\providecommand{\tabularnewline}{\\}
\numberwithin{equation}{section}
\numberwithin{figure}{section}
  \theoremstyle{plain}
  \newtheorem*{thm*}{\protect\theoremname}
\theoremstyle{plain}
\newtheorem{thm}{\protect\theoremname}[section]
  \theoremstyle{definition}
  \newtheorem{defn}[thm]{\protect\definitionname}
  \theoremstyle{remark}
  \newtheorem*{rem*}{\protect\remarkname}
  \theoremstyle{plain}
  \newtheorem{lem}[thm]{\protect\lemmaname}
  \theoremstyle{definition}
  \newtheorem*{example*}{\protect\examplename}
  \theoremstyle{plain}
  \newtheorem{prop}[thm]{\protect\propositionname}
  \theoremstyle{plain}
  \newtheorem{cor}[thm]{\protect\corollaryname}
\newenvironment{lyxlist}[1]
{\begin{list}{}
{\settowidth{\labelwidth}{#1}
 \setlength{\leftmargin}{\labelwidth}
 \addtolength{\leftmargin}{\labelsep}
 }}
{\end{list}}
  \theoremstyle{definition}
  \newtheorem{example}[thm]{\protect\examplename}
\author{Sungkyung Kang}
\address{Mathematical Institute, University of Oxford, Andrew Wiles Building,
Radcliffe Observatory Quarter, Woodstock Road, Oxford, OX2 6GG, UK}
\email{sungkyung.kang@maths.ox.ac.uk}
\thanks{This project has received funding from the European Research Council (ERC) under the European Unions Horizon 2020 research and innovation programme (grant agreement No 674978).}
\subjclass[2010]{57M27; 57R58}
\keywords{Contact structure; Transverse knots; Heegaard Floer homology}
  \providecommand{\corollaryname}{Corollary}
  \providecommand{\definitionname}{Definition}
  \providecommand{\examplename}{Example}
  \providecommand{\lemmaname}{Lemma}
  \providecommand{\propositionname}{Proposition}
  \providecommand{\remarkname}{Remark}
  \providecommand{\theoremname}{Theorem}
\providecommand{\theoremname}{Theorem}
\begin{document}

\title{A transverse link invariant from $\mathbb{Z}_{2}$-equivariant Heegaard
Floer cohomology}

\author{Sungkyung Kang}
\begin{abstract}
We define an invariant of based transverse links, as a well-defined
element inside the equivariant Heegaard Floer cohomology of its branched
double cover, defined by Lipschitz, Hendricks, and Sarkar. We prove
the naturality and functoriality of equivariant Heegaard Floer cohomology
for branched double covers of $S^{3}$ along based knots, and then
prove that our transverse link invariant $c_{\mathbb{Z}_{2}}(\xi_{K})$
is an well-defined element which is always nonvanishing and functorial
under certain classes of symplectic cobordisms, and describe its behavior
under negative stabilization. It follows that we can use properties
of $c_{\mathbb{Z}_{2}}(\xi_{K})$ to give a condition on transverse
knots $K$ which implies the vanishing/nonvanishing of the contact
class $c(\xi_{K})$.
\end{abstract}

\maketitle

\section{Introduction}

In the paper \cite{OSz-contact}, Ozsvath and Szabo defined an element
$c(\xi)\in\widehat{HF}(M)$ associated to a contact 3-manifold $(M,\xi)$,
which is an invariant of the isotopy class of the given contact structure
$\xi$ on $M$. In particular, they defined an element in $\widehat{HF}(M)$
associated to an open book decomposition of $M$ which supports $\xi$,
and then proved its invariance under isotopy and positive stabilization.
Later, in the paper \cite{HKM-HF}, Honda, Kazez, and Matic provided
a new way to define the element $c(\xi)$, by working with Heegaard
diagrams induced by arc diagrams on open books.

In the paper \cite{eqv-Floer}, Lipshitz, Hendricks, and Sarkar defined
an $\mathbb{F}_{2}[\theta]$-module $HF_{\mathbb{Z}_{2}}(L_{1},L_{2})$
associated to a pair of Lagrangian submanifolds $L_{1},L_{2}$ in
a symplectic manifold $M$, where the group $\mathbb{Z}_{2}$ acts
on $M$ by symplectomorphisms and leaves $L_{1},L_{2}$ invariant
as sets. The equivariant Floer cohomology $HF_{\mathbb{Z}_{2}}(L_{1},L_{2})$
turned out to be invariant under $\mathbb{Z}_{2}$-invariant Hamiltonian
isotopies, and in some special cases, noninvariant Hamiltonian isotopies.
This construction was applied to construct an $\mathbb{F}_{2}[\theta]$-module
\[
\widehat{HF}_{\mathbb{Z}_{2}}(\Sigma(L),p)\in\text{Mod}_{\mathbb{F}_{2}[\theta]}
\]
 associated to a bridge diagram of a based link $(L,p)$ on a sphere,
whose isomorphism type is an invariant of the isotopy class of $(L,p)$.

In this paper, we construct an element $c_{\mathbb{Z}_{2}}(\xi_{L})\in\widehat{HF}_{\mathbb{Z}_{2}}(\Sigma(L),p)$,
where $L$ is a tranverse knot in the standard contact 3-sphere $(S^{3},\xi_{std})$,
by considering the contact branched double cover $(\Sigma(L),\xi_{L})$
of $(S^{3},\xi_{std})$, branched along $L$, as defined by Plamenevskaya\cite{contact-branch}.
We prove that this element is indeed a well-defined element inside
the equivariant Floer cohomology, and becomes an invariant of the
transverse (based) isotopy class of $(L,p)$. 

Then, before discussing the functoriality of $c_{\mathbb{Z}_{2}}(\xi_{std})$,
we first prove the functoriality of $\widehat{HF}_{\mathbb{Z}_{2}}$.
Using the techniques introduced by Juhasz and Thurston\cite{Juhasz-naturality},
we prove that the $\mathbb{F}_{2}[\theta]$-module $\widehat{HF}_{\mathbb{Z}_{2}}(\Sigma(K),p)$
is natural in the sense that it admits an action of $MCG(S^{3},K,p)$,
when $(K,p)$ is a based knot. Then, using the naturality of $\widehat{HF}_{\mathbb{Z}_{2}}$
for based knots, we prove that a based cobordism $S=(S_{0},s)$ between
two based knots $(K_{1},p_{1})$ and $(K_{2},p_{2})$ in $S^{3}$
defines a map 
\[
\hat{f}_{S}\,:\,\widehat{HF}_{\mathbb{Z}_{2}}(\Sigma(K_{2}),p_{2})\rightarrow\widehat{HF}_{\mathbb{Z}_{2}}(\Sigma(K_{1}),p_{1}),
\]
 which is an invariant of the isotopy class of $S$ rel $\partial S$.
Here, a based cobordism is a cobordism togther with a curve from $p_{1}$
to $p_{2}$.

After estabilishing the functoriality of $\widehat{HF}_{\mathbb{Z}_{2}}$,
we discuss the functoriality of the element $c_{\mathbb{Z}_{2}}(\xi_{std})$
inside $\widehat{HF}_{\mathbb{Z}_{2}}(\Sigma(K),p)$, when $(K,p)$
is a based transverse knot in $(S^{3},p)$. Recall that any smooth
cobordism between knots(or links) is a composition of isotopies, births,
saddles, and deaths. We define their analogues (except for deaths)
in the symplectic setting, which turn out to be well-defined up to
a weaker version of symplectic isotopy, and restrict our attention
to symplectically constructible symplectic cobordisms, which can roughly
be defined to be based symplectic cobordisms which can be weakly symplectically
isotoped to a composition of symplectic versions of isotopies, births,
and saddles. Then we prove that $c_{\mathbb{Z}_{2}}(\xi_{std})$ is
preserved under the maps associated to symplectically constructible
based cobordisms. 

To summarize, we prove the following theorem.
\begin{thm*}
For a based link $(K,p)$ in $S^{3}$, the $\mathbb{F}_{2}[\theta]$-module
$\widehat{HF}_{\mathbb{Z}_{2}}(\Sigma(K),p)$ is natural. Given a
based knot cobordism $(S,s)$ in $S^{3}\times I$ with $\partial S=K_{1}\times\{0\}\cup K_{2}\times\{1\}$
and $\partial s=\{(p_{1},0),(p_{2},1)\}$, we have an $\mathbb{F}_{2}[\theta]$-module
homomorphism 
\[
\hat{f}_{(S,s)}\,:\,\widehat{HF}_{\mathbb{Z}_{2}}(\Sigma(K_{2}),p_{2})\rightarrow\widehat{HF}_{\mathbb{Z}_{2}}(\Sigma(K_{1}),p_{1}),
\]
 which is an invariant of the isotopy class of $(S,s)$. When $(K,p)$
is a based transverse knot inside $(S^{3},\xi_{std})$, we have an
element 
\[
c_{\mathbb{Z}_{2}}(\xi_{K})\in\widehat{HF}(\Sigma(K),p)
\]
 which is an invariant of the transverse isotopy class of $(K,p)$.
Furthermore, if $(S,s)$ is a symplectically constructible symplectic
cobordism in the $(S^{3}\times I,d(e^{t}\alpha_{std}))$ between based
transverse knots, say, $\partial S=K_{1}\times\{0\}\cup K_{2}\times\{1\}$
and $\partial s=\{(p_{1},0),(p_{2},1)\}$, then we have 
\[
\hat{f}_{(S,s)}(c_{\mathbb{Z}_{2}}(\xi_{K_{2}}))=c_{\mathbb{Z}_{2}}(\xi_{K_{1}}).
\]
\end{thm*}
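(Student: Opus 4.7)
The plan is to prove the four assertions in sequence, treating them as successive refinements.

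\emph{Naturality.} First I would set up the naturality of $\widehat{HF}_{\mathbb{Z}_{2}}(\Sigma(K),p)$ by transcribing the Juhasz--Thurston framework \cite{Juhasz-naturality} to the equivariant setting. Concretely, one fixes a category of bridge diagrams (with an extra basepoint) for $(S^{3},K,p)$ together with elementary moves (isotopies, handleslides, stabilizations) and shows that the $\mathbb{Z}_{2}$-equivariant continuation maps of \cite{eqv-Floer} associated to these moves commute up to homotopy with the equivariant differential and thus induce well-defined isomorphisms between the corresponding $\mathbb{F}_{2}[\theta]$-modules. Checking the cocycle condition on triangles of moves is essentially identical to Juhasz--Thurston, but I must verify that the equivariant perturbation data can be chosen consistently along loops; this is where the $\mathbb{Z}_{2}$-equivariance of the auxiliary Hamiltonians must be maintained coherently, and I expect to use the ``weak equivariant'' flexibility built into \cite{eqv-Floer}.

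\emph{Cobordism maps.} Using naturality, a based cobordism $(S,s)$ is decomposed into elementary pieces: isotopies, births, and saddles (the three Morse critical types for $S$), plus paths for $s$. For each elementary piece I would define the induced $\mathbb{F}_{2}[\theta]$-module map as an equivariant quasi-isomorphism on the isotopy part, an equivariant version of the unit map for births (following the branched double cover model), and a triangle map for saddles built from an equivariant count of holomorphic triangles. Invariance of the composite under different decompositions is then reduced, by a movie-move argument as in Juhasz's TQFT construction, to a finite list of local relations; each relation boils down to an equivariant chain homotopy, which exists for the same formal reasons it does in the non-equivariant setting once the perturbation data have been arranged $\mathbb{Z}_{2}$-equivariantly.

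\emph{The transverse invariant.} Given a based transverse knot $(K,p)$ in $(S^{3},\xi_{std})$, Plamenevskaya's branched double cover $(\Sigma(K),\xi_{K})$ is a contact 3-manifold, which by \cite{HKM-HF} comes with a distinguished Heegaard diagram from an adapted open book decomposition and a distinguished Ozsv\'ath--Szab\'o cycle. I would lift this cycle to the $E_{1}$-page of the spectral sequence of \cite{eqv-Floer}, then check that the $\mathbb{Z}_{2}$-action (induced by the branch covering involution) fixes this cycle and that all higher $\theta$-differentials vanish on it, so it survives to a genuine element $c_{\mathbb{Z}_{2}}(\xi_{K})\in\widehat{HF}_{\mathbb{Z}_{2}}(\Sigma(K),p)$. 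Transverse isotopy invariance then follows by combining the transverse Markov theorem with the invariance of $c(\xi_{K})$ under positive open-book stabilization, lifted to the equivariant setting via the naturality established above.

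\emph{Functoriality under symplectically constructible cobordisms.} Finally, I would introduce symplectic analogues of births, saddles, and isotopies of transverse links, and check that each induces, via the machinery of step two, a map carrying $c_{\mathbb{Z}_{2}}(\xi_{K_{2}})$ to $c_{\mathbb{Z}_{2}}(\xi_{K_{1}})$. For symplectic isotopies and births this reduces to an equivariant version of the open-book stabilization argument of Ozsv\'ath--Szabo. The main obstacle will be the saddle case: here I must show that the equivariant triangle map associated to an appropriately chosen Heegaard triple coming from a symplectic saddle move sends the contact cycle for the upper open book to the contact cycle for the lower one. This is the step I expect to be hardest, because it requires an explicit equivariant holomorphic-triangle count adapted to the open-book Heegaard diagrams of \cite{HKM-HF}, performed with $\mathbb{Z}_{2}$-invariant perturbation data that is compatible both with the branch covering involution and with the saddle surgery; once the count is arranged correctly, the desired identity on chain level follows essentially formally, and the theorem is proved by concatenating the three elementary invariance statements along any symplectically constructible decomposition of $(S,s)$.
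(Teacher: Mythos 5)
Your outline captures the paper's overall architecture accurately: Juhász--Thurston style naturality via a cell complex of bridge diagrams and distinguished relations, movie moves (Carter--Saito, not Juhász's sutured TQFT, though the spirit is the same) to show cobordism maps are well defined, a Honda--Kazez--Matić cycle lifted to an element $\theta^0 \otimes EH(\xi_K)$, Orevkov--Shevchishin for transverse invariance, and equivariant triangle counts for functoriality under births and saddles. However, there are two genuine gaps.

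First, you never address the admissibility problem, which is actually the paper's main technical precondition. To make sense of $\widehat{HF}_{\mathbb{Z}_2}$, the equivariant continuation and triangle maps, and the square maps needed for the commuting relations in the naturality cell complex, one must work with weakly admissible Heegaard double-, triple-, and quadruple-diagrams, and this is far from automatic in the equivariant setting: the usual winding tricks for achieving admissibility are not $\mathbb{Z}_2$-equivariant and would destroy the involution. The paper gets around this by proving (Section 4) that any \emph{involutive} diagram --- precisely the type that arises from branched double covers of bridge diagrams --- is automatically weakly admissible, via a periodic-domain argument that averages a putative nonnegative periodic domain $D$ against its image $\tau_\ast D$ and derives a contradiction. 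Without that observation (or some replacement for it), you have no right to count holomorphic polygons, let alone run the homotopy-coherence machinery, so this is a real hole and not a technicality that can be left implicit.

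Second, for the saddle case of functoriality you say only that the ``main obstacle'' is an explicit equivariant triangle count, without explaining why a given symplectic saddle along an arbitrary Legendrian arc produces a triple diagram for which the count is tractable. The paper's resolution is a geometric reduction: first treat the case where $L$ is already braided and the saddle arc is \emph{basic} (contained in a page of the standard genus-zero open book), where all triangles hitting the contact generator are small by placement of the basepoint; then use a version of Bennequin's theorem for transverse graphs to isotope an arbitrary nice graph $(L,a,z)$, through nice graphs, into a braided position with a basic arc, and invoke invariance of $c_{\mathbb{Z}_2}$ under isotopy. Your write-up jumps straight to the open-book triangle count without this reduction, and for a general saddle arc the count you want to do is not directly available.
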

Note that, since moving the basepoint by an isotopy preserves the
element $c_{\mathbb{Z}_{2}}(\xi_{K})$, we can drop the choice of
a basepoint on $K$ and say that $c_{\mathbb{Z}_{2}}(\xi_{K})$ is
an invariant of the transverse isotopy class of a given transverse
knot $K$ in $(S^{3},\xi_{std})$. 

Surprisingly, basic properties of $c_{\mathbb{Z}_{2}}(\xi_{K})$ allows
us to give a condition on the self-linking number of $K$ which ensures
the vanishing/nonvanishing of the (non-equivariant) contact class
$c(\xi_{K})$. This can be seen as a Heegaard Floer analogue of a
similar condition, for the Plamenveskaya $\psi$-invariant, proven
in \cite{Plamenevskaya-2008}. The two results are shown to be the
same for transverse representatives of quasi-alternating knots.
\begin{thm}
Let $K$ be a knot in $S^{3}$ and $T$ be a transverse representative
of $K$. Then $c(\xi)\ne0$ if $d_{3}(\xi_{K})=\frac{q_{\tau}(K)-1}{2}$
and $c(\xi)=0$ if $d_{3}(\xi_{K})>\frac{q_{\tau}(K)-1}{2}+v_{\tau}(K)$.
\end{thm}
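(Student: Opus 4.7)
The plan is to compare the equivariant contact class $c_{\mathbb{Z}_{2}}(\xi_{K})$ with the classical contact class $c(\xi_{K})$ via a specialization (or forgetful) map $\pi:\widehat{HF}_{\mathbb{Z}_{2}}(\Sigma(K),p)\to\widehat{HF}(\Sigma(K))$ obtained by setting $\theta=0$, which I expect to send $c_{\mathbb{Z}_{2}}(\xi_{K})$ to $c(\xi_{K})$. Since the main theorem guarantees that $c_{\mathbb{Z}_{2}}(\xi_{K})$ is always nonvanishing, the question of vanishing/nonvanishing of $c(\xi_{K})$ reduces to determining whether $c_{\mathbb{Z}_{2}}(\xi_{K})$ lies in the image of $\theta$. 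This is precisely an analysis of where $c_{\mathbb{Z}_{2}}(\xi_{K})$ sits in the $\theta$-tower/torsion decomposition of $\widehat{HF}_{\mathbb{Z}_{2}}(\Sigma(K),p)$, which is controlled by its Maslov grading.

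The first input I need is the grading identification: the Maslov grading of $c_{\mathbb{Z}_{2}}(\xi_{K})$ equals $d_{3}(\xi_{K})$ up to a fixed shift, mirroring the well-known fact for $c(\xi_{K})\in\widehat{HF}(\Sigma(K))$. The second input is a structural description of $\widehat{HF}_{\mathbb{Z}_{2}}(\Sigma(K),p)$ as an $\mathbb{F}_{2}[\theta]$-module, from which I would interpret $q_{\tau}(K)$ as (twice the bottom grading plus one of) the infinite $\theta$-tower, and $v_{\tau}(K)$ as the length of the additional $\theta$-torsion summand sitting above it. Granted these identifications, the two implications of the theorem become grading bookkeeping.

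For the nonvanishing direction, if $d_{3}(\xi_{K})=\frac{q_{\tau}(K)-1}{2}$, then $c_{\mathbb{Z}_{2}}(\xi_{K})$ must sit at the bottom of the $\theta$-tower (since it is nonzero and cannot lie higher for grading reasons). The specialization map $\pi$ is injective on the bottom of the tower, so $c(\xi_{K})=\pi(c_{\mathbb{Z}_{2}}(\xi_{K}))\ne 0$. For the vanishing direction, if $d_{3}(\xi_{K})>\frac{q_{\tau}(K)-1}{2}+v_{\tau}(K)$, then $c_{\mathbb{Z}_{2}}(\xi_{K})$ lies strictly above both the bottom of the tower and all torsion generators, so it must be $\theta$-divisible, hence in $\ker\pi$, and therefore $c(\xi_{K})=0$.

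The main obstacle will be setting up the specialization map rigorously and verifying it intertwines the two contact classes, since the equivariant contact class is defined through a branched-cover bridge-diagram construction and the comparison with Ozsv\'ath--Szab\'o's open-book definition requires checking that the chain-level representatives agree after killing $\theta$. A secondary technical point is giving a precise, intrinsic definition of $q_{\tau}(K)$ and $v_{\tau}(K)$ from the $\mathbb{F}_{2}[\theta]$-module structure and confirming via direct computation that they reduce to the classical Plamenevskaya-style bounds for quasi-alternating knots, which will validate the last remark comparing the two results.
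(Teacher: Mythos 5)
Your overall strategy matches the paper's: compare the classical and equivariant contact classes via the forgetful map $\pi\,:\,\widehat{HF}_{\mathbb{Z}_{2}}(\Sigma(K))\to\widehat{HF}(\Sigma(K))$ (Theorem \ref{thm:forget}), invoke the localization theorem (Theorem \ref{thm:localization}) to show $c_{\mathbb{Z}_{2}}(\xi_{K})$ always lives in the infinite $\theta$-tower, and do grading bookkeeping with $q_{\tau}$ and $v_{\tau}$. The vanishing direction is argued the same way in the paper. However, for the nonvanishing direction you take a genuinely different and more direct route than the paper does. You assert that $\pi$ is injective ``on the bottom of the tower,'' i.e.\ that $\ker\pi=\theta\cdot\widehat{HF}_{\mathbb{Z}_{2}}(\Sigma(K))$, so that non-$\theta$-divisibility of $c_{\mathbb{Z}_{2}}(\xi_{K})$ immediately gives $c(\xi_{K})\ne 0$. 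This claim is in fact true (it follows from the short exact sequence $0\to\theta\widehat{CF}_{\mathbb{Z}_{2}}\to\widehat{CF}_{\mathbb{Z}_{2}}\to\widehat{CF}\to 0$ together with the fact that multiplication by $\theta$ is an isomorphism of chain complexes $\widehat{CF}_{\mathbb{Z}_{2}}\cong\theta\widehat{CF}_{\mathbb{Z}_{2}}$, so the connecting map identifies $\ker\pi$ with the image of $\theta$), but you would need to supply this argument since the paper never establishes it. The paper instead proves nonvanishing via the $\theta$-filtration spectral sequence $E_{1}=\widehat{HF}(\Sigma(K))\otimes\mathbb{F}_{2}[\theta]\Rightarrow\mathbf{gr}_{\theta}\widehat{HF}_{\mathbb{Z}_{2}}(\Sigma(K))$: the condition $d_{3}(\xi_{K})=\frac{q_{\tau}(K)-1}{2}$ forces the images $c_{\mathbb{Z}_{2}}^{n}(\xi_{K})$ in the associated graded to be nonzero for all $n$, so the classes $c(\xi_{K})\otimes\theta^{n}$ survive to $E_{\infty}$, which yields the strictly stronger statement (Theorem \ref{thm:nonvanishing}) that $c(\xi_{K})\ne a+\tau^{\ast}a$ for every $a$, of which nonvanishing is the special case $a=0$. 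Your route is cleaner if only nonvanishing is wanted; the paper's route buys the sharper obstruction and the cardinality bound in Theorem \ref{thm:nonvanishing}(2). One secondary caution: your reading of $v_{\tau}(K)$ as ``the length of the torsion summand sitting above the tower'' is an approximation; the paper's actual definition concerns elements in graded pieces above $q_{\tau}(K)$ that are simultaneously non-$\theta$-torsion and non-$\theta$-divisible, and that precise formulation is what makes the divisibility conclusion in the vanishing direction go through.
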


It is natural to ask whether the transverse knot invariant $c_{\mathbb{Z}_{2}}(\xi_{K})$
is effective, in the sense that it can distinguish between two knots
with the same self-linking number and topological knot type. There
are several transverse knot invariants defined previously. For example,
the Plamenevskaya invariant of transverse knots is an element of Khovanov
homology, as shown in \cite{plamanevskaya-Kh}; Wu\cite{Wu-sl(n)}
defined an $\mathfrak{sl}_{n}$ invariant of transverse knots, as
an element of $\mathfrak{sl}_{n}$ homology. Lisca, Ozsvath, Stipsicz,
and Szabo\cite{LOSS-inv} defined the LOSS invariant of Legendrian
and transverse knots, as an element of knot Floer homology, which
was proved to be the same (up to automorphisms of $\widehat{HFK}$)
as the HFK grid invariant by Baldwin and Vela-Vick\cite{GRID=00003DLOSS}.
Also, Ekholm, Etnyre, Ng, and Sullivan\cite{knot contact homology}
defined transverse homology, which is a filtered version of knot contact
homology. Baldwin and Sivek\cite{monopole-LOSS} defined a monopole
version of the LOSS invariant, as an element of the sutured monopole
homology, which is functorial under Lagrangian concordances and maps
to the LOSS invariant via isomorphism between KHM and HFK\cite{eqv-LOSS}.
Among them, the LOSS invariant, its monopole version, and the transverse
homology are proven to be effective. It is still not known whether
the others are effective invariants of transverse knots. We do not
know whether the invariant $c_{\mathbb{Z}_{2}}$ is effective.

\subsection*{Acknowledgement}

The author would like to thank Andras Juhasz, Robert Lipshitz, Marco
Golla, and Olga Plamenevskaya for helpful discussions and suggestions.
The author would also like to thank Hyunwoo Kwon for his help on various
figures drawn in this paper.

\section{Branched double covers along transverse links}

Recall that, given a link $L$ in $S^{3}$, we can remove its neighborhood,
take the double cover with respect to the meridian of its boundary,
and then reglue a solid torus along the boundary to get the branched
double cover $\Sigma(L)$ of $S^{3}$ along $L$. The covering transformation,
i.e. natural $\mathbb{Z}_{2}$-action on $\Sigma(L)$ is an orientation-preserving
homeomorphism.

Now suppose that we are working with the standard contact sphere $(S^{3},\xi_{std})$,
and a transverse link $L$ inside it. It has a standard contact neighborhood:
\[
N(L)\simeq(S^{1}\times D^{2},\ker(\alpha=d\phi+r^{2}d\theta)),
\]
 where $\phi$ parametrizes $S^{1}$ and $(r,\theta)$ are the polar
coordinates on $D^{2}$. The pullback of $\alpha$ along the branched
covering $p\,:\,(z,r,\theta)\mapsto(z,r^{2},2\theta)$ is given by
\[
p^{\ast}\alpha=dz+2r^{4}d\theta,
\]
 which satisfies the contact condition away from the fixed locus $L=\{r=0\}$.
However, the branched double cover construction of Plamenevskaya\cite{contact-branch}
tells us that we may consider the interpolated 1-forms 
\[
\alpha_{f}=dz+f(r^{2})d\theta,
\]
 for smooth increasing functions $f$ which satisfy $f(r^{2})=r^{2}$
near $r=0$ and $f(r^{2})=2r^{4}$ away from $r=0$. For such a function
$f$, the form $\alpha_{f}$ is always contact. Also, for any two
such functions $f$ and $g$, the forms $\alpha_{f}$ and $\alpha_{g}$
are obviously isotopic by a radial isotopy. Hence, by gluing the solid
torus to the double cover of $S^{3}-N(L)$, we see that the contact
branched double cover 
\[
(\Sigma(L),\xi_{L})=\widetilde{(S^{3}-N(L),\xi_{std})}\cup(S^{1}\times D^{2},\alpha_{f})
\]
 is well-defined up to isotopy supported near $L$.

Now we consider the case when $L$ is braided along the $z$-axis
in $S^{3}$. This notion can be made precise as follows:
\begin{defn}
Consider the genus 0 open book of $S^{3}$, as follows:
\[
\pi\,:\,S^{3}-\{z\text{-axis}\}\rightarrow S^{1}.
\]
 Then a link $L$ in $S^{3}$ is braided if it does not intersect
the $z$-axis and the map $\pi|_{L}\,:\,L\rightarrow S^{1}$ is a
regular covering map.
\end{defn}

Clearly, when $L$ is braided along the $z$-axis, it is a closed
braid. The corresponding braid word is unique up to positive/negative
stabilizations and conjugations inside the braid group.

When $L$ is transverse in $(S^{3},\xi_{std})$, we do not have positive
stabilizations, since they increase the self-linking number of $L$
by 2. However, we have the following theorems.
\begin{thm}
(Bennequin \cite{Bennequin-original}) For any transverse link $L$
in $(S^{3},\xi_{std})$, there exists a transverse braid $B$ around
the $z$-axis, such that $L$ is transversely isotopic to $B$.
\end{thm}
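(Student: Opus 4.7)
The plan is to adapt Alexander's braiding procedure to the transverse category. Writing $\xi_{std} = \ker(dz + r^{2}\,d\theta)$ in cylindrical coordinates and letting $\pi \colon S^{3} \setminus \{z\text{-axis}\} \to S^{1}$ be the angular projection, a link $L$ disjoint from the $z$-axis is a braid precisely when $d\theta(TL) > 0$ everywhere (with the orientation). The task is therefore to find a transverse isotopy of $L$ to such a configuration.

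First, by a small transverse perturbation, arrange that $L$ is disjoint from the $z$-axis; this is generic, since the $z$-axis has codimension two and transversality is $C^{1}$-open. Decompose $L$ into maximal \emph{good arcs} where $d\theta(TL) > 0$ and \emph{bad arcs} where $d\theta(TL) \leq 0$. On a bad arc, the transverse condition $dz + r^{2}\,d\theta > 0$ forces $dz > 0$, so any bad arc is strictly monotone in $z$, and in particular does not close up into a whole component. The inductive plan is to eliminate bad arcs one at a time via ambient transverse isotopies, using a suitable complexity such as the number of bad arcs or the total negative $d\theta$-integral along $L$.

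The braiding move proceeds as follows. Given a bad arc $\gamma$ with endpoints $a, b$, replace it by an arc $\gamma'$ from $a$ to $b$ that lies on a large cylinder $\{r = R\}$ and winds positively in $\theta$ (wrapping as many times as needed to match the endpoints). On $\{r = R\}$ the contact form restricts to $dz + R^{2}\,d\theta$, so for $R \gg 0$ the $R^{2}\,d\theta$ term dominates and any curve $(z(t), R, \theta(t))$ with $\theta'(t) > 0$ is transverse provided $z'(t)$ is controlled. The interpolation from $\gamma$ to $\gamma'$ is then performed inside a thin neighborhood of a Seifert-like disk cobounded by $\gamma$ and $\gamma'$; this disk is chosen transverse to $L \setminus \gamma$, and the finitely many intersection points are resolved by small local transverse perturbations. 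That the interpolation goes through transverse arcs follows from the relative contact neighborhood theorem for transverse arcs with prescribed boundary $1$-jets.

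The main obstacle is exactly this last step: upgrading the smooth braiding move to an ambient \emph{transverse} isotopy, rather than merely a smooth isotopy that happens to terminate at a transverse link. This requires careful bookkeeping of $z'(t)$ along the moving arc and uses the openness of the transverse condition together with the dominance of $R^{2}\,d\theta$ at large radius. Since each application of the move strictly decreases the chosen complexity, the procedure terminates with a transverse braid transversely isotopic to $L$.
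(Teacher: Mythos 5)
The paper does not prove this statement; it is cited verbatim from Bennequin, so there is no internal proof to compare against. Your sketch follows the standard adaptation of Alexander's braiding trick and correctly isolates the crux: turning the smooth braiding move into a genuine transverse isotopy. That identification is right, and the observation that a bad arc satisfies $dz>0$ (hence is monotone and cannot close up) is exactly what is needed.

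The gap is in how you dispose of that crux. The "dominance of $R^{2}\,d\theta$ at large radius" helps only for the \emph{target} arc $\gamma'$, which already winds positively; it is actively harmful for the intermediate arcs of the interpolation. If you move $\gamma$ radially outward while it still has $d\theta(T\gamma)<0$ somewhere, the quantity $dz+r^{2}d\theta$ only \emph{decreases} as $r$ grows, so the naive radial slide through the "Seifert-like disk" can destroy transversality before you ever reach the cylinder $\{r=R\}$. The appeal to a relative contact neighborhood theorem does not substitute for this: that theorem standardizes neighborhoods of transverse arcs but produces no path in the space of transverse arcs. What actually carries the argument is an ordered two-stage move: (i) keeping $r$ and $z$ fixed, push $\gamma$ only in the $\theta$-direction so as to increase $d\theta(T\gamma)$ to $\geq 0$ (this can only increase $dz+r^{2}d\theta$, hence preserves transversality automatically, with embeddedness arranged by going around the binding), and only \emph{then} (ii) push out to $\{r=R\}$ and add further positive winding, at which point the $R^{2}d\theta$ term does dominate in the favorable direction. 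Without that ordering the interpolation is not clearly transverse, and this is precisely where Bennequin's (and Orevkov--Shevchishin's) argument does its real work. The rest of your scheme, including the complexity induction and the resolution of intersections of the sweeping disk with $L\setminus\gamma$ by $C^{1}$-small transverse perturbations, is sound.
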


\begin{thm}
\label{thm:O-S}(Orevkov-Shevchishin \cite{Orevkov-Shevchishin})
Two (closed) transverse braids around the $z$-axis are transversely
isotopic as transverse links if and only if they are related by braid
isotopies, conjugations in the braid group, and positive braid stabilizations.
\end{thm}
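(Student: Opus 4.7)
The ``if'' direction is the easy one and I would dispatch it first. Braid isotopies and conjugations in $B_n$ tautologically give ambient isotopies of the closure through transverse braids. For positive stabilization one writes down an explicit local model in a standard contact neighborhood of a single braid strand: one replaces a short arc by a positively stabilized arc that picks up one unit of the contact framing, thereby increasing the writhe by $+1$ and the braid index by $+1$, so the self-linking number $w-n$ is preserved. A direct calculation with $\alpha_{std}$ on the neighborhood exhibits the resulting deformation as a transverse isotopy of the closure.

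For the ``only if'' direction, the plan is to run Markov's argument with the decisive extra input being positivity of intersections for pseudoholomorphic curves. Given a transverse isotopy $\{L_t\}_{t\in[0,1]}$ between two closed transverse braids $L_0,L_1$, form the trace $\Sigma=\bigcup_t L_t\times\{t\}$ inside the symplectization $(S^3\times\mathbb{R},d(e^t\alpha_{std}))$. Because the isotopy is through transverse links, $\Sigma$ may be arranged to be symplectic, and after a compactly supported perturbation we may further take it to be pseudoholomorphic for a cylindrical almost complex structure $J$ compatible with the extension of the genus-$0$ open book $\pi\colon S^3-\{z\text{-axis}\}\to S^1$; the pages extend to $J$-holomorphic surfaces $P_\theta\times\mathbb{R}$.

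Next I would intersect $\Sigma$ with the family of pages: the slices $\Sigma\cap(P_\theta\times\mathbb{R})$ form a one-parameter family of braid diagrams, and by positivity of intersection every intersection point of $\Sigma$ with a page is positive. After a generic perturbation, the combinatorial type of the braid changes only at finitely many times, and a codimension-one analysis classifies the possible transitions into three kinds: generic isotopies of the diagram (braid isotopies in $B_n$), the movement of a strand across the binding (braid conjugation), and a birth/death tangency between $\Sigma$ and a page. Positivity forces every such tangency to contribute a \emph{positive} stabilization/destabilization; negative stabilizations are excluded precisely because they would require a negative local intersection of $\Sigma$ with a page, violating the positivity of intersections of pseudoholomorphic curves.

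The main obstacle is the analytic one of upgrading the symplectic trace $\Sigma$ to a genuine pseudoholomorphic representative while keeping the transverse-braid data intact at the boundary, and then establishing the requisite genericity and Morse-type classification of the page-intersection pattern. A secondary but real difficulty is a braid-group tidy-up: destabilizations can appear along the isotopy and must be cancelled against subsequent stabilizations so that the final sequence of moves uses only positive stabilizations (together with isotopies and conjugations). This last step is best handled group-theoretically after the geometric analysis delivers the unoriented list of elementary moves.
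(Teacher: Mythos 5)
The paper does not prove this statement; it is quoted (with attribution) as a known result of Orevkov--Shevchishin \cite{Orevkov-Shevchishin}, so there is no in-paper argument to compare against. Your sketch outlines what is essentially the Orevkov--Shevchishin strategy: trace the transverse isotopy as a symplectic surface in the symplectization, represent it $J$-holomorphically for a cylindrical $J$ compatible with the genus-zero open book, slice by the $J$-holomorphic pages, and invoke positivity of intersections to forbid negative Markov moves. That is the right line of attack, and you correctly flag that the hard part is the analytic step of producing a $J$-holomorphic trace with $J$ simultaneously cylindrical, compatible with the open book fibration, and respecting the braid boundary conditions; this is not a generic perturbation lemma and carries the full weight of the argument.

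One concrete correction: the ``braid-group tidy-up'' you anticipate at the end is a misreading of the theorem. ``Related by $\dots$ positive braid stabilizations'' means related through a finite chain whose steps are braid isotopies, conjugations, and positive stabilizations \emph{or} destabilizations; the relation is symmetric by definition, so there is nothing to cancel and no group-theoretic reorganization is needed. The real burden is only to show that no \emph{negative} tangency occurs, which is exactly what positivity of intersections is supposed to deliver. I would also double-check your dictionary between geometric events and braid-group moves: a strand of $L_t$ crossing the binding (the $z$-axis) changes the braid index and hence corresponds to a Markov (de)stabilization, not to a conjugation; conjugations arise from isotopies of the closure in the binding complement, not from crossings of the binding.
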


We adopt the usual notation for braids. If we consider braids with
$n$ strings, the corresponding braid group is generated by the standard
generators $\sigma_{1},\cdots,\sigma_{n-1}$, where $\sigma_{i}$
creates a positive crossing between the $i$th and the $(i+1)$th
strands. This notation can also be applied to transverse braids around
the $z$-axis.

Now we recall the way to construct the contact branched double cover
from a braid representative of a given transverse link, due to Plamenevskaya\cite{contact-branch}.
Suppose that the transverse link $L$ is represented by a transverse
braid $B$ with a braid word $w=\sigma_{i_{1}}^{\pm1}\cdots\sigma_{i_{k}}^{\pm1}$.
Consider the disk $D$ with $n$ points $p_{1},\cdots,p_{n}$ in its
interior and pairwise (interior-)disjoint simple arcs $c_{i}$ connecting
$p_{i}$ and $p_{i+1}$. Then we define the following self-diffeomorphism
of $D$:
\[
h(w)=T_{i_{1}}^{\pm1}\cdots T_{i_{k}}^{\pm1}\in\mbox{Diff}^{+}(D,\partial D,\{p_{1},\cdots,p_{n}\}).
\]
 Here, $T_{i}$ denotes the Dehn half-twist along the arc $c_{i}$.
Clearly this definition depends on the choice of points $p_{i}$ and
arcs $c_{i}$, but since any two choices of such data are related
by a self-diffeomorphism of $D$(due to the fact that any two such
data are isotopic to each other), we see that $h(w)$ is well-defined
up to conjugation. Since the self-diffeomorphism $h(w)$ is orientation-preserving
and fixes the boundary $\partial D$ pointwise, the pair $(D,h)$
becomes an abstract open book of $S^{3}$. 

We then consider the branched double cover $S$ of $D$, branched
along the points $p_{1},\cdots,p_{n}$. The arcs $c_{i}$ smoothly
lift to smooth simple closed curves $C_{i}$ and the self-diffeomorphism
$h$ of $D$ lifts smoothly to a self-diffeomorphism $\tilde{h}$
of $S$, which is now given by products of positive/negative Dehn
twists along the curves $C_{i}$. By the argument used above, the
conjugacy class of $\tilde{h}$ is uniquely determined. Hence the
abstract open book $\mathcal{P}_{B}=(S,\tilde{h})$ is defined up
to diffeomorphism. Since this abstract open book has a natural $\mathbb{Z}_{2}$-action(i.e.
covering transformation) and the monodromy $\tilde{h}$ is equivariant
with respect to that action, $\mathcal{P}_{B}$ represents a uniquely
determined closed contact $3$-manifold $(M_{B},\xi_{M_{B}})$ with
a contact $\mathbb{Z}_{2}$-action, which turns out to be the contact
branched double cover of $(S^{3},\xi_{std})$ along $L$. In other
words, we get an open book description of $(\Sigma(L),\xi_{L})$.
\begin{thm}
\cite{contact-branch} We have a $\mathbb{Z}_{2}$-equivariant contactomorphism
\[
(M_{B},\xi_{M_{B}})\simeq(\Sigma(L),\xi_{L}).
\]
\end{thm}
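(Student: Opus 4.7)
The plan is to prove the theorem in two stages: first a topological $\mathbb{Z}_{2}$-equivariant identification $M_{B}\cong\Sigma(L)$, then a matching of contact structures via Giroux's correspondence. For the topological stage, I use the fact that $(D,h(w))$ is an open book of $S^{3}$ in which the closed braid $B$ is realized as $\{p_{1},\ldots,p_{n}\}\times S^{1}$ transverse to the pages. Taking the branched double cover of each page $D$ along $\{p_{1},\ldots,p_{n}\}$ yields $S$, and since each half-twist $T_{i}$ along $c_{i}$ lifts to a full Dehn twist along the closed curve $C_{i}$ covering $c_{i}$, the monodromy $h(w)$ lifts to $\tilde{h}$. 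Gluing the mapping torus of $(S,\tilde{h})$ to the appropriate binding neighborhood produces the branched double cover of $S^{3}$ along $B$, with the covering involution $\iota$ on $M_{B}$ having fixed-point set equal to $L$; this identifies $M_{B}\cong\Sigma(L)$ equivariantly.

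For the contact stage, I observe that the Thurston--Winkelnkemper construction can be performed $\mathbb{Z}_{2}$-equivariantly: because $\tilde{h}$ commutes with $\iota$, one can select $\iota$-invariant page-compatible $1$-forms, so that the resulting contact structure $\xi_{M_{B}}$ supported by $(S,\tilde{h})$ is $\iota$-invariant and its quotient on $M_{B}/\iota=S^{3}$ is supported by $(D,h(w))$, hence equals $\xi_{std}$ by Giroux's theorem. It remains to verify that $\xi_{L}$, as constructed by Plamenevskaya, coincides with $\xi_{M_{B}}$ under the topological identification above. Away from $L$, both $\xi_{L}$ and $\xi_{M_{B}}$ are $\iota$-invariant lifts of $\xi_{std}|_{S^{3}\setminus L}$ under the unbranched double cover $\Sigma(L)\setminus L\to S^{3}\setminus L$, so they agree up to $\iota$-equivariant contact isotopy by uniqueness of lifts. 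Near $L$, I first normalize the Thurston--Winkelnkemper form around the binding of $(S,\tilde{h})$ to $dz+f_{1}(r^{2})d\theta$ via a $\mathbb{Z}_{2}$-equivariant contact neighborhood theorem for transverse knots; the radial isotopy argument already noted in the excerpt then shows it is $\iota$-equivariantly isotopic to the Plamenevskaya model $\alpha_{f}$, and an $\iota$-invariant cutoff supported in a collar of $\partial N(L)$ patches the two regional isotopies into a single global one.

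The main obstacle is the matching near the branch locus, since the form produced by Thurston--Winkelnkemper from the lifted open book is not directly of the shape $dz+f(r^{2})d\theta$. One must establish a $\mathbb{Z}_{2}$-equivariant contact neighborhood theorem around each component of $L$, choosing tubular coordinates so that $\iota$ acts as $(z,r,\theta)\mapsto(z,r,\theta+\pi)$ in order to match the Plamenevskaya model exactly. Once these equivariant coordinates are in place, an equivariant Moser-type argument together with Gray's stability theorem reduces the remaining comparison to the radial interpolation between smooth increasing functions $f$ and $f_{1}$ already handled in the excerpt, completing the proof.
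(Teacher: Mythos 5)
This theorem carries only a citation to \cite{contact-branch}; the paper gives no proof of its own, so there is no internal argument for me to compare against. Your outline is in the spirit of Plamenevskaya's original proof (equivariant double cover of the disk open book, equivariant Thurston--Winkelnkemper, comparison near the branch locus), but as written the contact-matching stage has a real gap. You assert that the $\iota$-quotient of $\xi_{M_B}$ ``is supported by $(D,h(w))$, hence equals $\xi_{std}$ by Giroux's theorem.'' This is not well-formed along $L$: under the branched covering $p\colon(z,r,\theta)\mapsto(z,r^{2},2\theta)$, the pushforward of a smooth $\iota$-invariant contact plane field on $\Sigma(L)$ is discontinuous at the branch locus --- for instance the pushforward of $\ker(dz+r^{2}d\theta)$ is $\ker\bigl(dz+\tfrac{1}{2}\rho\,d\phi\bigr)$, which has no limit as $\rho\to 0$ --- so there is no ``quotient contact structure'' on $S^{3}$ to which Giroux can be applied. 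Away from $L$ the quotient is a genuine contact structure, but your uniqueness-of-lifts step then requires that quotient to be \emph{literally equal} to $\xi_{std}|_{S^{3}\setminus L}$; Giroux gives at best an isotopy of closed contact manifolds, which you do not have here, and which in any case need not lift across the branch locus or preserve the Plamenevskaya model.

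The repair --- essentially what Plamenevskaya does --- is to invert the order of construction. Start with a Thurston--Winkelnkemper form $\alpha$ on $S^{3}$ adapted to $(D,h(w))$, chosen so that $L$ has the standard transverse model $dz+r^{2}d\theta$ in a tubular neighborhood. Pull back to obtain $p^{\ast}\alpha$ on $\Sigma(L)$; this is $\iota$-invariant, contact away from $L$, and degenerates to $dz+2r^{4}d\theta$ along $L$. Now replace $p^{\ast}\alpha$ by $dz+f(r^{2})d\theta$ on a collar of $L$ exactly as in the excerpt, keeping $f$ increasing so the result stays contact and $\iota$-invariant. One then verifies directly that this form is adapted to $(S,\tilde h)$ and agrees with the Plamenevskaya model near $L$, so $\xi_{M_B}=\xi_{L}$ by construction rather than by an a posteriori matching. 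This sidesteps both of the issues above. Also note a terminological slip: you write ``around the binding of $(S,\tilde h)$'' where you mean ``around the branch locus $L$''; $L$ lies in the interiors of the pages and is disjoint from the binding.
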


\begin{rem*}
The construction of $(M_{B},\xi_{B})$ depends on the braid isotopy
class of $B$, not only on its link isotopy class. However, by the
definition of contact branched double covers, we know that if $L,L^{\prime}$
are transversely isotopic, then the double covers $(\Sigma(L),\xi_{L})$
and $(\Sigma(L^{\prime}),\xi_{L^{\prime}})$ are ($\mathbb{Z}_{2}$-equivariantly)
contactomorphic. Hence we see that, if two transverse braids $B,B^{\prime}$
are related by braid isotopies and positive stabilizations(conjugations
and positive stabilizations in terms of braid group elements), then
$(M_{B},\xi_{B})\simeq(M_{B^{\prime}},\xi_{B^{\prime}})$.
\end{rem*}

\section{$\widehat{HF}_{\mathbb{Z}_{2}}$ of branched double covers of $S^{3}$
along a based link}

Hendricks, Lipshitz, and Sarkar\cite{eqv-Floer} constructed the $\mathbb{F}_{2}[\theta]$-module
$\widehat{HF}_{\mathbb{Z}_{2}}(L_{0},L_{1})$ when $\mathbb{Z}_{2}$
acts symplectically on a symplectic manifold $(M,\omega)$ and $L_{0},L_{1}\subset M$
are transversely intersecting Lagrangians which are fixed by the $\mathbb{Z}_{2}$-action
and satisfy Hypothesis 3.2 of \cite{eqv-Floer}, and use it to define
$\widehat{HF}_{\mathbb{Z}_{2}}(\Sigma(L),z)$ for the branched double
cover $\Sigma(L)$, where $(L,z)$ is a based link in $S^{3}$. In
this section, we will briefly review their construction to make the
whole paper more self-contained.

\subsection*{Homotopy coherent diagrams of almost complex structures}

By a cylindrical complex structure on a symplectic manifold $(M,\omega)$,
we mean a smooth 1-parameter family $J=J(t)$, $0\le t\le1$, of almost
complex structures on $M$, compatible with $\omega$. By an eventually
cylindrical almost complex structure on $(M,\omega)$, we mean a smooth
1-parameter family $\tilde{J}(s)$, $s\in\mathbb{R}$, of cylindrical
complex structures, which is constant outside a compact subset of
$\mathbb{R}$, modulo translation by $\mathbb{R}$. Denote the set
of eventually cylindrical almost complex structures by $\mathcal{J}$.
Then it carries a natural topology by declaring that a sequence $\{\tilde{J}_{i}\}$
converges if and only if we can replace $\tilde{J}_{i}$ by their
representatives so that every $\tilde{J}_{i}$ is constant outside
a fixed compact set $C$ and $\{\tilde{J}_{i}|_{C}\}$ is convergent
in the $C^{\infty}$ topology. Given an element $J\in\mathcal{J}$,
its limit at $-\infty$ and $\infty$ are well-defined cylindrical
complex structures, which we will denote as $J_{-\infty}$ and $J_{+\infty}$.
Let $\mathcal{J}(J_{-\infty},J_{+\infty})$ be the subspace of $\mathcal{J}$
consisting of $J^{\prime}\in\mathcal{J}$ with $J_{-\infty}^{\prime}=J_{-\infty}$
and $J_{+\infty}^{\prime}=J_{+\infty}$. 

We now define a category $\overline{\mathcal{J}}$ as follows. Its
objects are cylindrical complex structures. For any cylindrical complex
structures $J$ and $J^{\prime}$, the morphism set $\overline{\mathcal{J}}(J,J^{\prime})$
consists of finite nonempty sequences 
\[
(\tilde{J}^{1},\cdots,\tilde{J}^{n})\in\mathcal{J}(J,J_{1})\times\cdots\times\mathcal{J}(J_{n-1},J^{\prime}),
\]
 where $J_{1},\cdots,J_{n-1}$ are cylindrical complex structures,
modulo the equivalence relation 
\[
(\tilde{J}^{1},\cdots,\tilde{J}^{i-1},\tilde{J}^{i},\tilde{J}^{i+1},\cdots,\tilde{J}^{n})\sim(\tilde{J}^{1},\cdots,\tilde{J}^{i-1},\tilde{J}^{i+1},\tilde{J}^{n}),
\]
 whenever $\tilde{J}^{i}$ is a constant path. Then we have a natural
composition map 
\[
\circ\,:\,\overline{\mathcal{J}}(J,J^{\prime})\times\overline{\mathcal{J}}(J^{\prime},J^{\prime\prime})\rightarrow\overline{\mathcal{J}}(J,J^{\prime\prime}),
\]
 defined by concatenation which makes $\overline{\mathcal{J}}$ into
a category. This category can be given a natural topology, which turns
it into a topological category; see Section 3.2 of \cite{eqv-Floer}
for details. 

Note that, since the space $\overline{\mathcal{J}}(J,J^{\prime})$
are weakly contractible, we can also consider continuous multi-parameter
families of elements in $\overline{\mathcal{J}}(J,J^{\prime})$ and
consider them as higher morphisms in the category $\overline{\mathcal{J}}$.
Thus we can define the notion of homotopy coherent diagrams in $\overline{\mathcal{J}}$
as follows.
\begin{defn}
Given a small category $\mathcal{C}$, a homotopy coherent $\mathcal{C}$-diagram
$F$ in $\overline{\mathcal{J}}$ consists of the following data.

(i) For each $x\in\text{ob}(\mathcal{C})$, an object $F(x)$ of $\overline{\mathcal{J}}$,

(ii) For each integer $n\ge1$ and each composable sequence of morphisms
$f_{1},\cdots,f_{n}$ in $\mathcal{C}$, i.e. $f_{i+1}\circ f_{i}$
is defined for each $i$, a continuous family 
\[
F(f_{n},\cdots,f_{1})\,:\,[0,1]^{n-1}\rightarrow\overline{\mathcal{J}}(F(x_{0}),F(x_{1})),
\]
 so that the conditions in Definition 3.3 of \cite{eqv-Floer} are
satisfied.
\end{defn}

The source category $\mathcal{C}$ which we will use frequently is
the groupoid $\mathcal{E}\mathbb{Z}_{2}$, which has two objects $a$
and $b$, and four morphisms, as follows.
\begin{itemize}
\item $\text{Hom}_{\mathcal{C}}(x,x)=\{\text{id}_{x}\}$ for $x=a,b$,
\item $\text{Hom}_{\mathcal{C}}(a,b)=\{\alpha\}$ and $\text{Hom}_{\mathcal{C}}(b,a)=\{\beta\}$.
\end{itemize}

\subsection*{The freed Floer complex}
\begin{defn}
\label{def:jholcurves}Given an morphism $\tilde{J}=(\tilde{J}^{1},\cdots,\tilde{J}^{n})\in\overline{\mathcal{J}}$,
where each $\tilde{J}^{i}\in\mathcal{J}(J_{i-1},J_{i})$ is nonconstant,
and points $x,y\in L_{0}\cap L_{1}$, a $\tilde{J}$-holomorphic disk
from $x$ to $y$ is a sequence 
\[
(v^{0,1},\cdots,v^{0,m_{0}},u^{1},v^{1,1}\cdots,v^{1,m_{1}},u^{2}\cdots,u^{n},v^{n,1},\cdots,v^{n,m_{n}}),
\]
 where $m_{0},\cdots,m_{n}$ are nonnegative integers and the following
conditions are satisfied.
\begin{itemize}
\item Each $v^{i,j}$ is a $J_{i}$-holomorphic Whitney disk with boundary
on $L_{0}$ and $L_{1}$, connecting some points $x^{i,j-1}$ and
$x^{i,j}$ in $L_{0}\cap L_{1}$.
\item Each $u^{i}$ is a $\tilde{J}^{i}$-holomorphic Whitney disk with
boundary on $L_{0}$ and $L_{1}$, connecting some points $x^{i-1}$
and $x^{i}$ in $L_{0}\cap L_{1}$.
\item $x^{i,0}=x^{i}$ for $i\ge1$, $x^{i,m_{i}}=x^{i+1}$ for all $i$,
$x^{0,0}=x$, and $x^{n,m_{n}}=y$.
\end{itemize}
\end{defn}

Given a map $\tilde{J}\,:\,[0,1]^{k}\rightarrow\overline{\mathcal{J}}$,
points $x,y\in L_{0}\cap L_{1}$, and a homotopy class $\phi\in\pi_{2}(x,y)$,
let $\mathcal{M}(x,y;\tilde{J})$ denote the moduli space of pairs
$(p,u)$ where $p\in[0,1]^{k}$ and $u$ is a $\tilde{J}(p)$-holomorphic
disk from $x$ to $y$, and $\mathcal{M}(\phi;\tilde{J})$ denotes
the subspace of disks representing the class $\phi$. Then we have
a splitting, as follows. 
\[
\mathcal{M}(x,y;\tilde{J})=\coprod_{\phi\in\pi_{2}(x,y)}\mathcal{M}(\phi;\tilde{J})
\]
 Since $\tilde{J}$ is a $k$-parameter family, the expected dimension
of $\mathcal{M}(\phi;\tilde{J})$ is given by $\mu(\phi)+k$. When
a homotopy coherent diagram $F\,:\,\mathcal{C}\rightarrow\overline{\mathcal{J}}$
is sufficiently generic, i.e. for any points $x,y\in L_{0}\cap L_{1}$,
a homotopy class $\phi\in\pi_{2}(x,y)$, and a composable sequence
of morphisms $f_{1},\cdots,f_{n}$ in $\mathcal{C}$, the space $\mathcal{M}(\phi;F(f_{n},\cdots,f_{1}))$
is transversely cut out, and its dimension is given by $\mu(\phi)+n-1$.

Now, given a suffciently generic homotopy coherent diagram $F\,:\,\mathcal{C}\rightarrow\overline{\mathcal{J}}$,
for each $a\in\text{ob}(\mathcal{C})$, let $G(a)$ be the Floer chain
complex $(CF(L_{0},L_{1}),\partial_{F(a)})$ with respect to the cylindrical
complex structure $F(a)$. For a composable sequence of morphisms
$f_{1},\cdots,f_{n}$ in $\mathcal{C}$, and a $k$-dimensional face
$\sigma$ of $[0,1]^{n-1}$, we have a $k$-dimensional subfamily:
\[
F(f_{n},\cdots,f_{1})|_{\sigma}\,:\,[0,1]^{k}\rightarrow\overline{\mathcal{J}}.
\]
 So we define:
\[
G(f_{n},\cdots,f_{1})(\sigma\otimes x)=\sum_{y\in L_{0}\cap L_{1}}\sum_{\phi\in\pi_{2}(x,y),\,\mu(\phi)=1-k}\left|\mathcal{M}(\phi;F(f_{n},\cdots,f_{1})|_{\sigma})\right|\cdot y.
\]
 Then $G\,:\,\mathcal{C}\rightarrow\text{Kom}_{\mathbb{F}_{2}}$ turns
out to be a homotopy coherent $\mathcal{C}$-diagram in the ($\infty$-)category
of complexes of $\mathbb{F}_{2}$-vector spaces, in the following
sense.
\begin{defn}
Given a small category $\mathcal{C}$, a homotopy coherent $\mathcal{C}$-diagram
$F$ in $\text{Kom}_{\mathbb{F}_{2}}$ consists of:
\begin{itemize}
\item For each $x\in\text{ob}(\mathcal{C})$, a chain complex $F(x)\in\text{ob}(\text{Kom}_{\mathbb{F}_{2}})$.
\item For each $n\ge1$ and each composable sequence $f_{1},\cdots,f_{n}$
of morphisms in $\mathcal{C}$, a chain map 
\[
G(f_{n},\cdots,f_{1})\,:\,I_{\ast}^{\otimes n-1}\otimes G(x_{0})\rightarrow G(x_{n}),
\]
 where $I_{\ast}=C_{\ast}^{\text{simplicial}}([0,1])$, such that
$G(f_{n},\cdots,f_{1})(t_{1}\otimes\cdots\otimes t_{n-1})$, where
$t_{i}\in I_{\ast}$, is equal to:
\begin{itemize}
\item $G(f_{n},\cdots,f_{2})(\pi(t_{1})\otimes t_{2}\otimes\cdots\otimes t_{n-1})$
if $f_{1}=[0,1]$, where $\pi\,:\,I_{\ast}\rightarrow\mathbb{F}_{2}$
is the map induced the projection $[0,1]\rightarrow\{\text{pt}\}$;
\item $G(f_{n},\cdots,f_{i+1},f_{i-1},\cdots,f_{1})(t_{1}\otimes\cdots\otimes m(t_{i-1}\otimes t_{i})\otimes\cdots\otimes t_{n-1})$
if $f_{i}=[0,1]$ and $1<i<n$, where $m\,:\,I_{\ast}\otimes I_{\ast}\rightarrow I_{\ast}$
is the map induced by the multiplication $[0,1]\times[0,1]\rightarrow[0,1]$,
\item $G(f_{n-1},\cdots,f_{1})(t_{1}\otimes\cdots\otimes t_{n-2}\otimes\pi(t_{n-1}))$
if $f_{n}=[0,1]$,
\item $G(f_{n},\cdots,f_{i+1}\circ f_{i},\cdots,f_{1})(t_{1}\otimes\cdots\otimes t_{i-1}\otimes t_{i+1}\otimes\cdots\otimes t_{n-1})$
if $t_{i}=\{1\}$,
\item $G(f_{n},\cdots,f_{i+1})(t_{i+1}\otimes\cdots\otimes t_{n-1})\circ G(f_{i},\cdots,f_{1})(t_{1}\otimes\cdots\otimes f_{i-1})$
if $t_{i}=\{0\}$.
\end{itemize}
\end{itemize}
\end{defn}

Since $G$ is homotopy coherent, we can consider its homotopy colimit
$\text{hocolim }G$, which is a single chain complex of $\mathbb{F}_{2}$-vector
spaces.
\begin{defn}
When $\mathcal{C}=\mathcal{E}\mathbb{Z}_{2}$ and the homotopy coherent
diagram $F\,:\,\mathcal{E}\mathbb{Z}_{2}\rightarrow\overline{\mathcal{J}}$
is $\mathbb{Z}_{2}$-equivariant and sufficiently generic, the chain
complex $\text{hocolim }G$ is defined as the freed Floer complex,
and denoted as $\widetilde{CF}_{\mathbb{Z}_{2}}(L_{0},L_{1})$. Since
we have a natural $\mathbb{Z}_{2}$-action on the freed Floer complex,
the complex 
\[
CF_{\mathbb{Z}_{2}}(L_{0},L_{1})=\text{Hom}_{\mathbb{F}_{2}[\mathbb{Z}_{2}]}(\widetilde{CF}_{\mathbb{Z}_{2}}(L_{0},L_{1}),\mathbb{F}_{2})
\]
 is defined as the equivariant Floer cochain complex, and its cohomology
\[
HF_{\mathbb{Z}_{2}}(L_{0},L_{1})=H^{\ast}(CF_{\mathbb{Z}_{2}}(L_{0},L_{1}))
\]
 is defined as the equivariant Floer cohomology.
\end{defn}

In the category $\mathcal{E}\mathbb{Z}_{2}$, any composable sequence
of morphisms is either of the form $\alpha_{n}=(\alpha,\beta,\alpha,\cdots)$
or of the form $\beta_{n}=(\beta,\alpha,\beta,\cdots)$. Thus the
elements of the freed Floer chain complex $\widetilde{CF}_{\mathbb{Z}_{2}}(L_{0},L_{1})$
are of the form $\alpha_{n}\otimes x$ or $\beta_{n}\otimes x$ for
$x\in L_{0}\cap L_{1}$. The differential is of the following form.
\begin{align*}
\partial(\alpha_{n}\otimes x) & =\alpha_{n}\otimes(\partial x)+\beta_{n-1}\otimes x+\sum_{i=1}^{n}\alpha_{n-i}\otimes\begin{cases}
G_{\beta,\alpha,\cdots}(x) & \text{if }n-i\text{ is odd}\\
G_{\alpha,\beta,\cdots}(x) & \text{if }n-i\text{ is even}
\end{cases}\\
\partial(\beta_{n}\otimes x) & =\beta_{n}\otimes(\partial x)+\alpha_{n-1}\otimes x+\sum_{i=1}^{n}\beta_{n-i}\otimes\begin{cases}
G_{\alpha,\beta,\cdots}(x) & \text{if }n-i\text{ is odd}\\
G_{\beta,\alpha,\cdots}(x) & \text{if }n-i\text{ is even}
\end{cases}
\end{align*}
 Here, $G_{\alpha,\beta,\cdots}(x)$ and $G_{\beta,\alpha,\cdots}(x)$
can be evaluated by counting holomorphic disks of Maslov index $2+i-n$
from $x$. Note that the holomorphic disks which we are counting here
are the ones defined in \ref{def:jholcurves}.

The $\mathbb{Z}_{2}$-action on the freed Floer chain complex is given
by 
\[
\tau(\alpha_{n}\otimes x)=\beta_{n}\otimes\tau x,
\]
 where $\mathbb{Z}_{2}=\left\langle \tau\right\rangle $. Hence the
elements of the equivariant Floer cochain complex $CF_{\mathbb{Z}_{2}}(L_{0},L_{1})$
are of the form $\theta^{n}\otimes x^{\ast}$, where $\theta$ is
a formal variable and $x^{\ast}$ is the dual of a given Floer generator
$x$, and its differential is given by 
\begin{equation}
d(\theta^{n}\otimes x^{\ast})=\theta^{n}\otimes dx^{\ast}+\theta^{n+1}\otimes\tau^{\ast}x^{\ast}+\sum_{i=1}^{\infty}\theta^{n+i}\otimes(x^{\ast}\circ G_{\alpha,\beta,\cdots}).\label{eq:eqvdiffeqn}
\end{equation}
 We have an action of $\mathbb{F}_{2}[\theta]$ on $CF_{\mathbb{Z}_{2}}(L_{0},L_{1})$,
given as follows.
\[
\theta\cdot(\theta^{n}\otimes x^{\ast})=\theta^{n+1}\otimes\tau^{\ast}x^{\ast}
\]
 Since the differential of $CF_{\mathbb{Z}_{2}}(L_{0},L_{1})$ is
$\theta$-equivariant, we get a natural $\mathbb{F}_{2}[\theta]$-module
structure on $HF_{\mathbb{Z}_{2}}(L_{0},L_{1})$.

The quasi-isomorphism type of $CF_{\mathbb{Z}_{2}}(L_{0},L_{1})$,
and thus the $\mathbb{F}_{2}[\theta]$-module isomorphism type of
$HF_{\mathbb{Z}_{2}}(L_{0},L_{1})$, turns out to be invariant under
the choice of sufficiently generic $\mathbb{Z}_{2}$-equivariant diagrams
in $\overline{\mathcal{J}}$, $\mathbb{Z}_{2}$-equivariant Hamiltonian
isotopies, and non-$\mathbb{Z}_{2}$-equivariant Hamiltonian isotopies
which satisfy the conditions in Proposition 3.25 of \cite{eqv-Floer}.
The proof is given in Proposition 3.23, 3.24, and 3.25 of \cite{eqv-Floer}.

When a generic $\mathbb{Z}_{2}$-equivariant cylindrical complex structure
achieves transversality for all Whitney disks of Maslov index at most
1, then we have the following isomorphism.
\[
CF_{\mathbb{Z}_{2}}(L_{0},L_{1})\simeq CF(L_{0},L_{1})\otimes_{\mathbb{F}_{2}[\mathbb{Z}_{2}]}^{L}\mathbb{F}_{2}.
\]

\subsection*{Diffeomorphism maps}

Given a $\mathbb{Z}_{2}$-equivariant symplectomorphism $\phi:M\rightarrow M^{\prime}$
which sends $\mathbb{Z}_{2}$-invariant Lagrangians $L_{0},L_{1}\subset M^{\prime}$
to $L_{0}^{\prime}=\phi(L_{0})$ and $L_{1}^{\prime}=\phi(L_{1})$,
we have a naturally defined chain isomorphism 
\[
\phi_{\ast}\,:\,\widetilde{CF}_{\mathbb{Z}_{2}}(L_{0},L_{1};J)\rightarrow\widetilde{CF}_{\mathbb{Z}_{2}}(L_{0}^{\prime},L_{1}^{\prime};\phi(J)),
\]
 for homotopy coherent diagrams $J$, when $(M,L_{0},L_{1})$ satisfies
Hypothesis 3.2 of \cite{eqv-Floer}. Hence we get a natural map between
equivariant Floer cohomology.
\[
\phi^{\ast}:HF_{\mathbb{Z}_{2}}(L_{0}^{\prime},L_{1}^{\prime},\phi(J))\xrightarrow{\sim}HF_{\mathbb{Z}_{2}}(L_{0},L_{1},J)
\]

\subsection*{Equivariant triangle maps}

Suppose that $L_{0}$, $L_{0}^{\prime}$, and $L_{1}$ are $\mathbb{Z}_{2}$-invariant
Lagrangians, which are pairwise transverse, and there is a $\tau$-invariant
$\omega$-compatible almost complex structure $J$ on $M$ which achieves
transversality for all moduli spaces of holomorphic disks with boundary
on $(L_{0},L_{0}^{\prime})$ of Maslov index at most 1. Fix a cocycle
$\Theta\in CF(L_{0},L_{0}^{\prime})$, which is $\mathbb{Z}_{2}$-invariant.
As in the proof of Proposition 3.25 of \cite{eqv-Floer}, when $L_{0}\cap L_{0}^{\prime}\cap L_{1}=\emptyset$,
define a category $\mathscr{D}$ as follows. (The case when $L_{0}\cap L_{0}^{\prime}\cap L_{1}$
is nonempty can be done by extending $\mathscr{D}$ to include all
possible Hamiltonian perturbations of $L_{1}$)
\begin{itemize}
\item $\text{ob}(\mathscr{D})=\{0,1\}\times\text{ob}(\overline{\mathcal{J}})$.
\item For $(i,J),\,(i,J^{\prime})\in\{i\}\times\text{ob}(\overline{\mathcal{J}})$,
$\text{Hom}_{\mathscr{D}}((i,J),(i,J^{\prime})=\text{Hom}_{\overline{\mathcal{J}}}(J,J^{\prime})=\mathcal{J}(J,J^{\prime})$.
\item $\text{Hom}_{\mathscr{D}}((0,J),(1,J^{\prime}))$ is the space of
sequences $(\tilde{J}_{-i},\cdots,\tilde{J}_{-1},\tilde{J}_{0},\tilde{J}_{1},\cdots,\tilde{J}_{j})$
where:
\begin{itemize}
\item For $k\ne0$, $\tilde{J}_{k}\in\mathcal{J}(J_{k},J_{k+1})$ for some
sequence $J_{-i},\cdots,J_{j+1}$ of cylindrical complex structures.
\item $J_{-i}=J$ and $J_{j+1}=J^{\prime}$.
\item $\tilde{J}_{0}\in\mathcal{J}_{\triangle}$ agrees with $J_{0}$ on
some cylindrical neighborhood $[n,\infty)\times[0,1]$ of $p_{2}$,
$J_{1}$ on some cylindrical neighborhood of $[n,\infty)\times[0,1]$
of $p_{3}$, and $J$ on some cylindrical neighborhood $[n,\infty)\times[0,1]$
of $p_{1}$.
\end{itemize}
\end{itemize}
Here, $\mathcal{J}_{\triangle}$ is the space of almost complex structures
parametrized by $\triangle$, where $\triangle$ is a disk with three
boundary punctures $p_{1},p_{2},p_{3}$, together with identifications
of a small closed neighborhood of $p_{i}$ with $[n,\infty)\times[0,1]$.
Then, like $\overline{\mathcal{J}}$, the category $\mathscr{D}$
also becomes a topological category.

For families $\tilde{J}\,:\,[0,1]^{\ell}\rightarrow\mathcal{J}_{\triangle}$
and a homotopy class $\phi$ of triangles in $(L_{0},L_{0}^{\prime},L_{1})$,
we can consider the moduli space 
\[
\mathcal{M}(\phi;\tilde{J})=\cup_{t\in[0,1]^{\ell}}\mathcal{M}(\phi;\tilde{J}(t)).
\]
 For generic $\tilde{J}$, the moduli space $\mathcal{M}(\phi;\tilde{J})$
is transversely cut out, and so we can define a map 
\[
G(\tilde{J})\,:\,CF(L_{0},L_{1};J_{0})\rightarrow CF(L_{0}^{\prime},L_{1};J_{1})
\]
 by the following equation. Here, $p\in\Theta$ means that $p$ runs
over all Floer generators appearing in the given cocycle $\Theta$.
\[
G(x)=\sum_{y\in L_{0}^{\prime}\cap L_{1}}\sum_{p\in\Theta}\sum_{\phi\in\pi_{2}(x,y,p),\,\mu(\phi)=-\ell}\left|\mathcal{M}(\phi;\tilde{J})\right|\cdot y.
\]
 From any homotopy coherent diagram $F_{0}\,:\,\mathcal{C}\rightarrow\mathscr{D}$,
we can use the map $G$ to construct a homotopy coherent diagram of
$\mathcal{C}$ in $\text{Kom}_{\mathbb{F}_{2}}$. Now, for sufficiently
generic $\mathbb{Z}_{2}$-equivariant homotopy coherent diagrams $F,F^{\prime}\,:\,\mathcal{E}\mathbb{Z}_{2}\rightarrow\overline{\mathcal{J}}$,
we can extend $(\{0\}\times F)\cup(\{1\}\times F^{\prime})$ to a
sufficiently generic $\mathbb{Z}_{2}$-equivariant homotopy coherent
diagram 
\[
G\,:\,\mathscr{I}\times\mathcal{E}\mathbb{Z}_{2}\rightarrow\mathscr{D}
\]
 where $\mathscr{I}$ has objects 0 and 1, and the only non-identity
morphism in $\mathscr{I}$ is given by $\text{Hom}_{\mathscr{I}}(0,1)=\{f_{0,1}\}$.
Applying Floer theory gives a homotopy coherent diagram 
\[
G^{\prime\prime}\,:\,\mathscr{I}\times\mathcal{E}\mathbb{Z}_{2}\rightarrow\text{Kom}_{\mathbb{F}_{2}}.
\]
 This gives a map $\text{hocolim }G^{\prime\prime}\,:\,\widetilde{CF}_{\mathbb{Z}_{2}}(L_{0},L_{1};F)\rightarrow\widetilde{CF}_{\mathbb{Z}_{2}}(L_{0}^{\prime},L_{1},F^{\prime})$.
Since it is $\mathbb{Z}_{2}$-equivariant, we get the equivariant
triangle map between equivariant Floer cochain complexes:
\[
F_{\Theta}\,:\,CF_{\mathbb{Z}_{2}}(L_{0},L_{1};F)\rightarrow CF_{\mathbb{Z}_{2}}(L_{0}^{\prime},L_{1},F^{\prime}).
\]

\subsection*{Equivariant Floer cohomology of branched double covers of $S^{3}$}

A based link is a pair $(L,p)$ where $L$ is a link in $S^{3}$ and
$p\in L$ is a choice of a basepoint. Given a genus 0 Heegaard surface
$\Sigma\subset S^{3}$, a based link $(L,p)$ is in a bridge position
with respect to $\Sigma$ if, for a Heegaard splitting $S^{3}=H_{a}\cup_{\Sigma}H_{b}$,
the connected arcs $\{a_{i}\}$ and $\{b_{i}\}$, $1\le i\le n$,
given by 
\[
\cup a_{i}=L\cap H_{a},\,\cup b_{i}=L\cap H_{b},
\]
 satisfy the following conditions.
\begin{itemize}
\item There exist disks $D_{a_{i}}$ and $D_{b_{j}}$ such that $a_{i}\subset\partial D_{a_{i}}\subset a_{i}\cup\Sigma$
and $b_{j}\subset\partial D_{b_{j}}\subset b_{j}\cup\Sigma$.
\item The disks $D_{a_{i}}$ and $D_{b_{j}}$ can be chosen to have pairwise
disjoint interiors.
\item $p\in L\cap\Sigma$.
\end{itemize}
If $\partial D_{a_{i}}=a_{i}\cup A_{i}$ and $\partial D_{B_{j}}=b_{j}\cup B_{j}$
where $A_{i}$ and $B_{j}$ are simple arcs on $\Sigma$, we say that
$(\{A_{i}\},\{B_{j}\})$ is the bridge diagram for the based link
$(L,p)$. Given a bridge diagram, by taking the branched double cover
of the whole diagram, with the branching locus given by $L\cap\Sigma$,
and removing the curves which contain the basepoint $p$, gives a
Heegaard diagram $(\tilde{\Sigma},\boldsymbol{\alpha},\boldsymbol{\beta},p)$
together with the covering $\mathbb{Z}_{2}$-action, where the alpha(beta)-curves
are given by the inverse images of the arcs $A_{i}$($B_{i}$).

The $\mathbb{Z}_{2}$-equivariant Floer cohomology theory can be applied
to the symplectic $\mathbb{Z}_{2}$-action on the symmectric power
$(\text{Sym}^{g}(\tilde{\Sigma}-\{p\}),\mathbb{T}_{\alpha},\mathbb{T}_{\beta})$.
It turns out that the $\mathbb{F}_{2}[\theta]$-isomorphism class
of the equivariant Floer cohomology 
\[
\widehat{HF}_{\mathbb{Z}_{2}}(\Sigma(L),p)=HF_{\mathbb{Z}_{2}}(\mathbb{T}_{\alpha},\mathbb{T}_{\beta})
\]
 is an invariant of the isotopy class of $(L,p)$. 

The proof of the invariance uses the fact that any two bridge diagram
of a based link are related by three types of moves: isotopy, handleslide,
and stabilization. An isotopy of bridge diagram $(\{A_{i}\},\{B_{j}\})$
is an isotopy of each arc $A_{i}$ and $B_{j}$, while fixing their
endpoints. A handleslide is a move which replaces $A_{i}$(or $B_{j}$)
by $A_{i}^{\prime}$(or $B_{j}^{\prime})$ with the same endpoints,
when there exists another arc $A_{k}$ such that $A_{i}$ and $A_{i}^{\prime}$
bound a disk $D$ which contains $A_{k}$, and $D$ does not intersect
any other A-arcs(or B-arcs). Finally, a stabilization is a move which
adds an A-arc and a B-arc to an arc $A_{i}$(or $B_{j})$ near one
of its endpoint; see Figure 13 of \cite{eqv-Floer} for details.

An isotopy of an arc induces a Hamiltonian isotopy of an alpha(beta)-curve
on the branched double cover, which induces an isomorphism of the
equivariant Floer cohomology $\widehat{HF}_{\mathbb{Z}_{2}}(\Sigma(L),p)$.
A handleslide also induced an isomorphism of the equivariant Floer
cohomology, by Proposition 3.25 of \cite{eqv-Floer}. A stabilization
can be seen as a combination of a creation of an unknot and a saddle
move, which can be translated as a composition of a stabilization
map, i.e. performing a connected sum with a genus 1 Heegaard diagram
of $S^{3}$, followed by an equivariant triangle map. The proof that
this also induces an isomorphism of equivariant cohomology is given
in the proof of Theorem 1.24 in \cite{eqv-Floer}.
\begin{rem*}
The equivariant triangle map can also be used to construct a cobordism
map in equivariant Floer cohomology, using the construction given
in Lemma 6.10 of \cite{eqv-Floer}. It is constructed by slicing a
given cobordism into basic pieces, i.e. cylinders, births, deaths,
and saddles. Saddles correspond to equivariant triangle maps, births/deaths
correspond to the stabilization/destabilization of the surface $\tilde{\Sigma}$,
and cylinders correspond to isotopy maps. After isotoping a given
bridge diagram as drawn in Figure 4 of \cite{eqv-Floer}, this saddle
map becomes the map induced by the surgery cobordism map between the
ordinary Heegaard Floer homology. We will show in this paper that
this cobordism map is independent of all auxiliary choices, and thus
is well-defined.
\end{rem*}

\section{Weak admissibility and naturality of $\widehat{HF}_{\mathbb{Z}_{2}}$}

Recall that, to define hat-versions of HF-groups and the maps between
them, we need to assure that all diagrams we consider satisfy weak
admissibility. In particular, to define the Floer chain complex, we
need weak admissibility for Heegaard diagrams. We also need admissibilities
for higher polygons; the triangle maps need weakly admissible triple-diagrams
and the proof of its associativity needs weakly admissible quadruple-diagrams.
Counting of higher polygons is not needed.

In this section, we will prove that all Heegaard (double, triple,
quadruple)-diagrms that we will use in this paper will be weakly admissible,
thus allowing us to freely use all hat-flavored aspects of Heegaard
Floer theory. The Heegaard diagrams that we will use are involutive
ones, which we will define as follows.
\begin{defn}
A Heegaard (double-)diagram $(\Sigma,\boldsymbol{\alpha},\boldsymbol{\beta},z)$
is involutive (with respect to an orientation-preserving involution
$\tau$) if $\tau$ fixes $z$ and the alpha- and beta-curves setwise
and $\tau|_{\alpha_{i}}$, $\tau|_{\beta_{j}}$ is orientation-reversing
with two distinct fixed points. 

A Heegaard triple-diagram $(\Sigma,\boldsymbol{\alpha},\boldsymbol{\beta},\boldsymbol{\gamma},z)$
is involutive with respect to $\tau$ if it is a small perturbation
of a diagram $(\Sigma,\boldsymbol{\alpha}_{0},\boldsymbol{\beta}_{0},\boldsymbol{\gamma}_{0},z)$
such that the tuples $(\Sigma,\boldsymbol{\alpha}_{0},\boldsymbol{\beta}_{0},z)$,
$(\Sigma,\boldsymbol{\beta}_{0},\boldsymbol{\gamma}_{0},z)$, $(\Sigma,\boldsymbol{\alpha}_{0},\boldsymbol{\gamma}_{0},z)$
are involutive with respect to $\tau$.

A Heegaard quadruple-diagram is involutive if it is a small perturbation
of a diagram such that the triple-diagrams given by choosing any three
of the curve systems among the given four are involutive with respect
to $\tau$.
\end{defn}

For simplicity, we usually do not specify the action of an involution
$\tau$ in figures, unless necessary. An example of possible local
pictures of Heegaard triple-diagrams and quadruple-diagrams around
the intersection points $\boldsymbol{\alpha}_{0}\cap\boldsymbol{\beta}_{0}\cap\boldsymbol{\gamma}_{0}(\cap\boldsymbol{\delta}_{0})$
are drawn in Figure \ref{fig:4.1}. Note that any diagram given by
a small perturbation can be obtained by permuting the labels of $\alpha,\beta,\cdots$
in the figures.

\begin{figure}[tbph]
\resizebox{.8\textwidth}{!}{\includegraphics{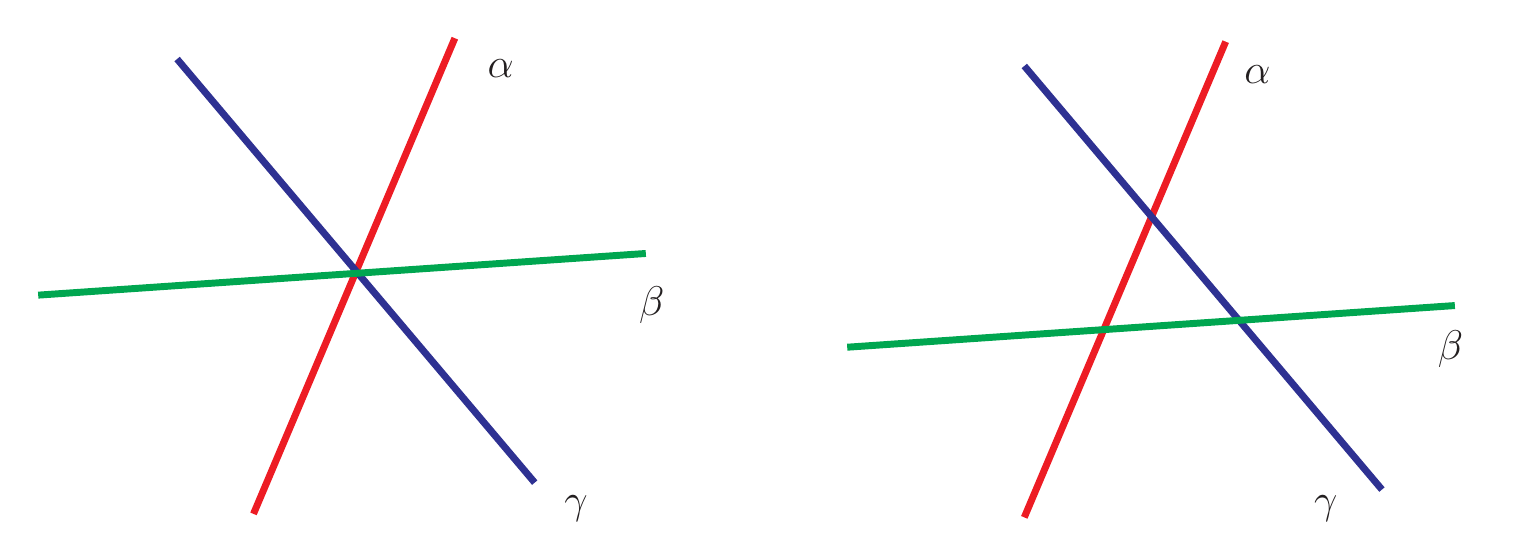}}

\resizebox{.8\textwidth}{!}{\includegraphics{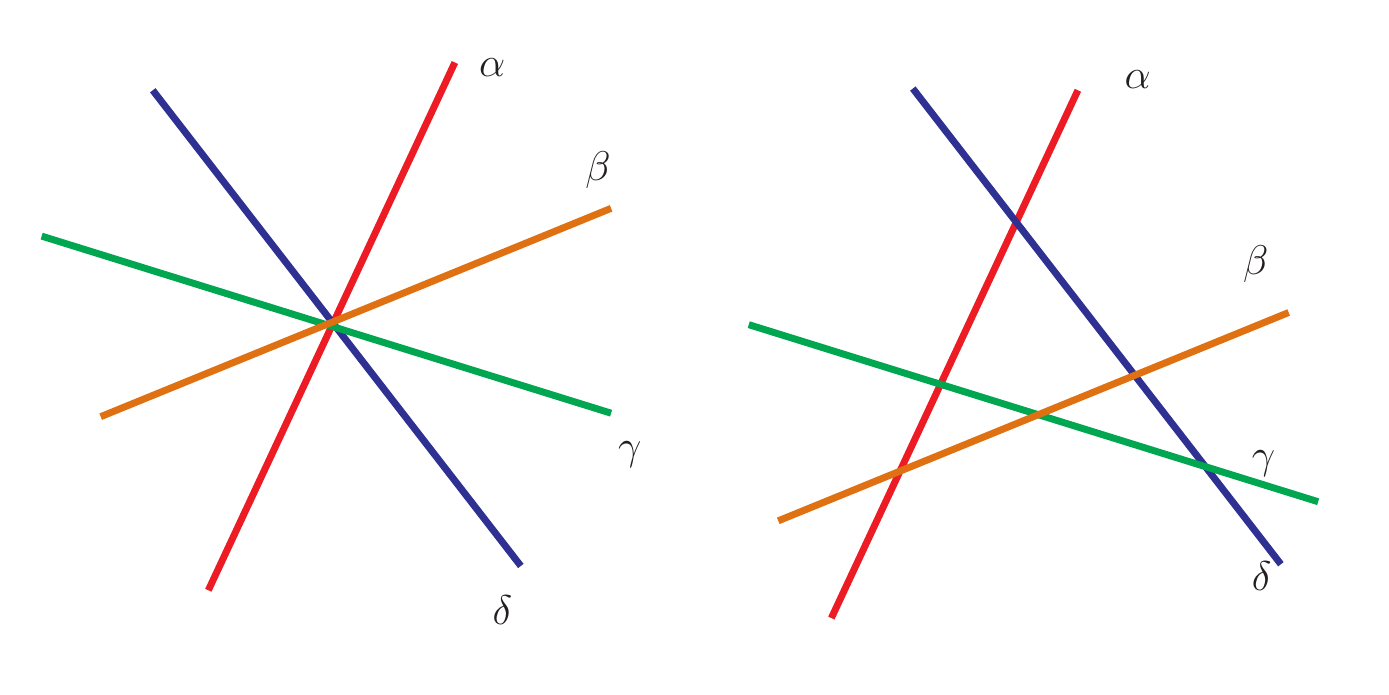}} \caption{\label{fig:4.1}Possible local pictures of nice Heegaard diagrams.
A choice of a perturbation changes the figures on the left to the
figures of nice diagrams, shown on the right.}
\end{figure}
\begin{lem}
Every involutive Heegaard diagram is weakly admissible.
\end{lem}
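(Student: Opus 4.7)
The plan is to prove the sharper statement that $\tau_{\ast}(P)=-P$ for every periodic domain $P$ on an involutive diagram; weak admissibility is then immediate. The two inputs I will use are that $\tau$ is orientation-preserving on $\Sigma$ and that it restricts to an orientation-reversing involution on each alpha- and beta-curve.

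First I would set up the action of $\tau$ on chains. Let $P$ be a periodic domain, so $P$ is a $2$-chain on $\Sigma$ with $\partial P = \sum a_i \alpha_i + \sum b_j \beta_j$ and with multiplicity zero at $z$. Since each $\alpha_i$ and $\beta_j$ is preserved setwise by $\tau$ while the restriction reverses orientation, we have $\tau_{\ast}(\alpha_i)=-\alpha_i$ and $\tau_{\ast}(\beta_j)=-\beta_j$ as $1$-chains. Because $\tau$ is orientation-preserving on $\Sigma$, pushforward commutes with $\partial$, so
\[
\partial(\tau_{\ast}P)\;=\;\tau_{\ast}(\partial P)\;=\;-\partial P,
\]
and hence $P+\tau_{\ast}P$ is a closed $2$-chain on $\Sigma$.

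Next I would appeal to the topology of $\Sigma$: every $2$-cycle on a closed oriented surface is an integer multiple of the fundamental class $[\Sigma]$. Since $\tau(z)=z$, the multiplicity of $P+\tau_{\ast}P$ at $z$ is $0+0=0$, which forces the integer to vanish. Therefore $\tau_{\ast}P=-P$, meaning that the multiplicity of $P$ in a region $R$ equals the negative of its multiplicity in $\tau(R)$. Consequently, if $P$ is nontrivial it must carry both positive and negative multiplicities, which is precisely weak admissibility for the double-diagram case.

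For the triple and quadruple cases, the definition reduces the question to a $\tau$-invariant diagram $(\Sigma,\boldsymbol{\alpha}_0,\boldsymbol{\beta}_0,\boldsymbol{\gamma}_0,z)$ (and analogously with $\boldsymbol{\delta}_0$), together with a small generic perturbation. The same computation, applied to a triply (resp.\ quadruply) periodic domain whose boundary is a sum of alpha-, beta-, gamma- (and delta-)curves, yields $\tau_{\ast}P=-P$ and hence weak admissibility of the unperturbed diagram. A sufficiently small perturbation induces a bijection between periodic domains that preserves multiplicities region-by-region, so weak admissibility persists for the perturbed diagram. I do not foresee a real obstacle: the only delicate point is the sign bookkeeping in $\partial\tau_{\ast}=-\partial$ on the boundary, which follows directly from the two stipulated orientation properties of $\tau$ in the definition of an involutive diagram.
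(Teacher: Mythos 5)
Your argument for the double-diagram case (which is what this lemma states) is correct, and it takes a genuinely different and slicker route than the paper. The paper considers $\tilde D = D + \tau_* D$, notes it is $\tau$-invariant, periodic and nonnegative, and then runs a local propagation argument: starting at a $\tau$-invariant intersection point it derives $a=b$, then spreads equality of coefficients from region to adjacent region until all coefficients of $\tilde D$ are equal, whence $n_z=0$ forces $\tilde D =0$ and gives the contradiction. You instead prove the sharper global fact $\tau_*P = -P$ for \emph{every} periodic domain $P$: since $\tau$ preserves each $\alpha_i$, $\beta_j$ but reverses orientation on it, $\tau_*$ acts by $-1$ on the $\mathbb Z$-span of the curves, so $\partial(\tau_*P)=\tau_*(\partial P)=-\partial P$; hence $P+\tau_*P$ is a $2$-cycle, hence a multiple of $[\Sigma]$, and $n_z=0$ kills it. This is cleaner, and it also makes the paper's intermediate assertion ``$\tilde D$ is nontrivial'' transparent as the other half of the contradiction. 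One further advantage of your approach: the paper's local argument needs a $\tau$-invariant point of $\boldsymbol\alpha\cap\boldsymbol\beta$ to start the propagation, while your global computation does not. Two caveats. First, your argument really does need $\tau$ to preserve \emph{each} $\alpha_i$ and $\beta_j$ individually (rather than merely permuting the collections); this is what the definition says (the restrictions $\tau|_{\alpha_i}$ must make sense), but it is worth stating. Second, your last paragraph addresses the \emph{triple} and \emph{quadruple} diagrams, which are the subsequent lemmas, not this one; there the ``region-by-region bijection under small perturbation'' step is not as innocuous as you suggest, because the unperturbed curves are no longer pairwise transverse and $\tau$ does not act on the perturbed diagram, which is precisely the difficulty the paper works around by choosing the extra coefficient $g'$ at the triple point. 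For the present lemma, however, your proof is correct and complete.
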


\begin{proof}
Suppose that a nontrivial periodic domain $D$ with nonnegative coefficients
is given. If we denote the involution by $\tau$, the domain $\tilde{D}=D+\tau_{\ast}D$
is nontrivial, periodic, has nonnegative coefficients and is $\tau$-invariant.
Thus, in a neighborhood of any $\tau$-invariant point $p\in\alpha_{i}\cap\beta_{j}$,
the domain $\tilde{D}$ should have coefficients as described in Figure
\ref{fig:4.2}.

By periodicity, $2a=2b$, i.e. $a=b$. But then, near an adjacent
intersection point $q\in\alpha_{i}\cap\beta_{k}$, the domain $\tilde{D}$
is given as in Figure \ref{fig:4.3}. By periodicity, $a+c=a+d$,
i.e. $c=d$. Continuing in this manner, we see that if two components
of $\Sigma-\cup\alpha_{i}-\cup\beta_{j}$ share a segment of a beta-curve,
then the coefficients of $\tilde{D}$ for those components are the
same. But by the same argument, we can prove the same for components
sharing a segment of a alpha-curve. This implies that all coefficients
of $\tilde{D}$ should be the same. Since $n_{z}(\tilde{D})=0$, we
deduce that $\tilde{D}=0$. Contradiction.
\end{proof}
\begin{figure}[tbph]
\resizebox{.4\textwidth}{!}{\includegraphics{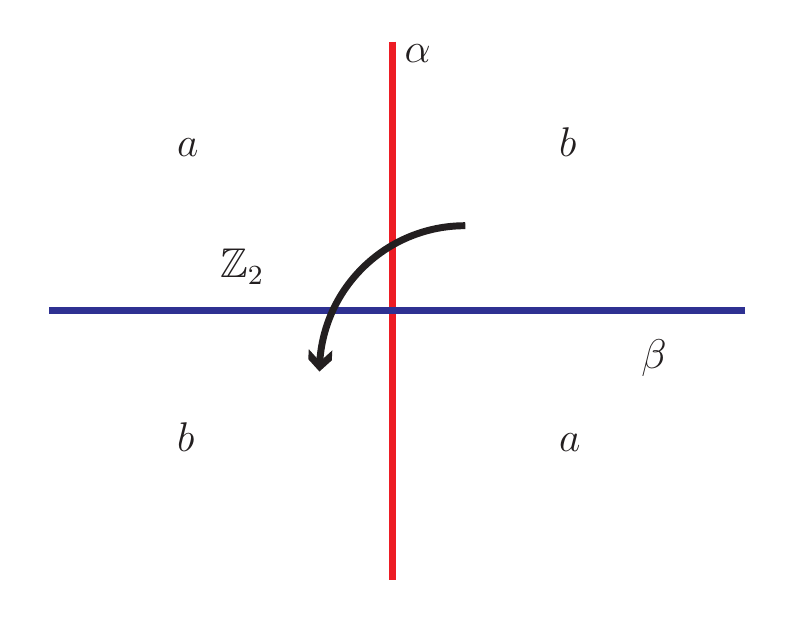}} \caption{\label{fig:4.2}The periodic domain $\tilde{D}$ near an invariant
intersection point.}
\end{figure}
\begin{figure}[tbph]
\resizebox{.4\textwidth}{!}{\includegraphics{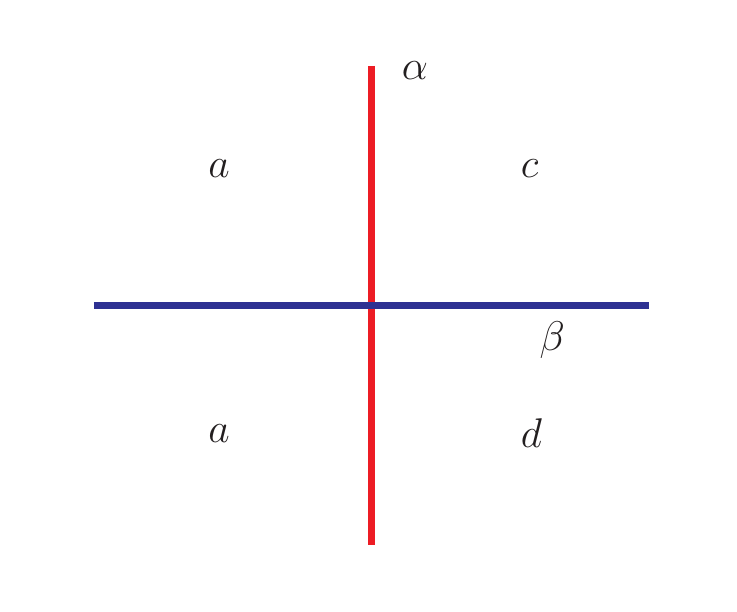}} \caption{\label{fig:4.3}The periodic domain $\tilde{D}$ near a non-invariant
intersection point.}
\end{figure}
\begin{lem}
Every involutive Heegaard triple-diagram $\mathcal{D}=(\Sigma,\boldsymbol{\alpha},\boldsymbol{\beta},\boldsymbol{\gamma},z)$
is weakly admissible.
\end{lem}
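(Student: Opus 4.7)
The plan is to run the symmetrization-and-propagation argument of the previous lemma, now across three curve systems in parallel. Because weak admissibility depends only on the lattice of periodic domains and is preserved under small perturbations of the curves, I reduce the problem to the unperturbed diagram $\mathcal{D}_0 = (\Sigma, \boldsymbol{\alpha}_0, \boldsymbol{\beta}_0, \boldsymbol{\gamma}_0, z)$, on which the involution $\tau$ acts preserving each of the three curve systems setwise.

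Suppose for contradiction that $D$ is a nontrivial triply-periodic domain on $\mathcal{D}_0$ with nonnegative coefficients and $n_z(D) = 0$. Set $\tilde{D} = D + \tau_\ast D$; this is nontrivial, $\tau$-invariant, triply-periodic, nonnegative, and satisfies $n_z(\tilde{D}) = 0$. At any $\tau$-invariant transverse intersection $p$ of two of the three curve families---which exists on every individual curve by involutivity of each pairwise sub-diagram, since the two $\tau$-fixed points on the curve lie at transverse intersections with curves of another color---the local calculation from the previous lemma applies verbatim, as in Figure \ref{fig:4.2}: $\tau$-invariance identifies opposite regions, and constancy of the jump across each of the two curves through $p$ forces all four surrounding coefficients to coincide.

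Propagation then proceeds exactly as in the previous lemma. The constant-jump condition along a curve means that any equality of coefficients on the two sides of the curve established at one intersection persists along the entire curve; at each subsequent transverse crossing with a curve of another color, the periodicity of that transverse curve equates the two remaining regions as well, as in Figure \ref{fig:4.3}. Iterating this through the full intersection graph of the three curve systems shows that every region of $\tilde{D}$ has a single common coefficient, and $n_z(\tilde{D}) = 0$ then forces $\tilde{D} = 0$, contradicting the nontriviality of $\tilde{D}$.

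The main obstacle is confirming that the propagation genuinely reaches every region of $\Sigma$; this reduces to connectedness of the intersection graph formed jointly by the three curve families, together with the existence of a $\tau$-invariant intersection point on every individual curve of every color, both of which are supplied by the involutivity hypothesis applied to each of the three pairwise sub-diagrams. The local calculations at $\alpha\gamma$- and $\beta\gamma$-invariant intersection points are structurally identical to the $\alpha\beta$-case treated in the previous lemma, so no new analytic input is required, and the only care needed is bookkeeping across the richer three-color combinatorics.
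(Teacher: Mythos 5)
Your proposed reduction to the unperturbed diagram $\mathcal{D}_0$ is a genuinely different (and, in principle, cleaner) route than the paper's, which works directly on the perturbed diagram. However, as written, the argument has a real gap at the step you describe as ``applies verbatim.''

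The difficulty is that on $\mathcal{D}_0$ the $\tau$-fixed points on each curve are \emph{triple} intersections, not pairwise ones. Indeed, the two fixed points of $\tau|_{\alpha_{0,i}}$ are a fixed subset of $\alpha_{0,i}$ independent of which pairwise sub-diagram one looks at; involutivity of $(\Sigma,\boldsymbol{\alpha}_0,\boldsymbol{\beta}_0,z)$ places them on $\boldsymbol{\beta}_0$ and involutivity of $(\Sigma,\boldsymbol{\alpha}_0,\boldsymbol{\gamma}_0,z)$ places the very same points on $\boldsymbol{\gamma}_0$, so they lie on all three systems. Hence the local picture at a $\tau$-invariant point of $\mathcal{D}_0$ has six sectors, not the four of Figure~\ref{fig:4.2}, and the ``verbatim'' claim fails. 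The six-sector computation (using $\tau$-invariance $n_i=n_{i+3}$ together with constancy of the jump across each of the three curves) does in fact force all six coefficients to coincide, but this is a different argument that you would need to carry out; you cannot cite the previous lemma's four-region figure. You would similarly need to check how the constant-jump propagation passes through the \emph{non}-invariant triple intersection points of $\mathcal{D}_0$, where the jump condition across one curve does not by itself control all six adjacent sectors.

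More importantly, notice that your reduction to $\mathcal{D}_0$ sidesteps exactly the point the paper spends most of its effort on: on the actual (perturbed) triple-diagram, $\tau$ does not act on the set of regions because of the small triangles, so $\tau_\ast D$ is not a priori defined; the paper constructs it by choosing a coefficient $g'$ on each small triangle and then proves the crucial inequality $g+g'\ge 0$ so that $D+\tau_\ast D$ remains nonnegative, after which the constraints $\tilde a+\tilde b=\tilde c+\tilde d$, etc., at a small triangle are used to equalize coefficients. This is the technical heart of the lemma. Your reduction (which also needs a brief justification --- one should say explicitly that forgetting the small-triangle coefficients of a nonnegative, nontrivial triply-periodic domain on $\mathcal{D}$ yields a nonnegative, nontrivial triply-periodic domain on $\mathcal{D}_0$, and that this is a valid implication) is a legitimate way to avoid the $g+g'\ge 0$ computation, but in exchange it forces you to confront the six-sector local analysis at triple points, which you have not done.
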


\begin{proof}
Suppose that a nontrivial triply-periodic domain $D$ with nonnegative
coefficients in $\mathcal{D}$ is given. If we denote the involution
by $\tau$, $\tau_{\ast}D$ would be well-defined away from the triple
intersections. A typical local picture near a triple intersection
point is drawn in Figure \ref{fig:4.4}.

We denote the coefficients by $a,b,c,d,e,f,g$ as in Figure \ref{fig:4.4},
and claim that there exists an integer $g^{\prime}$ which makes the
domain described in Figure \ref{fig:4.5} achieve periodicity. Define
$g^{\prime}$ as $g^{\prime}=a+c-b$. Then we have 
\[
c+e-d=b+g^{\prime}-a+e-d=g^{\prime}+g-f+f-g=g^{\prime},
\]
 and similarly $a+e-f=g^{\prime}$, so our choice of $g^{\prime}$
makes the domain periodic; denote the resulting periodic domain by
$\tau_{\ast}D$. Then $\tau_{\ast}D$ may not have nonnegative coefficients,
since we do not know whether $g^{\prime}\ge0$ holds. However, we
have 
\begin{align*}
g+g^{\prime} & =g+a+c-b\\
 & =g+f+b-g+d+b-g-b\\
 & =f+d+b-g\\
 & =e+b\ge0.
\end{align*}
 Hence $\tilde{D}=D+\tau_{\ast}D$ is a periodic domain with nonnegative
coefficients. Also, since $a,\cdots,f=0$ implies $g=0$, we see that
$\tilde{D}\ne0$. Now, the local picture of $\tilde{D}$ near triple
intersections is given as in Figure \ref{fig:4.6}. By periodicity,
we get 
\begin{align*}
\tilde{a}+\tilde{b} & =\tilde{c}+\tilde{d},\\
\tilde{a}+\tilde{c} & =\tilde{b}+\tilde{d},\\
\tilde{a}+\tilde{d} & =\tilde{b}+\tilde{c}.
\end{align*}
 Thus $\tilde{a}=\tilde{b}=\tilde{c}=\tilde{d}$. Now, by the argument
used to prove the previous lemma, we see that all coefficients of
$\tilde{D}$ are the same. Since $n_{z}(\tilde{D})=0$, we get $\tilde{D}=0$,
which is a contradiction.
\end{proof}
\begin{figure}[tbph]
\resizebox{.3\textwidth}{!}{\includegraphics{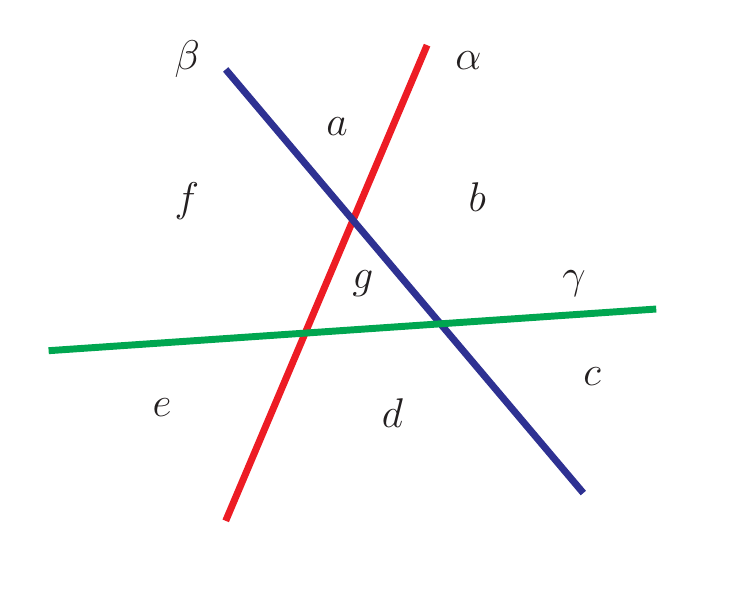}} \caption{\label{fig:4.4}The periodic domain $D$ near a non-invariant intersection
point.}
\end{figure}
 
\begin{figure}[tbph]
\resizebox{.3\textwidth}{!}{\includegraphics{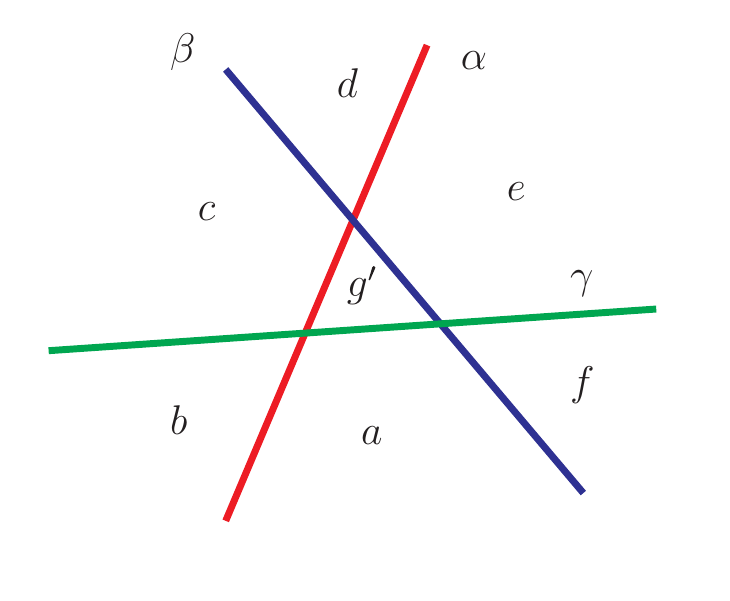}} \caption{\label{fig:4.5}The periodic domain $\tau_{\ast}D$ near a non-invariant
intersection point.}
\end{figure}
\begin{figure}[tbph]
\resizebox{.3\textwidth}{!}{\includegraphics{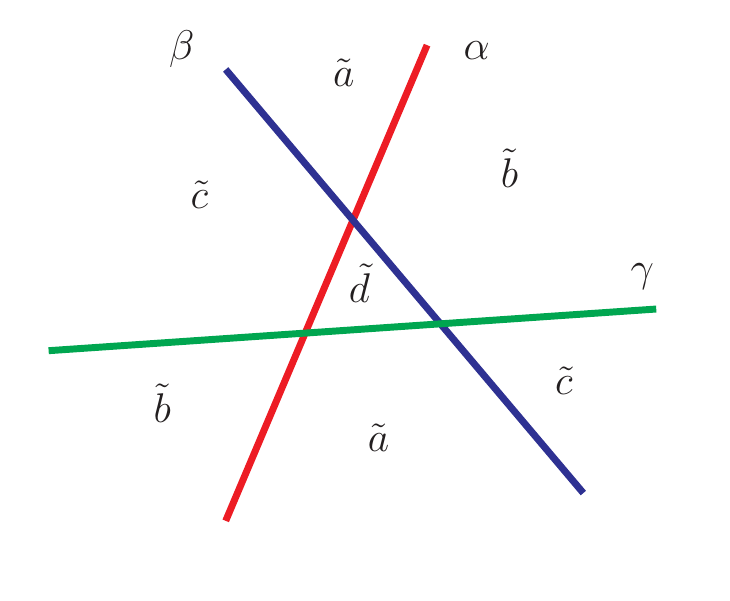}} \caption{\label{fig:4.6}The periodic domain $\tilde{D}$ near a non-invariant
intersection point.}
\end{figure}
 
\begin{lem}
Every involutive Heegaard quadruple-diagram is weakly admissible.
\end{lem}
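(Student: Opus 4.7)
The plan is to follow the same template as the previous two lemmas, now accounting for four curve systems rather than three. Given a nontrivial periodic domain $D$ with nonnegative coefficients in an involutive quadruple-diagram $\mathcal{D}=(\Sigma,\boldsymbol{\alpha},\boldsymbol{\beta},\boldsymbol{\gamma},\boldsymbol{\delta},z)$, obtained as a small perturbation of a diagram $\mathcal{D}_{0}=(\Sigma,\boldsymbol{\alpha}_{0},\boldsymbol{\beta}_{0},\boldsymbol{\gamma}_{0},\boldsymbol{\delta}_{0},z)$ whose four constituent triple-diagrams are all involutive with respect to a common involution $\tau$, I will produce a nonzero $\tau$-invariant periodic domain $\tilde{D}=D+\tau_{\ast}D$ with nonnegative coefficients, and then derive a contradiction exactly as in the double-diagram case: local periodicity forces all coefficients of $\tilde{D}$ to be equal, and $n_{z}(\tilde{D})=0$ then forces that common value to vanish, which conflicts with $\tilde{D}\ne 0$.

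The pushforward $\tau_{\ast}D$ is naturally well-defined on the complement of small neighborhoods of the points of $\boldsymbol{\alpha}_{0}\cap\boldsymbol{\beta}_{0}\cap\boldsymbol{\gamma}_{0}$, $\boldsymbol{\alpha}_{0}\cap\boldsymbol{\beta}_{0}\cap\boldsymbol{\delta}_{0}$, $\boldsymbol{\alpha}_{0}\cap\boldsymbol{\gamma}_{0}\cap\boldsymbol{\delta}_{0}$, $\boldsymbol{\beta}_{0}\cap\boldsymbol{\gamma}_{0}\cap\boldsymbol{\delta}_{0}$, and $\boldsymbol{\alpha}_{0}\cap\boldsymbol{\beta}_{0}\cap\boldsymbol{\gamma}_{0}\cap\boldsymbol{\delta}_{0}$. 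Near each non-invariant triple intersection point of $\mathcal{D}_{0}$ that is not also a quadruple point, the previous lemma already tells us how to extend $\tau_{\ast}D$: assign one integer coefficient to the single inner region produced by the perturbation (as in Figures~\ref{fig:4.4}--\ref{fig:4.5}), and an identity of the shape $g+g^{\prime}=e+b\ge 0$ simultaneously enforces local periodicity and nonnegativity of $D+\tau_{\ast}D$ there.

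The genuinely new case is a non-invariant quadruple intersection point of $\mathcal{D}_{0}$. After the small perturbation promised by the definition of an involutive quadruple-diagram, such a point is replaced by a cluster of $\binom{4}{2}=6$ pairwise intersections bounding several inner regions (schematically shown in Figure~\ref{fig:4.1}). I plan to treat this by the same linear-algebraic idea as in the triple case: label the coefficients of $D$ on the regions of the cluster, write the periodicity relations $\partial(\tau_{\ast}D)=0$ at each of the six pairwise intersections as a linear system in the still-undetermined coefficients of $\tau_{\ast}D$ on the inner regions, and solve this system over $\mathbb{Z}$. Because each of the four sub-triple-diagrams of $\mathcal{D}_{0}$ is involutive, any three-curve restriction of the cluster is governed exactly by the computation of the previous lemma, so the inner-region coefficients are uniquely determined by the outer ones; each inner coefficient then satisfies an analogue of $g+g^{\prime}=e+b$, expressing it as a nonnegative combination of values of $D$ on outer regions of the cluster (which in turn coincide, via $\tau$, with nonnegative values of $D$ elsewhere). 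This yields $D+\tau_{\ast}D\ge 0$ throughout the cluster.

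With $\tilde{D}=D+\tau_{\ast}D$ constructed, the endgame is identical to the previous two lemmas: local periodicity across each arc of any curve forces adjacent coefficients of $\tilde{D}$ to agree, so every coefficient equals a single integer $k\ge 0$; the condition $n_{z}(\tilde{D})=0$ then forces $k=0$, i.e.\ $\tilde{D}=0$. Nonvanishing of $\tilde{D}$ follows because $D\ne 0$ must have a strictly positive coefficient in some outer region (the local rigidity already established at triple points prevents all outer coefficients from vanishing while some inner one does not), and $\tau_{\ast}D$ is nonnegative on outer regions, so $\tilde{D}$ is strictly positive there. I expect the main obstacle to be the bookkeeping of the inner regions in the quadruple cluster---their exact number and adjacency pattern depends on the chosen perturbation---and verifying the quadruple-case analogue of $g+g^{\prime}=e+b\ge 0$ on each inner region; this is finite, local case analysis rather than a conceptual hurdle.
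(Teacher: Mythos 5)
Your overall approach matches the paper's: construct $\tau_{*}D$ over the perturbed quadruple cluster, form $\tilde D=D+\tau_{*}D$, and derive a contradiction exactly as in the double-diagram case. However, the step where you determine the inner-region coefficients of $\tau_{*}D$ does not work as stated. You claim that ``any three-curve restriction of the cluster is governed exactly by the computation of the previous lemma'' because the sub-triple-diagrams of $\mathcal{D}_0$ are involutive, but this conflates two different local pictures: deleting one curve system from the quadruple cluster of Figure~\ref{fig:4.7} does not produce the triple-cluster configuration of Figure~\ref{fig:4.4}, since the region structure is different. The triple-case computation therefore cannot be invoked to determine the quadruple cluster's inner coefficients.

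What actually works---and what the paper does---is to solve the linear system from the six pairwise-intersection periodicity equations directly. Three of these determine the inner coefficients $x,y,z$ in terms of the outer ones, and the remaining three reduce to the consistency condition $a+b=c+d=e+f=g+h$ (the paper's ``filling condition''). Since $D$ is periodic, its outer coefficients satisfy the filling condition; and because the filling condition is invariant under the swaps $a\leftrightarrow b$, $c\leftrightarrow d$, $e\leftrightarrow f$, $g\leftrightarrow h$, the $\tau$-transformed outer coefficients also admit a (unique) periodic completion, which defines $\tau_{*}D$. From there the filling condition forces every coefficient of $\tilde D$ near the cluster to equal $a+b$, so the local-rigidity propagation goes through; your observation that nontriviality of $\tilde D$ follows from the outer regions is correct, and the separate verification of $\tilde D\ge0$ on inner regions, while true, is dispensable. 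In short: the strategy is right, but the inner-coefficient step must be carried out via the explicit filling condition, not by a purported reduction to the triple case.
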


\begin{proof}
We continue to use the above approach and start from Figure \ref{fig:4.7}
for a nontrivial quadruply-periodic domain $D$ with nonnegative coefficients.
Suppose that we are given $a,b,c,d,e,f,g,h$ and try to find $x,y,z$
so that the resulting domain becomes periodic. If such $x,y,z$ exists,
then we must have 
\begin{align*}
y & =b+h-f,\\
z & =b+g-d,\\
x & =c+e-a.
\end{align*}
 The remaining three equations are 
\begin{align*}
c+y-h-x & =0,\\
x+g-e-z & =0,\\
b+x-y-z & =0.
\end{align*}
Using the first three set of equations, we get the followng.
\begin{align*}
0=c+y-h-x & =c+b+h-f-h-c-e+a\\
 & =b-f-e+a,\\
0=x+g-e-z & =c+e-a+g-e-b-g+d\\
 & =c+d-a-b,\\
0=b+x-y-z & =b+c+e-a-b-h+f-b-g+d\\
 & =c+d+e+f-a-b-g-h.
\end{align*}
 Hence we get 
\[
a+b=c+d=e+f=g+h,
\]
 which we will call as the filling condition. Now, if a given $a,b,c,d,e,f,g,h$
satisifes the filling condition, we can reverse the above argument
to deduce that there exists a unique choice of $x,y,z$ which makes
the resulting domain periodic.

Here we notice that the filling condition is invariant respect to
the change 
\[
a\leftrightarrow b,c\leftrightarrow d,e\leftrightarrow f,g\leftrightarrow h.
\]
 This implies that there exists a unique choice of integers $x^{\prime},y^{\prime},z^{\prime}$
making the domain described in Figure \ref{fig:4.8} periodic, where
coefficients for all other domains are transformed by the involution
$\tau$, whch makes the given quadruple-diagram nice. Denote the resulting
domain by $\tau_{\ast}D$. Then $\tilde{D}=D+\tau_{\ast}D$ has the
local picture as in Figure \ref{fig:4.9}, near any $\tau$-invariant
intersection point, by the filling condition. Note that we obviously
have $\tilde{D}\ne0$. Now, by the argument used for Heegaard (double)
diagrams, we deduce that all coefficients of $\tilde{D}$ are the
same. Since $n_{z}(\tilde{D})=0$, we must have $\tilde{D}=0$, which
is a contradiction.
\end{proof}
\begin{figure}[tbph]
\resizebox{.3\textwidth}{!}{\includegraphics{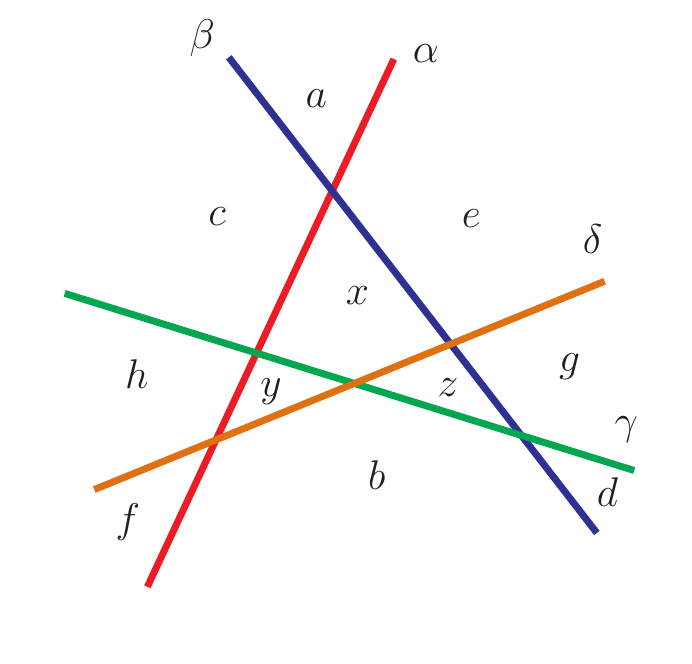}} \caption{\label{fig:4.7}The periodic domain $D$ near a non-invariant intersection
point.}
\end{figure}
 
\begin{figure}[tbph]
\resizebox{.3\textwidth}{!}{\includegraphics{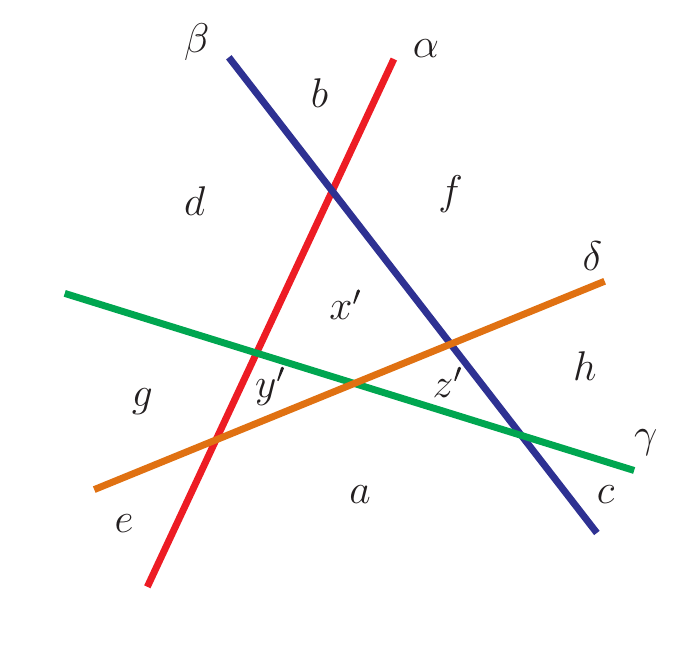}} \caption{\label{fig:4.8}The periodic domain $D$ near a non-invariant intersection
point.}
\end{figure}
 
\begin{figure}[tbph]
\resizebox{.3\textwidth}{!}{\includegraphics{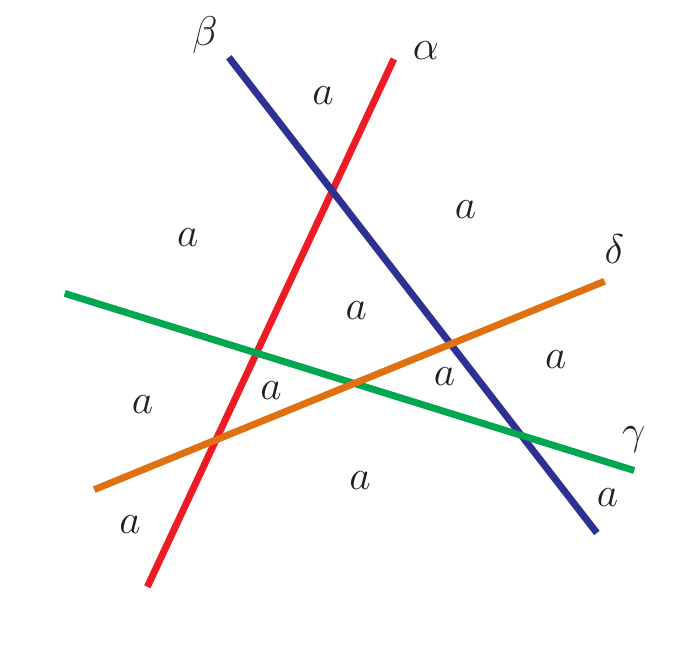}} \caption{\label{fig:4.9}The periodic domain $D$ near a non-invariant intersection
point.}
\end{figure}

By the above results, we see that we can now freely talk about counting
holomorphic disks, triangles, and squares in involutive diagrams.
Proposition 3.25 of \cite{eqv-Floer} proves that counting triangles
of Maslov indices at most zero gives a map between equivariant Floer
(co)homologies. 

\section{The equivariant contact class}

Choose a system of pairwise disjoint simple arcs $a_{i}^{0}$ in $D$,
where $a_{i}^{0}$ connects $p_{i}$ to a boundary point. Pick a point
$p_{1}$ among $p_{1},\cdots,p_{n}$ and regard it as a basepoint;
$z:=p_{1}$ (such a system is called a \textbf{half-arc basis}). The
arcs $a_{i}^{0}$ lift to nonseparating smooth simple arcs $a_{i}$
in $S$, which pass through $p_{i}$ and connect two points in $\partial S$.
We claim that $\{a_{i}\}_{i\ne1}$ is an arc basis on the surface
$S$, so that the data $(S,\{a_{i}\}_{i\ne1},\tilde{h},z=p_{1})$
defines an open book diagram of $(M_{B},\xi)$ in the sense of \cite{HKM-HF},
which is invariant under the covering transformation. To prove this,
recall that a pairwise disjoint system of simple arcs $\{a_{i}\}\subset S$
is called an arc basis if the two endpoints of each $a_{i}$ lie on
$\partial S$ and $S-\cup a_{i}$ is a disk. Since each $a_{i}$ is
a lift of $a_{i}^{0}$ and $D-\cup_{i\ne1}a_{i}^{0}$ is a disk with
one branching point $p_{1}$ in its interior, its inverse image $S-\cup_{i\ne1}a_{i}$
is the double cover of a disk branched along an interior point, which
is a disk. This proves our claim.
\begin{example*}
Suppose that we are given a disk with four marked points $p_{1},\cdots,p_{4}$
in its interior and a half-arc basis $\{a_{2}^{0},a_{3}^{0},a_{4}^{0}\}$
given as in Figure \ref{fig:5.1}.
\begin{figure}[tbph]
\resizebox{.4\textwidth}{!}{\includegraphics{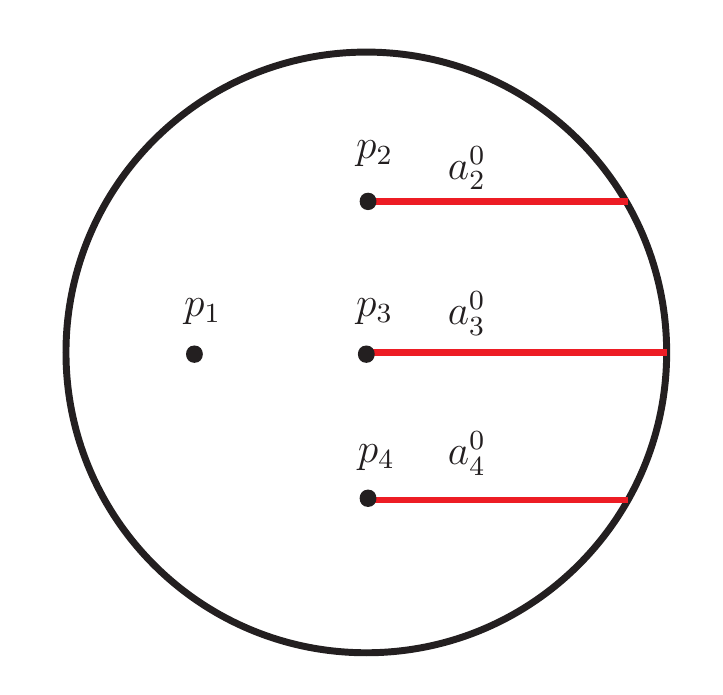}} \caption{\label{fig:5.1}A disk with four marked points and a half-arc basis}
\end{figure}

After taking the branched double cover, we get the twice-puncture
torus with three arcs $a_{2},a_{3},a_{4}$. From Figure \ref{fig:5.2},
We see that $S-(a_{2}\cup a_{3}\cup a_{4})$ is a disk, i.e. $\{a_{2},a_{3},a_{4}\}$
gives an arc basis on $S$.
\begin{figure}[tbph]
\resizebox{.8\textwidth}{!}{\includegraphics{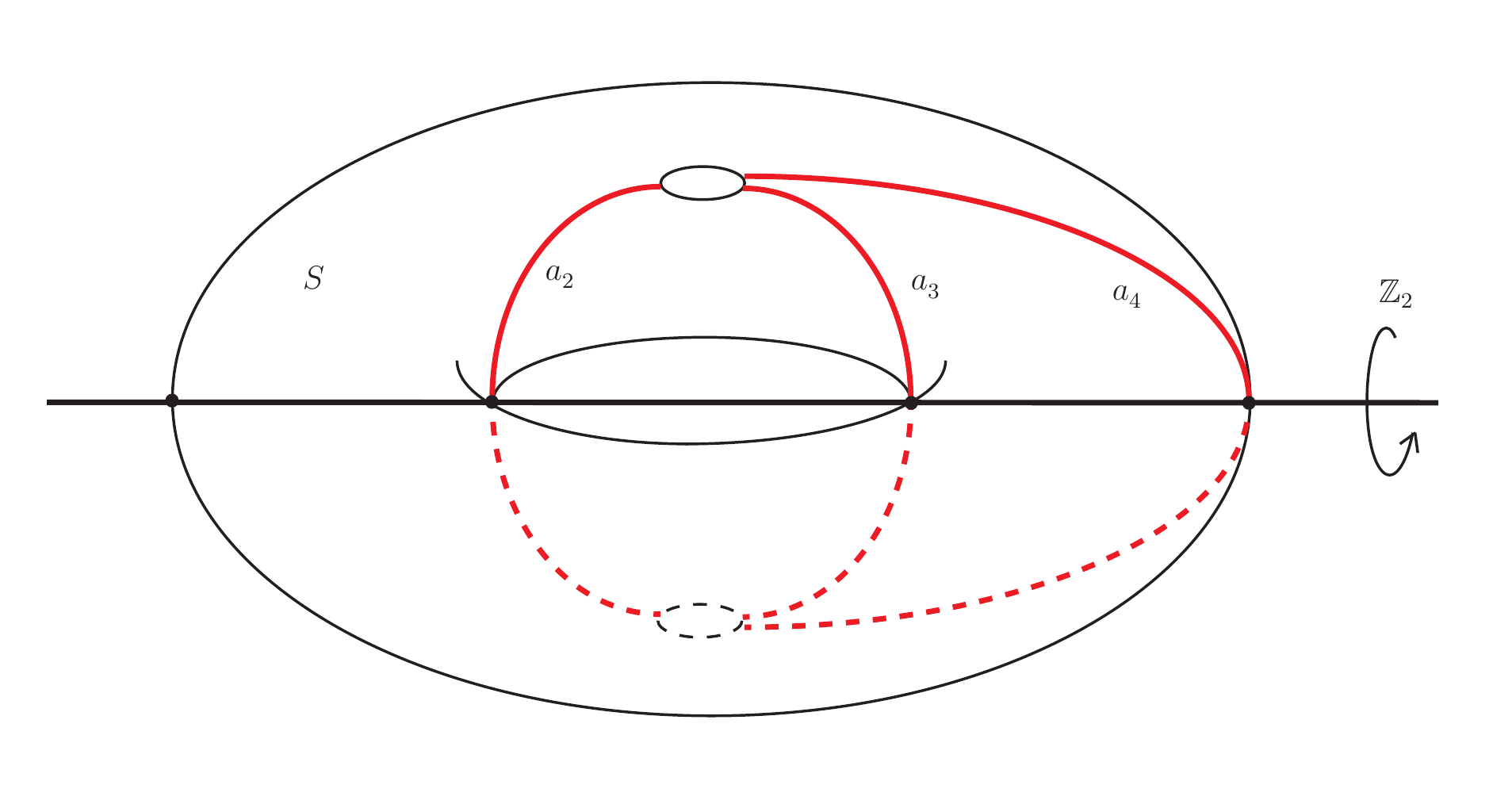}} \caption{\label{fig:5.2}The branched double cover.}
\end{figure}
 
\end{example*}
Thus, if we define the following $\alpha$- and $\beta$-curves on
the Heegaard surface 
\[
\Sigma=(S\times\{0,1\})/\partial S\times\{0,1\},
\]
 we get a $\mathbb{Z}_{2}$-invariant Heegaard diagram of $M_{B}$:
\begin{eqnarray*}
\alpha_{i} & = & a_{i}\cup a_{i},\\
\beta_{i} & = & b_{i}\cup\tilde{h}(b_{i}),
\end{eqnarray*}
 where $b_{i}$ is a slight perturbation of $a_{i}$ in positive direction
along an orientation of $\partial S$ and $|a_{i}\cap b_{i}|=1$.
For simplicity, we denote the sets of $\alpha_{i}$ and $\beta_{i}$
as $\boldsymbol{\alpha}$ and $\boldsymbol{\beta}$. If we denote
the half-arc basis which we have started with by $\mathcal{A}$, then
the contact element $EH(\xi_{K},\mathcal{A})$ is defined as the element
$\{p_{1},\cdots,p_{n}\}$ in the Heegaard Floer cochain complex of
$M_{B}$, as in section 3.1 of \cite{HKM-HF}.
\[
EH(\xi_{K},\mathcal{A})\in CF^{\ast}(\Sigma,\boldsymbol{\alpha},\boldsymbol{\beta},z)\simeq CF^{\ast}(M_{B}),
\]
 which is a $\mathbb{Z}_{2}$-invariant cocycle. This element induces
the following element in the equivariant Floer cochain complex
\[
EH_{\mathbb{Z}_{2}}(\xi_{K},\mathcal{A})=\theta^{0}\otimes EH(\xi_{K})\in\widehat{CF}_{\mathbb{Z}_{2}}(\Sigma,\boldsymbol{\alpha},\boldsymbol{\beta},z).
\]
 When the choice of a half-arc basis $\mathcal{A}$ is not important,
we will drop $\mathcal{A}$ and write $EH_{\mathbb{Z}_{2}}(\xi_{K})$
for simplicity.
\begin{rem*}
As we have seen above, given a half-arc basis $\{a_{i}^{0}\}$ and
a monodromy $h$ of $D^{2}$ which fixes the points $p_{i}$, we can
take its branched double cover along the points $p_{i}$ to get an
arc basis $\{a_{i}\}$ in the open book diagram $(S,\tilde{h})$,
and applying the Honda-Kazez-Mati\'{c} construction to it gives a
$\mathbb{Z}_{2}$-invariant Heegaard diagram. Now, if we apply the
Honda-Kazez-Mati\'{c} construction directly to the given half-arc
basis, what we get is a bridge diagram of a link in $S^{3}$, drawn
on a genus 0 Heegaard surface, as follows.
\begin{align*}
S^{2}\simeq\Sigma= & (D^{2}\times\{0,1\})/(\partial D^{2}\times\{0,1\}),\\
A_{i} & =a_{i}^{0}\cup a_{i}^{0},\\
B_{i} & =b_{i}^{0}\cup\tilde{h}(b_{i}^{0}).
\end{align*}
 Here, $b_{i}^{0}$ is a slight perturbation of $a_{i}^{0}$ along
the positive direction of $\partial D^{2}$, so that $a_{i}^{0}$
and $b_{i}^{0}$ intersect only at the endpoint of $a_{i}^{0}$ which
lies in the interior of $D^{2}$. Then, taking its branched double
cover along the set $\{p_{1},p_{2},\cdots\}\times\{0,1\}$ also gives
a $\mathbb{Z}_{2}$-invariant Heegaard diagram. The two Heegaard diagrams
we get are identical. To summarize, we have a following commutative
diagram of objects which we consider in this paper.
\[
\xymatrix{\text{half-arc diagram}\ar[rrr]^{\text{branched double cover}}\ar[d]_{\text{HKM construction}} &  &  & \text{arc diagram}\ar[d]^{\text{HKM construction}}\\
\text{bridge diagram}\ar[rrr]^{\text{branched double cover}} &  &  & \text{Heegaard diagram}
}
\]
\end{rem*}
Now we argue that, for a generic almost complex structure $\mathfrak{j}$
on $S^{2}$, the symmetric product of the lifted structure $\tilde{\mathfrak{j}}$
on $\Sigma$ achieves equivariant transversality.
\begin{thm}
\label{thm:eqvtrans}For a generic 1-parameter family of almost complex
structures $\mathfrak{j}$ on $S^{2}$, the $\mathbb{Z}_{2}$-equivariant
cylindrical complex structure $\text{Sym}^{g}(\tilde{\mathfrak{j}})$
on $\text{Sym}^{g}(\Sigma)$, where $g$ is the genus of $\Sigma$,
achieves equivariant transversality, in the sense of \cite{eqv-Floer}.
\end{thm}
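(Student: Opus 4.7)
The plan is to apply Lipshitz's cylindrical reformulation and exploit the fact that the quotient $\tilde\Sigma/\tau \cong S^2$ is a simple target. Recall that a $\text{Sym}^g(\tilde{\mathfrak{j}})$-holomorphic disk in $\text{Sym}^g(\tilde\Sigma-\{p\})$ with boundary on $\mathbb{T}_\alpha \cup \mathbb{T}_\beta$ corresponds to a $(j_S, \tilde{\mathfrak{j}})$-holomorphic map $u : (S,\partial S)\to \tilde\Sigma\times [0,1]\times\mathbb{R}$ from some Riemann surface $S$ with appropriate boundary conditions on $\boldsymbol{\alpha}$ and $\boldsymbol{\beta}$. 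The covering involution $\tau$ on $\tilde\Sigma$ acts on this moduli space by $u\mapsto (\tau\times\text{id})\circ u$. I would handle non-equivariant disks (those not fixed by this action) and $\tau$-equivariant disks separately, and show that a generic 1-parameter family $\mathfrak{j}$ on $S^2$ produces transversality for both classes.

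For non-equivariant disks, the orbit under the $\tau$-action consists of two distinct curves $\{u,\tau\circ u\}$, and transversality for one is automatically equivalent to transversality for the other. A Sard--Smale argument restricted to the space of $\tau$-equivariant cylindrical complex structures (i.e., those lifted from $S^2$) and performed on $S^2$ downstairs yields generic transversality, exactly as in Lipshitz's original proof but carried out in a $\tau$-equivariant framework. For $\tau$-equivariant disks, a $\tau$-equivariant map $u$ descends to a $\mathfrak{j}$-holomorphic map $\bar u : \bar S\to S^2\times [0,1]\times\mathbb{R}$ on the quotient. The linearized Cauchy--Riemann operator $D_u\bar\partial$ splits as $D^+\oplus D^-$ according to the $\tau$-action: $D^+$ is canonically identified with the downstairs linearization $D_{\bar u}\bar\partial$, which is surjective for generic $\mathfrak{j}$ on $S^2$ by the standard Lipshitz transversality argument applied in the genus-$0$ downstairs setting, whereas $D^-$ acts on sections of the $(-1)$-eigenbundle of $\tau$, which may be described as a line bundle on $\bar S$ twisted by the branching divisor of the double cover $\tilde\Sigma\to S^2$.

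The hard part will be establishing surjectivity of the $\tau$-anti-invariant operator $D^-$, since all allowed perturbations of $\tilde{\mathfrak{j}}$ (coming from perturbing $\mathfrak{j}$ downstairs) are themselves $\tau$-invariant and thus act trivially on the $-$ part at first glance. The plan is to resolve this via a Riemann--Roch computation on $\bar S$ combined with automatic transversality for holomorphic maps into a genus-$0$ downstairs target: for disks of Maslov index at most $1$, the degree of the twisted line bundle appearing in $D^-$ is forced by the index formula to make the corresponding $H^{0,1}$ vanish, which yields surjectivity of $D^-$ for free. The crucial input here is the genus-$0$ nature of $S^2$, which allows us to apply the positivity-type arguments available in low-genus targets (as in the Sarkar--Wang nice-diagram framework). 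Combining this with the generic transversality in the non-equivariant case then establishes equivariant transversality in the sense of \cite{eqv-Floer}.
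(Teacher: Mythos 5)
Your proof proposal misses the single structural observation that makes the paper's argument short, and as a result it attempts to solve a harder problem than the one actually posed. The paper's proof notes that the $\mathbb{Z}_2$-invariant points of $\mathbb{T}_\alpha\cap\mathbb{T}_\beta$ sit inside $0$-dimensional components of the fixed locus $(\text{Sym}^g(\Sigma))^{\text{inv}}$ (citing Section 6.1 of \cite{eqv-Floer}). Consequently \emph{no nonconstant} holomorphic Whitney disk between generators can be contained in the fixed locus, i.e.\ there are no nonconstant $\text{Sym}^g(\tau)$-invariant disks. This reduces the whole problem to the case your ``non-equivariant disks'' handle, and the Khovanov--Seidel argument (Proposition 5.13 of \cite{quiver-Floer}, as adapted in the proof of Corollary 1.12 in \cite{eqv-Floer}) then gives transversality for \emph{all} homotopy classes using $\tau$-invariant perturbations, precisely because every relevant curve passes through a point where the $\tau$-action is free and one can vary the almost complex structure independently near that point and its $\tau$-image.

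Your treatment of the $\tau$-equivariant disks is therefore addressing a vacuous case, and it does so with an argument that is both unnecessary and incomplete. The $D^+\oplus D^-$ decomposition of the linearized operator, the identification of $D^-$ with a twisted operator on the quotient, and the Riemann--Roch / automatic-transversality claim that ``$H^{0,1}$ vanishes for free'' at Maslov index $\le 1$ are asserted as a ``plan'' without the index computation that would justify the vanishing; in general twisted $\bar\partial$-operators on disks with totally real boundary conditions are not automatically surjective, and nothing in the sketch pins down the degree of the relevant line bundle in terms of the branching divisor and the Maslov index. Separately, the ``non-equivariant'' half also contains a confusion: a curve that is not $\tau$-equivariant does not descend to $S^2$, so there is no Sard--Smale argument ``performed on $S^2$ downstairs'' for such curves. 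The correct formulation is a Sard--Smale argument on the universal moduli space over $\tilde\Sigma$ using $\tau$-invariant perturbations of the cylindrical almost complex structure, with somewhere-injectivity away from the fixed locus. In short, the gap is the absence of the observation that $(\mathbb{T}_\alpha\cap\mathbb{T}_\beta)^{\text{inv}}$ lies in isolated fixed components, which removes the entire $\tau$-equivariant case from consideration.
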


\begin{proof}
Since the $\mathbb{Z}_{2}$-invariant locus $(\mathbb{T}_{\alpha}\cap\mathbb{T}_{\beta})^{inv}$
consists of 0-dimensional components of $(\text{Sym}^{g}(\Sigma))^{inv}$
(see Section 6.1 of \cite{eqv-Floer} for details), for any choice
of an almost complex structure $J$ on $\text{Sym}^{g}(\Sigma)$,
any $J$-holomorphic disk connecting two points in $\mathbb{T}_{\alpha}\cap\mathbb{T}_{\beta}$
is not completely contained in $(\text{Sym}^{g}(\Sigma))^{inv}$.
Thus, as in the proof of Proposition 5.13 in \cite{quiver-Floer},
the argument used in the proof of Corollary 1.12 in \cite{eqv-Floer}
actually gives transversality for all homotopy classes of Whitney
disks in this case.
\end{proof}
The above theorem tells us that we only have to work with $\mathbb{Z}_{2}$-invariant
almost complex structures of the form $\text{Sym}^{g}(\tilde{\mathfrak{j}})$.
For such almost complex structures, the argument in Section 3.1 of
\cite{HKM-HF} tells us that there are no holomorphic disks going
towards $EH(\xi_{K})$. Thus, if we denote the generator of $\mathbb{Z}_{2}$
as $\tau$, the higher degree terms in the formula \ref{eq:eqvdiffeqn}
vanishes, except for the term $\theta^{1}\otimes EH(\xi_{K})$ which
comes from the constant disk of Maslov index 0. So the following equality
holds.
\[
d_{\mathbb{Z}_{2}}(EH_{\mathbb{Z}_{2}}(\xi_{K}))=\theta^{0}\otimes d(EH(\xi_{K}))+\theta^{1}\otimes(EH(\xi_{K})+\tau^{\ast}EH(\xi_{K}))=0.
\]
 Hence $EH_{\mathbb{Z}_{2}}(\xi_{K})$ is a cocycle, i.e. defines
a cohomology class 
\[
c_{\mathbb{Z}_{2}}(\xi_{K}):=[EH_{\mathbb{Z}_{2}}(\xi_{K})]\in\widehat{HF}_{\mathbb{Z}_{2}}(\Sigma(K)).
\]
\begin{defn}
Given a half-arc basis $\{a_{i}^{0}\}_{i\ne1}$ on a disk $D$, suppose
that an (half-)arc $b$ starting at $p_{i}$ and ending in $\partial D$
satisfies the property that there exits a unique $j\ne i,1$ such
that $p_{j}$ is contained in the region bounded by $\partial D$,
$a_{i}^{0}$, and $b$. If $a_{j}^{0}$ is also contained in that
region, we say that $\{a_{k}^{0}\}_{k\ne1,i}\cup\{b\}$ is obtained
by performing a half-arc slide of $a_{i}^{0}$ along $a_{j}^{0}$.
\end{defn}

\begin{figure}[tbph]
\resizebox{.4\textwidth}{!}{\includegraphics{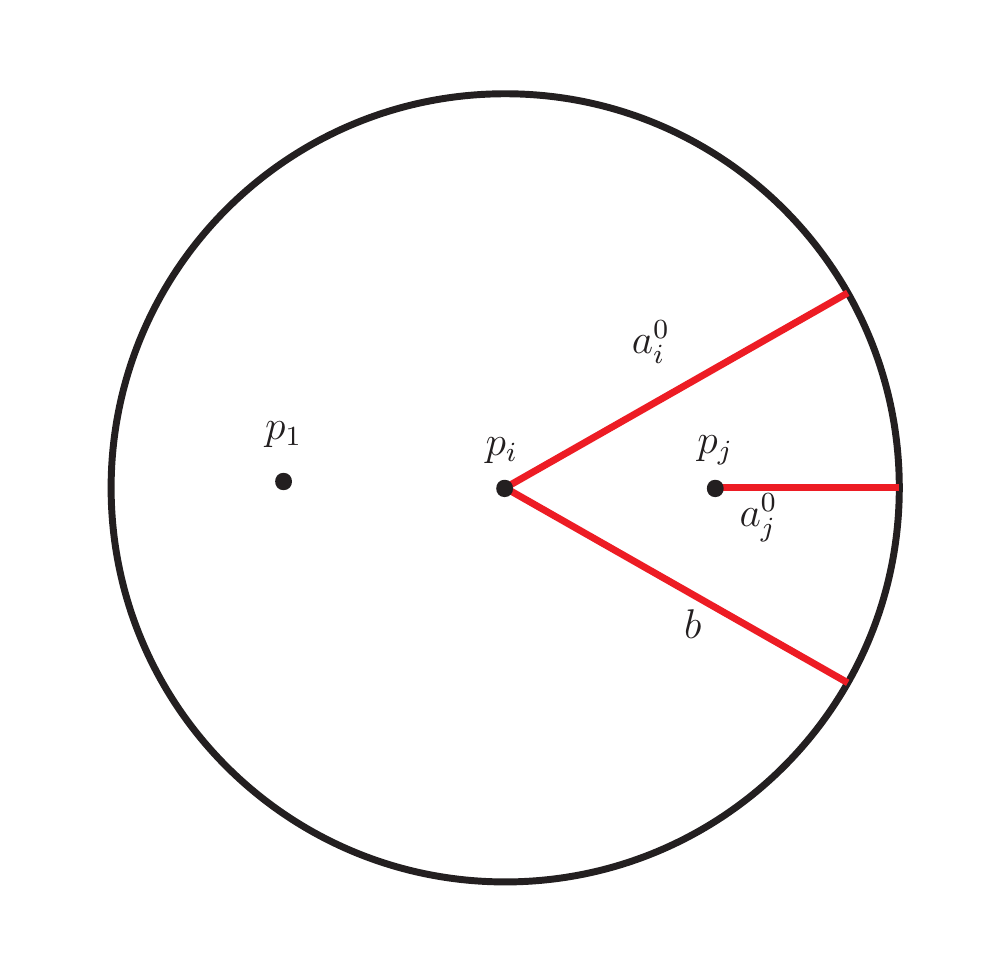}} \caption{\label{fig:5.3}A picture describing a half-arc slide.}
\end{figure}
\begin{prop}
\label{prop:halfarc}Let $\{a_{i}\}_{1<i\le n},\{b_{i}\}_{1<i\le n}$
on a disk $D$ be two half-arc bases, where $a_{i}$ and $b_{i}$
connect an interior point $p_{i}\in\text{int}(D)$ with $\partial D$.
Then they are related by a sequence of isotopies and half-arc slides.
\end{prop}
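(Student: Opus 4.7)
The plan is to prove Proposition \ref{prop:halfarc} by induction on the number $n$ of marked points in the disk $D$, after first putting the two half-arc bases in general position. The base case $n=2$ treats a single half-arc from $p_2$ to $\partial D$ in $D \setminus \{p_1\}$: if $a$ and $b$ are two such arcs, joining their boundary endpoints by a path along $\partial D$ gives a loop in $D \setminus \{p_1\}$ whose class in $\pi_1(D \setminus \{p_1\}, p_2) \cong \mathbb{Z}$ measures their discrepancy. Since the generator of this fundamental group is represented by $\partial D$ itself, the freedom to slide the boundary endpoint of $b$ around $\partial D$ kills the difference class, and no half-arc slides are needed.

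For the inductive step, given bases $\mathcal{A} = \{a_i\}_{i \ne 1}$ and $\mathcal{B} = \{b_i\}_{i \ne 1}$, I would isotope $\mathcal{B}$ so that it meets $\mathcal{A}$ transversally and with minimal intersection, and then single out a fixed arc $a_{i_0} \in \mathcal{A}$. The goal is to modify $\mathcal{B}$ by isotopies and half-arc slides until some $b_k$ becomes isotopic to $a_{i_0}$; once this is achieved, cutting $D$ open along their common image yields a new disk with $n-1$ marked points $\{p_1,\dots,\widehat{p_{i_0}},\dots,p_n\}$, and the remaining arcs $\{a_i\}_{i \ne 1,i_0}$ and $\{b_i\}_{i \ne 1, k}$ restrict to half-arc bases on this smaller disk, to which the inductive hypothesis applies.

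The combinatorial heart of the argument is the following exchange move. If $b_i$ and $b_j$ are two arcs in $\mathcal{B}$ whose endpoints on $\partial D$ are adjacent in the cyclic order and which, together with $\partial D$, bound a region containing exactly one marked point, then the definition of half-arc slide in the paper allows us to swap $b_i$ with a new arc that realizes the opposite cyclic order of the two endpoints. Composing such elementary swaps with isotopies of $\partial D$-endpoints, I would first match the cyclic order of the $\mathcal{B}$-endpoints on $\partial D$ to that of the $\mathcal{A}$-endpoints, and then remove the remaining intersections between $\mathcal{B}$ and $\mathcal{A}$ by resolving innermost bigons in $D \setminus \{p_1,\dots,p_n\}$ via isotopies (and, when a bigon encloses a single marked point $p_j$ with $a_j$ sitting inside, via one further half-arc slide).

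The main obstacle is verifying that the restrictive half-arc slide move, which insists on exactly one $p_j$ (together with its arc $a_j^0$) lying in the bounded region, is strong enough to perform every exchange needed. The key point to argue carefully is that the configuration of $\mathcal{B}$ relative to $\mathcal{A}$ always admits some elementary move that strictly reduces the lexicographic complexity given by the pair (intersection number of $\mathcal{B}$ with $\mathcal{A}$, cyclic distance between the two orders on $\partial D$); this should follow by examining an innermost region among the innermost bigons and triangles of $\mathcal{A} \cup \mathcal{B}$ and exhibiting in each case either an isotopy or a valid half-arc slide. Once this reduction is established, iterating yields a finite sequence of allowed moves converting $\mathcal{B}$ to $\mathcal{A}$.
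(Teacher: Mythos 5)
Your plan takes a genuinely different route from the paper: induction on $n$ by matching a single pair $(a_{i_0},b_{i_0})$ and cutting the disk open, whereas the paper works with all arcs simultaneously, first forcing $\partial a_i = \partial b_i$, then using half-arc slides to correct the class of $a_k \cdot \bar b_k$ in $\pi_1(D\setminus\{p_1\},p_k)\cong\mathbb{Z}$, and finally removing intersections via innermost bigons. The cutting strategy is a perfectly reasonable alternative and would, if completed, give a cleaner inductive structure.

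However, there is a real gap. The inductive step rests on making $b_{i_0}$ isotopic to $a_{i_0}$ using isotopies and half-arc slides, and you explicitly flag that the verification that the half-arc slide is strong enough ``should follow by examining an innermost region'' without actually carrying that examination out. This is precisely the content of the proposition, so the proof as written is circular at its key step. More concretely, the plan ``match cyclic orders of $\partial D$-endpoints, then remove bigons'' is not enough: after removing all intersections, $a_{i_0}$ and $b_{i_0}$ are disjoint simple arcs with common endpoints, hence cobound a disk, but that disk may contain the basepoint $p_1$ (which carries no arc and therefore does not obstruct the innermost property). In that case $a_{i_0}$ and $b_{i_0}$ are \emph{not} isotopic rel the marked points, and no further bigon move is available. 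This winding-around-$p_1$ discrepancy is exactly what the paper's $\pi_1(D\setminus\{p_1\})$ argument (its second step, with the $n|s|$ half-arc slides) is designed to kill, and your inductive step never addresses it; your parenthetical about a bigon ``enclosing a single marked point $p_j$ with $a_j$ sitting inside'' cannot occur for an innermost region when $j\ne 1$ (since $a_j$ would have to cross the boundary of the region), so it does not cover the relevant case $j=1$. You would need to insert the $\pi_1$-correction for the pair $(a_{i_0},b_{i_0})$ before cutting, and also to argue that the number of intersection points strictly decreases under the allowed moves, rather than asserting the existence of a suitable lexicographic complexity.
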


\begin{proof}
We can isotope the arcs so that $\partial a_{i}=\partial b_{i}$ for
all $1<i\le n$. Then, for each $k$, the closed curve $\gamma$ given
by the concatenation of $a_{k}$ and $b_{k}$ gives an element of
$\pi_{1}(D-\{p_{1}\},p_{k})\simeq\mathbb{Z}$. If the homotopy class
of $[\gamma]$ is $s$ times the generator, then we can apply $n|s|$
half-arc slides on $b_{k}$ so that $a_{k}$ is homotopic to $b_{k}$
in $D-\{p_{1}\}$. Thus we can apply this process for each $1<k\le n$,
so that $a_{k}$ is homotopic to $b_{k}$ rel endpoints in $D-\{p_{1}\}$.

Now assume that, for any $i,j$ with $1<i,j\le n$, $a_{i}$ and $b_{i}$
intersect transversely, and denote the total number of intersection
points between a-half-arcs and b-half-arcs by $N$. We can find a
disk $D$ which is innermost, i.e. no half-arcs intersect its interior.
Then, we can apply an isotopy along the disk to remove a pair of intersection
points; the number of remaining intersection points in $N-2$. Therefore,
by induction on $N$, we see that, after a sequence of isotopies,
we may assume that $a_{i}$ and $b_{i}$ cobound a disk $D_{i}$ for
each $1<i\le n$, and the disks $D_{i}$ are pairwise disjoint. Then,
we can isotope the half-arcs along the disks $D_{i}$ to isotope $a_{i}$
to $b_{i}$.
\end{proof}
Now we will prove invariance of $c_{\mathbb{Z}_{2}}(\xi_{K})$ with
respect to half-arc slides. Lifting the whole picture to $S$ shows
that, in the branched double cover, a half-arc slide corresponds to
an arcslide in the sense of \cite{HKM-HF}, which then corresponds
to an $\alpha$-handleslide followed by a $\beta$-handleslide. Hence,
if $\{a_{i}\}$ is obtained by performing an arcslide to $\{\tilde{a}_{i}\}$,
and $\boldsymbol{\alpha},\boldsymbol{\beta},\tilde{\boldsymbol{\alpha}},\tilde{\boldsymbol{\beta}}$
are the associated $\alpha$- and $\beta$-curves on the invariant
Heegaard surface $\Sigma$, then we have the following quasi-isormorphism,
which is induced by a composition of an equivariant triangle map for
an $\alpha$-handleslide followed by an equivariant triangle map for
a $\beta$-handleslide:
\[
\widetilde{CF}_{\mathbb{Z}_{2}}(\Sigma,\tilde{\boldsymbol{\alpha}},\tilde{\boldsymbol{\beta}})\xrightarrow{\sim}\widetilde{CF}_{\mathbb{Z}_{2}}(\Sigma,\boldsymbol{\alpha},\boldsymbol{\beta}).
\]
 Since this quasi-isomorphism is clearly $\mathbb{Z}_{2}$-equivariant,
we get the following induced quasi-isomorphism between equivariant
Floer cochain complexes.
\begin{equation}
\widehat{CF}_{\mathbb{Z}_{2}}(\Sigma,\tilde{\boldsymbol{\alpha}},\tilde{\boldsymbol{\beta}})\xrightarrow{\sim}\widehat{CF}_{\mathbb{Z}_{2}}(\Sigma,\boldsymbol{\alpha},\boldsymbol{\beta}).\label{eq:2eqtrimaps}
\end{equation}
\begin{thm}
\label{thm:half-arcslide}The map (\ref{eq:2eqtrimaps}) sends $EH_{\mathbb{Z}_{2}}(\xi_{K})$
to $EH_{\mathbb{Z}_{2}}(\xi_{K})$.
\end{thm}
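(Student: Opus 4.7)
The plan is to reduce Theorem~\ref{thm:half-arcslide} to a parametrized refinement of the Honda--Kazez--Mati\'c argument about the contact class. First, since the map (\ref{eq:2eqtrimaps}) is by construction the composition of two equivariant triangle maps, one for the $\alpha$-handleslide through an intermediate triple-diagram $(\Sigma,\boldsymbol{\alpha},\tilde{\boldsymbol{\alpha}},\tilde{\boldsymbol{\beta}})$ and one for the $\beta$-handleslide through $(\Sigma,\boldsymbol{\alpha},\tilde{\boldsymbol{\beta}},\boldsymbol{\beta})$, it suffices to treat each factor separately; I focus on the $\alpha$-case, since the $\beta$-case is symmetric. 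The cocycles $\Theta_\alpha$ and $\Theta_\beta$ used as inputs are the $\mathbb{Z}_2$-invariant top-degree generators in the appropriate handleslide regions, and they can be chosen so that, together with $EH(\xi_K)$, they form a ``nice'' HKM configuration in the sense of Section~3.1 of \cite{HKM-HF}.

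Next, I would unfold the equivariant triangle map $F_{\Theta_\alpha}$ applied to the chain-level element $\theta^{0}\otimes EH(\xi_K)$ as a $\theta$-power expansion
\[
F_{\Theta_\alpha}\bigl(\theta^{0}\otimes EH(\xi_K)\bigr)=\sum_{\ell\ge 0}\theta^{\ell}\otimes g_{\ell}\bigl(EH(\xi_K)\bigr),
\]
where $g_{\ell}$ counts holomorphic triangles of Maslov index $-\ell$ in the generic $\ell$-parameter family of triangle data underlying the chosen homotopy coherent extension of our diagram, with one vertex at $EH$, one at $\Theta_\alpha$, and one at an output generator. The $\ell=0$ term is the ordinary triangle map; by the HKM arcslide invariance theorem applied to the invariant Heegaard triple-diagram (which is weakly admissible by the lemmas of Section~4), this term contributes exactly $EH(\xi_K)$ on the new diagram, together with a possible coboundary that is killed under passage to $EH_{\mathbb{Z}_2}$.

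The core of the proof is the vanishing of the higher terms $g_{\ell}(EH(\xi_K))$ for $\ell\ge 1$. Here I would adapt the HKM positivity-of-domains argument: any triangle contributing to $g_{\ell}$ has a domain $\phi$ with boundary starting at the HKM generator $\{p_{1},\ldots,p_{n}\}$, and in the nice HKM triple-diagram the only positive domain emanating from that generator and avoiding the basepoint is the canonical small triangle of Maslov index $0$ sending $EH$ to $EH$. Any other positive domain is forced to acquire a negative local multiplicity in one of the HKM regions, contradicting the existence of a holomorphic representative under $\mathrm{Sym}^g(\tilde{\mathfrak{j}})$ (which achieves transversality by Theorem~\ref{thm:eqvtrans}); since this domain obstruction is independent of the auxiliary parameter in $[0,1]^{\ell}$, the entire parametrized moduli space is empty for $\ell\ge 1$. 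Thus all higher $\theta$-coefficients vanish, and $F_{\Theta_\alpha}(\theta^{0}\otimes EH)=\theta^{0}\otimes EH$ at the chain level (up to a $\mathbb{Z}_2$-equivariant coboundary).

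The main obstacle is precisely this parametric positivity step: it requires checking that the positivity argument from \cite{HKM-HF}, which rules out non-constant index-$0$ curves from $EH$, still rules out index-$(-\ell)$ curves inside the generic $\ell$-parameter family. I expect this to follow because the domain constraints only depend on the combinatorics of the HKM triple-diagram near $EH$ and not on the chosen cylindrical or triangular almost complex structure; the equivariant framework of Section~3.2 of \cite{eqv-Floer} then allows us to choose the homotopy coherent diagram to be a small $\mathbb{Z}_2$-equivariant perturbation of $\mathrm{Sym}^g(\tilde{\mathfrak{j}})$, so that the same combinatorial obstruction persists throughout the family. Once this is established, iterating the argument for the second ($\beta$-handleslide) triangle map completes the proof that (\ref{eq:2eqtrimaps}) sends $EH_{\mathbb{Z}_2}(\xi_K)$ to $EH_{\mathbb{Z}_2}(\xi_K)$.
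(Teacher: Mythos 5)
Your proof is correct and follows essentially the same strategy as the paper's: reduce to the two equivariant triangle maps, observe that the $\ell=0$ term agrees with the HKM arcslide argument, and kill the higher $\theta$-powers by noting that the only positive domains emanating from the HKM generator and avoiding the basepoint are the canonical small index-$0$ triangles (so there are simply no triangles of index $-\ell$ for $\ell\geq 1$ to count, regardless of the parametrized almost complex structure). The paper compresses this to the single sentence that all triangles involved are ``small'' in the sense of Lemma~3.5 of \cite{HKM-HF}, hence index $0$ with one-point moduli spaces by the Riemann mapping theorem; your version just makes explicit the domain-positivity reasoning and the observation that it is independent of the auxiliary family, both of which are implicit in the paper's appeal to HKM.
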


\begin{proof}
Note that performing a half-arc slide to a half-arc basis corresponds
to performing an arcslide to the induced arc-basis in the branched
double cover. Thus the Heegaard triple-diagrams we get are the same
as the diagrams which arise in the proof of the invariance of contact
classes under arcslides, as in \cite{HKM-HF}. Since all holomorphic
triangles involved are small(see the proof of Lemma 3.5 in \cite{HKM-HF}),
all holomorphic triangles we count here have Maslov index $0$, and
their moduli spaces consist of a single point by Riemann mapping theorem.
Therefore we deduce that $EH_{\mathbb{Z}_{2}}(\xi_{K})$ is mapped
to $EH_{\mathbb{Z}_{2}}(\xi_{K})$.
\end{proof}
The invariance under perturbations of almost complex structures is
proved similarly.
\begin{thm}
\label{thm:acscontinuation}The map induced by changing the choice
of almost complex structures sends $EH_{\mathbb{Z}_{2}}(\xi_{K})$
to $EH_{\mathbb{Z}_{2}}(\xi_{K})$.
\end{thm}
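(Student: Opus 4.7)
The plan is to mirror the structure of the proof of Theorem \ref{thm:half-arcslide}, with equivariant triangle counts replaced by equivariant continuation counts for a $\mathbb{Z}_{2}$-equivariant 1-parameter family of almost complex structures interpolating between the two chosen structures. The crucial input, exactly as in the verification above that $d_{\mathbb{Z}_{2}}(EH_{\mathbb{Z}_{2}}(\xi_{K}))=0$, is that the HKM argument from Section 3.1 of \cite{HKM-HF} rules out every nonconstant holomorphic disk asymptotic to the generator $\{p_{1},\cdots,p_{n}\}$, and does so \emph{uniformly} in the almost complex structure, because it depends only on the combinatorics of the arc basis lifted to $S$.

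More concretely, let $\mathfrak{j}_{0}$ and $\mathfrak{j}_{1}$ be two generic 1-parameter families of almost complex structures on $S^{2}$, and let $F_{0},F_{1}\,:\,\mathcal{E}\mathbb{Z}_{2}\rightarrow\overline{\mathcal{J}}$ be the induced $\mathbb{Z}_{2}$-equivariant homotopy coherent diagrams built from the symmetric-product lifts as in Theorem \ref{thm:eqvtrans}. I would extend $(\{0\}\times F_{0})\cup(\{1\}\times F_{1})$ to a $\mathbb{Z}_{2}$-equivariant, sufficiently generic homotopy coherent diagram $G\,:\,\mathscr{I}\times\mathcal{E}\mathbb{Z}_{2}\rightarrow\overline{\mathcal{J}}$ coming from a generic 1-parameter interpolation $\mathfrak{j}_{s}$, $s\in[0,1]$, of families on $S^{2}$. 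Passing to Floer theory and taking the homotopy colimit produces the $\mathbb{F}_{2}[\theta]$-module continuation map
\[
\Phi\,:\,\widehat{CF}_{\mathbb{Z}_{2}}(\Sigma,\boldsymbol{\alpha},\boldsymbol{\beta};F_{0})\longrightarrow\widehat{CF}_{\mathbb{Z}_{2}}(\Sigma,\boldsymbol{\alpha},\boldsymbol{\beta};F_{1}),
\]
whose matrix coefficients are signed counts of $\tilde{J}$-holomorphic disks for multiparameter subfamilies of the interpolation $G$. Equivariant transversality for all these moduli spaces is obtained by the same reasoning as in Theorem \ref{thm:eqvtrans}, since the $\mathbb{Z}_{2}$-invariant locus still contains no positive-dimensional components meeting $\mathbb{T}_{\alpha}\cap\mathbb{T}_{\beta}$.

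The last step is to evaluate $\Phi$ on $EH_{\mathbb{Z}_{2}}(\xi_{K})=\theta^{0}\otimes EH(\xi_{K})^{\ast}$. The result decomposes as $\theta^{0}\otimes EH(\xi_{K})^{\ast}$ (coming from the constant disks at the endpoints of the interpolation) plus higher-order corrections $\sum_{i\geq 1}\theta^{i}\otimes H_{i}$, where each $H_{i}$ is a count of nonconstant $\tilde{J}$-holomorphic disks asymptotic to $\{p_{1},\cdots,p_{n}\}$ for various families. The HKM argument shows that the domain of any such putative disk would be forced to have a negative local multiplicity near one of the $p_{i}$, contradicting holomorphicity; since this obstruction is purely topological, it applies uniformly to every almost complex structure appearing in the interpolation and to disks of every Maslov index. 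Therefore all the $H_{i}$ vanish, and $\Phi(EH_{\mathbb{Z}_{2}}(\xi_{K}))=EH_{\mathbb{Z}_{2}}(\xi_{K})$. The main technical point to verify carefully is that the HKM positivity argument genuinely carries through for all the higher coherence data of the homotopy coherent diagram $G$ (not only for the two endpoints), but this reduces to the same combinatorial obstruction and is independent of $\mathfrak{j}_{s}$.
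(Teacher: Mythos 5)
Your proposal is correct and follows essentially the same route as the paper's proof: both use the equivariant continuation map of Proposition 3.23 of \cite{eqv-Floer}, reduce to $\mathbb{Z}_{2}$-invariant symmetric-product almost complex structures via Theorem \ref{thm:eqvtrans}, and invoke the HKM argument of Section 3.1 of \cite{HKM-HF} to rule out the nonconstant holomorphic disks ending at the contact generator that would produce higher-order $\theta$-terms. The paper phrases the obstruction as the disks being forced to intersect the basepoint, whereas you phrase it as a forced negative local multiplicity; these are two standard descriptions of the same domain-positivity argument, so the proofs coincide in substance.
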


\begin{proof}
The induced map, which is defined in the proof of Proposition 3.23
of \cite{eqv-Floer}, counts holomorphic disks going towards $EH$
with Maslov index at most 0. By Theorem \ref{thm:eqvtrans}, we can
choose cylindrical complex structures of the form $\text{Sym}^{g}(\tilde{\mathfrak{j}})$
to compute equivariant Floer cohomology. However, by the argument
of Section 3.1 in \cite{HKM-HF}, such disks must intersect the basepoint.
This completes the proof.
\end{proof}
\begin{cor}
\label{cor:isotopy-inv}The map induced by an isotopy, from a half-arc
basis $\mathcal{A}_{0}$ to another basis $\mathcal{A}$, sends $EH_{\mathbb{Z}_{2}}(\xi_{K},\mathcal{A})$
to $EH_{\mathbb{Z}_{2}}(\xi_{K},\mathcal{A}_{0})$.
\end{cor}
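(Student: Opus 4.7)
The plan is to reduce the statement to the invariance results already established in Theorem \ref{thm:half-arcslide} and Theorem \ref{thm:acscontinuation}, by interpreting the isotopy as a $\mathbb{Z}_{2}$-equivariant Hamiltonian isotopy on the branched cover and then analyzing the associated equivariant triangle map by nearest-point arguments. First I would lift the given isotopy $\{\mathcal{A}_{t}\}_{t\in[0,1]}$ of half-arc bases on $D$, from $\mathcal{A}_{0}$ to $\mathcal{A}=\mathcal{A}_{1}$, through the branched double covering $S\to D$ to obtain a $\tau$-equivariant isotopy of full arc bases on $S$, and from there to a $\tau$-equivariant ambient isotopy of the curves $\boldsymbol{\alpha}_{t},\boldsymbol{\beta}_{t}$ on the Heegaard surface $\Sigma$ induced by the Honda--Kazez--Mati\'c construction.

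Next, I would subdivide $[0,1]$ into sufficiently many subintervals so that on each subinterval the isotopy supports only small Hamiltonian perturbations of the $\alpha$- and $\beta$-curves, and so that the resulting triples $(\Sigma,\boldsymbol{\alpha}_{t_{i}},\tilde{\boldsymbol{\alpha}}_{t_{i+1}},\boldsymbol{\beta}_{t_{i}})$ and $(\Sigma,\boldsymbol{\alpha}_{t_{i+1}},\boldsymbol{\beta}_{t_{i}},\tilde{\boldsymbol{\beta}}_{t_{i+1}})$ are involutive in the sense of Section~4 (hence weakly admissible). On each piece, the induced $\mathbb{Z}_{2}$-equivariant continuation map is modeled by a pair of equivariant triangle maps of the type described in Section~3, using a $\tau$-invariant cocycle $\Theta$ that is the canonical top generator obtained by taking nearest intersection points. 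Using Theorem \ref{thm:eqvtrans}, I may compute these counts with a cylindrical almost complex structure of the form $\mathrm{Sym}^{g}(\widetilde{\mathfrak{j}})$, and then appeal to the nearest-point argument of \cite{HKM-HF}: the only holomorphic triangles of Maslov index $\le 0$ with an input vertex at the EH point are the small triangles supplied by the Riemann mapping theorem, each contributing uniquely to the nearest EH point on the perturbed diagram. Any putative higher-degree triangle or holomorphic disk contributing to the equivariant differential/map applied to the EH chain would be forced, by the same argument as in the proofs of Theorems \ref{thm:half-arcslide} and \ref{thm:acscontinuation}, to cross the basepoint $z=p_{1}$, and hence is not counted in the hat-flavor.

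The step I expect to require the most care is the equivariant refinement of this nearest-point argument, namely, ensuring that every higher term $\theta^{k}\otimes(\cdot)$ appearing in the equivariant triangle map applied to $EH_{\mathbb{Z}_{2}}(\xi_{K},\mathcal{A}_{t_{i+1}})=\theta^{0}\otimes EH(\xi_{K},\mathcal{A}_{t_{i+1}})$ vanishes. These higher terms arise from counting $k$-parameter families of $\tau$-equivariant triangles (in the homotopy-coherent diagram from $\mathcal{E}\mathbb{Z}_{2}$ to $\mathscr{D}$) with an input at the EH cocycle; but any such nonconstant family must contain a curve carrying nonzero multiplicity at $z$ by the Section~3.1 argument of \cite{HKM-HF}, and thus does not contribute in $\widehat{HF}_{\mathbb{Z}_{2}}$. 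Concatenating the identifications across the subintervals of $[0,1]$ then shows that the full isotopy-induced map sends $EH_{\mathbb{Z}_{2}}(\xi_{K},\mathcal{A})$ to $EH_{\mathbb{Z}_{2}}(\xi_{K},\mathcal{A}_{0})$, completing the proof.
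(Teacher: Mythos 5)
Your proposal is correct in outline but takes a considerably more elaborate route than the paper does. The paper's proof is essentially a one-liner: by the isotopy extension theorem, the isotopy of half-arc bases extends to a one-parameter family of self-diffeomorphisms of $D^{2}$ starting at the identity; pulling the entire HKM construction back along this family keeps the $\alpha$- and $\beta$-curves \emph{fixed} and instead moves only the cylindrical almost complex structure, so the statement reduces immediately to Theorem~\ref{thm:acscontinuation}. You instead implement the isotopy by physically moving the curves, subdividing $[0,1]$ into small Hamiltonian steps, and computing the comparison map by pairs of equivariant triangle maps with nearest-point cocycles. That approach is closer in spirit to the handleslide argument of Theorem~\ref{thm:half-arcslide} and to Proposition~3.25 of \cite{eqv-Floer}, and it can be made to work, but you then owe several auxiliary checks that the paper sidesteps by never moving the Lagrangians: involutivity (hence weak admissibility) of each intermediate Heegaard triple in the sense of Section~4, identification of a $\tau$-invariant top cocycle $\Theta$ at each step, and the vanishing of higher-$\theta$ contributions for triangles as well as for disks. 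One imprecision worth fixing if you keep your route: you attribute the vanishing of higher-order terms to the basepoint-crossing argument ``as in the proofs of Theorems~\ref{thm:half-arcslide} and~\ref{thm:acscontinuation}'', but the proof of Theorem~\ref{thm:half-arcslide} in fact uses the small-triangle/Riemann-mapping argument (all relevant triangles have Maslov index exactly $0$), while the basepoint-crossing argument is what appears in the proof of Theorem~\ref{thm:acscontinuation} for disks; you should state clearly which mechanism kills the higher $\theta$-terms in your triangle counts. The paper's reduction to a change of almost complex structure avoids all of these complications and is the cleaner argument.
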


\begin{proof}
An isotopy of half-arc basis can be replaced by a 1-parameter family
of self-diffeomorphisms of $D^{2}$, starting from the identity. Such
a family induces a 1-parameter family of cylindrical complex structures
which we use to compute equivariant Floer cohomology. By Theorem \ref{thm:acscontinuation},
we see that the the induced isomorphism maps $EH_{\mathbb{Z}_{2}}(\xi_{K},\mathcal{A})$
to $EH_{\mathbb{Z}_{2}}(\xi_{K},\mathcal{A}_{0})$.
\end{proof}
\begin{thm}
\label{thm:isotopyinv}The map induced by an isotopy of the monodromy
$h$ sends $EH_{\mathbb{Z}_{2}}(\xi_{K})$ to $EH_{\mathbb{Z}_{2}}(\xi_{K})$.
\end{thm}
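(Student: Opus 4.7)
The plan is to adapt the strategy of Theorem~\ref{thm:acscontinuation} and Corollary~\ref{cor:isotopy-inv}. An isotopy $h_t$ ($t\in[0,1]$) of the monodromy on $D$, relative to $\partial D$ and to the branch set $\{p_{1},\dots,p_{n}\}$, lifts uniquely to a $\mathbb{Z}_{2}$-equivariant isotopy $\tilde{h}_{t}$ of $\tilde{h}$ on $S$. Since the auxiliary arcs $b_{i}$ are left unchanged by the monodromy, the resulting $\beta$-curves $\beta_{i}^{t}=b_{i}\cup\tilde{h}_{t}(b_{i})$ on $\Sigma=(S\times\{0,1\})/(\partial S\times\{0,1\})$ are swept out by a $\mathbb{Z}_{2}$-equivariant Hamiltonian isotopy, and consequently $\mathbb{T}_{\beta}\subset\text{Sym}^{g}(\Sigma)$ moves by a $\mathbb{Z}_{2}$-equivariant Hamiltonian isotopy while $\mathbb{T}_{\alpha}$ stays fixed.

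By the equivariant Hamiltonian invariance results (Proposition 3.23--3.25 of \cite{eqv-Floer}) applied to this isotopy, one obtains a quasi-isomorphism
\[
\Phi\,:\,\widehat{CF}_{\mathbb{Z}_{2}}(\Sigma,\boldsymbol{\alpha},\boldsymbol{\beta}^{0})\xrightarrow{\sim}\widehat{CF}_{\mathbb{Z}_{2}}(\Sigma,\boldsymbol{\alpha},\boldsymbol{\beta}^{1}),
\]
assembled from a $\mathbb{Z}_{2}$-equivariant homotopy-coherent diagram $\mathscr{I}\times\mathcal{E}\mathbb{Z}_{2}\rightarrow\mathscr{D}$ whose structure maps count parametrized holomorphic disks and triangles. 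To identify $\Phi(EH_{\mathbb{Z}_{2}}(\xi_{K}))$, I would use Theorem~\ref{thm:eqvtrans} to restrict to cylindrical complex structures of the form $\text{Sym}^{g}(\tilde{\mathfrak{j}}_{s})$ for a generic $1$-parameter family $\mathfrak{j}_{s}$ on $S^{2}$. For such lifted structures, the basepoint-positivity argument of Section 3.1 of \cite{HKM-HF} applies in the parametrized setting: every (parametrized) holomorphic disk with an incoming end at the generator $\{p_{1},\dots,p_{n}\}$ representing $EH(\xi_{K})$ is forced to pass through the basepoint $z=p_{1}$, and therefore is discarded in the hat-flavoured theory. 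All higher-order terms in the analogue of~(\ref{eq:eqvdiffeqn}) for $\Phi$ thus vanish, leaving only the identity contribution and yielding $\Phi(EH_{\mathbb{Z}_{2}}(\xi_{K}))=EH_{\mathbb{Z}_{2}}(\xi_{K})$.

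The main obstacle is checking that the HKM basepoint-exclusion argument propagates uniformly across the full parameter family and across its equivariant enlargements in $\mathscr{I}\times\mathcal{E}\mathbb{Z}_{2}$. This should follow because the arcs $b_{i}$, and hence the local intersection pattern of $a_{i}$ with $b_{i}$ near each $p_{i}$ in a fundamental domain, are independent of $t$ and of the monodromy; consequently the combinatorial picture underlying HKM's argument is preserved uniformly along the isotopy $\tilde{h}_{t}$. Verifying this uniformity carefully (so that the exclusion extends from the standard moduli spaces to the parametrized and homotopy-coherent versions appearing in $\Phi$) is the technical core of the proof.
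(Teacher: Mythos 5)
Your overall strategy matches the paper's: reduce to a $\mathbb{Z}_{2}$-equivariant Hamiltonian isotopy of $\mathbb{T}_{\beta}$ and then run the Honda--Kazez--Mati\'{c} basepoint-positivity argument in the equivariant/parametrized setting to rule out higher-order contributions. The paper's own proof is terse but does exactly this, citing Theorem 7.3 of \cite{OSz-original} for the reduction to the Hamiltonian case and Lemma 3.3 of \cite{HKM-HF} for the basepoint argument.

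There is, however, a genuine gap in your first step. You assert that ``since the auxiliary arcs $b_{i}$ are left unchanged by the monodromy,'' the $\beta$-curves $\beta_{i}^{t}=b_{i}\cup\tilde{h}_{t}(b_{i})$ are swept out by a $\mathbb{Z}_{2}$-equivariant \emph{Hamiltonian} isotopy. This does not follow, and in general it is false: an ambient isotopy of $\Sigma$ that moves a simple closed curve is not automatically area-preserving, and even an area-preserving isotopy has a flux obstruction to being Hamiltonian. The fact that $b_{i}$ is held fixed says nothing about either of these. The paper addresses precisely this point by invoking the reduction from Theorem 7.3 of \cite{OSz-original}: one must perturb, and adjust the (equivariant) area form or Heegaard diagram, so that the isotopy of $\beta$-curves \emph{becomes} a $\mathbb{Z}_{2}$-equivariant Hamiltonian isotopy, and one must verify the reduction can be performed equivariantly. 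Once that reduction is supplied, the remainder of your argument --- equivariant Hamiltonian invariance via Proposition 3.23--3.25 of \cite{eqv-Floer}, the choice of $\text{Sym}^{g}(\tilde{\mathfrak{j}})$-type structures via Theorem~\ref{thm:eqvtrans}, and the parametrized HKM basepoint exclusion --- is correct and aligns with what the paper's appeal to Lemma 3.3 of \cite{HKM-HF} is doing.
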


\begin{proof}
Let $\{h_{t}\}$ be an isotopy of monodromy functions. As in Theorem
7.3 of \cite{OSz-original}, we can reduce to the case where the isotopy
$\{\tilde{h}_{t}\}$ of self-diffeomorphisms of $\Sigma$ is a $\mathbb{Z}_{2}$-equivariant
Hamiltonian isotopy. Then we can apply the proof of Lemma 3.3 in \cite{HKM-HF}
to deduce that the equivariant isotopy map sends $EH_{\mathbb{Z}_{2}}(\xi_{K})$
to $EH_{\mathbb{Z}_{2}}(\xi_{K})$.
\end{proof}
Thus we proved the invariance under the choice of Floer-theoretic
auxiliary data, so it remains to prove the invariance under a basepoint
change and positive braid stabilization. Before proving the invariance
under positive braid stabilization. we prove the invariance under
the choice of a basepoint and stabilizations.
\begin{thm}
The map induced by changing the (invariant) basepoint sends $EH_{\mathbb{Z}_{2}}(\xi_{K})$
to $EH_{\mathbb{Z}_{2}}(\xi_{K})$.
\end{thm}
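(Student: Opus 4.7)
The plan is to reduce to the case of an elementary basepoint swap between two ``adjacent'' branch points $p_1$ and $p_j$ (i.e., connected by a path in $D$ that avoids the other branch points), and then to verify the invariance in that local setting. Since any two choices of invariant basepoint among $\{p_1,\ldots,p_n\}$ can be connected by a sequence of such elementary moves, it suffices to prove invariance under a single elementary swap.

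For an elementary swap from $p_1$ to $p_j$, I would first choose half-arc bases $\mathcal{A}_1=\{a_i^{0}\}_{i\ne 1}$ (with basepoint $p_1$) and $\mathcal{A}_j=\{a_k^{0}\}_{k\ne 1,j}\cup\{a_1^{0,\mathrm{new}}\}$ (with basepoint $p_j$) that agree on all arcs indexed by $k\ne 1,j$ and differ only in the arc associated to the pair $\{p_1,p_j\}$. Specifically, I would arrange for $a_j^{0}\in\mathcal{A}_1$ and $a_1^{0,\mathrm{new}}\in\mathcal{A}_j$ to be parallel arcs supported in a small neighborhood $U\subset D$ of a chosen path from $p_1$ to $p_j$. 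By Theorem \ref{thm:half-arcslide} for half-arc slides, Theorem \ref{thm:isotopyinv} and Corollary \ref{cor:isotopy-inv} for isotopies of monodromy and of half-arc bases, and Theorem \ref{thm:acscontinuation} for changes of almost complex structure, one reduces to this standard configuration without changing $EH_{\mathbb{Z}_2}(\xi_K)$.

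In this reduced setting, the two induced $\mathbb{Z}_2$-invariant Heegaard diagrams $(\Sigma,\boldsymbol{\alpha}^{(1)},\boldsymbol{\beta}^{(1)},p_1)$ and $(\Sigma,\boldsymbol{\alpha}^{(j)},\boldsymbol{\beta}^{(j)},p_j)$ coincide outside the lift $\tilde{U}\subset\Sigma$ of $U$ and differ only in a small $\tau$-invariant region around the fixed points above $p_1$ and $p_j$. The natural chain isomorphism between the two equivariant Floer cochain complexes is realized as a composition of an equivariant triangle map associated to a local handleslide exchanging the role of the arc at $p_j$ with that at $p_1$, combined with an equivariant basepoint move within $\tilde{U}$. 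The $EH$ cocycle for $\mathcal{A}_1$ contains an intersection point near $p_j$ on $\alpha_j^{(1)}$, while the $EH$ cocycle for $\mathcal{A}_j$ contains an intersection point near $p_1$ on $\alpha_1^{(j),\mathrm{new}}$; all other generators in the cocycles are identical.

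The main obstacle is to verify that this composed isomorphism identifies the two $EH$ cocycles. As in the proof of Theorem \ref{thm:half-arcslide}, by choosing the arcs to be sufficiently close in $U$, all relevant holomorphic triangles are supported in $\tilde{U}$, have Maslov index zero, and their moduli spaces consist of a single point by the Riemann mapping theorem. A direct inspection of the local picture in $\tilde{U}$, entirely analogous to the analysis carried out for half-arc slides, shows that the intersection point near $p_j$ is sent to the intersection point near $p_1$, and thus that $EH_{\mathbb{Z}_2}(\xi_K,\mathcal{A}_1)\mapsto EH_{\mathbb{Z}_2}(\xi_K,\mathcal{A}_j)$ under the basepoint-change map.
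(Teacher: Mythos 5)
Your reduction to an elementary swap between branch points joined by a path avoiding the other branch points is sensible, but the pivotal step in the reduced case does not correspond to an actual Heegaard move, and this is where the argument breaks down. You describe the transition from the diagram with basepoint $p_1$ to the one with basepoint $p_j$ as realized by ``an equivariant triangle map associated to a local handleslide exchanging the role of the arc at $p_j$ with that at $p_1$, combined with an equivariant basepoint move within $\tilde U$.'' But a handleslide of $\alpha$- or $\beta$-curves cannot change which $\tau$-fixed point a given curve passes through: the curve through the lift of $p_j$ in the old diagram and the curve through the lift of $p_1$ in the new diagram are not related by a handleslide, and any isotopy carrying one to the other would sweep the curve across the chosen basepoint. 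Likewise, ``moving the basepoint'' from $p_1$ to $p_j$ is not an isotopy of the basepoint within a single component of $\Sigma\setminus(\boldsymbol{\alpha}\cup\boldsymbol{\beta})$; it must cross curves. This composite operation is none of the basic moves (isotopy, handleslide, stabilization away from the basepoint) in the bridge-diagram framework of the paper, nor is it an equivariant triangle map in the sense of Proposition 3.25 of \cite{eqv-Floer}. So no chain map between the two equivariant complexes has actually been constructed, and the subsequent Riemann-mapping-theorem count of small triangles cannot get started.

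A second symptom of the gap is that your argument nowhere uses that $K$ is a knot. That hypothesis is essential: for a multi-component link, $\widehat{HF}_{\mathbb{Z}_2}(\Sigma(L),z)$ genuinely depends on which component contains $z$, and there is no basepoint-change map across components. The paper exploits connectivity by observing that the monodromy acts transitively on $\{p_1,\ldots,p_n\}$: it applies the diffeomorphism $\tilde h^k$ carrying $z$ to $z'$, notes that this fixes the monodromy since $\tilde h^k\tilde h\tilde h^{-k}=\tilde h$, and then uses half-arc slides (via Proposition~\ref{prop:halfarc}) to bring the image arc basis $\{\tilde h^k(a_i)\}$ back to a standard half-arc basis for the new basepoint. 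In other words, the diffeomorphism transports the basepoint, the arcs, and the monodromy simultaneously, so the basepoint--curve interaction that your approach runs into never arises, and the whole change is absorbed into moves already shown to preserve $EH_{\mathbb{Z}_2}$.
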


\begin{proof}
According to the definition in the previous section, we choose the
basepoint $z$ to be one of the points $p_{1},\cdots,p_{n}$, which
form the fixed locus of the $\mathbb{Z}_{2}$-action. If we choose
another such basepoint $z^{\prime}$, then since our transverse braid
forms a knot, there exists a positive integer $k$ satisfying $h^{k}(z)=z^{\prime}$.
Now applying the self-diffeomorphism $\tilde{h}^{k}$ to the open
book diagram $(\Sigma,\{a_{i}\},\tilde{h},z)$ gives $(\Sigma,\{\tilde{h}^{k}(a_{i})\},\tilde{h}^{k}\tilde{h}\tilde{h}^{-k}=\tilde{h},\tilde{h}^{k}(z)=z^{\prime})$.
But since we can always change the half-arc basis $\{h^{k}(a_{i}^{0})\}$
back to $\{a_{i}^{0}\}$ via half-arc slides, we can change the arc
basis $\{\tilde{h}^{k}(a_{i})\}$ back to $\{a_{i}\}$ via arcslides,
in the same manner. Since the maps induced by arcslides and diffeomorphisms
preserve $EH_{\mathbb{Z}_{2}}^{\ast}$, the theorem follows.
\end{proof}
\begin{thm}
\label{thm:stab}The map induced by a positive braid stabilization
sends $EH_{\mathbb{Z}_{2}}(\xi_{K})$ to $EH_{\mathbb{Z}_{2}}(\xi_{K})$.
\end{thm}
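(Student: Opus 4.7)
The plan is to identify a positive braid stabilization with a \emph{Giroux positive stabilization} of the contact branched double cover $(M_B,\xi_B)$, performed $\mathbb{Z}_2$-equivariantly, and then to deploy an equivariant version of the Honda--Kazez--Mati\'{c} invariance argument used for non-equivariant $EH$. A positive braid stabilization by $\sigma_n$ adds a new marked point $p_{n+1}$ to the base disk $D$ and composes the disk monodromy $h(w)$ with the positive Dehn half-twist $T_n$ along an arc $c_n$ joining $p_n$ to $p_{n+1}$. Extending the half-arc basis by a new half-arc $a_{n+1}^{0}$ running from $p_{n+1}$ out to $\partial D$ (chosen parallel to $a_n^{0}$ near the boundary), one sees that, in the branched double cover $S$, adding the branch point $p_{n+1}$ attaches a new $1$-handle to $S$, and $\tilde{h}$ gets composed with the full positive Dehn twist along the invariant lift $C_n$ of $c_n$, which cobounds a disk with the co-core of the new handle. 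This is exactly the local model of a Giroux positive stabilization of the open book $(S,\tilde{h})$, and the whole operation is carried out equivariantly with respect to the covering involution.

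Translating this to Heegaard data, the stabilized half-arc basis produces, via the construction recalled in Section~5, a new invariant Heegaard diagram $(\Sigma',\boldsymbol{\alpha}',\boldsymbol{\beta}',z)$ obtained from $(\Sigma,\boldsymbol{\alpha},\boldsymbol{\beta},z)$ by $\mathbb{Z}_2$-equivariant connected sum with a genus-one invariant diagram of $S^3$, in which the new pair $\alpha_{n+1}=a_{n+1}\cup a_{n+1}$ and $\beta_{n+1}=b_{n+1}\cup\tilde{h}(b_{n+1})$ intersect in a single near-boundary point $x_{n+1}$ (together with its involutive counterpart); this point is the $(n+1)$st coordinate of the stabilized EH cocycle, so that $EH(\xi_K,\mathcal{A}')=EH(\xi_K,\mathcal{A})\cup\{x_{n+1}\}$ and hence $EH_{\mathbb{Z}_2}(\xi_K,\mathcal{A}')=\theta^0\otimes\bigl(EH(\xi_K,\mathcal{A})\cup\{x_{n+1}\}\bigr)$.

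The invariance map at the cochain level is built exactly as in Theorem~1.24 of \cite{eqv-Floer}: it factors through a diffeomorphism map (stabilizing $\Sigma$ to $\Sigma'$) followed by an equivariant triangle map implementing the handleslides needed to put the new diagram in standard position. Both factors are accessible because, as guaranteed by Theorem~\ref{thm:eqvtrans} applied to $\Sigma'$, we may work with $\mathbb{Z}_2$-equivariant cylindrical complex structures of the form $\mathrm{Sym}^{g+1}(\tilde{\mathfrak{j}})$ lifted from $S^2$, for which all moduli spaces are transversely cut out. The key point is then a no-disk statement: by the argument of Section~3.1 of \cite{HKM-HF}, any Whitney disk in $(\Sigma',\boldsymbol{\alpha}',\boldsymbol{\beta}')$ with incoming end at the stabilized EH generator and Maslov index $\le 1$ must pass through the basepoint $z$, and the same conclusion holds for the relevant holomorphic triangles appearing in the equivariant triangle map, which are small in the sense of the proof of Lemma~3.5 of \cite{HKM-HF} and thus have unique representatives by the Riemann mapping theorem. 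Exactly as in the proofs of Theorems~\ref{thm:half-arcslide} and \ref{thm:acscontinuation}, this forces all higher-order $\theta^{k}$ correction terms in the equivariant stabilization map to vanish on $EH_{\mathbb{Z}_2}(\xi_K,\mathcal{A})$, so the map sends $EH_{\mathbb{Z}_2}(\xi_K,\mathcal{A})$ to $EH_{\mathbb{Z}_2}(\xi_K,\mathcal{A}')$ on the nose.

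The main obstacle is the bookkeeping for the equivariant stabilization quasi-isomorphism: one has to check that the diffeomorphism map and the equivariant triangle map assemble to a $\mathbb{Z}_2$-equivariant quasi-isomorphism and that, at every power of $\theta$, the only holomorphic objects contributing to the image of the EH cocycle are the trivial ones picked out by the HKM argument. Once equivariant transversality via Theorem~\ref{thm:eqvtrans} is in place, this reduces to a straightforward count identical to the one already used for half-arc slides and almost complex structure continuations above.
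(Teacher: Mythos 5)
Your proof takes essentially the same route as the paper's: both decompose the stabilization map (following the proof of Theorem~1.24 of \cite{eqv-Floer}) into a Heegaard stabilization step (you call this the genus-one connected sum; the paper presents it as the birth of an unknot on the bridge-diagram side) followed by an equivariant triangle map (you describe these as handleslides; the paper describes it as the saddle map for a Legendrian $(-1)$-surgery along a lifted arc), and both conclude by arguing that the relevant holomorphic triangles are small in the HKM sense, forcing $EH_{\mathbb{Z}_{2}}$ to map to $EH_{\mathbb{Z}_{2}}$ with no higher-$\theta$ corrections. Your labelling of the Heegaard stabilization step as a ``diffeomorphism map'' is a minor misnomer, but the underlying argument matches the paper's.
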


\begin{proof}
The induced map between $CF_{\mathbb{Z}_{2}}^{\ast}$ is induced by
taking RHom at the following $\mathbb{Z}_{2}$-equivariant quasi-isomorphism
of chain complexes(see the proof of Theorem 1.24 in \cite{eqv-Floer}
for details):
\[
\widehat{CF}(\Sigma(B))\rightarrow\widehat{CF}(\Sigma(B\coprod\text{unknot)})\rightarrow\widehat{CF}(\Sigma(B_{+})),
\]
 where $B$ is the original braid and $B_{+}$ is its positive stabilization.
Dualizing this gives 
\[
\widehat{CF}^{\ast}(\Sigma(B_{+}))\rightarrow\widehat{CF}^{\ast}(\Sigma(B\coprod\text{unknot}))\rightarrow\widehat{CF}^{\ast}(\Sigma(B)).
\]
 The second map preserves $EH$ by its definition. The first map is
the saddle map induced by a Legendrian $(-1)$-surgery along a lift
$c$ of a small Legendrian arc connecting $B$ with a trivial braid.
The Heegaard triple diagram for the saddle is drawn in Figure \ref{fig:5.4}.
Then, by the convenient placement of the basepoint, we see that all
holomorphic triangles connecting $\mathbf{x}$ and $\Theta$ are small.
Therefore the induced isomorphism between $CF_{\mathbb{Z}_{2}}^{\ast}$
preserves $EH_{\mathbb{Z}_{2}}$.
\end{proof}
\begin{figure}[tbph]
\resizebox{.7\textwidth}{!}{\includegraphics{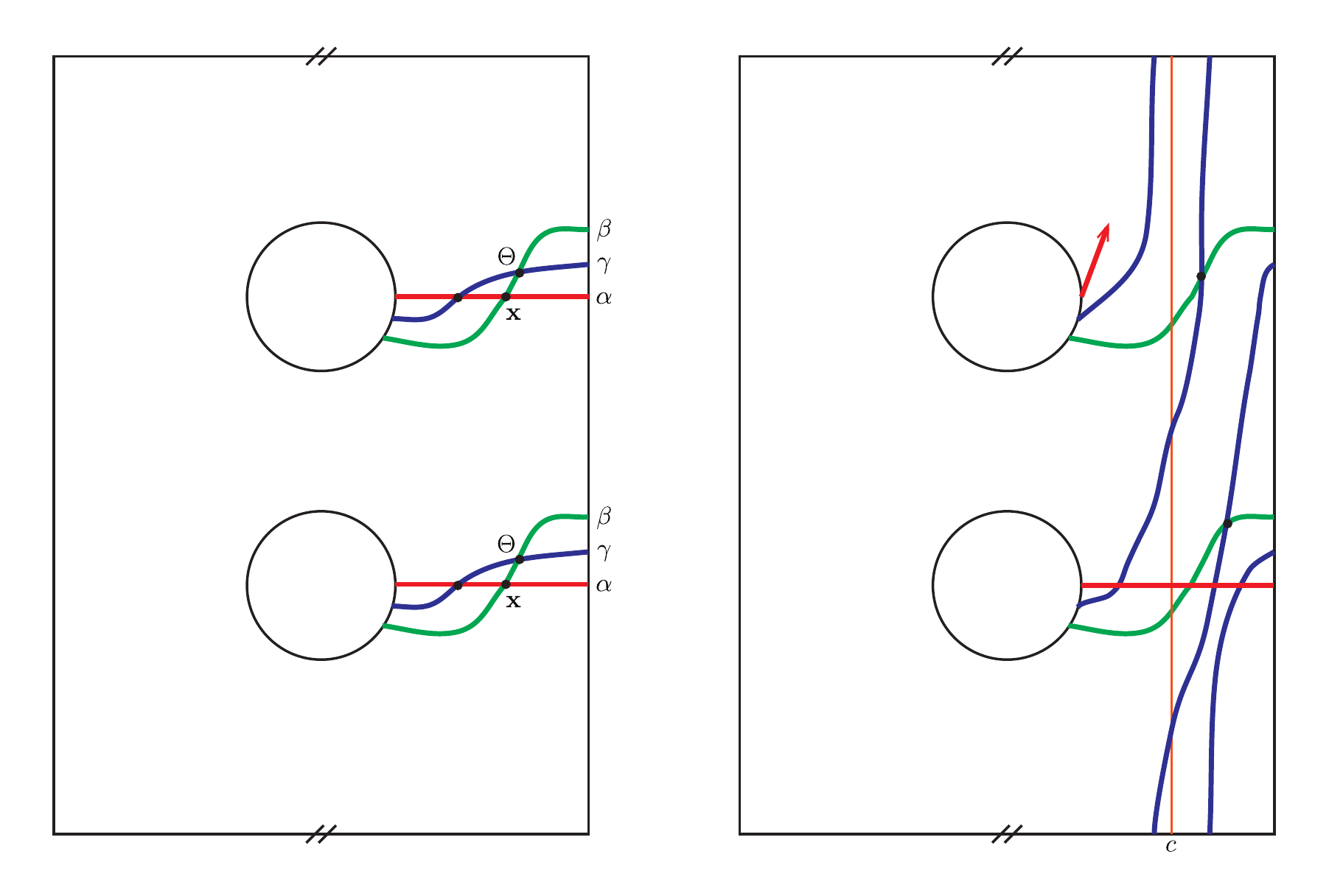}} \caption{\label{fig:5.4}The induced Heegaard triple-diagram.}
\end{figure}

Combining these invariance theorems, we get the complete invariance
of $EH_{\mathbb{Z}_{2}}(\xi_{K})$ and its cohomology class $c_{\mathbb{Z}_{2}}(\xi_{K})$.
\begin{thm}
The cohomology class $c_{\mathbb{Z}_{2}}(\xi_{K})\in\widehat{HF}_{\mathbb{Z}_{2}}(\Sigma(K))$
depends only on the transverse isotopy class of the transverse knot
$K$.
\end{thm}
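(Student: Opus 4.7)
The plan is to assemble the invariance results established in Section 5 and combine them with the Bennequin and Orevkov--Shevchishin theorems to reduce transverse isotopy to a sequence of moves, each of which has already been shown to preserve the cocycle $EH_{\mathbb{Z}_{2}}(\xi_{K})$ up to the chain-level quasi-isomorphisms identifying the various equivariant Floer complexes. Concretely, by the Bennequin theorem, any transverse knot $K$ is transversely isotopic to a closed transverse braid $B$ around the $z$-axis, so it suffices to show that, given two transverse braid representatives $B$ and $B'$ of transversely isotopic knots, the cohomology classes $c_{\mathbb{Z}_{2}}(\xi_{B})$ and $c_{\mathbb{Z}_{2}}(\xi_{B'})$ agree after transporting through the sequence of equivariant quasi-isomorphisms that identify the two constructions.

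First, I would show that for a single braid $B$ with chosen braid word $w$, the class $c_{\mathbb{Z}_{2}}(\xi_{K}) = [EH_{\mathbb{Z}_{2}}(\xi_{K},\mathcal{A})]$ does not depend on the auxiliary data used to construct it. The auxiliary data consist of: (i) the points $p_{i}$ and arcs $c_{i}$ used to define the monodromy $h(w)$ (unique up to a self-diffeomorphism of $D$, hence the induced open book and Heegaard diagram are unique up to a $\mathbb{Z}_{2}$-equivariant diffeomorphism, which acts trivially on $EH_{\mathbb{Z}_{2}}$ by the diffeomorphism-map construction of Section 3); (ii) the choice of half-arc basis $\mathcal{A}$, handled by Proposition \ref{prop:halfarc} together with Theorem \ref{thm:half-arcslide} and Corollary \ref{cor:isotopy-inv}; (iii) the $\mathbb{Z}_{2}$-invariant basepoint, handled by the basepoint-change theorem; and (iv) the $\mathbb{Z}_{2}$-equivariant cylindrical almost complex structure $\mathrm{Sym}^{g}(\tilde{\mathfrak{j}})$, handled by Theorem \ref{thm:acscontinuation} in combination with the equivariant transversality statement Theorem \ref{thm:eqvtrans}. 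Assembling these, $c_{\mathbb{Z}_{2}}(\xi_{B})$ is a well-defined invariant of the data $(D,h(w),z)$.

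Next, by Theorem \ref{thm:O-S}, any two transverse braid representatives of $K$ are related by a finite sequence of braid isotopies, braid conjugations, and positive braid stabilizations. A braid isotopy induces an isotopy of the monodromy $h(w)$, which translates (after the branched double cover) into an equivariant Hamiltonian isotopy of the diffeomorphism $\tilde{h}$; invariance under this move is exactly Theorem \ref{thm:isotopyinv}. A braid conjugation $w \mapsto vwv^{-1}$ replaces $h(w)$ by the conjugate $h(v)h(w)h(v)^{-1}$, which gives the same abstract open book $(D,h(w))$ up to applying the diffeomorphism $h(v)$; lifting this to the branched double cover and invoking naturality of the construction under $\mathbb{Z}_{2}$-equivariant diffeomorphisms shows that the resulting class is preserved. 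Invariance under positive braid stabilization is Theorem \ref{thm:stab}. Chaining these quasi-isomorphisms together gives a canonical identification of the equivariant Floer cohomology groups sending $c_{\mathbb{Z}_{2}}(\xi_{B})$ to $c_{\mathbb{Z}_{2}}(\xi_{B'})$.

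The only piece that requires more than a citation to a previous result is the conjugation step, since it is not explicitly listed among the invariance theorems in Section 5; but it follows immediately from the observation that the abstract open books associated to $w$ and $vwv^{-1}$ are conjugate, hence diffeomorphic, and the diffeomorphism-map construction of Section 3 is functorial on $\widehat{HF}_{\mathbb{Z}_{2}}$ and preserves the $EH$-generator by the same small-triangles argument used in Theorem \ref{thm:half-arcslide}. Once these pieces are in place, the theorem follows by concatenating the finite sequence of equivariant quasi-isomorphisms produced by Orevkov--Shevchishin and observing that each sends $EH_{\mathbb{Z}_{2}}(\xi_{K})$ to $EH_{\mathbb{Z}_{2}}(\xi_{K})$. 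The main conceptual subtlety, rather than any technical obstacle, is to check that the implicit identification of the equivariant Floer groups along this chain is canonical, which will follow from the naturality of $\widehat{HF}_{\mathbb{Z}_{2}}$ promised in the main theorem of the introduction.
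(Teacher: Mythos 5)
Your proposal follows the same top-level strategy as the paper's proof: reduce via Bennequin's theorem to transverse braids, invoke Orevkov--Shevchishin to reduce transverse isotopy to braid isotopies, conjugations, and positive stabilizations, and then appeal to the Section~5 invariance results (Theorems \ref{thm:acscontinuation}, \ref{thm:half-arcslide}, \ref{thm:isotopyinv}, \ref{thm:stab}, Corollary \ref{cor:isotopy-inv}, and Proposition \ref{prop:halfarc}) for each move and for each piece of auxiliary data. One place where your write-up is actually more explicit than the paper's is the conjugation move in Theorem \ref{thm:O-S}: the paper's written proof cites Orevkov--Shevchishin and then only itemizes invariance under isotopy, half-arc slides, and positive stabilization, leaving conjugation to the earlier remark that $h(w)$ is well-defined up to conjugation. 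You correctly fill this in by noting that $w \mapsto vwv^{-1}$ replaces the open book by a diffeomorphic one, so invariance follows from the diffeomorphism map together with half-arc slides to return to the original arc basis; this is precisely the mechanism the paper uses inside its basepoint-change theorem, so your argument is consistent with the paper's methods and simply states the step more carefully.
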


\begin{proof}
By Theorem \ref{thm:acscontinuation}, the class $c_{\mathbb{Z}_{2}}(\xi_{K})$
is independent of the choice of almost complex structures. From Theorem
\ref{thm:half-arcslide}, Theorem \ref{thm:isotopyinv} and Corollary
\ref{cor:isotopy-inv}, we see that $c_{\mathbb{Z}_{2}}(\xi_{K})$
is invariant under isotopy and half-arc slide. Thus $c_{\mathbb{Z}_{2}}(\xi_{K})$
does not depend on the choice of half-arc basis by Proposition \ref{prop:halfarc},
which means that it only depends on the choice of a transverse braid
representative of the given transverse knot $K$. However, Theorem
\ref{thm:stab}tells us that $c_{\mathbb{Z}_{2}}(\xi_{K})$ is also
invariant under positive braid stabilizations. Therefore, by Theorem
\ref{thm:O-S}, we deduce that $c_{\mathbb{Z}_{2}}(\xi_{K})$ is an
invariant of the transverse isotopy class of $K$.
\end{proof}
\begin{defn}
The class $c_{\mathbb{Z}_{2}}(\xi_{K})$, which is an invariant of
the transverse isotopy class of $K$ in $(S^{3},\xi_{std})$, is called
the equivariant contact class of $(\Sigma(K),\xi_{K})$.
\end{defn}

\subsection*{Equivariant contact classes of transverse links}

When we work with a multi-component transverse link $L$, the same
argument can be applied to establish the existence and the invariance
of equivariant contact classes. However, we have a small issue with
the choice of a basepoint; the equivariant contact class still depends
on the component of $L$ in which the basepoint lies. Hence, what
we get is a cohomology class 
\[
c_{\mathbb{Z}_{2}}(\xi_{L},z)\in\widehat{HF}_{\mathbb{Z}_{2}}(\Sigma(L),z),
\]
 which depends on the component of $L$ in which $z$ lies. Writing
the basepoint $z$ explicitly will be useful in the next section,
where we deal with symplectic functoriality.

\section{Naturality and functoriality of $\widehat{HF}_{\mathbb{Z}_{2}}(\Sigma(K),p)$
when $K$ is a knot}

In this section, we will prove that the equivariant Floer cohomology
$\widehat{HF}_{\mathbb{Z}_{2}}(\Sigma(L),p)$ is well-defined up to
natural isomorphism, in the sense of \cite{Juhasz-naturality}, so
that it admits a natural action of the mapping class group $\text{MCG}(S^{3},L)=\pi_{0}\text{Diff}^{+}(S^{3},L)$.

Recall that any two bridge diagrams of a given based link are related
by isotopies, handleslides, and (de)stabilizations, applied to arcs
which do not contain the basepoint.
\begin{defn}
Let $\{A_{i}\},\{B_{i}\}$ denote the A- and B-arcs of a bridge diagram
of a based link $(L,p)$. A basic move on $L$ is an isotopy, a handleslide,
or a (de)stabilization applied to either a single A-arc or a single
B-arc, which does not contain $p$. An A(B)-equivalence is an isotopy
or a handleslide applied to a single A(B)-arc which does not contain
$p$.
\end{defn}

We can easily point out some naturally arising commutative triangles,
squares, and hexagons, consisting of basic moves. Those diagrams are
described below. Please note that, by an A(B)-arc, we mean an A(B)-arc
of a given bridge diagram of a given based link, which does not contain
the basepoint.

\subsubsection*{(a) A-equivalences and B-equivalences commute with each other}

Given a bridge diagram $D=(\{A_{i}\},\{B_{i}\})$ of a based link
$(L,p)$, we can consider applying an A-equivalence on $A_{i}$ and
a B-equivalence on $B_{j}$. Suppose that applying an A-equivalence
on $A_{i}$ of $D$ transforms it into $D_{a}$ and applying a B-equivalence
on $B_{j}$ of $D$ transforms it into $D_{b}$. Denote the result
of applying both equivalences on $D$ gives $D_{ab}$. Then we have
a following dintinguished square. 
\[
\xymatrix{D\ar[d]_{A}\ar[r]_{B} & D_{b}\ar[d]_{A}\\
D_{a}\ar[r]_{B} & D_{ab}
}
\]

\subsubsection*{(b) Commutative triangles of A(B)-equivalences}

Suppose that applying an A-equivalence on a bridge diagram $D$ gives
$D_{1}$ , applying another A-equivalence on $D_{1}$ gives $D_{2}$,
and there exists an A-equivalence which transforms $D$ into $D_{2}$.
Suppose further that, if two of the three A-equivalences are handleslides,
then they are handleslides along the same A-arc. Then we get a distinguished
triangle. The same thing also holds for B-equivalences.
\[
\xymatrix{D\ar[r]_{A}\ar[dr]_{A} & D_{1}\ar[d]_{A}\\
 & D_{2}
}
\]

\subsubsection*{(c) Handleslides on different arcs commute}

Suppose that applying a handleslide on an A-arc $A_{i}$ of a bridge
diagram $D$ gives $D_{i}$, applying a handleslide on an A-arc $A_{j}$
gives $D_{j}$, and applying both on $D$ gives $D_{ij}$. If $i\ne j$,
then we have a distinguished square. The same thing also holds for
B-equivalences.
\[
\xymatrix{D\ar[r]_{A}\ar[d]_{A} & D_{i}\ar[d]_{A}\\
D_{j}\ar[r]_{A} & D_{ij}
}
\]

\subsubsection*{(d) Commutative hexagon of handleslides}

Suppose that there are three A-arcs $A_{i},A_{j},A_{k}$ lying close
to each other in a bridge diagram $D$, as in Figure \ref{fig:6.1}.
Then we have two choices when handlesliding $A_{i}$ and $A_{j}$
over $A_{k}$ to reach Figure \ref{fig:6.2}; we can either move $A_{j}$
over $A_{k}$ first and then move $A_{i}$ over $A_{k}$ and $A_{j}$,
or move $A_{i}$ over $A_{j}$ and $A_{k}$ first and then move $A_{j}$
over $A_{k}$. 
\begin{figure}[tbph]
\resizebox{.7\textwidth}{!}{\includegraphics{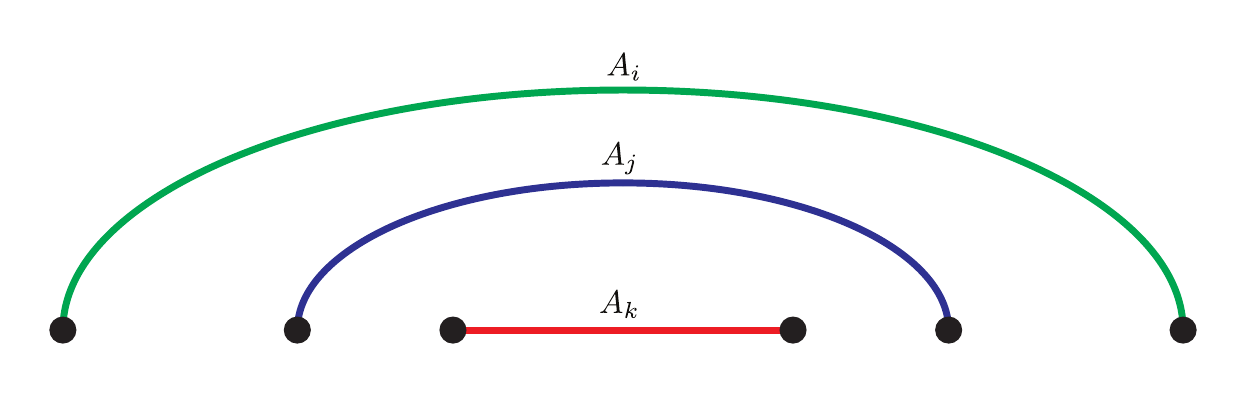}} \caption{\label{fig:6.1}Three A-arcs lying close to each other}
\end{figure}
\begin{figure}[tbph]
\resizebox{.7\textwidth}{!}{\includegraphics{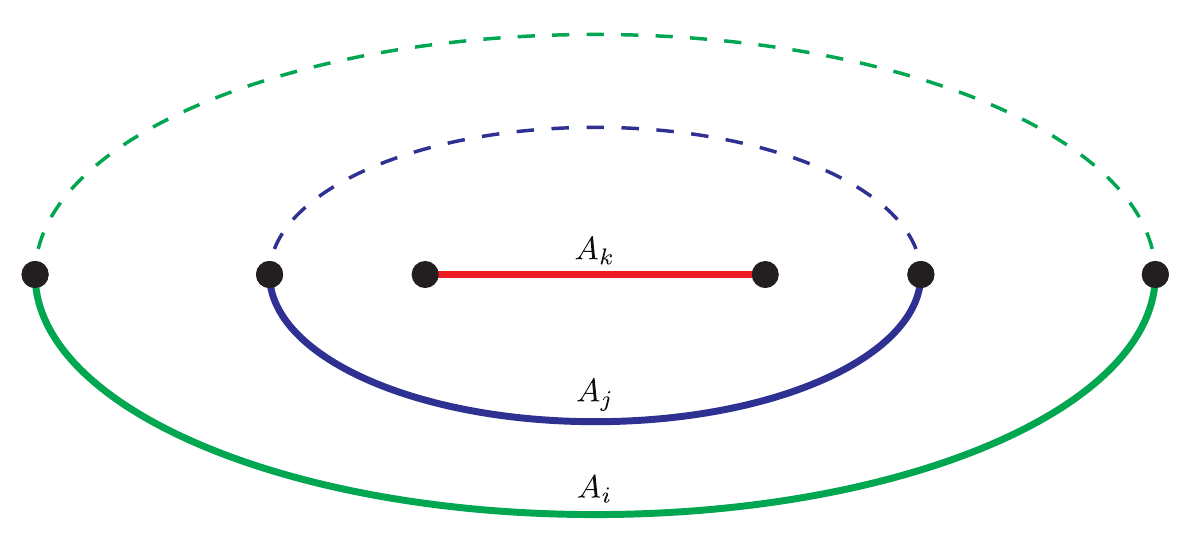}} \caption{\label{fig:6.2}Three A-arcs, after three handleslides}
\end{figure}

This gives a distinguished hexagon, and the same thing holds for B-arcs.
Here, $A_{i}/A_{j}$ denotes the handleslide of $A_{i}$ over $A_{j}$.
\[
\xymatrix{ & D_{1}\ar[r]_{A_{i}/A_{k}} & D_{2}\ar[dr]_{A_{i}/A_{j}}\\
D\ar[dr]_{A_{i}/A_{j}}\ar[ur]_{A_{j}/A_{k}} &  &  & D_{3}\\
 & D_{1}^{\prime}\ar[r]_{A_{i}/A_{k}} & D_{2}^{\prime}\ar[ur]_{A_{j}/A_{k}}
}
\]

\subsubsection*{(e) A(B)-equivalences commute with stabilizations}

Suppose that applying an A(B)-equivalence on an A(B)-arc $A_{i}$($B_{j}$)
in the bridge diagram $D$ gives $D_{1}$, and applying a stabilization
on an arc of $D$, which does not contain the basepoint and is different
from $A_{i}$, gives $D^{s}$, and applying a stabilization on the
corresponding arc of $D_{1}$ gives $D_{1}^{s}$. Then $D_{1}^{s}$
can be obtained from $D^{s}$ by an A(B)-equivalence, and so we get
a distinguished square.
\[
\xymatrix{D\ar[r]_{A}\ar[d]_{s} & D_{1}\ar[d]_{s}\\
D^{s}\ar[r]_{A} & D_{1}^{s}
}
\]

\subsubsection*{(f) Stabilizations applied on different arcs commute }

Consider applying stabilization on two different arcs of a bridge
diagram $D$. There are two possible orders, which give us a distinguished
square.
\[
\xymatrix{D\ar[d]_{s}\ar[r]_{s} & D_{2}\ar[d]_{s}\\
D_{1}\ar[r]_{s} & D_{3}
}
\]

\subsubsection*{(g) Commutative triangle of two stabilizations and an isotopy}

Given a point $x\in\partial A_{i}\cap\partial B_{j}$ of a bridge
diagram $D=(\{A_{i}\},\{B_{j}\})$, suppose that applying a stabilization
on $A_{i}$ near $x$ gives $D_{1}$ and applying a stabilization
on $B_{j}$ near $x$ gives $D_{2}$. Then $D_{1}$ and $D_{2}$ differ
by an isotopy. So we get a distinguished triangle.
\[
\xymatrix{D\ar[r]_{s}\ar[d]_{s} & D_{2}\\
D_{1}\ar[ur]_{\text{isotopy}}
}
\]

Given a bridge diagram $D$ of a based link $(L,p)$, whose set of
endpoints of arcs is given by $S\subset\Sigma$, we can also define
maps on the equivariant Floer cohomology, associated to diffeomorphisms
$\phi\in\text{Diff}^{+}(\Sigma)$ which fix $p$ and $S$ pointwise
in a natural way. Then we get few more types of distinguished diagrams.

\subsubsection*{(h) Diffeomorphism and basic moves commute}

\[
\xymatrix{D\ar[r]_{\text{diffeo}}\ar[d]_{\text{basic}} & \phi(D)\ar[d]^{\text{basic}}\\
D_{1}\ar[r]_{\text{diffeo}} & \phi(D^{\prime})
}
\]

Here, we use diffeomorphisms $\phi\in\text{Diff}^{+}(\Sigma,S,p)$.

\subsubsection*{(i) Diffeomorphism can be undone by basic moves}

Suppose that a diffeomorphism $\phi\in\text{Diff}^{+}(\Sigma,S,p)$
maps a bridge diagram $D$ of $(L,p)$ to $\phi(D)$. Since they represent
the same based link, we can obtain $\phi(D)$ from $D$ by a sequence
of basic moves. So we get a distinguished diagram.
\[
\xymatrix{D\ar[rrrr]_{\text{diffeo}}\ar[dr]_{\text{basic}} &  &  &  & \phi(D)\\
 & D_{1}\ar[r]_{\text{basic}} & \cdots\ar[r]_{\text{basic}} & D_{k}\ar[ur]_{\text{basic}}
}
\]
\begin{rem*}
In the paper \cite{eqv-Floer}, the authors define stabilizations
on an arc only when the stabilization is applied near an endpoint
of an arc and that endpoint is very close to the basepoint. However,
we can similarly define stabilization maps even when the point where
stabilizations occur is not close to the basepoint and the endpoints
of arcs, by taking a family of almost complex structures of type $\text{Sym}^{g}(\mathfrak{j})$
as in Theorem \ref{thm:eqvtrans}, where the $1$-parameter family
$\mathfrak{j}$ is split and has long neck near the point at which
a stabilization is performed. Once we prove that the equivariant Floer
cohomology satisfies the commutative squares of type (e), we can immediately
deduce that such maps are indeed isomorphisms.
\end{rem*}
\begin{lem}
\label{lemma-isotopy-tri}Let $(S,\boldsymbol{\alpha},\boldsymbol{\beta})$
and $(S,\boldsymbol{\beta},\boldsymbol{\gamma})$ be $\mathbb{Z}_{2}$-Heegaard
diagrams given by taking branched double covers of bridge diagrams
of links drawn on a sphere $\Sigma$, and suppose that $\{\boldsymbol{\alpha}_{t}\}_{t\in[0,1]}$
is a $\mathbb{Z}_{2}$-invariant isotopy so that $\boldsymbol{\alpha}_{1}=\boldsymbol{\alpha}$.
Suppose further that $\boldsymbol{\alpha}_{t},\boldsymbol{\beta},\boldsymbol{\gamma}$
are pairwise transverse for $t=0$ and $t=1$. Then, for each $\mathbb{Z}_{2}$-invariant
cycle $\theta_{\beta,\gamma}\in\widehat{CF}(S,\boldsymbol{\beta},\boldsymbol{\gamma})$
and each element $x_{\alpha_{0},\beta}\in H_{\ast}(\widetilde{CF}_{\mathbb{Z}_{2}}(S,\boldsymbol{\alpha}_{0},\boldsymbol{\beta})\otimes_{\mathbb{F}_{2}[\mathbb{Z}_{2}]}\mathbb{F}_{2})$,
we have 
\[
\Gamma_{\{\boldsymbol{\alpha}_{t}\},\gamma}(\hat{f}_{\alpha_{0},\beta,\gamma}(x_{\alpha_{0},\beta}\otimes\theta_{\beta,\gamma}))=\hat{f}_{\alpha,\beta,\gamma}(\Gamma_{\{\boldsymbol{\alpha}_{t}\},\beta}(x_{\alpha_{0},\beta})\otimes\theta_{\beta,\gamma}),
\]
 where $\hat{f}$ denote the equivariant triangle maps, as defined
in the proof of Proposition 3.25 of \cite{eqv-Floer}, and $\Gamma$
denote the equivariant isotopy map.
\end{lem}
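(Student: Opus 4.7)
The plan is to establish this as an equivariant upgrade of a standard compatibility in Heegaard Floer theory. Non-equivariantly, the analogous identity between an isotopy map $\Gamma$ and a triangle map $\hat{f}$ is proven by counting holomorphic triangles whose $\boldsymbol{\alpha}$-boundary travels along a 1-parameter family of Lagrangians, that is, by considering the moduli space of $\tilde J$-holomorphic triangles with boundary on $(\boldsymbol{\alpha}_t,\boldsymbol{\beta},\boldsymbol{\gamma})$ as $t$ ranges in $[0,1]$. The ends of the resulting 1-dimensional moduli spaces match the two compositions appearing on either side of the claimed identity, together with boundary terms that vanish because $\theta_{\beta,\gamma}$ is a cycle. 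My goal is to run this same argument $\mathbb{Z}_2$-equivariantly in the homotopy coherent framework of \cite{eqv-Floer}.

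First I would set up the appropriate ambient category. Just as the equivariant isotopy map arises from a $\mathbb{Z}_2$-equivariant homotopy coherent diagram $\mathcal{E}\mathbb{Z}_2\times\mathscr{I}\to\overline{\mathcal{J}}$ and the equivariant triangle map arises from a $\mathbb{Z}_2$-equivariant homotopy coherent diagram $\mathcal{E}\mathbb{Z}_2\to\mathscr{D}$, the compatibility map should come from a sufficiently generic $\mathbb{Z}_2$-equivariant homotopy coherent diagram $\mathcal{E}\mathbb{Z}_2\times\mathscr{I}\to\mathscr{D}$ whose restrictions at $0$ and $1$ are the data computing the two sides of the desired identity. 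The associated homotopy coherent diagram in $\mathrm{Kom}_{\mathbb{F}_2}$ produces, after taking homotopy colimits and dualizing, a map between equivariant Floer complexes and, crucially, a chain homotopy between the two compositions of the lemma.

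Second, I would verify that this chain homotopy descends to the claimed equality in (co)homology. On generators $\alpha_n\otimes x$ of the freed Floer complex, the homotopy is defined by counting triangles of Maslov index $-n$ with $\boldsymbol{\alpha}$-boundary on $\boldsymbol{\alpha}_t$, incorporating both the $n$-fold $\mathbb{Z}_2$-coherence data and the additional $[0,1]$-family parameter. Weak admissibility, proven in the previous section for all involutive double, triple, and quadruple diagrams arising from branched double covers of bridge diagrams, guarantees that every such count is finite. A standard compactness-and-gluing analysis then identifies the ends of the relevant 1-parameter moduli spaces with the four contributions: the two compositions $\Gamma\circ\hat f$ and $\hat f\circ\Gamma$, a term involving $d\theta_{\beta,\gamma}=0$, and a term involving the Floer differential, the last of which vanishes after passing to cohomology.

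The main obstacle will be achieving $\mathbb{Z}_2$-equivariant transversality simultaneously for all of the moduli spaces appearing in the $\mathcal{E}\mathbb{Z}_2\times\mathscr{I}$-indexed diagram, so that the usual extension-to-genericity argument from Proposition 3.25 of \cite{eqv-Floer} applies. By Theorem \ref{thm:eqvtrans} we may restrict to almost complex structures of the form $\mathrm{Sym}^g(\tilde{\mathfrak{j}})$, for which the invariant locus of $\mathrm{Sym}^g(\Sigma)$ meets $\mathbb{T}_{\alpha}\cap\mathbb{T}_{\beta}$ only in isolated points; hence no holomorphic triangle or quadrilateral is contained in the fixed locus, and the standard Sard--Smale argument gives equivariant transversality after a generic perturbation of $\mathfrak{j}$ in a neighborhood of the non-fixed part of the moving family. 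Once this is in place, the rest of the argument is a routine, if notationally involved, adaptation of the non-equivariant compatibility.
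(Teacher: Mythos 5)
Your proposal takes essentially the same route as the paper: both construct the chain homotopy by counting holomorphic triangles whose $\boldsymbol\alpha$-boundary travels along the moving family $\{\boldsymbol\alpha_t\}$, analyze the ends of the resulting 1-dimensional parametrized moduli spaces (matching the two compositions, with the $\beta\gamma$-bubbling killed because $\theta_{\beta,\gamma}$ is a cycle), and appeal to equivariant transversality via Theorem~\ref{thm:eqvtrans}. The one detail you gloss over is the additional end type special to the equivariant setting, namely degenerations of the almost complex structure to the boundary $\tilde J|_{\partial[0,1]^\ell}$ of the parametrizing cube; the paper handles these by observing that $\mathbb{Z}_2$-equivariance of the coherence data forces those contributions to occur in pairs, hence to vanish over $\mathbb{F}_2$, and you would want to add that observation rather than list only the non-equivariant end types.
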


\begin{proof}
Recall that we used a topological category $\mathscr{D}$ when constructing
equivariant triangle map. Denote the three edges of the triangle $\triangle$
by $e_{\alpha},e_{\beta},e_{\gamma}$, and parametrize the edge $e_{\alpha}$
by a function $E_{\alpha}\,:\,\mathbb{R}\rightarrow\triangle$. Given
a parametrized family $\tilde{J}\,:\,[0,1]^{\ell}\rightarrow\mathcal{J}_{\triangle}$,
consider the moduli spaces
\begin{align*}
\mathcal{M}_{\tau} & (\tilde{J})=\bigcup_{t\in[0,1]^{\ell}}\left\{ u\,:\,\triangle\rightarrow\text{Sym}^{g}(\Sigma)\left|\begin{array}{c}
u\circ e_{\alpha}(t)\in\mathbb{T}_{\alpha_{t+\tau}},\,u(e_{\beta})\subset\mathbb{T}_{\beta},\,u(e_{\gamma})\subset\mathbb{T}_{\gamma}\\
u\text{ is }J(t)-\text{holomorphic}
\end{array}\right.\right\} ,\\
\mathcal{M}(\tilde{J}) & =\bigcup_{\tau\in\mathbb{R}}\mathcal{M}_{\tau}(\tilde{J}),
\end{align*}
 and split them into homotopy classes $\phi\in\pi_{2}^{\{\mathbb{T}_{\alpha_{t}}\},\mathbb{T}_{\beta},\mathbb{T}_{\gamma}}(\mathbf{x},\mathbf{y},\mathbf{z})$
for $\mathbf{x}\in\mathbb{T}_{\alpha_{0}}\cap\mathbb{T}_{\beta},\mathbf{y\in\mathbb{T}_{\beta}\cap\mathbb{T}_{\gamma}},\mathbf{z}\in\mathbb{T}_{\gamma}\cap\mathbb{T}_{\alpha}$,
as follows.
\[
\mathcal{M}_{\tau}(\tilde{J})=\bigcup_{\phi}\mathcal{M}_{\tau}(\phi;\tilde{J}),\,\,\mathcal{M}(\tilde{J})=\bigcup_{\phi}\mathcal{M}(\phi;\tilde{J})
\]
 Then we define the map $G(\tilde{J})\,:\,\widehat{CF}(S,\boldsymbol{\alpha}_{0},\boldsymbol{\beta})\rightarrow\widehat{CF}(S,\boldsymbol{\alpha},\boldsymbol{\gamma})$
as follows:
\[
G(\tilde{J})(\mathbf{x})=\sum_{\mathbf{z}\in\mathbb{T}_{\gamma}\cap\mathbb{T}_{\alpha}}\sum_{\mathbf{y}\in\theta_{\beta,\gamma}}\sum_{\phi\in\pi_{2}^{\{\mathbb{T}_{\alpha_{t}}\},\mathbb{T}_{\beta},\mathbb{T}_{\gamma}}(\mathbf{x},\mathbf{y},\mathbf{z}),\,\mu(\phi)=-1-\ell}\left|\mathcal{M}(\phi;\tilde{J})\right|\cdot\mathbf{z}.
\]
 As in the proof of Proposition 3.25 of \cite{eqv-Floer}, the function
$G$ induces a map 
\[
F_{G}\,:\,\widetilde{CF}_{\mathbb{Z}_{2}}(S,\boldsymbol{\alpha}_{0},\boldsymbol{\beta})\rightarrow\widetilde{CF}_{\mathbb{Z}_{2}}(S,\boldsymbol{\alpha},\boldsymbol{\gamma}).
\]

There are three types of ends in the moduli space $\mathcal{M}(\phi;\tilde{J})$
when $\mu(\phi)=-\ell$. The first type is the degeneration of the
almost complex structure to the boundary $\tilde{J}|_{\partial[0,1]^{\ell}}$,
which does not contribute to the total count of ends; since we are
using $\mathbb{Z}_{2}$-equivariant diagrams of almost complex structures,
the count of such ends must be even and hence zero in $\mathbb{F}_{2}$.
The second and third types are the ones in the proof of Proposition
8.14 in \cite{OSz-original}, which contribute to 
\[
\Gamma_{\{\boldsymbol{\alpha}_{t}\},\gamma}(\hat{f}_{\alpha_{0},\beta,\gamma}(\mathbf{x}\otimes\theta_{\beta,\gamma}))+\hat{f}_{\alpha,\beta,\gamma}(\Gamma_{\{\boldsymbol{\alpha}_{t}\},\beta}(\mathbf{x})\otimes\theta_{\beta,\gamma})
\]
 and $\partial F_{G}(\mathbf{x})+F_{G}(\partial\mathbf{x})$, respectively.
Therefore we deduce that 
\[
\Gamma_{\{\boldsymbol{\alpha}_{t}\},\gamma}(\hat{f}_{\alpha_{0},\beta,\gamma}(x_{\alpha_{0},\beta}\otimes\theta_{\beta,\gamma}))+\hat{f}_{\alpha,\beta,\gamma}(\Gamma_{\{\boldsymbol{\alpha}_{t}\},\beta}(x_{\alpha_{0},\beta})\otimes\theta_{\beta,\gamma})=\partial F_{G}(x_{\alpha_{0},\beta})+F_{G}(\partial x_{\alpha_{0},\beta}).
\]
 Since $\theta_{\beta,\gamma}$ is $\mathbb{Z}_{2}$-invariant, the
map $F_{G}$ is also $\mathbb{Z}_{2}$-invariant. Therefore we get
the desired result.
\end{proof}
\begin{lem}
\label{lem:lemma-isotopy-concat}Let $(S,\boldsymbol{\alpha},\boldsymbol{\beta})$
be a $\mathbb{Z}_{2}$-Heegaard diagram given by taking branched double
cover of a bridge diagram of a link drawn on a sphere $\Sigma$, and
suppose that $\{\boldsymbol{\alpha}_{t}\}_{t\in[0,1]}$ is a $\mathbb{Z}_{2}$-invariant
isotopy so that $\boldsymbol{\alpha}_{1}=\boldsymbol{\alpha}$. Suppose
further that $\boldsymbol{\alpha}_{t},\boldsymbol{\beta},\boldsymbol{\gamma}$
are pairwise transverse for $t=0,\frac{1}{2},1$. Then, for each element
$x_{\alpha_{0},\beta}\in H_{\ast}(\widetilde{CF}_{\mathbb{Z}_{2}}(S,\boldsymbol{\alpha}_{0},\boldsymbol{\beta})\otimes_{\mathbb{F}_{2}[\mathbb{Z}_{2}]}\mathbb{F}_{2})$,
we have 
\[
\Gamma_{\{\boldsymbol{\alpha}_{t}\}_{t\in\left[\frac{1}{2},1\right]},\beta}(\Gamma_{\{\boldsymbol{\alpha}_{t}\}_{t\in\left[0,\frac{1}{2}\right]},\beta}(x_{\alpha_{0},\beta}))=\Gamma_{\{\boldsymbol{\alpha}_{t}\}_{t\in[0,1]},\beta}(x_{\alpha_{0},\beta}),
\]
 where $\Gamma$ denotes the equivariant isotopy map.
\end{lem}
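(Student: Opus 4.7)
The plan is to follow the strategy of \autoref{lemma-isotopy-tri}, interpolating between the composition $\Gamma_{\{\boldsymbol{\alpha}_t\}_{t\in[\frac{1}{2},1]},\beta}\circ\Gamma_{\{\boldsymbol{\alpha}_t\}_{t\in[0,\frac{1}{2}]},\beta}$ and the full isotopy map $\Gamma_{\{\boldsymbol{\alpha}_t\}_{t\in[0,1]},\beta}$ by a $\mathbb{Z}_2$-equivariant one-parameter family of homotopy coherent diagrams, and analyzing the ends of the resulting one-parameter moduli spaces.

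Concretely, for $R\in[0,\infty]$ I would construct a $\mathbb{Z}_2$-equivariant family of Lagrangian isotopies $\{\boldsymbol{\alpha}_t^R\}_{t\in[0,1]}$ that agrees with $\{\boldsymbol{\alpha}_t\}$ at $R=0$ and, for large $R$, has a long constant neck of length $R$ around $t=\frac{1}{2}$ at $\boldsymbol{\alpha}_{1/2}$. Lift this to a $\mathbb{Z}_2$-equivariant homotopy coherent diagram $F\colon \mathcal{E}\mathbb{Z}_2\times \mathscr{I}\to\overline{\mathcal{J}}$, where $\mathscr{I}$ has two objects $\{0,\infty\}$ and a single non-identity morphism between them, so that the $\{0\}$-end of $F$ is the homotopy coherent diagram used to define $\Gamma_{\{\boldsymbol{\alpha}_t\}_{t\in[0,1]},\beta}$, while the $\{\infty\}$-end of $F$, by the standard neck-stretching analysis of broken holomorphic disks with boundary on the long-constant stretch, computes the composition $\Gamma_{\{\boldsymbol{\alpha}_t\}_{t\in[\frac{1}{2},1]},\beta}\circ\Gamma_{\{\boldsymbol{\alpha}_t\}_{t\in[0,\frac{1}{2}]},\beta}$. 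Counting $\tilde{J}$-holomorphic disks with $\mu(\phi)=-1$ in the one-parameter-larger moduli space associated to this extended family (exactly as in \autoref{lemma-isotopy-tri}, but with $R$ playing the role of an additional parameter and without the third Lagrangian $\boldsymbol{\gamma}$) defines a $\mathbb{Z}_2$-equivariant map
\[
H\colon \widetilde{CF}_{\mathbb{Z}_2}(S,\boldsymbol{\alpha}_0,\boldsymbol{\beta})\to \widetilde{CF}_{\mathbb{Z}_2}(S,\boldsymbol{\alpha},\boldsymbol{\beta}).
\]
The ends of the corresponding one-parameter moduli spaces split into three types, as in the previous lemma: ends at $R=0$ contribute $\Gamma_{\{\boldsymbol{\alpha}_t\}_{t\in[0,1]},\beta}$; ends at $R=\infty$, by the neck-stretching / Gromov compactness analysis, contribute the composition $\Gamma_{\{\boldsymbol{\alpha}_t\}_{t\in[\frac{1}{2},1]},\beta}\circ\Gamma_{\{\boldsymbol{\alpha}_t\}_{t\in[0,\frac{1}{2}]},\beta}$; and strip-breaking at either end contributes $\partial H + H\partial$. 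Since $H$ is $\mathbb{Z}_2$-equivariant by construction, passing to the quotient by $\mathbb{F}_2[\mathbb{Z}_2]$ and then to cohomology yields the required equality.

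The main obstacle will be setting up the interpolating family coherently enough that (i) it extends to a $\mathbb{Z}_2$-equivariant homotopy coherent diagram satisfying the coherence conditions of Definition 3.3 of \cite{eqv-Floer}, and (ii) all the extended moduli spaces in the family are transversely cut out. The first point is essentially a diagrammatic exercise in the topological category $\overline{\mathcal{J}}$, since $\overline{\mathcal{J}}(J,J')$ is weakly contractible. For the second point, \autoref{thm:eqvtrans} gives equivariant transversality for cylindrical complex structures of the form $\mathrm{Sym}^g(\tilde{\mathfrak{j}})$, because no nonconstant holomorphic disk between $\mathbb{T}_\alpha$-type tori can be entirely contained in the fixed locus; a generic $\mathbb{Z}_2$-equivariant perturbation within this class then achieves transversality for the one-parameter family of moduli spaces needed here. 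Once these two points are handled, the end analysis is formally identical to that of \autoref{lemma-isotopy-tri}.
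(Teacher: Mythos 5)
Your proposal is correct and matches the paper's approach: the paper's proof is a one-line pointer to Theorem 7.3 of Ozsv\'ath--Szab\'o adapted to the equivariant setting as in the preceding lemma, and your proposal simply fleshes out what that entails (a neck-stretching chain homotopy between the composed continuation map and the single one, with the ends at $R=0$, $R=\infty$, and strip-breaking accounting for the three terms, all carried out $\mathbb{Z}_2$-equivariantly so that it descends through the homotopy quotient). The only item you leave implicit that the paper's Lemma~\ref{lemma-isotopy-tri} spells out is the fourth type of end, degeneration of the family $\tilde{J}$ to the boundary of its parameter cube, which vanishes mod $2$ precisely because of $\mathbb{Z}_2$-equivariance of the diagram; but you already invoke that equivariance for $H$, so this is a presentational rather than substantive gap.
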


\begin{proof}
As in the proof of \ref{lemma-isotopy-tri}, we can mimic the proof
of Theorem 7.3 of \cite{OSz-original} to make it work in the equivariant
setting.
\end{proof}
\begin{prop}
\label{prop:commutation-a-h}The equivariant Floer cohomology of based
links in $S^{3}$ makes the distinguished diagrams of type (a)-(h)
commutative.
\end{prop}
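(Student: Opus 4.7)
The plan is to verify each type of distinguished diagram by expressing the maps in question as equivariant isotopy maps, equivariant triangle maps, equivariant stabilization maps, or equivariant diffeomorphism maps, and then invoking equivariant analogues of the classical associativity/naturality identities in Heegaard Floer theory. Since Section 4 guarantees weak admissibility of all involutive double-, triple-, and quadruple-diagrams, counting higher polygons is legitimate. Throughout, I will compute using cylindrical complex structures of the form $\mathrm{Sym}^g(\tilde{\mathfrak{j}})$ as in Theorem \ref{thm:eqvtrans}, so equivariant transversality is automatic, and I will ensure at every step that the auxiliary data entering the triangle counts can be chosen $\mathbb{Z}_2$-invariantly (in particular, all ``top'' cocycles in intermediate $\beta$-$\beta'$ or $\alpha$-$\alpha'$ diagrams).

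For cases (a)--(d), each basic A- or B-equivalence lifts on the branched double cover to either an equivariant Hamiltonian isotopy or an equivariant handleslide, which in turn induces an equivariant isotopy map (as in Lemma \ref{lemma-isotopy-tri} and Lemma \ref{lem:lemma-isotopy-concat}) or an equivariant triangle map. Case (a) follows because the two equivalences act on disjoint curve systems: for two isotopies this is Lemma \ref{lemma-isotopy-tri}, while for handleslides one sets up a $\mathbb{Z}_2$-equivariant quadruple diagram and applies equivariant associativity of the triangle map (described below). Cases (b), (c), (d) are then the equivariant versions of the classical ``triangle of handleslides'', ``disjoint handleslides commute'', and the hexagon identity for three mutually-overlapping handleslides, respectively. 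For each, I would write down the relevant $\mathbb{Z}_2$-equivariant quadruple (or quintuple, for (d)) diagram, apply the equivariant associativity to the count of holomorphic rectangles (pentagons), and use that Lemma \ref{lemma-isotopy-tri} lets any required isotopy subcase commute with the triangle-map portion.

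For cases (e)--(g), recall that a stabilization in the sense of \cite{eqv-Floer} is the composition of a connected sum with a standard genus-$1$ diagram of $S^3$ followed by an equivariant triangle map realizing a saddle. Case (e) follows because the stabilization region may be taken disjoint from the support of the A- or B-equivalence, so with a split family of almost complex structures having long neck along the connect sum, the two maps decouple and one reduces again to equivariant associativity on a $\mathbb{Z}_2$-equivariant quadruple diagram. Case (f) is identical, with the two stabilizations performed in disjoint regions. Case (g) is a direct chain-level computation, since the two stabilizations near the common endpoint $x \in \partial A_i \cap \partial B_j$ are carried to one another by a small isotopy of the bridge diagram, and on the branched double cover this becomes an equivariant Hamiltonian isotopy whose associated map is the identity on the generators representing the stabilization, by the Riemann mapping argument used in Theorem \ref{thm:stab}. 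Finally, case (h) is immediate: for $\phi \in \mathrm{Diff}^+(\Sigma, S, p)$ we may arrange $\phi$ to commute with the covering involution $\tau$, and then the equivariant diffeomorphism map $\phi_\ast$ intertwines every choice of Floer data on $D$ with the corresponding data on $\phi(D)$, so any basic move on $D$ is carried by $\phi_\ast$ to the corresponding basic move on $\phi(D)$, yielding the commuting square tautologically.

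The main obstacle is the equivariant associativity statement needed throughout (a)--(e): one must show that for a $\mathbb{Z}_2$-equivariant homotopy coherent diagram $\mathcal{E}\mathbb{Z}_2 \to \mathscr{D}$ extended to its quadruple-diagram analogue (obtained by replacing the trinion $\triangle$ by a disk with four boundary punctures), the hocolim of the resulting diagram in $\mathrm{Kom}_{\mathbb{F}_2}$ produces a $\mathbb{Z}_2$-equivariant chain homotopy between the two iterated compositions of equivariant triangle maps. The construction parallels that of the equivariant triangle map in Section 3, and the boundary analysis of the one-parameter families of holomorphic quadrilaterals proceeds in exactly the non-equivariant fashion; the weak admissibility of involutive quadruple-diagrams established in Section 4 is precisely what makes this count well-defined. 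Once this equivariant associativity is in hand, all eight commuting diagrams follow by the case-by-case reductions indicated above.
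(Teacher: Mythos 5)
Your high-level plan (reduce each distinguished diagram to an equivariant chain homotopy coming from holomorphic polygon counts, after placing yourself in the $\mathrm{Sym}^g(\tilde{\mathfrak{j}})$ setting) agrees with the paper's strategy, and your case groupings roughly mirror the paper's. However, there is a genuine gap in how you handle transversality for the higher polygons, and your treatment of the triangle-shaped diagrams (b), (d), (g) misses the paper's actual mechanism.

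The transversality gap: you assert that equivariant transversality is ``automatic'' by Theorem \ref{thm:eqvtrans}, but that theorem only governs Whitney \emph{disks}. For the holomorphic squares that furnish the chain homotopy in cases (a), (c), (f), (h), transversality with $\mathbb{Z}_2$-invariant data is not automatic and must be argued. The paper supplies exactly the missing step: after perturbing as in Lemma \ref{lemma-isotopy-tri} so that no constant triangles of negative Maslov index occur, any square lying entirely in the $\mathbb{Z}_2$-fixed locus is necessarily constant and has Maslov index $0$, hence does not appear in the count of index $-1$ squares; invoking the Khovanov--Seidel argument (Prop.\ 5.13 of \cite{quiver-Floer}), one then gets transversality for a \emph{generic $\mathbb{Z}_2$-invariant 1-parameter family}, rather than needing the full freed/hocolim machinery for quadrilaterals. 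Weak admissibility of involutive quadruple-diagrams, which you emphasize, only gives finiteness of the relevant moduli spaces — it does not substitute for the transversality argument. Your proposed alternative (extend the $\mathscr{D}$-category of triangle data to a quadruple-diagram analogue and take hocolims) would in principle sidestep invariant transversality, but you do not articulate that this is its role, and you still lean on the false ``automatic'' claim.

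For the diagrams (b), (d), (g), you describe them as instances of equivariant associativity (and for (g), a Riemann-mapping/stabilization argument), but you never close the loop. The crux of these triangle-shaped diagrams is not associativity per se but the identification $f_{\alpha\beta\gamma}(\Theta_{D,D_1} \otimes \Theta_{D_1,D_2}) = \Theta_{D,D_2}$; associativity alone only tells you the two compositions are chain-homotopic, not that they agree with the given one-step map. The paper closes this by observing that the relevant $\widehat{CF}$ groups for an isotopy or a saddle admit a \emph{unique} $\mathbb{Z}_2$-invariant cocycle representing the top class, so the image of the composite is forced. Your case (g) reduction to ``an equivariant Hamiltonian isotopy whose associated map is the identity on the generators'' would need the same uniqueness input to conclude. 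Case (e) in the paper is actually argued by following Theorem 2.14 / Lemma 2.15 of \cite{OSz-4-mfd-inv} in the equivariant setting, which is stabilization-specific and not a generic disjoint-support argument.
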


\begin{proof}
By equivariant transversality and the above lemma, we only have to
prove that the corresponding commutative diagrams of $\widehat{CF}$
groups are satisfied up to $\mathbb{Z}_{2}$-equivariant chain homotopies. 

For the distinguished diagrams of type (c), we already know that it
is satisfied on the $\widehat{CF}$ level, up to a chain homotopy.
The chain homotopy is given by counting holomorphic squares, after
a perturbation as in Figure 4 of \cite{eqv-Floer}. Here, we can always
perturb a given bridge diagram by an isotopy using Lemma \ref{lemma-isotopy-tri}.
After making such a perturbation, we have no constant triangle of
negative Maslov index, and thus the equivariant triangle map is induced
by the ordinary triangle map with respect to generic 1-parameter families
of almost complex structures. Since holomorphic squares contained
in the $\mathbb{Z}_{2}$-fixed locus are constant squares, and such
squares have Maslov index 0, they are not counted in the square map.
This implies, by the arguments of the proof of Proposition 5.13 in
\cite{quiver-Floer}, that a generic $\mathbb{Z}_{2}$-invariant 1-parameter
family of almost complex structures achieves transversality for squares
of Maslov index $-1$, which tells us that the holomorphic square
map is well-defined for generic $\mathbb{Z}_{2}$-invariant families.
Hence the given chain homotopy is $\mathbb{Z}_{2}$-equivariant, i.e.
the given square diagram commutes up to $\mathbb{Z}_{2}$-equivariant
chain homotopy on the $\widehat{CF}$ level. Therefore we get a commuting
square diagram of corresponding $\widehat{HF}_{\mathbb{Z}_{2}}$ groups.
The same argument can be used to prove the commutativity for distinguished
squares of type (a), (f), and (h). Also, by Theorem 2.14 of \cite{OSz-4-mfd-inv},
we can follow the proof of Lemma 2.15 in \cite{OSz-4-mfd-inv} to
show that distinguished squares of type (e) also commute.

For the distinguished diagrams of type (b), the A-equivalence from
$D$ to $D_{1}$ is given by evaluating the triangle map using a $\mathbb{Z}_{2}$-invariant
cocycle $\Theta_{D,D_{1}}$ which represent the top class, and similarly
consider $\mathbb{Z}_{2}$-invariant cocycles $\Theta_{D,D_{2}}$
and $\Theta_{D_{1},D_{2}}$. By the technique used to prove the commuativity
of distinguished diagrams of type (c), it suffices to prove that the
image of $\Theta_{D,D_{1}}\otimes\Theta_{D_{1},D_{2}}$ under the
triangle map is the same as the cocycle $\Theta_{D,D_{2}}$. Since
the image of $\Theta_{D,D_{1}}\otimes\Theta_{D_{1},D_{2}}$ must also
be a $\mathbb{Z}_{2}$-invariant cocycle which represents the top
class, the proof will be finished if $\Theta_{D,D_{2}}$ is the only
$\mathbb{Z}_{2}$-invariant cocycle which represents the top class.
Since the Heegaard diagram for an isotopy or a saddle obviously admits
a unique representative of its top class, we are done. The same argument
can be used to prove the commutativity of distinguished squares of
type (d) and (g).
\end{proof}
\begin{lem}
\label{lem:identity-diff}Given a bridge diagram $(\{A_{i}\},\{B_{i}\})$
of a based link $(L,p)$ in $S^{3}$, drawn on a sphere $\Sigma=S^{2}$,
let $\{p_{1},\cdots,p_{n}\}$ be the set of endpoints of arcs $A_{i}$
and $B_{i}$, which are not equal to the basepoint $p$. Given a 1-parameter
family of self-diffeomorphisms $\{\phi_{t}\}_{t\in[0,1]}$ of $\Sigma$,
such that $\phi_{0}=\text{id}_{\Sigma}$, each $\phi_{t}$ fixes $p$
pointwise and $\{p_{1},\cdots,p_{n}\}$ setwise, and the images of
A- and B-curves under $\phi_{1}$ intersect transversely with the
original A- and B-curves, consider the following two maps. First,
the isotopy map induced by the isotopy $\{\phi_{t}(A_{i})\}$ and
$\{\phi_{t}(B_{i})\}$ of A- and B-arcs:
\[
\Gamma\,:\,\widehat{HF}_{\mathbb{Z}_{2}}(\Sigma(L),p)\rightarrow\widehat{HF}_{\mathbb{Z}_{2}}(\Sigma(L),p).
\]
Next, the diffeomorphism map 
\[
\phi^{\ast}:\widehat{HF}_{\mathbb{Z}_{2}}(\Sigma(L),p)\rightarrow\widehat{HF}_{\mathbb{Z}_{2}}(\Sigma(L),p).
\]
 Then we have $\Gamma=\phi^{\ast}$.
\end{lem}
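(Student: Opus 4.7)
The plan is to realize both $\Gamma$ and $\phi^{*}$ as tautological pushforwards composed with continuation maps along homotopic $1$-parameter families of equivariant cylindrical almost complex structures, by a fiberwise change of variables in the moduli of holomorphic disks defining $\Gamma$. First, I would lift the isotopy $\{\phi_{t}\}$ to a $\mathbb{Z}_{2}$-equivariant isotopy $\{\tilde{\phi}_{t}\}_{t\in[0,1]}$ of the branched double cover $\tilde{\Sigma}$ with $\tilde{\phi}_{0}=\mathrm{id}$, and set $\Phi_{t}:=\mathrm{Sym}^{g}(\tilde{\phi}_{t})$. Fix a generic $\mathbb{Z}_{2}$-invariant cylindrical complex structure $J=\mathrm{Sym}^{g}(\tilde{\mathfrak{j}})$ on $\mathrm{Sym}^{g}(\tilde{\Sigma})$ of the type supplied by Theorem \ref{thm:eqvtrans}, and extend it to a sufficiently generic equivariant homotopy coherent $\mathcal{E}\mathbb{Z}_{2}$-diagram.

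At the chain level, $\Gamma$ counts $J$-holomorphic Whitney disks with moving boundary $(\Phi_{t}\mathbb{T}_{\alpha},\Phi_{t}\mathbb{T}_{\beta})$ as $t$ ranges over $[0,1]$. The key step is the fiberwise substitution $u\mapsto\Phi_{t}^{-1}\circ u$: this produces a bijection between the above moduli and the moduli of $J_{t}$-holomorphic disks with fixed boundary $(\mathbb{T}_{\alpha},\mathbb{T}_{\beta})$, where $J_{t}:=\Phi_{t}^{*}J$ is a $1$-parameter family of equivariant cylindrical structures interpolating $J_{0}=J$ and $J_{1}=\Phi_{1}^{*}J$. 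Consequently, up to the tautological chain isomorphism induced by $\Phi_{1}$, the map $\Gamma$ is realized as the continuation map between equivariant Floer cochain complexes associated to the family $\{J_{t}\}$. On the other hand, the diffeomorphism map $\phi^{*}$, as defined in the ``Diffeomorphism maps'' subsection, is by construction the tautological chain isomorphism induced by $\Phi_{1}$, composed with the continuation map needed to identify $\Phi_{1}^{*}J$ with $J$ so that source and target both compute $\widehat{HF}_{\mathbb{Z}_{2}}(\Sigma(L),p)$. By Proposition 3.23 of \cite{eqv-Floer}, continuation maps on equivariant Floer cohomology depend only on the endpoints of an admissible path through generic equivariant cylindrical complex structures; since $\{J_{t}\}$ and any other admissible path from $J$ to $\Phi_{1}^{*}J$ lie in the same contractible space, the two continuation maps agree on $\widehat{HF}_{\mathbb{Z}_{2}}$, yielding $\Gamma=\phi^{*}$.

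The main subtlety will be ensuring that the change of variables respects the full equivariant structure on the freed Floer complex $\widetilde{CF}_{\mathbb{Z}_{2}}$: the $\mathcal{E}\mathbb{Z}_{2}$-diagram of almost complex structures used to define $\Gamma$ must be arranged so that pullback along $\{\Phi_{t}\}$ yields a homotopy coherent diagram compatible with the one used to define $\phi^{*}$. This is precisely the type of coherence already handled in the proofs of Lemmas \ref{lemma-isotopy-tri} and \ref{lem:lemma-isotopy-concat}, and the same bookkeeping with moduli of parametrized disks applies here; no additional Floer-theoretic input should be required.
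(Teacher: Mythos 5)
Your proof takes essentially the same route as the paper. The paper's argument is brief: it cites Lemma 9.5 of Juh\'asz--Thurston and Proposition 9.8 of Ozsv\'ath--Szab\'o (both of which are reparametrization arguments realizing the isotopy map as a tautological pullback followed by a continuation map), notes that these generalize to the equivariant setting to give the claim for sufficiently small isotopies, and then appeals to Lemma \ref{lem:lemma-isotopy-concat} to concatenate and obtain the general case. Your write-up unpacks the same reparametrization argument explicitly and invokes Proposition 3.23 of \cite{eqv-Floer} for path-independence of continuation maps, which is the correct replacement for the small-isotopy/concatenation step.

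One caveat worth flagging: the claim that the substitution $u\mapsto\Phi_{\rho(s)}^{-1}\circ u$ gives a literal bijection between the moving-boundary $J$-moduli and fixed-boundary $J_{t}$-moduli is not quite right as stated. Differentiating $\Phi_{\rho(s)}^{-1}$ in the $s$-direction produces an extra inhomogeneous term, so the reparametrized curve satisfies a \emph{perturbed} Cauchy--Riemann equation rather than being genuinely $J_{\rho(s)}$-holomorphic; this is precisely the analytic subtlety that Ozsv\'ath--Szab\'o's Proposition 9.8 is careful about, and it is also a reason why the paper proves the result first for small isotopies (where the perturbation is small and the comparison with the continuation map can be made directly) and then concatenates. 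If you want your version to stand on its own without implicitly deferring to \cite{OSz-original}, you should either adopt the small-isotopy-then-concatenate structure yourself, or say explicitly how the perturbation term is absorbed.
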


\begin{proof}
The argument used in the proof of Lemma 9.5 of \cite{Juhasz-naturality}
and Proposition 9.8 of \cite{OSz-original} directly generalizes to
the equivariant setting. Hence we see that the lemma holds when the
given isotopy $\{\phi_{t}\}$ is sufficiently small. Hence, by Lemma
\ref{lem:lemma-isotopy-concat}, we deduce that the lemma holds for
any isotopy.
\end{proof}
\begin{lem}
\label{lem:diff}Given a bridge diagram $(\{A_{i}\},\{B_{i}\})$ of
a based link $(L,p)$ in $S^{3}$, drawn on a sphere $\Sigma=S^{2}$,
let $\{p_{1},\cdots,p_{n}\}$ be the set of endpoints of arcs $A_{i}$
and $B_{i}$, which are not equal to the basepoint $p$. Given a diffeomorphism
$\phi$ of $\Sigma$ which fix $p$ pointwise and $\{p_{1},\cdots,p_{n}\}$
setwise, choose a sequence of basic moves from $(\{A_{i}\},\{B_{i}\})$
to $(\{\phi(A_{i})\},\{\phi(B_{i})\})$, and denote the induced map
between $\widehat{HF}_{\mathbb{Z}_{2}}$ as follows:
\[
T_{\phi}\,:\,\widehat{HF}_{\mathbb{Z}_{2}}(\Sigma(L),p)\rightarrow\widehat{HF}_{\mathbb{Z}_{2}}(\Sigma(L),p).
\]
 Similarly construct a map $T_{\phi^{-1}}$ by choosing a sequence
of basic moves from $(\{\phi(A_{i})\},\{\phi(B_{i})\})$ to $(\{A_{i}\},\{B_{i}\})$.
Then we have $T_{\phi}\circ T_{\phi^{-1}}=\text{id}$.
\end{lem}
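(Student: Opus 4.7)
The plan is to identify the basic-move map $T_{\phi}$ with the natural diffeomorphism-induced map $\phi_{\ast}$, and then conclude by functoriality:
\[
T_{\phi}\circ T_{\phi^{-1}} \;=\; \phi_{\ast}\circ(\phi^{-1})_{\ast} \;=\; (\phi\circ\phi^{-1})_{\ast} \;=\; \mathrm{id}.
\]

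To prove $T_{\phi}=\phi_{\ast}$ for a particular convenient choice of basic moves, I would first factor the diffeomorphism as $\phi=\phi_{N}\circ\cdots\circ\phi_{1}$, where each $\phi_{i}$ is either isotopic to the identity through admissible diffeomorphisms (those fixing $p$ pointwise and $\{p_{1},\dots,p_{n}\}$ setwise), or a half-twist that exchanges two adjacent marked points. For each piece $\phi_{i}$ isotopic to the identity, I would choose the corresponding basic moves to be the arc isotopy $\{\phi_{i,t}(A_{j}),\phi_{i,t}(B_{j})\}$; then Lemma \ref{lem:identity-diff} directly yields $T_{\phi_{i}}=(\phi_{i})_{\ast}$. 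For each half-twist piece, I would realize the twist by a natural sequence of basic moves (a stabilization, followed by appropriate handleslides, followed by a destabilization, exactly as in the proof of Theorem \ref{thm:stab}), and verify by the same small-triangle analysis used there that the induced map on $\widehat{HF}_{\mathbb{Z}_{2}}$ coincides with the diffeomorphism map. Concatenating these pieces, the commuting squares of type (h) from Proposition \ref{prop:commutation-a-h} let us glue the identifications $T_{\phi_{i}}=(\phi_{i})_{\ast}$ into a global identification $T_{\phi}=\phi_{\ast}$ for this specific concatenated sequence of basic moves.

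To extend the equality $T_{\phi}=\phi_{\ast}$ to an arbitrary choice of basic moves realizing $\phi$, I would use the distinguished commutative diagrams (a)--(h) of Proposition \ref{prop:commutation-a-h} to interpolate between any two sequences of basic moves from $D$ to $\phi(D)$ by a chain of elementary rearrangements, each preserving the induced map on $\widehat{HF}_{\mathbb{Z}_{2}}$. The principal obstacle is precisely this last step: verifying that the relations (a)--(h) are rich enough to connect any two sequences of basic moves realizing the same diffeomorphism. This is essentially a combinatorial/topological statement about the complex of bridge diagrams in the spirit of Juhasz and Thurston's naturality framework, and it requires careful bookkeeping of how handleslides, (de)stabilizations, and isotopies can be permuted and cancelled against each other; its validity here is closely intertwined with the naturality of $\widehat{HF}_{\mathbb{Z}_{2}}$ for based links, which is the subject of the surrounding section.
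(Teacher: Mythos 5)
Your plan is to identify $T_{\phi}$ with a diffeomorphism-induced map $\phi_{\ast}$ and conclude by functoriality of $\phi\mapsto\phi_{\ast}$. This is a genuinely different route from the paper, and it proves a strictly stronger statement (well-definedness of $T_{\phi}$, independent of the chosen sequence of basic moves) than what is needed. The paper instead attacks $T_{\phi}\circ T_{\phi^{-1}}=\mathrm{id}$ directly: it first invokes Lemma \ref{lem:identity-diff} to discard isotopy moves, then uses the type (a) commutation to separate the A-equivalence maps from the B-equivalence maps, and finally observes that the composition of all the A-equivalence maps in $T_{\phi}\circ T_{\phi^{-1}}$ is a closed loop from $(\Sigma,\{A_{i}\},\{A_{i}'\})$ back to itself; since the branched double cover of this ``self'' diagram admits a \emph{unique} $\mathbb{Z}_{2}$-invariant top cocycle, the same associativity trick used for diagrams of type (b) and (c) in Proposition \ref{prop:commutation-a-h} forces this composite to be the identity. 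No claim of the form $T_{\phi}=\phi_{\ast}$ is made or needed.

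The gap you flag in your last paragraph is genuine, and it is also fatal as written: establishing that any two sequences of basic moves from $D$ to $\phi(D)$ induce the same map on $\widehat{HF}_{\mathbb{Z}_{2}}$ is essentially the whole content of the naturality theorem being proved in this section, and in fact it logically \emph{depends} on the commutativity of diagrams of type (i), which is proved only after (and using) Lemma \ref{lem:diff}. So taking that route makes the argument circular. The paper's proof is structured precisely so as to avoid needing independence of the choice of basic moves: it only ever looks at the round-trip composition, for which the uniqueness of the invariant top generator in the ``self'' Heegaard diagram gives the answer directly. If you wanted to salvage your approach, you would have to produce an independent, non-circular proof of well-definedness of $T_{\phi}$, which would be a substantially longer detour than the paper's argument.
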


\begin{proof}
By Lemma \ref{lem:identity-diff}, we can assume, without losing generality,
that the given sequences of basic moves do not contain isotopies.
Since the maps induced by A-equivalences and B-equivalences commute,
it suffices to prove that the composition of all A-equivalence maps
arising in $T_{\phi}$ and $T_{\phi^{-1}}$ is the identity, since
it will also imply the same thing for B-equivalence maps. 

The Heegaard diagram given by taking the branched double cover of
$(\Sigma,\{A_{i}\},\{A_{i}^{\prime}\})$ has unique $\mathbb{Z}_{2}$-invariant
cocycle whch represents the top class, where $A_{i}^{\prime}$ is
a slight perturbation of $A_{i}$ so that $A_{i}\cap A_{i}^{\prime}=\partial A_{i}$.
Therefore we can use the arguments in the proof of Proposition \ref{prop:commutation-a-h}
to conclude that the composition of all A-equivalence maps in $T_{\phi}\circ T_{\phi^{-1}}$
is the identity, and so $T_{\phi}\circ T_{\phi^{-1}}=\text{id}$.
\end{proof}
\begin{prop}
The equivariant Floer cohomology of based knots in $S^{3}$ satisfies
the commutative diagrams of type (i)
\end{prop}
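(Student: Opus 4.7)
The plan is to show that the endomorphism $\Psi := (\phi^{-1})_* \circ T_\phi$ of $\widehat{HF}_{\mathbb{Z}_{2}}(\Sigma(L),p)$ is the identity, where $T_\phi$ denotes the composite of basic move maps along any chosen sequence $D \to D_1 \to \cdots \to D_k \to \phi(D)$ and $\phi_*$ is the diffeomorphism map. The approach combines three tools already in place: Lemma \ref{lem:identity-diff}, which identifies the diffeomorphism map with the isotopy map when $\phi$ is isotopic to the identity; Lemma \ref{lem:diff}, which shows that basic move sequences have the expected inverse; and the commutativity of type (h) squares from Proposition \ref{prop:commutation-a-h}, which transports diffeomorphism maps past basic move maps.

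First, by Lemma \ref{lem:identity-diff} I may assume without loss of generality that the basic move sequence contains no isotopies, since any isotopy in the sequence can be absorbed into the diffeomorphism map $\phi_*$. Next, I iterate the commutativity of type (h) over each step of the sequence to obtain the identity
\[
(\phi^{-1})_* \circ T_\phi = T' \circ (\phi^{-1})_*,
\]
where $T' : \widehat{HF}_{\mathbb{Z}_{2}}(\phi^{-1}(D)) \to \widehat{HF}_{\mathbb{Z}_{2}}(D)$ is the composite of basic move maps along the conjugated sequence $\phi^{-1}(D) \to \phi^{-1}(D_1) \to \cdots \to D$. Consequently $\Psi$ factors through $\widehat{HF}_{\mathbb{Z}_{2}}(\phi^{-1}(D))$ as a diffeomorphism map followed by a basic move composition returning to the original bridge diagram $D$.

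To show that $\Psi = \text{id}$, I adapt the strategy of the proof of Lemma \ref{lem:diff}. The commutativity of type (a) lets me separate A-equivalences from B-equivalences, so it suffices to prove that the composite A-equivalence part acts as the identity, the B-equivalence argument being identical. For A-equivalences, the triangle maps are controlled by $\mathbb{Z}_{2}$-invariant top-class cocycles in the relevant Heegaard triple diagrams; the uniqueness of these cocycles, established during the proof of Proposition \ref{prop:commutation-a-h}, forces the composite to act trivially on generators. The diffeomorphism factor $(\phi^{-1})_*$ enters as a tautological combinatorial relabeling that, by type (h), commutes through the basic move sequence and is exactly cancelled by the conjugated basic moves defining $T'$.

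The main technical obstacle lies in verifying that the uniqueness-of-top-class argument transfers cleanly to the combined diagrams obtained after conjugation by $\phi^{-1}$: one must check that no additional holomorphic triangles of Maslov index zero connect distinct $\mathbb{Z}_{2}$-invariant top-class cocycles in the concatenated triple diagrams, and that $\mathbb{Z}_{2}$-invariance of the cocycles is preserved under the diffeomorphism factor and the required equivariant perturbations for transversality. Once this verification is complete, the argument yields $\Psi = \text{id}$, hence $T_\phi = \phi_*$, establishing commutativity of the distinguished diagrams of type (i).
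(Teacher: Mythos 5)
Your approach diverges substantially from the paper's, and in doing so it develops a genuine gap rather than an alternative route. The paper's proof has two essential ingredients you omit. First, it reduces the problem to a generating set of the mapping class group: since $\text{PMod}(S_{0,n+1})$ is generated by full Dehn twists $T_i^2$ along arcs, and Lemma \ref{lem:diff} gives $T_\phi \circ T_{\phi^{-1}} = \text{id}$, one only needs to verify $T_\phi = \phi_*$ when $\phi$ is a full Dehn twist along a single A-arc or B-arc. Second, for that specific $\phi$, the paper performs a concrete analysis of the Heegaard triple-diagram (Figures \ref{fig:6.4}--\ref{fig:6.6}), shows that the nonconstant holomorphic triangles touching the endpoints of $A_i$ are forced to have a particular shape, and concludes that the triangle map factors as a small-isotopy triangle map (identified with a diffeomorphism map by the argument of Proposition 9.8 of \cite{OSz-original}) composed with the Dehn-twist diffeomorphism map. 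That explicit geometric comparison is what actually establishes $T_\phi = \phi_*$.

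Your argument instead factors $\Psi = (\phi^{-1})_* \circ T_\phi$ as $T' \circ (\phi^{-1})_*$ by iterating type (h), and then asserts that the diffeomorphism factor $(\phi^{-1})_*$ is a ``tautological combinatorial relabeling'' that is ``exactly cancelled'' by the conjugated basic moves $T'$. This step is circular: the claim that a diffeomorphism map can be cancelled against a sequence of basic move maps is precisely the content of the type (i) commutativity you are trying to prove. The uniqueness-of-top-class argument from Lemma \ref{lem:diff} does not apply to $T' \circ (\phi^{-1})_*$, because $(\phi^{-1})_*$ is a chain isomorphism induced by pushforward, not a triangle map, so its composition with triangle maps is not a priori controlled by uniqueness of the top-class generator in any triple diagram. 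You also cannot freely absorb isotopies into $\phi_*$: Lemma \ref{lem:identity-diff} identifies an isotopy map with the diffeomorphism map of the isotopy's endpoint, but then the resulting diffeomorphism is no longer $\phi$, so this changes the problem rather than simplifying it. Without the reduction to Dehn-twist generators and the triangle-map decomposition, the proposal does not close.
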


\begin{proof}
For any positive integer $n$, the pure mapping class group of a disk
with $n$ punctures is given by the pure braid group on $n$ strands,
and taking its quotient by the Dehn twists along the boundary gives
the mapping class group of a sphere with $n+1$ punctures. Thus, if
we consider the standard generating set $\{T_{i}\}$ of the pure braid
group $B_{n}$, where $T_{i}$ denotes a positive twist of the $i$th
and the $i+1$th strand, the set $\{T_{i}^{2}\}$ normally generates
the pure braid group $PB_{n}$, and thus generates the group $\text{PMod}(S_{0,n+1})$.
Hence, by Lemma \ref{lem:diff}, given a bridge diagram of a based
knot $(K,p)$ on a sphere $\Sigma$, we only have to prove that the
commutativity diagrams of type (i) holds for $\widehat{HF}_{\mathbb{Z}_{2}}(\Sigma(K),p)$
only for the full Dehn twists along an A-arc or a B-arc, which does
not contain $p$, and any choice of a sequence of basic moves.

Choose such an A-arc $A_{i}$. The bridge diagram given by appling
a Dehn twist along $A_{i}$ is drawn in Figure \ref{fig:6.3}. Its
branched double cover admits a unique $\mathbb{Z}_{2}$-invariant
cycle which represents the top generator in homology. Thus we can
compute the composition of the maps associated to our choice of basic
moves by computing the equivariant triangle map for the Heegaard triple-diagram,
which is given by taking the branched double cover of the diagram
drawn in Figure \ref{fig:6.4}. Note that $A_{i}$ is assumed to not
intersect any B-arcs other than the two B-arcs adjacent to it, and
the basepoint is placed near the leftmost point in Figure \ref{fig:6.4};
this is possible because $\widehat{HF}_{\mathbb{Z}_{2}}$ satisfies
commutative diagrams of type (h).

We claim that the only nonconstant triangles, each of which consists
of a green arc, a blue arc, and a constant red arc, and involves at
least one of the two endpoints of $A_{i}$, are those shown in Figure
\ref{fig:6.5}. Let $T$ be such a triangle. Then, without loss of
generality, we may assume that it uses the blue arc and the green
arc connected to the leftmost point in Figure \ref{fig:6.4}. If $T$
uses the leftmost red arc instead, then by the assumption on the placement
of the basepoint, $T$ must intersect the basepoint, so this case
is impossible. Hence $T$ must use the constant red arc at that point.
Then, the triangle we get is the one shown in Figure \ref{fig:6.5}. 

Now, the other triangles are exactly the ones which arise when calculating
the triangle map for the triple-diagram in Figure \ref{fig:6.6},
except for the two shaded regions. Thus, the triangle map for the
triple-diagram in Figure \ref{fig:6.4} is the composition of the
triangle map for the triple-diagram in Figure \ref{fig:6.6}, followed
by the diffeomorphism map induced by the Dehn twist along $A_{i}$.

However, using the argument of Proposition 9.8 in \cite{OSz-original},
we see that the triangle map for Figure \ref{fig:6.6} agrees with
the diffeomorphism map, induced by a diffeomorphism which is isotopic
to the identity. Therefore we see that the triangle map must give
the same result as the diffeomorphism map.
\end{proof}
\begin{figure}[tbph]
\resizebox{.7\textwidth}{!}{\includegraphics{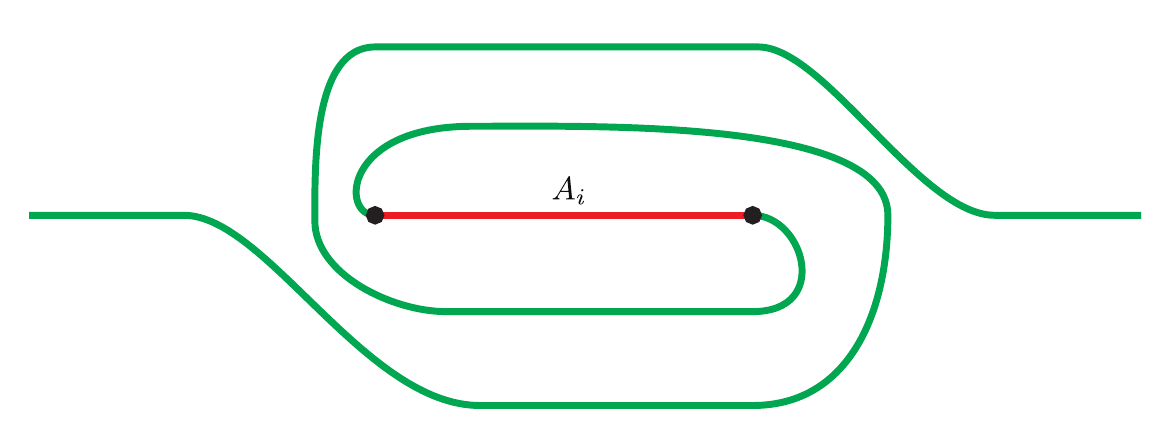}} \caption{\label{fig:6.3}The diagram after applying a Dehn twist along (the
boundary of a neighborhood of) $A_{i}$}
\end{figure}
\begin{figure}[tbph]
\resizebox{.7\textwidth}{!}{\includegraphics{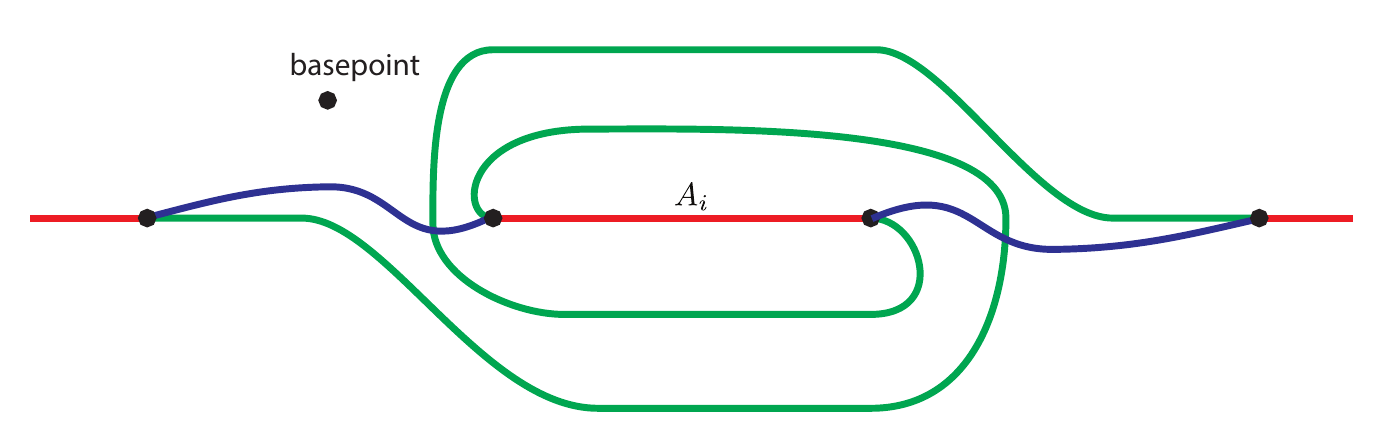}} \caption{\label{fig:6.4}The triple-diagram}
\end{figure}
\begin{figure}[tbph]
\resizebox{.7\textwidth}{!}{\includegraphics{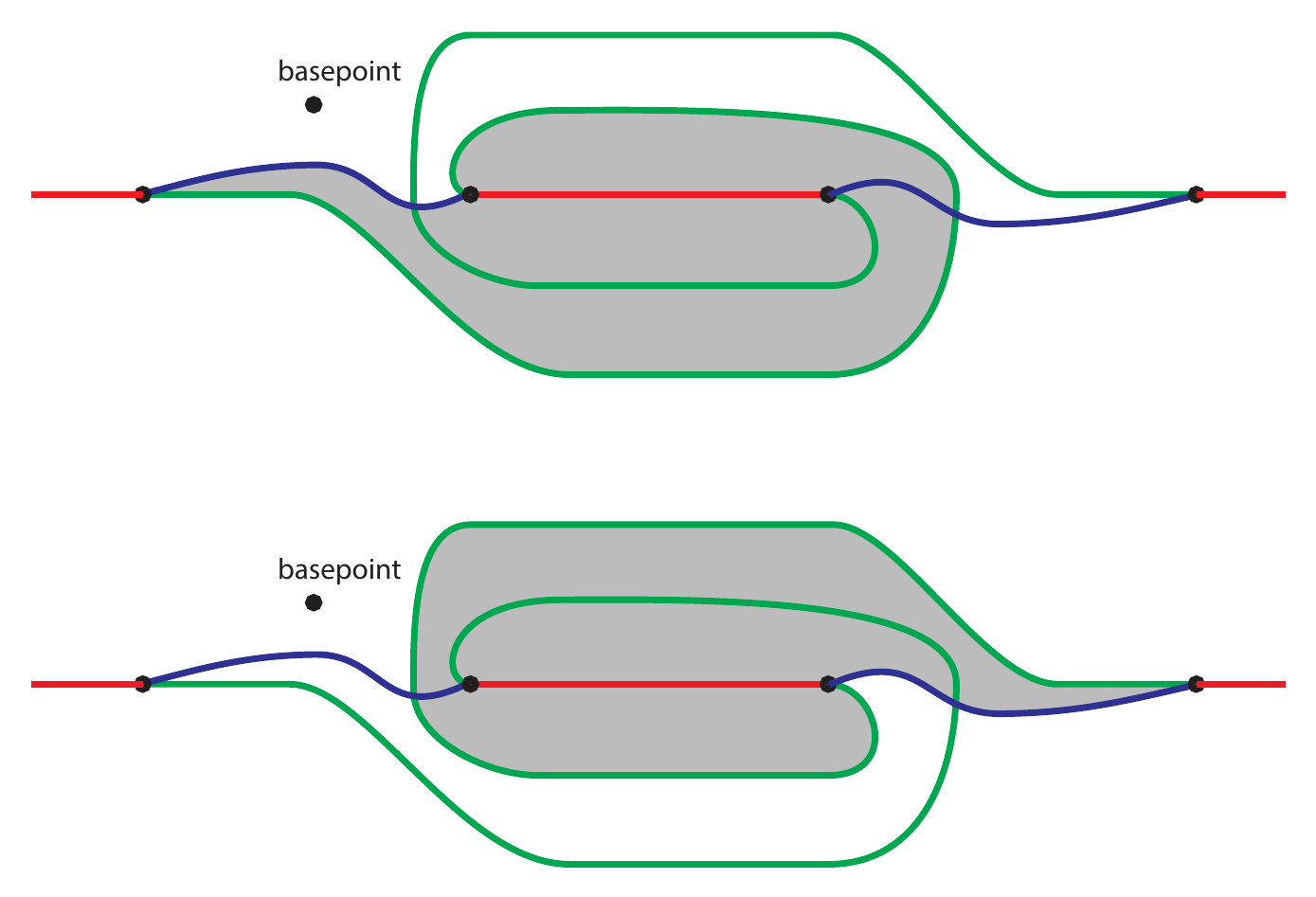}} \caption{\label{fig:6.5}The nonconstant triangles in (the branched double
cover of) the given triple-diagram}
\end{figure}
\begin{figure}[tbph]
\resizebox{.7\textwidth}{!}{\includegraphics{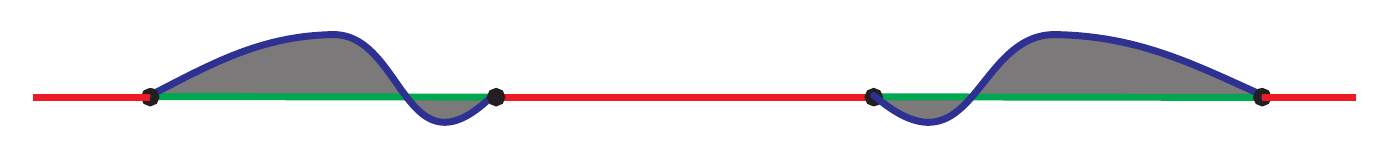}} \caption{\label{fig:6.6}The triple-diagram representing a small isotopy of
B-arcs}
\end{figure}

For a based link $(L,p)$ in $S^{3}$ and a genus 0 Heegaard surface
$\Sigma\subset S^{3}$, let $\mathcal{B}_{L,\Sigma,p}$ be the 2-dimensional
cell complex defined as follows.
\begin{itemize}
\item The 0-cells are bridge diagrams of $(L,p)$ on $\Sigma$,
\item The 1-cells are basic moves and diffeomorphism maps,
\item The 2-cells are commutative diagrams of type (a)-(i).
\end{itemize}
If we denote the space of parametrized based links in $S^{3}$ which
are isotopic to $(L,p)$ as $\text{Emb}_{L}\left(\coprod S^{1},S^{3}\right)$,
we have a canonical map 
\[
\mathcal{R}\,:\,\mathcal{B}_{L,\Sigma,p}\rightarrow\text{Emb}_{L}\left(\coprod S^{1},S^{3}\right).
\]
\begin{lem}
The map $\mathcal{R}$ is 1-connected.
\end{lem}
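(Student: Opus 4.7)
The plan is to verify the three conditions implicit in $\mathcal{R}$ being $1$-connected, namely that it induces a surjection on $\pi_0$, an injection on $\pi_0$, and a surjection on $\pi_1$ (equivalently, that every homotopy fibre of $\mathcal{R}$ is connected). These are the standard ingredients required to apply the Juhasz--Thurston naturality formalism.

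First, $\pi_0$-surjectivity. Any parametrised based link $(L',p')$ isotopic to $(L,p)$ admits a bridge presentation on $\Sigma$: put $L'$ into general position with respect to a Morse function whose level set at the equator is $\Sigma$, and slide the basepoint onto $\Sigma$. This produces a $0$-cell of $\mathcal{B}_{L,\Sigma,p}$ whose image under $\mathcal{R}$ lies in the isotopy class of $(L',p')$.

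Next, $\pi_0$-injectivity follows from the Reidemeister-type theorem for based bridge diagrams invoked in the previous section: any two bridge diagrams of isotopic based links on $\Sigma$ are related by a finite sequence of isotopies, handleslides, and stabilisations/destabilisations applied to arcs not containing the basepoint, possibly interleaved with reparametrisations by diffeomorphisms of $\Sigma$ fixing the basepoint and permuting the other arc endpoints. Each such move is exactly a $1$-cell of $\mathcal{B}_{L,\Sigma,p}$, so any two vertices lying over isotopic based links are joined by a path in $\mathcal{B}_{L,\Sigma,p}$.

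For $\pi_1$-surjectivity, take a loop $\gamma:[0,1]\to\text{Emb}_L(\coprod S^1,S^3)$ based at the link of some bridge diagram $D_0$. After a small perturbation of $\gamma$ rel endpoints, I would lift it to a generic $1$-parameter family of bridge diagrams on $\Sigma$, so that the combinatorial type of the diagram is locally constant away from finitely many times $0<t_1<\cdots<t_k<1$, at each of which a single codimension-one degeneration occurs that is precisely one of the basic moves or a parametrisation transition. Concatenating these produces a cyclic sequence of $1$-cells in $\mathcal{B}_{L,\Sigma,p}$ whose image under $\mathcal{R}$ is homotopic to $\gamma$. The main obstacle in this step is establishing the genericity statement itself: one must check that the space of bridge positions of $(L,p)$ on $\Sigma$ is stratified by combinatorial type, with codimension-zero strata dense and codimension-one strata corresponding exactly to the basic moves and diffeomorphism transitions listed among the $1$-cells. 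This is directly parallel to the Cerf-theoretic analysis used by Juhasz and Thurston in the proof of naturality of ordinary Heegaard Floer homology, and no essentially new ingredient is required.
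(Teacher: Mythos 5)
There is a genuine gap: your proposal establishes bijectivity on $\pi_0$ and surjectivity on $\pi_1$, but stops there. In the paper, the conclusion of the lemma (and what the subsequent naturality argument actually uses, when it writes ``we have an isomorphism $\pi_1(\mathcal{B}_{K,\Sigma,p})\simeq\pi_1(E_{K,p})$'') is that $\mathcal{R}$ induces an \emph{isomorphism} on $\pi_1$, not merely a surjection. Injectivity on $\pi_1$ is what guarantees that the monodromy representation defined on $\pi_1(\mathcal{B}_{K,\Sigma,p})$ descends to $\pi_1(E_{K,p})$: without it, a loop of bridge diagrams that bounds a disk in $\text{Emb}_L(\coprod S^1,S^3)$ but not in $\mathcal{B}_{L,\Sigma,p}$ could a priori have nontrivial monodromy, and the whole naturality statement would fail. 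The remark ``these are the standard ingredients required to apply the Juh\'asz--Thurston formalism'' is therefore not accurate; the formalism requires that \emph{all} relations coming from the ambient space be captured by the $2$-cells, which is precisely $\pi_1$-injectivity.

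Proving that injectivity is the bulk of the paper's proof and is entirely absent from your proposal. One must continue the Cerf-theoretic stratification of $\text{Emb}_L(\coprod S^1,S^3)$ one step further: classify the codimension-$2$ degenerations of the projection to $\Sigma$ (quadruple points, higher-order tangencies, simultaneous codimension-$1$ events at distinct points, etc.), and show that the monodromy of a small loop around each codimension-$2$ stratum is one of the distinguished diagrams (a)--(i) used to attach the $2$-cells of $\mathcal{B}_{L,\Sigma,p}$. This is exactly the content of the list (2a)--(2j) and Table~\ref{codim2-monodromy} in the paper. Only once every codimension-$2$ monodromy has been matched to a $2$-cell can one invoke the triangulation/transversality argument of Juh\'asz--Thurston to conclude that any nullhomotopy in the embedding space of a loop of bridge diagrams can be pushed into the $2$-skeleton of $\mathcal{B}_{L,\Sigma,p}$. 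Without this step the lemma, as the paper uses it, is not proved. (A secondary, smaller issue: your $\pi_0$-injectivity and $\pi_1$-surjectivity arguments are really the same codimension-$0$/$1$ Cerf analysis applied to paths, so folding them into one genericity statement would be cleaner, but that is a matter of exposition rather than correctness.)
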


\begin{proof}
Let $x,y$ be the north and south pole of $S^{3}$, and choose a projection
function $p_{\Sigma}\,:\,S^{3}-\{x,y\}\rightarrow\Sigma$, together
with a height function $h_{\Sigma}\,:\,S^{3}-\{x,y\}\rightarrow\mathbb{R}$,
so that $p_{\Sigma}\times h_{\Sigma}$ is a diffeomorphism, $p_{\Sigma}|_{\Sigma}=\text{id}_{\Sigma}$,
and $h_{\Sigma}|_{\Sigma}=0$ . A generic point in $\text{Emb}_{L}\left(\coprod S^{1},S^{3}\right)$
is a based links in $S^{3}$, isotopic to $(L,p)$, which do not pass
through $x$ nor $y$, intersects $\Sigma$ transversely, no two points
$(x_{1},x_{2})$ on the link have the same image under $p_{\Sigma}$
if $x_{1}\in\Sigma$ or $x_{2}\in\Sigma$, and no three points on
the link have the same image under $p_{\Sigma}$. Such a link can
be canonically isotoped to a bridge position by moving it across $\Sigma$
so that, for each pair of points $(x_{1},x_{2})$ on the link which
satisfies $p_{\Sigma}(x_{1})=p_{\Sigma}(x_{2})$ and $h_{\Sigma}(x_{1})<h_{\Sigma}(x_{2})$,
we have 
\[
h_{\Sigma}(x_{1})<0<h_{\Sigma}(x_{2}),
\]
 while fixing the points in $L\cap\Sigma$. The codimension 1 points
are the links which satisfy one of the following cases.
\begin{lyxlist}{00.00.0000}
\item [{(1a)}] The link projects to the cusp $y^{2}=x^{3}$
\item [{(1b)}] Exactly two generic points $x_{1},x_{2}$ on the link have
the same image under $p_{\Sigma}$, and the projected imaged of the
segments of the link near $x_{1}$ and $x_{2}$ are tangent to each
other, where the order of tangency is 1
\item [{(1c)}] Exactly three generic points $x_{1},x_{2},x_{3}$ on the
link have the same image under $p_{\Sigma}$
\item [{(1d)}] The link is tangent to $\Sigma$ at a generic point $x_{\Sigma}$,
where the order of tangency is 1
\end{lyxlist}
The case (1a) corresponds to isotopies, (1b) and (1c) corresponds
to handleslides, and (1d) corresponds to stabilizations. Note that
a path of generic points, which does not pass through codimension
1 singularities, corresponds to isotopy maps or diffeomorphism maps.
The choice of a diffeomorphism map is unique up to distinguished squares
of type (i).

Now the codimension 2 points are given as follows.
\begin{lyxlist}{00.00.0000}
\item [{(2a)}] Exactly four generic points $x_{1},x_{2},x_{3},x_{4}$ on
the link have the same image under $p_{\Sigma}$
\item [{(2b)}] Exactly three points $x_{1},x_{2},x_{3}$ on the link have
the same image under $p_{\Sigma}$, where the segments of the link
near $x_{1},x_{2}$ are tangent to each other, where the order of
tangency is 1
\item [{(2c)}] Exactly two points $x_{1},x_{2}$ on the link have the same
image under $p_{\Sigma}$, where the segments of the link near $x_{1},x_{2}$
are tangent to each other, and the order of tangency is 2
\item [{(2d)}] Two codimension 1 states of type (1c) occur at two different
points of $\Sigma$
\item [{(2e)}] Exactly two points $x_{1},x_{2}$ on the link have the same
image under $p_{\Sigma}$, where the segments of the link near $x_{1}$
projects to the cusp $y^{2}=x^{3}$
\item [{(2f)}] The link projects to $\Sigma$ as the degenerate cusp $y^{2}=x^{5}$
\item [{(2g)}] A codimension 1 state of type (1d) and a state of type (1a)
occur at two different points of $\Sigma$
\item [{(2h)}] A codimension 1 state of type (1d) and a state of type (1b)
or (1c) occur at two different points of $\Sigma$
\item [{(2i)}] Two codimension 1 states of type (1d) occur at two different
points of $\Sigma$ 
\item [{(2j)}] The link is tangent to $\Sigma$, where the order of tangency
is 2
\end{lyxlist}
The monodromies of codimension 2 points are given in Table \ref{codim2-monodromy}.
Note that (none) means a monodromy along a boundary of a 2-cell which
does not contain codimension 2 points. Therefore, using the triangulation
technique of \cite{Juhasz-naturality}, we deduce that the map $\mathcal{R}$
induces isomorphisms of $\pi_{0}$ and $\pi_{1}$.
\end{proof}
\begin{table}[tbph]
\begin{centering}
\begin{tabular}{|c|c|c|c|c|c|c|c|c|c|c|c|}
\hline 
Codimension 2 points & (2a) & (2b) & (2c) & (2d) & (2e) & (2f) & (2g) & (2h) & (2i) & (2j) & (none)\tabularnewline
\hline 
\hline 
Monodromy & (d) & (b) & (c) & (c) & (a) & (i) & (g) & (e) & (f) & (g) & (b),(h)\tabularnewline
\hline 
\end{tabular}
\par\end{centering}
\caption{\label{codim2-monodromy} Monodromies of codimension 2 points in terms
of distinguished diagrams}
\end{table}

Now we stick to the case when $L=K$ is a knot. By the above lemma,
the map 
\[
\mathcal{R}\,:\,\mathcal{B}_{K,\Sigma,p}\rightarrow\text{Emb}_{K}(S^{1},S^{3})
\]
 is 1-connected. Consider the space $E_{K,p}$ of unparametrized based
knots isotopic to $(K,p)$. Since we have a fibration 
\[
\text{Diff}^{+}(S^{1},p)\rightarrow\text{Emb}_{K}(S^{1},S^{3})\rightarrow E_{K,p}
\]
 and the group $\text{Diff}^{+}(S^{1},p)$ is contractible, the map
$\text{Emb}_{K}(S^{1},S^{3})\rightarrow E_{K,p}$ is a homotopy equivalence.
Hence we have an isomorphism $\pi_{1}(\mathcal{B}_{K,\Sigma,p})\simeq\pi_{1}(E_{K,p})$.

The natural action of the diffeomorphism group on $E_{K,p}$ gives
a fibration 
\[
\text{Diff}^{+}(S^{3},K,p)\rightarrow\text{Diff}^{+}(S^{3})\rightarrow E_{K,p}.
\]
 Since $\text{Diff}^{+}(S^{3})$ is path-connected, $\pi_{1}\text{Diff}^{+}(S^{3})\simeq\mathbb{Z}_{2}$
with the generator given by rotation, and $\pi_{0}\text{Diff}^{+}(S^{3},K,p)$
is the mapping class group $MCG(S^{3},K,p)$, we get an exact sequence
\[
\xymatrix{\mathbb{Z}_{2}\ar[r] & \pi_{1}(E_{K,p})\ar[r]\ar[d]_{\sim} & MCG(S^{3},K,p)\ar[r] & 1.\\
 & \pi_{1}(\mathcal{B}_{K,\Sigma,p})
}
\]
 However, since we can place the genus 0 Heegaard surface of $S^{3}$
so that the generator of $\mathbb{Z}_{2}$ acts on it by a full rotation,
and such a rotation induces the identity map of $\widehat{HF}_{\mathbb{Z}_{2}}(\Sigma(K),p)$
by Lemma \ref{lem:identity-diff}, the monodromy representation 
\[
\pi_{1}(\mathcal{B}_{K,\Sigma,p})\rightarrow\text{GL}\left(\widehat{HF}_{\mathbb{Z}_{2}}(\Sigma(K),p)\right)
\]
 factors through $MCG(S^{3},K,p)$. Therefore we have a natural action
of the mapping class group on the equivariant Floer cohomology $\widehat{HF}_{\mathbb{Z}_{2}}(\Sigma(K),p)$. 
\begin{thm}
Let $\text{Knot}_{\ast}$ be the category whose objects are based
knots in $S^{3}$ and morphisms are self-diffeomorphisms of $S^{3}$
which preserve the knot (as a set) and the basepoint. Then we have
a functor 
\[
\widehat{HF}_{\mathbb{Z}_{2}}\,:\,\text{Knot}_{\ast}\rightarrow\text{Vect}_{\mathbb{F}_{2}},
\]
 agreeing up to isomorphism with the invariants $\widehat{HF}_{\mathbb{Z}_{2}}(\Sigma(K))$
defined in \cite{eqv-Floer}.
\end{thm}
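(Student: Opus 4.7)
The plan is to package the naturality results established earlier in this section into a functor definition, with essentially no new content beyond bookkeeping. For each based knot $(K,p)$, I would define $\widehat{HF}_{\mathbb{Z}_{2}}(\Sigma(K),p)$ as a transitive system of vector spaces in the sense of Juh\'asz--Thurston \cite{Juhasz-naturality}: the indexing set consists of bridge diagrams of $(K,p)$ on genus zero Heegaard surfaces, to each such diagram $D$ I assign the concrete vector space $\widehat{HF}_{\mathbb{Z}_{2}}(\Sigma(D))$ computed from the branched double cover, and to each basic move and each diffeomorphism map I assign the corresponding isomorphism. Proposition \ref{prop:commutation-a-h} together with the commutativity of type (i) diagrams shows that every cycle in the cell complex $\mathcal{B}_{K,\Sigma,p}$ induces the identity on $\widehat{HF}_{\mathbb{Z}_{2}}(\Sigma(D))$: the $1$-connectedness of $\mathcal{R}$ combined with the factorization of the monodromy through $MCG(S^{3},K,p)$ and the fact that the generator of $\pi_{1}\text{Diff}^{+}(S^{3})$ acts trivially by Lemma \ref{lem:identity-diff} is exactly this statement. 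Hence the transitive system has a canonical colimit, which I take as the definition of $\widehat{HF}_{\mathbb{Z}_{2}}(\Sigma(K),p)$.

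For the action on morphisms, given $\phi\colon (K_{1},p_{1})\to(K_{2},p_{2})$ in $\text{Knot}_{\ast}$, pick any bridge diagram $D_{1}$ of $(K_{1},p_{1})$; then $\phi(D_{1})$ is a bridge diagram of $(K_{2},p_{2})$, and $\phi$ lifts to a $\mathbb{Z}_{2}$-equivariant diffeomorphism of the branched double covers, which induces an isomorphism of equivariant Floer cochain complexes (and its inverse is the one for $\phi^{-1}$, as in Lemma \ref{lem:diff}). This gives a map at the level of single bridge diagrams. I would then check that it descends to a well-defined map between the colimits: if $D_{1}'$ is connected to $D_{1}$ by a sequence of basic moves, then $\phi(D_{1}')$ is connected to $\phi(D_{1})$ by the $\phi$-images of those moves, and the commutative squares of type (h) make the resulting square
\[
\xymatrix{\widehat{HF}_{\mathbb{Z}_{2}}(\Sigma(D_{1}))\ar[r]^{\phi_{*}}\ar[d] & \widehat{HF}_{\mathbb{Z}_{2}}(\Sigma(\phi(D_{1})))\ar[d]\\
\widehat{HF}_{\mathbb{Z}_{2}}(\Sigma(D_{1}'))\ar[r]^{\phi_{*}} & \widehat{HF}_{\mathbb{Z}_{2}}(\Sigma(\phi(D_{1}')))}
\]
commute, so the map on the colimit is independent of the choice of representative. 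Functoriality $(\phi_{2}\circ\phi_{1})_{*}=(\phi_{2})_{*}\circ(\phi_{1})_{*}$ is immediate because the pushforward by diffeomorphisms is strictly functorial already at the chain level.

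The main obstacle, in principle, is organizing the coherence between diffeomorphism maps and the basic-move transitions so that the induced maps genuinely descend to the canonical colimit; but this has already been absorbed into the content of the commutation diagrams (a)--(i) and the verification that the monodromy of $\mathcal{B}_{K,\Sigma,p}$ factors through $MCG(S^{3},K,p)$, both carried out above. Finally, the comparison with the invariants of \cite{eqv-Floer} follows because, for any choice of bridge diagram, the chain complex computing $\widehat{HF}_{\mathbb{Z}_{2}}(\Sigma(D))$ is literally the one defined in \cite{eqv-Floer}, and the canonical colimit is isomorphic (non-canonically) to any one of its terms.
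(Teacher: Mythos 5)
Your overall scaffolding---indexing by bridge diagrams, assigning equivariant Floer complexes, using the type~(h) diagrams to check compatibility of diffeomorphism maps with transitions---is the right shape and mirrors the paper's setup. However, there is a genuine error at the crux of your argument, and it is exactly where the subtlety lies.

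You assert that ``every cycle in the cell complex $\mathcal{B}_{K,\Sigma,p}$ induces the identity on $\widehat{HF}_{\mathbb{Z}_{2}}(\Sigma(D))$'' and that the factorization of the monodromy through $MCG(S^{3},K,p)$ together with Lemma \ref{lem:identity-diff} ``is exactly this statement.'' It is not. What the paper establishes, via the $1$-connectedness of $\mathcal{R}$, the fibration argument producing the exact sequence $\mathbb{Z}_{2}\to\pi_{1}(E_{K,p})\to MCG(S^{3},K,p)\to 1$, and the triviality of the action of the generator of $\pi_{1}\text{Diff}^{+}(S^{3})$, is precisely that the monodromy representation $\pi_{1}(\mathcal{B}_{K,\Sigma,p})\to\text{GL}(\widehat{HF}_{\mathbb{Z}_{2}}(\Sigma(K),p))$ \emph{factors through} $MCG(S^{3},K,p)$. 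This is strictly weaker than triviality of the monodromy. Indeed, the entire point of the naturality statement is that $MCG(S^{3},K,p)$ does act---possibly nontrivially---on $\widehat{HF}_{\mathbb{Z}_{2}}(\Sigma(K),p)$; if all loops acted trivially, the mapping class group action would be trivial and the theorem would have no content beyond well-definedness up to isomorphism. A loop in $\mathcal{B}_{K,\Sigma,p}$ realizing a nontrivial mapping class has no reason to induce the identity.

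Because the monodromy need not be trivial, your transitive-system / canonical-colimit construction does not go through as stated: the system indexed by all bridge diagrams with \emph{all} basic-move and diffeomorphism edges is not a transitive system in the Juh\'asz--Thurston sense, and the ``canonical colimit'' you invoke is therefore undefined (or, depending on your convention for colimit, would collapse to coinvariants of the $MCG$-action, which is not what you want). The correct reading of the argument in the paper is different: the commutativity of diagrams (a)--(i) plus the $1$-connectedness of $\mathcal{R}$ shows that the monodromy is a well-defined map on $\pi_{1}(\mathcal{B}_{K,\Sigma,p})\cong\pi_{1}(E_{K,p})$; the content of Lemma \ref{lem:identity-diff} is then that the kernel of $\pi_{1}(E_{K,p})\to MCG(S^{3},K,p)$, generated by the full rotation, acts trivially, so the representation descends to $MCG(S^{3},K,p)$. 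The functor is built out of this descended action (well-defined up to natural isomorphism after fixing a basepoint diagram in each isotopy class), not out of a transitive system with trivial monodromy. You should replace your claim of global triviality with this weaker---and correct---factorization statement, and adjust the construction of the object assignment accordingly.
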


\begin{rem*}
A similar argument shows that the same statement holds for links,
when each component has a basepoint. However, since we were working
with based links, where only one component has a basepoint, we cannot
say that $\widehat{HF}_{\mathbb{Z}_{2}}(\Sigma(L),p)$ is natural
with respect to the based link $(L,p)$. 

This causes a slight problem when proving functoriality, so we will
only consider cobordisms where both ends are knots. In this case,
we can ``push off'' the excessive monodromies coming from absense
of basepoints toward an end and then cancel them out.
\end{rem*}
Now we will consider cobordisms between based links in $S^{3}$.
\begin{defn}
A based cobordism $(S,s)$ between based links $(L_{1},p_{1})$ and
$(L_{2},p_{2})$ is an oriented cobordism $S\subset S^{3}\times I$
between $L_{1}$ and $L_{2}$, together with a smooth curve $s\,:\,I\rightarrow S$
such that $s(0)=p_{1}$ and $s(1)=p_{2}$.
\end{defn}

We first consider the case when the based cobordism $(S,s)$ is very
simple. There are three possible cases of such cobordisms, which we
will call as basic pieces. The basic pieces can be defined as the
follows.
\begin{enumerate}
\item cylindrical pieces
\item birth/death of a component without a basepoint
\item saddle along an arc joining two points on the link
\end{enumerate}
To define a map associated to a based cobordism, the most natural
strategy is to chop it into simple pieces. Given a based cobordism
$(S,s)$, consider the projection map $p\,:\,S^{3}\times I\rightarrow I$.
Then we can isotope the pair $(S,s)$ so that it satisfies the following
conditions.
\begin{itemize}
\item $p|_{S}$ is Morse on $S$,
\item $p\circ s$ is regular.
\item No two critical points of $p|_{S}$ have the same value under $p$.
\end{itemize}
Once such conditions are satisfied, we can cut $S$ horizontally to
reduce it into basic pieces, and thus we can represent $(S,s)$ as
a composition of basic pieces. Note that the third condition can be
satisfied because, if there are two critical points of $p$ with the
same value, then we can perturb $p$ slightly to make them have different
values, and if the perturbation is sufficiently $C^{1}$-small, then
the first and second conditions remain hold.

In \cite{eqv-Floer}, it is proved that when $S$ is a basic piece
from $(L_{1},p_{1})$ to $(L_{2},p_{2})$, there exists a map 
\[
\hat{f}_{S}\,:\,\widehat{HF}_{\mathbb{Z}_{2}}(\Sigma(L_{2}),p_{2})\rightarrow\widehat{HF}_{\mathbb{Z}_{2}}(\Sigma(L_{1}),p_{1}),
\]
 which is compatible with the cobordism map between $\widehat{HF}^{\ast}(\Sigma(L_{i}))$,
via the naturally defined spectral sequence 
\[
E_{1}=\widehat{HF}^{\ast}(\Sigma(L))\otimes\mathbb{F}_{2}[\theta]\Rightarrow\widehat{HF}_{\mathbb{Z}_{2}}(\Sigma(L)).
\]
 However, the construction of this map depends on choices of auxiliary
data, and we should prove that our maps are invariant under such choices.

Suppose that $S$ is a saddle along an arc $a$ from $x_{1}\in L_{1}$
to $x_{2}\in L_{2}$, which is indeed a based link cobordism from
$(L_{1},p_{1})$ to $(L_{2},p_{2})$, provided $x_{i}\ne p_{i}$.
Choose a genus zero Heegaard surface $\Sigma\subset S^{3}$. To define
a map $\hat{f}_{S}$ associated to $S$, we need to draw bridge diagrams
of $(L_{i},p_{i})$ on the surface $\Sigma$ and then project the
arc $a$ on $\Sigma$.
\begin{defn}
A saddle diagram of $S$ consists of bridge diagrams $(\{A_{j}^{i}\},\{B_{k}^{i}\})$
of $(L_{i},p_{i})$ on $\Sigma$ for $i=0,1$ and an arc $a_{\Sigma}\subset\Sigma$
from an $X$-arc $A_{j}^{0}$($B_{j}^{0}$) to an $X$-arc $A_{j}^{1}$($B_{j}^{1}$),
where $X$ is either A or B, so that the 1-subcomplex of $S^{3}$
given by pushing the A-arcs inside $\Sigma$ and the B-arcs outside
$\Sigma$ is isotopic to $L_{1}\cup a\cup L_{2}$, and $\text{int}(a_{\Sigma})\cap A_{j}^{i}(B_{k}^{i})=\emptyset$
for all $i$ and $j$($k$). 
\end{defn}

There are several choice of saddle diagrams which represent $S$.
However, given two bridge diagrams of $(L_{i},p_{i})$ for $i=0,1$,
which coincide outside the saddle region, we only have to choose the
placement of the arc $a_{\Sigma}$. Any two choice of $a_{\Sigma}$
are related by isotopies and handleslides(over A(B)-arcs). If $a_{\Sigma}$
and $a_{\Sigma}^{\prime}$ are related by a single handleslide, then
the bridge diagrams representing the saddle moves along $a_{\Sigma}$
and $a_{\Sigma}^{\prime}$ are related by a composition of two handleslides,
as shown in Figure \ref{fig:6.7}. This can be represented as a distinguished
diagram of bridge diagrams as follows. Note that we are using saddle
diagrams in a perturbed form, as in \cite{eqv-Floer}.
\[
\xymatrix{D_{1}\ar[rr]_{a_{\Sigma}^{\prime}}\ar[dr]_{a_{\Sigma}} &  & D_{3}\\
 & D_{2}\ar[ur]_{\text{handleslides}}
}
\]

\begin{figure}[tbph]
\resizebox{.4\textwidth}{!}{\includegraphics{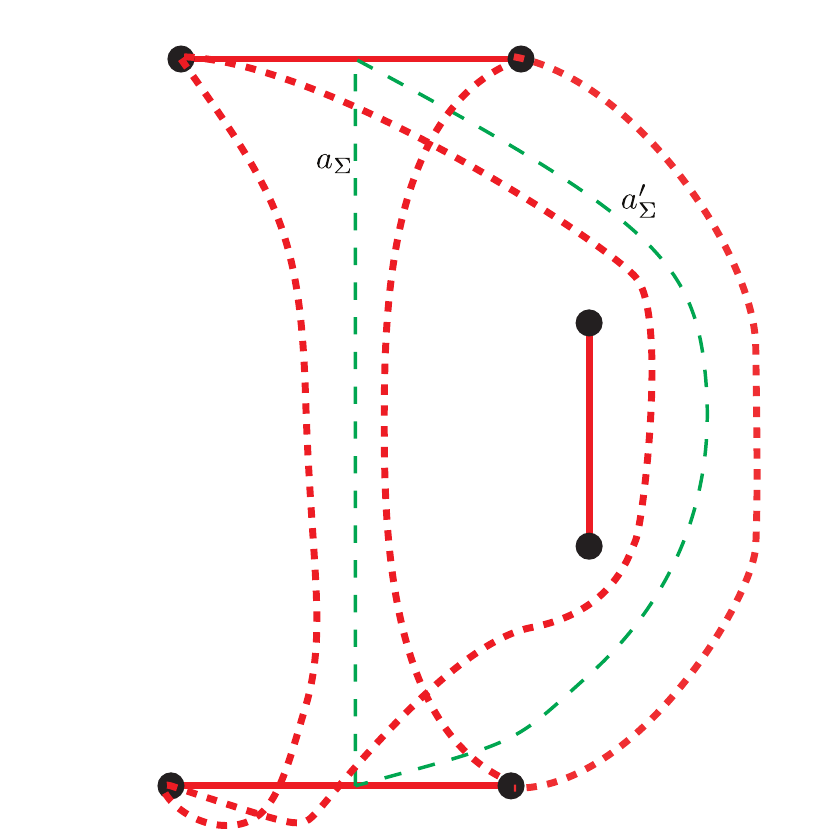}} \caption{\label{fig:6.7}Taking saddles along two arcs $a_{\Sigma}$ and $a_{\Sigma}^{\prime}$,
which differ by a handleslide}
\end{figure}
\begin{lem}
The above diagram induces a commutative diagram of corresponding $\widehat{HF}_{\mathbb{Z}_{2}}$
groups:
\[
\xymatrix{\widehat{HF}_{\mathbb{Z}_{2}}(\Sigma(L_{2}),p_{2})\ar[dr]_{\text{handleslides}}\ar[rr]_{\text{saddle along }a_{\Sigma}^{\prime}} &  & \widehat{HF}_{\mathbb{Z}_{2}}(\Sigma(L_{1}),p_{1}).\\
 & \widehat{HF}_{\mathbb{Z}_{2}}(\Sigma(L_{2}),p_{2})\ar[ur]_{\text{saddle along }a_{\Sigma}}
}
\]
\end{lem}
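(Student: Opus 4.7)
The plan is to translate the whole picture to the branched double cover and reduce the statement to the associativity of equivariant triangle maps, followed by a cocycle identification argument as in Proposition \ref{prop:commutation-a-h}. Lifting the bridge diagrams $D_1, D_2, D_3$ to the branched double cover, the saddle along $a_\Sigma$ is represented by a triangle map whose third Lagrangian $\mathbb{T}_{\gamma}$ is obtained by small Hamiltonian perturbation together with an elementary handle-attachment curve, evaluated on a $\mathbb{Z}_2$-invariant top-degree cocycle $\Theta_{a_\Sigma}$; similarly the saddle along $a_\Sigma'$ is a triangle map against $\Theta_{a_\Sigma'}$, and the composition of handleslides from $D_2$ to $D_3$ is a (composition of) triangle map(s) against $\mathbb{Z}_2$-invariant top-degree handleslide cocycles $\Theta_{hs}$.

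The main step is to assemble these three triple-diagrams into a single involutive Heegaard quadruple-diagram $\mathcal{Q}$ on the branched double cover whose three ``faces'' recover (up to isotopy / Hamiltonian perturbation) the three triangle maps appearing in the statement. By Lemma~\ref{lem:identity-diff} and Lemma~\ref{lem:lemma-isotopy-concat} it suffices to check commutativity after these perturbations. Weak admissibility of $\mathcal{Q}$ is automatic from the quadruple-diagram admissibility lemma of Section 4. Counting holomorphic squares in $\mathcal{Q}$ then gives, by the standard $A_\infty$ associativity argument, a chain homotopy between the two compositions of triangle maps; the point is that this chain homotopy is $\mathbb{Z}_2$-equivariant. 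Here I would argue as in Proposition \ref{prop:commutation-a-h}: constant holomorphic squares in the $\mathbb{Z}_2$-fixed locus have Maslov index $0$ and therefore do not contribute to the Maslov index $-1$ count used for the chain homotopy, so by the transversality argument of \cite{quiver-Floer} used in Theorem~\ref{thm:eqvtrans}, generic $\mathbb{Z}_2$-invariant $1$-parameter families of almost complex structures of the form $\mathrm{Sym}^g(\tilde{\mathfrak j})$ achieve transversality for the squares of interest, and the resulting chain homotopy is $\mathbb{Z}_2$-equivariant, hence descends to $\widehat{HF}_{\mathbb{Z}_2}$.

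With associativity in hand, commutativity of the diagram reduces to the identification of cocycles: one must check that the triangle map applied to $\Theta_{hs}\otimes\Theta_{a_\Sigma}$ represents the same class as $\Theta_{a_\Sigma'}$ in the relevant $\widehat{CF}$-group. Each of these is a $\mathbb{Z}_2$-invariant cocycle in the top-graded part of the appropriate Floer complex, and the Heegaard triple-diagrams arising from handleslides and small isotopic perturbations of saddle arcs are, by construction, of the standard ``small triangle'' form used throughout Section 5 (cf.\ the proof of Theorem~\ref{thm:half-arcslide} and Theorem~\ref{thm:stab}), for which the top-dimensional $\mathbb{Z}_2$-invariant cocycle is unique. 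Hence $\Theta_{a_\Sigma'}$ and the image of $\Theta_{hs}\otimes\Theta_{a_\Sigma}$ must agree as classes, which forces the two compositions of triangle maps to agree on cohomology.

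The part I expect to be the main obstacle is the assembly of the quadruple-diagram $\mathcal{Q}$ so that all four curve systems are pairwise transverse and involutive in the sense of Section 4, while still producing the three specified triple subdiagrams on its faces; this requires small $\mathbb{Z}_2$-equivariant Hamiltonian perturbations near the saddle region and near the handleslide triples, and some bookkeeping to verify that the perturbations do not change the isotopy/Hamiltonian isotopy class of the maps being compared. Once the diagram is set up and shown to be weakly admissible (via the Section 4 lemma for involutive quadruple-diagrams) and the equivariant transversality arguments of Section 5 are invoked, the commutativity follows from standard holomorphic square counting combined with the uniqueness of invariant top-degree cocycles.
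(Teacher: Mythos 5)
Your proof is correct and follows essentially the same route as the paper: the paper's own proof is a single sentence ("the Heegaard diagram for a saddle along $a_{\Sigma}'$ admits a unique $\mathbb{Z}_{2}$-invariant top generator"), implicitly invoking the machinery from Proposition~\ref{prop:commutation-a-h} (associativity of equivariant triangle maps via holomorphic square counting, plus uniqueness of the invariant top-class cocycle). You have simply unfolded that one sentence into the full argument it abbreviates.
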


\begin{proof}
The Heegaard diagram for a saddle along $a_{\Sigma}^{\prime}$ admits
a unique $\mathbb{Z}_{2}$-invariant top generator.
\end{proof}
The above lemma means that the saddle map is invariant under the choice
of $a_{\Sigma}$. However, it remains to show the invariant under
the choice of bridge diagrams of $L_{1}$ and $L_{2}$. For that,
we need to show that the following commutative diagrams induce commutative
diagrams of $\widehat{HF}_{\mathbb{Z}_{2}}$ groups.
\begin{itemize}
\item Basic moves and saddles commute.
\item Diffeomorphisms and saddles commute.
\end{itemize}
\begin{equation}
\xymatrix{D\ar[r]_{\text{saddle}}\ar[d]_{\text{basic moves/diffeomorphisms}} & D_{2}\ar[d]^{\text{basic moves/diffeomorphisms}}\\
D_{1}\ar[r]_{\text{saddle}} & D_{3}
}
\label{eq:diag}
\end{equation}
\begin{lem}
The diagram (\ref{eq:diag}) induces commutative diagrams of $\widehat{HF}_{\mathbb{Z}_{2}}$
groups: 
\[
\xymatrix{\widehat{HF}_{\mathbb{Z}_{2}}(\Sigma(L_{2}),p_{2})\ar[r]_{\text{saddle}}\ar[d]_{\text{basic moves/diffeomorphisms}} & \widehat{HF}_{\mathbb{Z}_{2}}(\Sigma(L_{1}),p_{1})\ar[d]^{\text{basic moves/diffeomorphisms}}\\
\widehat{HF}_{\mathbb{Z}_{2}}(\Sigma(L_{2}),p_{2})\ar[r]_{\text{saddle}} & \widehat{HF}_{\mathbb{Z}_{2}}(\Sigma(L_{1}),p_{1})
}
\]
\end{lem}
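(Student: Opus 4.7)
The plan is to mimic the proof of Proposition \ref{prop:commutation-a-h}, reducing each square to a statement about equivariant triangle or quadruple maps and then invoking uniqueness of the $\mathbb{Z}_{2}$-invariant top generator. First I would lift the square (\ref{eq:diag}) to the level of $\mathbb{Z}_{2}$-invariant Heegaard (multi-)diagrams on the branched double cover $\tilde{\Sigma}$. After Lemma \ref{lemma-isotopy-tri} the case where the basic move is an isotopy (either of arcs, or induced by a diffeomorphism via Lemma \ref{lem:identity-diff}) reduces to a statement about isotopy maps commuting with triangle maps, which is exactly what that lemma provides; so I may assume the left/right vertical arrow is a handleslide, a stabilization, or a diffeomorphism map that cannot be undone by an isotopy.

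Next I would treat the case of an A- or B-equivalence. Here both the saddle map (top row) and the basic move (vertical arrow) are induced by equivariant triangle maps, evaluated against $\mathbb{Z}_{2}$-invariant top cocycles $\Theta_{\text{sad}}$ and $\Theta_{\text{hs}}$ respectively. One then forms the $\mathbb{Z}_{2}$-invariant Heegaard quadruple-diagram combining the saddle region with the handleslide region, which is involutive in the sense of the previous section and hence weakly admissible by the quadruple lemma. Applying the standard associativity argument for holomorphic quadrilaterals (equivariantly, which works because the quadruple count is a chain-homotopy between the two compositions and the chain homotopy is $\mathbb{Z}_{2}$-equivariant by the same transversality argument used in Proposition \ref{prop:commutation-a-h} for type (c) squares), both compositions agree with the triangle map against the image $\mu_{2}(\Theta_{\text{sad}},\Theta_{\text{hs}})$. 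Since the saddle and handleslide regions are disjoint on $\Sigma$, this image is the unique $\mathbb{Z}_{2}$-invariant top-generator cocycle of the combined triple-diagram, so both compositions agree.

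For the case when the vertical arrow is a stabilization, I would use the same strategy but with the Heegaard surface stabilized; the relevant triple/quadruple diagrams decompose as a connected sum of the saddle triple-diagram with a genus-one $S^{3}$-diagram, and the stabilization isomorphism (Theorem 2.14-style argument of \cite{OSz-4-mfd-inv}, already invoked in Proposition \ref{prop:commutation-a-h} for squares of type (e)) commutes with the triangle count since the two regions are separated in $\tilde{\Sigma}$. For the case when the vertical arrow is a diffeomorphism map, the commutativity is immediate on the chain level: a $\mathbb{Z}_{2}$-equivariant self-diffeomorphism of $\tilde{\Sigma}$ fixing the basepoint transports the saddle triple-diagram to its image, and the equivariant triangle map is tautologically natural under such transport; by Lemma \ref{lem:diff} we may then replace the diffeomorphism map by any sequence of basic moves between the two bridge diagrams, which reduces back to the previously handled cases.

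The main obstacle I expect is the stabilization case, because unlike handleslides it changes the Heegaard surface and one has to check carefully that the holomorphic triangle counts on the stabilized and unstabilized diagrams agree $\mathbb{Z}_{2}$-equivariantly — that is, that the connect-sum neck-stretching argument of \cite{OSz-4-mfd-inv} can be performed with $\mathbb{Z}_{2}$-invariant almost complex structures of the form $\operatorname{Sym}^{g}(\tilde{\mathfrak{j}})$ of Theorem \ref{thm:eqvtrans} without destroying transversality. This should work because the relevant triangles are not contained in the fixed locus (for the same reason as in the proof of Theorem \ref{thm:eqvtrans}), but writing this out carefully is where most of the work will go.
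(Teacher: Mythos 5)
Your proposal matches the paper's proof step for step: isotopies are handled by Lemma \ref{lemma-isotopy-tri}, diffeomorphism maps are immediate at the chain level, and handleslides/stabilizations are treated exactly as in the proof of Proposition \ref{prop:commutation-a-h} for squares of type (c) and (e) respectively. You supply substantially more detail (the quadruple-diagram associativity, the uniqueness of the invariant top generator, and the neck-stretching/transversality concerns for stabilizations) than the paper's two-sentence proof, but the underlying argument is identical.
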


\begin{proof}
The lemma is obvious for diffeomorphisms, and Lemma \ref{lemma-isotopy-tri}
implies that the lemma holds fo isotopies. The proof for handleslides
and stabilizations are the same as the proof of Proposition \ref{prop:commutation-a-h}
for distinguished diagrams of type (c) and (e), respectively.
\end{proof}
Using the above lemma, we see that the saddles give well-defined saddle
maps.
\begin{thm}
The basic pieces give well-defined maps between $\widehat{HF}_{\mathbb{Z}_{2}}$
groups.
\end{thm}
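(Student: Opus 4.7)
The plan is to treat the three types of basic pieces separately, showing in each case that the ambiguity in the construction is absorbed by the commutative diagrams already established in this section. A cylindrical piece is induced by an ambient isotopy of $(L_1,p_1)$ to $(L_2,p_2)$, and its associated map is the corresponding equivariant isotopy map on the branched double cover. By Lemma \ref{lem:identity-diff} together with the naturality of $\widehat{HF}_{\mathbb{Z}_2}(\Sigma(L),p)$ under $MCG(S^3,L,p)$ proved earlier in this section, this map depends only on the isotopy class of the cylinder rel endpoints, and not on the bridge diagrams or on auxiliary almost complex structures chosen to compute it.

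For a birth or death of an unbasepointed component, the map is the $\mathbb{Z}_2$-equivariant (de)stabilization map coming from connect-summing with a genus-one diagram of $S^3$. I would first invoke equivariant transversality from Theorem \ref{thm:eqvtrans}, using almost complex structures of the form $\text{Sym}^{g}(\tilde{\mathfrak{j}})$ with long neck near the stabilization region, to ensure the map is an isomorphism that does not depend on these Floer-theoretic choices. Independence from the ambient bridge diagram and from the location at which the new component is created is then handled by the commutative squares of type (e), (f), and (g) in Proposition \ref{prop:commutation-a-h}, together with the naturality of $\widehat{HF}_{\mathbb{Z}_2}$ on the link obtained after the birth.

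For a saddle, the two preceding lemmas already show that the associated map is independent of the projected arc $a_\Sigma$ and that it commutes with basic moves and diffeomorphisms applied away from the saddle region. To package these into well-definedness, I would work with the cell complex $\mathcal{B}_{L_i,\Sigma,p_i}$ of bridge diagrams and basic moves at each endpoint. The $1$-connectedness of the map $\mathcal{R}$, combined with the two preceding commutativity lemmas, allows the saddle map to be transported consistently between any two bridge-diagram representatives of $(L_1,p_1)$ and $(L_2,p_2)$. The main obstacle I expect is verifying that every loop of choices in $\mathcal{B}_{L_i,\Sigma,p_i}$, which decomposes along the distinguished $2$-cells of types (a)--(i), commutes with the saddle map. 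I plan to handle this cell-type by cell-type, reducing each case either to uniqueness of the $\mathbb{Z}_2$-invariant top generator of the relevant Heegaard triple-diagram (as in the proof of Proposition \ref{prop:commutation-a-h}) or to the triangle/isotopy compatibility of Lemma \ref{lemma-isotopy-tri}, thereby completing the proof.
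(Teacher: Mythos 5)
Your proposal is correct and takes essentially the same approach as the paper: the saddle case is reduced to the two preceding lemmas on independence of $a_\Sigma$ and commutation with basic moves/diffeomorphisms, while the birth/death case is reduced to the (de)stabilization map with a long neck and the type (e)--(g) commutative squares. The paper's own proof is terser and leaves the cylindrical case implicit (it follows immediately from the naturality already established), but the underlying ingredients are identical.
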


\begin{proof}
The saddle maps are well-defined by the above lemma. The birth map
and the death map correspond to taking/untaking a connecting sum with
an invariant Heegaard diagram of $S^{1}\times S^{2}$ on the branched
double cover. Since they commute with basic moves (and, obviously,
diffeomorphisms) when the connected sum neck is very long, the theorem
follows.
\end{proof}
Now consider the case when we are given a based cobordism $S$ from
a based knot $(K_{1},p_{1})$ to a based knot $(K_{2},p_{2})$. Then
we can isotope $S$, slice it into basic pieces, convert them into
maps between $\widehat{HF}_{\mathbb{Z}_{2}}$ groups, and then compose
them to get a cobordism map 
\[
\hat{f}_{S}\,\,:\,\widehat{HF}_{\mathbb{Z}_{2}}(\Sigma(K_{2}),p_{2})\rightarrow\widehat{HF}_{\mathbb{Z}_{2}}(\Sigma(K_{1}),p_{1}).
\]
 We claim that $\hat{f}_{S}$ does not depend on the process of slicing
$S$ into basic pieces.
\begin{lem}
\label{lemma-6.15}$\hat{f}_{S}$ depends only on the isotopy class
of based cobordisms $S=(S_{0},s)$ rel $s\cup\partial S_{0}$.
\end{lem}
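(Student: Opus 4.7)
The plan is to lift the Juh\'asz--Thurston style argument used earlier in this section from parametrized knots to parametrized cobordisms. First I would introduce a 2-dimensional cell complex $\mathcal{C}_{(S,s)}$ whose $0$-cells are slicings of $(S_0,s)$, i.e., triples consisting of a smooth representative of the isotopy class of $(S_0,s)$ rel $s\cup\partial S_0$ whose projection $p\colon S^3\times I\to I$ is Morse on $S_0$, regular along $s$, and has pairwise distinct critical values, together with compatible bridge diagrams at each regular level and saddle/birth/death diagrams at each critical level. The $1$-cells are elementary moves between slicings and the $2$-cells are naturally occurring commutative diagrams of such moves. There is a forgetful map $\mathcal{R}\colon \mathcal{C}_{(S,s)}\to\mathrm{Isot}(S_0,s)$ to the set of isotopy classes rel $s\cup\partial S_0$; the goal is to show that the composition of basic-piece maps is constant on each $\pi_{0}$-fiber of $\mathcal{R}$, while $\mathcal{R}$ itself is $1$-connected.

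Next, by projecting a generic $1$-parameter family of parametrizations to $\mathrm{Isot}(S_0,s)$, I would enumerate the codimension $1$ phenomena, which become the $1$-cells of $\mathcal{C}_{(S,s)}$. They fall into four families: (C1) a basic move on the bridge diagram at a single regular level; (C2) the swapping of two adjacent critical values of $p|_{S_0}$ (saddle/saddle, saddle/birth, saddle/death, birth/birth, birth/death, death/death); (C3) a birth--death cancellation of an adjacent critical pair; (C4) a transverse crossing of $s$ through a critical level, which by genericity can always be resolved by a small perturbation. The corresponding codimension $2$ phenomena, analyzed in the same fashion as in the proof of $1$-connectedness of $\mathcal{R}\colon \mathcal{B}_{K,\Sigma,p}\to\mathrm{Emb}_{K}(S^{1},S^{3})$, furnish the $2$-cells and show $\mathcal{R}$ to be $1$-connected.

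Then I would verify that the composition of basic-piece maps is invariant under each elementary move. Type (C1) is exactly the compatibility of a single basic piece with adjacent basic moves on one of its ends, which is already proved above via Lemma~\ref{lemma-isotopy-tri} for isotopies, Proposition~\ref{prop:commutation-a-h} for A- and B-equivalences, and the saddle--basic-move compatibility lemma; (C4) is subsumed by Lemma~\ref{lemma-isotopy-tri}. The birth/death--saddle commutations within (C2) follow from a neck-stretching argument: a birth or death is represented by a connected sum with a standard $\mathbb{Z}_{2}$-invariant genus-$1$ Heegaard diagram of $S^{1}\times S^{2}$, and in the long-neck limit this stabilization commutes with any triangle count supported away from the neck, with the unique $\mathbb{Z}_{2}$-invariant top generator playing the role of $\Theta$. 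The cancellation (C3) reduces similarly to an equivariant small-triangle count with a unique $\mathbb{Z}_{2}$-invariant top generator.

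The main obstacle is the saddle/saddle commutation in (C2). Two saddles at different heights that swap give rise to two orderings of equivariant triangle maps that must coincide, and this reduces to an equivariant associativity statement for the triangle product of the Heegaard quadruple-diagram built from the two saddle arcs. Weak admissibility of this quadruple-diagram is furnished by the results of Section~4, while the equivariant transversality of Theorem~\ref{thm:eqvtrans}, together with the observation that the $\mathbb{Z}_{2}$-fixed locus of $\mathrm{Sym}^{g}(\Sigma)$ admits no non-constant holomorphic squares of the relevant Maslov index, ensures that the standard count of Maslov index $-1$ squares provides a genuinely $\mathbb{Z}_{2}$-equivariant chain homotopy between the two compositions, and hence equality on $\widehat{HF}_{\mathbb{Z}_{2}}$. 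Once invariance under (C1)--(C4) is established, the composition of basic-piece maps descends through $\mathcal{R}$, so $\hat{f}_{S}$ depends only on the isotopy class of $(S_0,s)$ rel $s\cup\partial S_0$.
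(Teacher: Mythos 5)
Your proposal takes a genuinely different route from the paper. The paper does not build a Cerf-theoretic cell complex of slicings; instead it observes that isotoping and slicing $S$ is the same as representing $S$ as a movie, and then invokes the Carter--Saito classification of movie moves as a black box: any two movies are related by fifteen explicit moves, and only one of them (Fig.~30 of \cite{Carter-Saito}, the birth--saddle / saddle--death cancellation) needs a nontrivial verification. Your Juh\'asz--Thurston-style argument is an attempt to re-derive the combinatorial input of Carter--Saito from scratch by stratifying the space of slicings; this is plausible in outline, but you only sketch the codimension-$2$ analysis that is needed to guarantee $1$-connectedness, whereas the paper outsources that entirely to a published theorem. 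The trade-off is that your route, if completed, would be self-contained, while the paper's route is much shorter but relies on \cite{Carter-Saito}.

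The genuine gap is in your treatment of (C3), the birth--death cancellation, which you dismiss with one sentence (``reduces similarly to an equivariant small-triangle count with a unique $\mathbb{Z}_2$-invariant top generator''). This is precisely the hard case, and the paper spends the bulk of the proof on it. For one of the two triangles of the Fig.~30 move the identification is definitional (birth followed by saddle is literally how the stabilization map is defined), but the other triangle requires showing that the composite cobordism $L \xrightarrow{\text{saddle}} L\coprod U \xrightarrow{\text{death}} L \xrightarrow{\text{birth}} L\coprod U \xrightarrow{\text{saddle}} L$ induces the identity on $\widehat{HF}_{\mathbb{Z}_2}$. The paper proves this by a three-stage reduction: first for $L$ a union of two unlinked unknots, then for $L = L_0 \coprod U_0$ via a long-neck argument, and finally for general $L$ by choosing an auxiliary saddle $S_a$ whose induced map is shown to be injective (because it is part of a stabilization isomorphism), commuting the diagram of type~(c), and cancelling the injective map. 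This is not a small-triangle count near a neck; it is a global argument that needs the injectivity step. Without something of this kind, your (C3) is unjustified and the proof has a hole exactly where the paper works hardest.
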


\begin{proof}
Isotoping $S$ and then slicing it into basic pieces is equivalent
to representing the cobordism $S$ as a movie from $(K_{1},p_{1})$
to $(K_{2},p_{2})$. It is known that any two movies of $S$ are related
through a sequence of 15 possible types of movie moves, which are
defined by Carter and Saito in the paper \cite{Carter-Saito}. Note
that, although the result of Carter and Saito is about non-based cobordisms,
it can also be applied directly to based cobordisms, by taking saddles
moves to occur away from the basepoint.

Among the 15 types of movie moves, the only type that needs a proof
for functoriality of equivariant Floer cohomology is the one drawn
in Fig.30 of \cite{Carter-Saito}, which corresponds to the following
commutative triangles.
\[
\xymatrix{L\ar[rr]_{\text{destabilization}}\ar[dr]_{\text{birth}} &  & L, & L\ar[dr]_{\text{saddle}}\ar[rr]_{\text{stabilization}} &  & L\\
 & L\coprod U\ar[ur]_{\text{saddle}} &  &  & L\coprod U\ar[ur]_{\text{death}}
}
\]
 But the composition of a birth map followed by a saddle map is precisely
the definition of the stabilization map, which proves that the triangle
on the left induces a commutative triangle of $\widehat{HF}_{\mathbb{Z}_{2}}$
groups; see the proof of Theorem 1.24 of \cite{eqv-Floer} for the
definition. 

The triangle on the right induces a commuative triangle of $\widehat{HF}_{\mathbb{Z}_{2}}$
groups if the composite cobordism 
\[
S\,:\,\xymatrix{L\ar[r]_{\text{saddle}} & L\coprod U\ar[r]_{\text{death}} & L\ar[r]_{\text{birth}} & L\coprod U\ar[r]_{\text{saddle}} & L}
\]
 induces identity on equivariant Heegaard Floer cohomology. First,
if $L$ is a union of two unlinked unknots, then this is obvious.
Next, if $L=L_{0}\coprod U_{0}$, where $U_{0}$ is an unlinked unknot
component, the basepoint lies in $L_{0}$, and the saddle moves are
taken on the component $U_{0}$, then the question reduces to the
previous case by making a long neck between the bridge diagrams of
$L_{0}$ and $U_{0}$, as drawn in \ref{fig:6.8}. 

For general link $L$, consider the saddle move on $L$, as in Figure
\ref{fig:6.8-1} (the green dashed line is the saddle arc) so that
the resulting link $L^{\prime}$ is isotopic to $L\coprod\text{unknot}$
where the basepoint does not lie on the unknot component. Since the
saddle map $\widehat{HF}_{\mathbb{Z}_{2}}(\Sigma(L),p)\xrightarrow{\hat{f}_{S_{a}}}\widehat{HF}_{\mathbb{Z}_{2}}(\Sigma(L^{\prime}),p)$
postcomposed with the birth map $\widehat{HF}_{\mathbb{Z}_{2}}(\Sigma(L^{\prime}),p)\rightarrow\widehat{HF}_{\mathbb{Z}_{2}}(\Sigma(L),p)$
is the stabilization map, which is an isomorphism, we see that $\hat{f}_{S_{a}}$
is injective. Now, by the proof of \ref{prop:commutation-a-h} for
distinguished diagrams of type (c), the following diagram commutes.
\[
\xymatrix{\widehat{HF}_{\mathbb{Z}_{2}}(\Sigma(L))\ar[r]_{\hat{f}_{S}}\ar[d]_{\hat{f}_{S_{a}}} & \widehat{HF}_{\mathbb{Z}_{2}}(\Sigma(L))\ar[d]_{\hat{f}_{S_{a}}}\\
\widehat{HF}_{\mathbb{Z}_{2}}(\Sigma(L^{\prime}))\ar[r]_{\hat{f}_{S\coprod(L\times I)}} & \widehat{HF}_{\mathbb{Z}_{2}}(\Sigma(L^{\prime}))
}
\]
 But we already know that $\hat{f}_{S\coprod(L\times I)}$ is the
identity. Therefore, since $\hat{f}_{S_{a}}$ is injective, the map
$\hat{f}_{S}$, too, is the identity.
\end{proof}
\begin{figure}[tbph]
\resizebox{.7\textwidth}{!}{\includegraphics{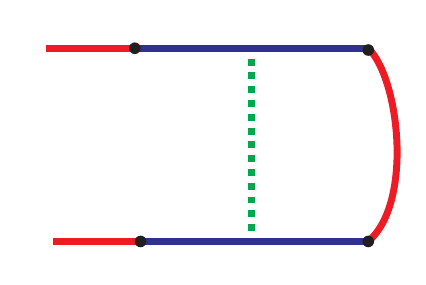}} \caption{\label{fig:6.8-1}Saddle along a small arc near $L$}
\end{figure}
\begin{figure}[tbph]
\resizebox{.4\textwidth}{!}{\includegraphics{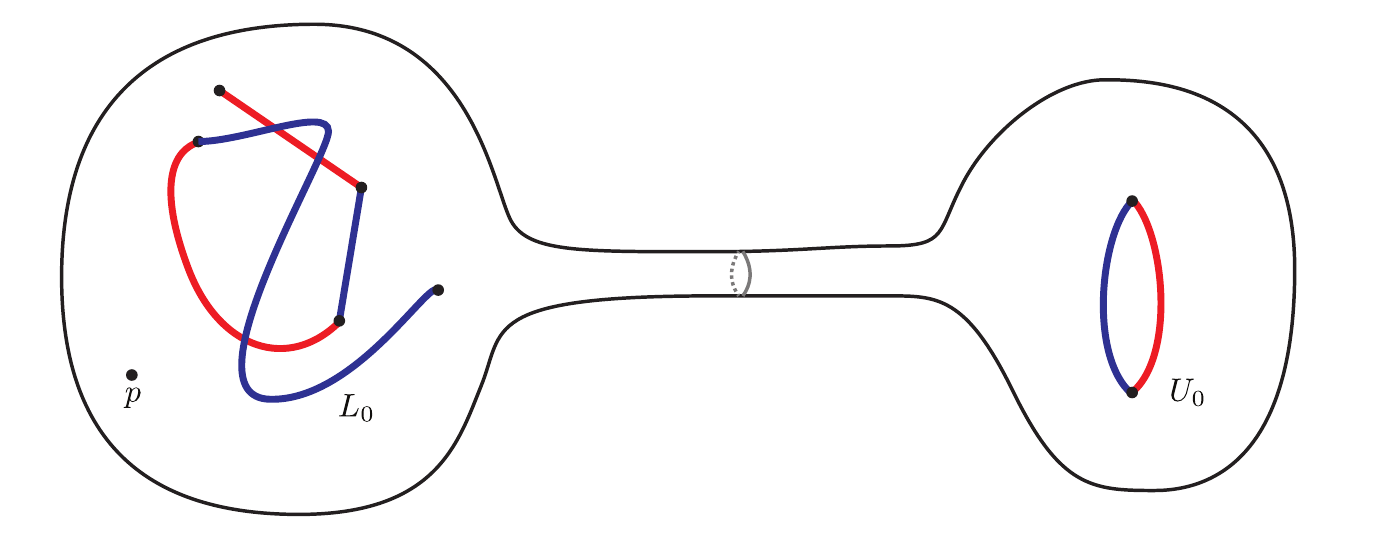}} \caption{\label{fig:6.8}The bridge diagram of $L=L_{0}\coprod U_{0}$ on a
sphere with a long neck}
\end{figure}
\begin{thm}
$\hat{f}_{S}$ depends only on the isotopy class of based cobordisms
$S=(S_{0},s)$ rel $\partial S_{0}$.
\end{thm}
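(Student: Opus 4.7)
The plan is to extend Lemma~\ref{lemma-6.15}---which shows invariance under isotopies rel $s \cup \partial S_{0}$---by additionally accommodating isotopies of the basepoint curve $s$ within $S_0$ rel $\partial s$. Together, these two invariances give the full invariance rel $\partial S_{0}$. Given an ambient isotopy $\{\Phi_t\}$ of $S^3 \times I$ rel $\partial S_0$ realizing $(S_0, s) \sim (S_0', s')$, I subdivide the isotopy into small pieces and, on each piece, approximate by a concatenation of a sub-isotopy that fixes the basepoint curve (handled by Lemma~\ref{lemma-6.15}) followed by one supported in a neighborhood of $s$ that fixes the underlying surface. This reduces the theorem to the following core claim: if $s, s'$ are two curves in a fixed $S_0$ with the same endpoints that are isotopic within $S_0$ rel $\partial s$, then $\hat{f}_{(S_0, s)} = \hat{f}_{(S_0, s')}$.

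To prove this core claim, I choose a generic $1$-parameter family $\{s_t\}_{t \in [0,1]}$ in $S_0$ interpolating between $s$ and $s'$ with $\partial s_t = \partial s$ fixed, and a generic Morse slicing function $p: S^3 \times I \to I$ with respect to which both $S_0$ and the family $\{s_t\}$ are generic. There are then finitely many critical times $t_1 < \dots < t_k$ at which $s_{t_i}$ is tangent to a level set of $p$ or passes through a critical point of $p|_{S_0}$. Between consecutive critical times the pair $(S_0, s_t)$ admits a common movie representation with the same sequence of basic pieces---only the basepoint's location within each basic piece varies smoothly, and the corresponding variation of the cobordism map is absorbed into the basic-piece commutation squares of types (a)--(h) and the basepoint isotopy invariance of Lemma~\ref{lem:identity-diff} already used in the proof of Lemma~\ref{lemma-6.15}.

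The main work, and the principal obstacle, is verifying invariance across each critical time. The tangency case simply subdivides or joins adjacent cylindrical pieces, and invariance is immediate from cylinder-basic commutation. In the critical-point-crossing case, births and deaths can be dispensed with: the basepoint trajectory would have to enter the small terminal disk associated to the birth/death component, which can be pushed off by a local isotopy supported in a neighborhood of $s_{t_i}$, reducing to a situation already handled by Lemma~\ref{lemma-6.15}. The remaining and nontrivial case is when $s_{t_i}$ passes through a saddle point $q$ of $p|_{S_0}$; the passage corresponds to moving the basepoint from one side of the saddle arc to the other in the relevant bridge diagram at the saddle level. I verify that this basepoint side-change is realized as a composition of handleslides over the A- or B-arcs adjacent to the saddle arc, followed by a small isotopy of the bridge diagram, each of which commutes with the saddle map by the commutativity of saddle-basic (type (e)) distinguished squares established in the lemmas preceding this theorem. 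Assembling the local invariances across all critical times yields the core claim, and hence the theorem.
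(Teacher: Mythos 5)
The reduction you make at the start---to the core claim that for a fixed $S_0$ and two basepoint curves $s,s'$ isotopic rel $\partial s$ one has $\hat{f}_{(S_0,s)}=\hat{f}_{(S_0,s')}$---is the same as the paper's. From there, though, your argument and the paper's diverge substantially, and yours has a genuine gap precisely at the point you identify as the ``principal obstacle.''

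The paper does \emph{not} analyze what happens when the family $\{s_t\}$ sweeps across a critical point of $p|_{S_0}$; instead it arranges for that never to happen. After making $s$ and $s'$ transverse, it picks an innermost bigon $D\subset S_0$ with $\partial D\subset s\cup s'$ and then applies Lemma~\ref{lemma-6.15} to isotope $S_0$ rel $s\cup\partial S_0$ so that $p|_D$ has no critical points. Because $D$ is then a trivially $p$-fibered bigon, pushing $s$ across $D$ can be realized by a family of basepoint curves $s_t$ on each of which $p$ is regular and which never meet a critical point; such a family only moves the basepoint ``horizontally'' within each slice, and isotopy maps commute with everything else, so the cobordism map is constant. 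Induction on $|s\cap s'|$ finishes. The decisive idea is to use Lemma~\ref{lemma-6.15} to move the critical points of $p|_{S_0}$ out of the way, rather than moving the basepoint curve through them.

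Your treatment of the saddle-crossing case is where the gap lies. You write ``I verify that this basepoint side-change is realized as a composition of handleslides over the A- or B-arcs adjacent to the saddle arc, followed by a small isotopy,'' but no verification is given, and it is not obviously correct. At the critical time $t_i$ the basepoint curve $s_{t_i}$ passes through the saddle point $q$, so at the level $p(q)$ the basepoint lies on the singular fiber of $p|_{S_0}$, where the slice is a graph rather than a link---the movie picture degenerates and there is no bridge diagram in which to perform a ``handleslide of the basepoint over the saddle arc.'' One would have to understand instead how the movies at $t_i-\epsilon$ and $t_i+\epsilon$ differ, and showing that difference is a composition of basic-move commutations is a nontrivial claim in its own right. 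The paper's proof is valuable precisely because it makes this analysis unnecessary: given the flexibility granted by Lemma~\ref{lemma-6.15}, one can always clear the relevant region of $S_0$ of critical points before pushing $s$, so the hard case simply never arises.
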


\begin{proof}
Denote the projection map $S^{3}\times I\rightarrow I$ by $p$. We
only have to prove that, given a knot cobordism $S_{0}$ and two curves
$s$ and $s^{\prime}$ such that $\partial s=\partial s^{\prime}$,
$s$ is isotopic to $s^{\prime}$ rel $\partial s$, and $p$ is regular
when restricted to $s$ and $s^{\prime}$, we have 
\[
\hat{f}_{(S_{0},s)}=\hat{f}_{(S_{0},s^{\prime})}.
\]
 Isotope $S_{0}$ rel $s\cup s^{\prime}$, so that $p|_{S_{0}}$ is
Morse and the critical points of $p|_{S_{0}}$ have distinct images
under $p$. We can isotope $s$ slightly and horizontally so that
it intersects $s^{\prime}$ transversely. We know that $s$ and $s^{\prime}$
cobound a disk $D\subset S$, which is innermost in the sense that
$D\cap(s\cup s^{\prime})=\partial D$. Using Lemma \ref{lemma-6.15},
we can isotope $S$ so that $p|_{D}$ has no critical points. Then
we get an isotopy $\{s_{t}\}_{t\in[0,1]}$ from $s$ to $s^{\prime}$
rel $\partial s$, such that $h$ is regular on $s_{t}$ for each
$t\in[0,1]$. Since isotopy maps (on $\Sigma$) commute with all other
types of maps, we deduce that the based cobordisms $(S,s_{t})$ induce
the same map for all $t$. Therefore, by induction on $|s\cap s^{\prime}|$,
we get $\hat{f}_{(S_{0},s)}=\hat{f}_{(S_{0},s^{\prime})}$.
\end{proof}
Thus we can now rewrite the equivariant Heegaard Floer theory $\widehat{HF}_{\mathbb{Z}_{2}}(\Sigma(L),p)$
of based links as a well-defined functor. We first define a notion
of isotopies of based cobordisms, as follows.

We can now define a category $\mathbf{bCob}$ as follows.
\begin{itemize}
\item The objects of $\mathbf{bCob}$ are based knots in $S^{3}$.
\item The morphisms of $\mathbf{bCob}$ between based knots $K_{1},K_{2}$
are isotopy classes of based cobordisms from $K_{1}$ to $K_{2}$.
\end{itemize}
Then what we proved up to now can be rephrased as follows.
\begin{thm}
\label{thm:functoriality}$\widehat{HF}_{\mathbb{Z}_{2}}\,:\,\mathbf{bCob}\rightarrow\mathbf{Mod}_{\mathbb{F}_{2}[\theta]}$
is a functor.
\end{thm}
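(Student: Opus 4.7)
The plan is to verify the remaining functoriality axioms, since the preceding theorem already shows that $\hat{f}_S$ depends only on the isotopy class of $(S_0,s)$ rel $\partial S_0$, so $\hat{f}_S$ is well-defined on morphisms of $\mathbf{bCob}$. It remains to check: (i) that the identity morphism on $(K,p)$ is sent to $\mathrm{id}_{\widehat{HF}_{\mathbb{Z}_2}(\Sigma(K),p)}$; (ii) that composition of composable based cobordisms is preserved; and (iii) that each $\hat{f}_S$ is in fact an $\mathbb{F}_2[\theta]$-module homomorphism.

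For (i), the identity morphism is represented by the trivial cylinder $(K\times I,\{p\}\times I)$. After a small isotopy the projection $S^3\times I\to I$ restricts to a Morse function on $K\times I$ with no critical points, so the induced movie consists purely of cylindrical basic pieces, each corresponding to a small isotopy of the associated bridge diagram. By the identification of the isotopy map with the diffeomorphism map proved earlier (the equivariant analogue of Ozsvath--Szabo's and Juhasz--Thurston's arguments given in Lemma \ref{lem:identity-diff}), and the fact that a diffeomorphism isotopic to the identity through a $\mathbb{Z}_2$-equivariant isotopy acts trivially on $\widehat{HF}_{\mathbb{Z}_2}$, each such piece contributes the identity; composing yields $\hat{f}_{K\times I}=\mathrm{id}$, and the previous theorem guarantees independence of the slicing used.

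For (ii), given composable based cobordisms $(S_1,s_1):(K_1,p_1)\to(K_2,p_2)$ and $(S_2,s_2):(K_2,p_2)\to(K_3,p_3)$, I isotope each so that the projection is Morse on the surface with distinct critical values and regular on the arc, and I arrange the two slicings to agree on a collar of the common boundary $K_2\times\{1\}=K_2\times\{0\}$. The concatenation $(S_2\circ S_1,s_2\cdot s_1)$ then inherits a slicing which is the juxtaposition of the two, and $\hat{f}_{S_2\circ S_1}$ computed via this slicing is by definition the composition $\hat{f}_{S_2}\circ\hat{f}_{S_1}$; invariance under the choice of slicing gives the functoriality equation for any admissible slicings.

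For (iii), each basic-piece map is $\mathbb{Z}_2$-equivariant on the freed Floer chain complex by its construction in Section 3: cylindrical pieces come from equivariant continuation and diffeomorphism maps; saddles are induced by equivariant triangle maps against $\mathbb{Z}_2$-invariant cocycles; births and deaths correspond to taking or removing a connected sum along a long neck with a $\mathbb{Z}_2$-invariant genus-one Heegaard diagram of $S^1\times S^2$. Dualizing and passing to the equivariant Floer cochain complex yields $\mathbb{F}_2[\theta]$-module homomorphisms, and compositions of such maps remain $\mathbb{F}_2[\theta]$-linear. The main obstacle is step (i): although morally ``an isotopy of the diagram is the identity,'' one must carefully match the combinatorial cylindrical basic piece with an isotopy of the bridge diagram induced by a diffeomorphism of $S^3$ that is isotopic to the identity through an equivariant family, and then invoke Lemma \ref{lem:identity-diff}; once this is in place, (ii) and (iii) are formal consequences of the constructions in Sections 3 and 6 together with the slicing-independence theorem just proved.
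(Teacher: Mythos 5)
Your proposal is correct and in spirit matches the paper's own treatment, which presents this theorem simply as a rephrasing of the preceding well-definedness results (Lemma \ref{lemma-6.15} and the theorem on isotopy invariance rel $\partial S_0$), without spelling out the functoriality axioms. You are supplying the missing details: that identities go to identities, that composition is respected, and that the induced maps are $\mathbb{F}_2[\theta]$-linear. Each of your three verifications is sound. For (i), your argument is in fact more elaborate than necessary -- since the cobordism map is already known to depend only on the isotopy class rel $\partial S_0$, one may simply compute $\hat{f}_{K\times I}$ using the trivial slicing (no critical points, no moves), which immediately gives the identity, without needing to invoke Lemma \ref{lem:identity-diff} for arbitrary cylindrical slicings -- but the more careful route you take is also valid. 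For (ii) and (iii), your reasoning that concatenated slicings give the composite map and that all basic-piece maps are $\mathbb{Z}_2$-equivariant (hence $\mathbb{F}_2[\theta]$-linear after dualizing) is exactly what the construction provides.

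One small notational slip: since $\hat{f}_S$ for $S:(K_1,p_1)\to(K_2,p_2)$ goes from $\widehat{HF}_{\mathbb{Z}_2}(\Sigma(K_2),p_2)$ to $\widehat{HF}_{\mathbb{Z}_2}(\Sigma(K_1),p_1)$, the composition identity should read $\hat{f}_{S_2\circ S_1}=\hat{f}_{S_1}\circ\hat{f}_{S_2}$, not $\hat{f}_{S_2}\circ\hat{f}_{S_1}$ (the latter does not typecheck). This reflects the fact that $\widehat{HF}_{\mathbb{Z}_2}$ is contravariant on $\mathbf{bCob}$ as stated, or equivalently covariant on $\mathbf{bCob}^{\mathrm{op}}$; it does not affect the substance of your argument.
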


Note that the equivariant HF theory of links is an unoriented theory.
The group $\widehat{HF}_{\mathbb{Z}_{2}}(\Sigma(K),z)$ does not depend
on the orientation of $K$, and a knot cobordism induces maps between
$\widehat{HF}_{\mathbb{Z}_{2}}$ in both ways.
\begin{rem*}
Using the same argument, we can prove that a based cobordism of links
induces a cobordism map, which is well-defined up to monodromy. In
other words, if $S$ is a based cobordism from $(L_{1},p_{1})$ to
$(L_{2},p_{2})$ and we denote the monodromy of $\widehat{HF}_{\mathbb{Z}_{2}}(\Sigma(L_{i}),p_{i})$
by 
\[
\rho_{i}\,:\,\pi_{1}(\mathcal{B}_{L_{i},\Sigma,p_{i}})\rightarrow\text{GL}(\widehat{HF}_{\mathbb{Z}_{2}}(\Sigma(L_{i}),p_{i})),
\]
 then the cobordism map 
\[
\hat{f}_{S}\,:\,\widehat{HF}_{\mathbb{Z}_{2}}(\Sigma(L_{2}),p_{2})\rightarrow\widehat{HF}_{\mathbb{Z}_{2}}(\Sigma(L_{1}),p_{1})
\]
 is well-defined up to composition with elements in the images of
$\rho_{1}$ (and $\rho_{2}$).
\end{rem*}

\section{Functoriality of $c_{\mathbb{Z}_{2}}$ under certain symplectic cobordisms}

Now we restrict to the case when the links are transverse, with respect
to the standard contact structure $\xi_{std}$ on $S^{3}$. Then the
cobordisms we should use in this case are symplectic cobordisms. Recall
that symplectic cobordism of links is defined as follows.
\begin{defn}
Let $L_{0},L_{1}$ be transverse links in $(S^{3},\xi_{std})$. A
symplectic cobordism from $L_{0}$ to $L_{1}$ is an embedded symplectic
surface $S\subset(S^{3}\times[0,R],d(-e^{t}\xi_{std}))$ where $R$
is a positive real, $S\cap(S^{3}\times\{i\})=L_{i}$ for $i=0,1$,
and $S$ is cylindrical near both ends.
\end{defn}

\begin{example}
Suppose that we are given an isotopy $rels\cup s^{\prime}\{L_{t}\}$
of transverse links, $0\le t\le1$. Given a symplectization 
\[
(S^{3}\times[0,1],\omega_{R}=d(e^{Rt}\alpha_{std}))\simeq(S^{3}\times[0,R],d(e^{t}\alpha_{std}))
\]
 for $R>0$, where $\alpha_{std}=dz+xdy$(on $\mathbb{R}^{3}$), consider
the cobordism 
\[
S_{R}=\bigcup_{0\le t\le1}L_{t}\times\{Rt\}.
\]
 Choose a point $(p,t)\in S^{3}\times[0,R]$. Then the tangent plane
$T_{(p,Rt)}S_{R}$ is spanned by the vector $\partial_{t}+w_{p,t}$
and the line $T_{p}L_{t}$, for some vector $w_{p,t}\in T_{p}S^{3}$.
Let $v_{p}$ be a tangent vector spanning $T_{p}L_{t}$, which is
chosen to vary smoothly. Then we have 
\begin{align*}
(\omega_{R})_{(p,Rt)}(\partial_{t}+w_{p,t},v_{p}) & =e^{Rt}(Rdt\wedge\xi_{std}+d\alpha_{std})(\partial_{t}+w_{p,t},v_{p})\\
 & =Re^{Rt}(\alpha_{std}(v_{p})+\frac{1}{R}d\alpha_{std}(\partial_{t}+w_{p,t},v_{p})).
\end{align*}
 Since $L_{t}$ are transverse to $\xi_{std}$, by compactness, the
value of $\alpha_{std}(v_{p})$ is bounded away from zero. Hence,
for sufficiently large $R$, the cobordism $S$ is symplectic. In
other words, $S_{R}$ is symplectic for sufficiently large $R>0$;
we will call such surfaces as isotopy cylinders.

Now suppose that we are given a cobordism $S\subset(S^{3}\times[0,R],d(-e^{t}\alpha_{std}))$
which is symplectic and cylindrical near both ends. Let $L=S\cap(S^{3}\times\{0\})\subset S^{3}$
and choose a generator $v_{p}\in T_{p}L$ for $p\in S^{3}$. Since
$S$ is symplectic, by assumption, we must have 
\[
0\ne d(e^{t}\alpha_{std})(\partial_{t},v_{p})=e^{t}(dt\wedge\alpha_{std}+d\alpha_{std})(\partial_{t},v_{p})=e^{t}\alpha_{std}(v_{p}).
\]
 This implies $\alpha_{std}(v_{p})\ne0$, i.e. $L$ is transverse
to $\xi_{std}$. Therefore we see that symplectic cobordisms form
a good notion of cobordisms between transverse links.
\end{example}

Before we explicitly construct ``basic pieces'' of based symplectic
cobordisms which achieve the functoriality for equivariant contact
classes, we need to define the notion of weak symplectic isotopies
between symplectic cobordisms. They are defined as follows.
\begin{defn}
Two symplectic cobordisms $S_{1},S_{2}$ are weakly symplectically
isotopic if they are symplectically isotopic after concatenating with
(trivial) cylindrical cobordisms on both ends.

A based symplectic cobordism is a based cobordism $(S,s)$ where $S$
is symplectic. Two based symplectic cobordisms $(S,s)$ and $(S^{\prime},s^{\prime})$
are weakly symplectically isotopic if $S$ and $S^{\prime}$ are weakly
symplectically isotopic and, after performing the isotopy, the homotopy
class $[s]-[s^{\prime}]$ is contained in the image of the map $\pi_{1}(\partial S)\rightarrow\pi_{1}(S)$.
\end{defn}

Thus we are led to define a category $\mathbf{sCob}_{w}$ as follows:
\begin{itemize}
\item objects of $\mathbf{sCob}_{w}$ are based transverse knots;
\item morphisms of $\mathbf{sCob}_{w}$ are weak symplectic based isotopy
classes of based symplectic cobordisms.
\end{itemize}
Then we have a well-defined natural functor 
\[
\mathbf{sCob}_{w}\rightarrow\mathbf{bCob},
\]
 which forgets all the contact and symplectic structures involved
and reverses the directions of cobordisms. In other words, a weak
symplectic based isotopy class of based symplectic cobordisms can
be seen as a based isotopy class of based cobordisms.

Now we construct the ``basic pieces'' of symplectic cobordisms,
which will only be defined up to weak symplectic based isotopy. The
idea is to mimic the symplectic handle constructions on the branched
double cover side, so we can construct the symplectic models of ``birth''
and ``saddle'' cobordisms. Note that a birth corresponds to a 1-handle
and a saddle corresponds to a 2-handle, while a death corresponds
to a 3-handle; 4-dimensional symplectic 3-handles do not exist.

The birth case is easy:
\begin{defn}
\label{birth-symp-def}Given a transverse based link $L$ in $(S^{3},\xi_{std})$,
choose a point $p\in S^{3}$ whch lies outside $L$. Consider a 1-parameter
family $\{U_{t}\}_{t\in(0,1]}$ which converges to $p$, so that in
the front projection $(x,y,z)\mapsto(x,z)$, the family is represented
by Figure \ref{fig:7.1}. Then the symplectic birth cobordism from
$L$ is defined as 
\[
B(L)=L\times[0,R]\cup\left(\bigcup_{t\in[0,1]}U_{1-t}\times\{Rt\}\right)\subset S^{3}\times[0,R],
\]
 where $R>0$ is chosen so that $B(L)$ becomes symplectic.
\end{defn}

\begin{figure}[tbph]
\resizebox{.4\textwidth}{!}{\includegraphics[scale=0.75]{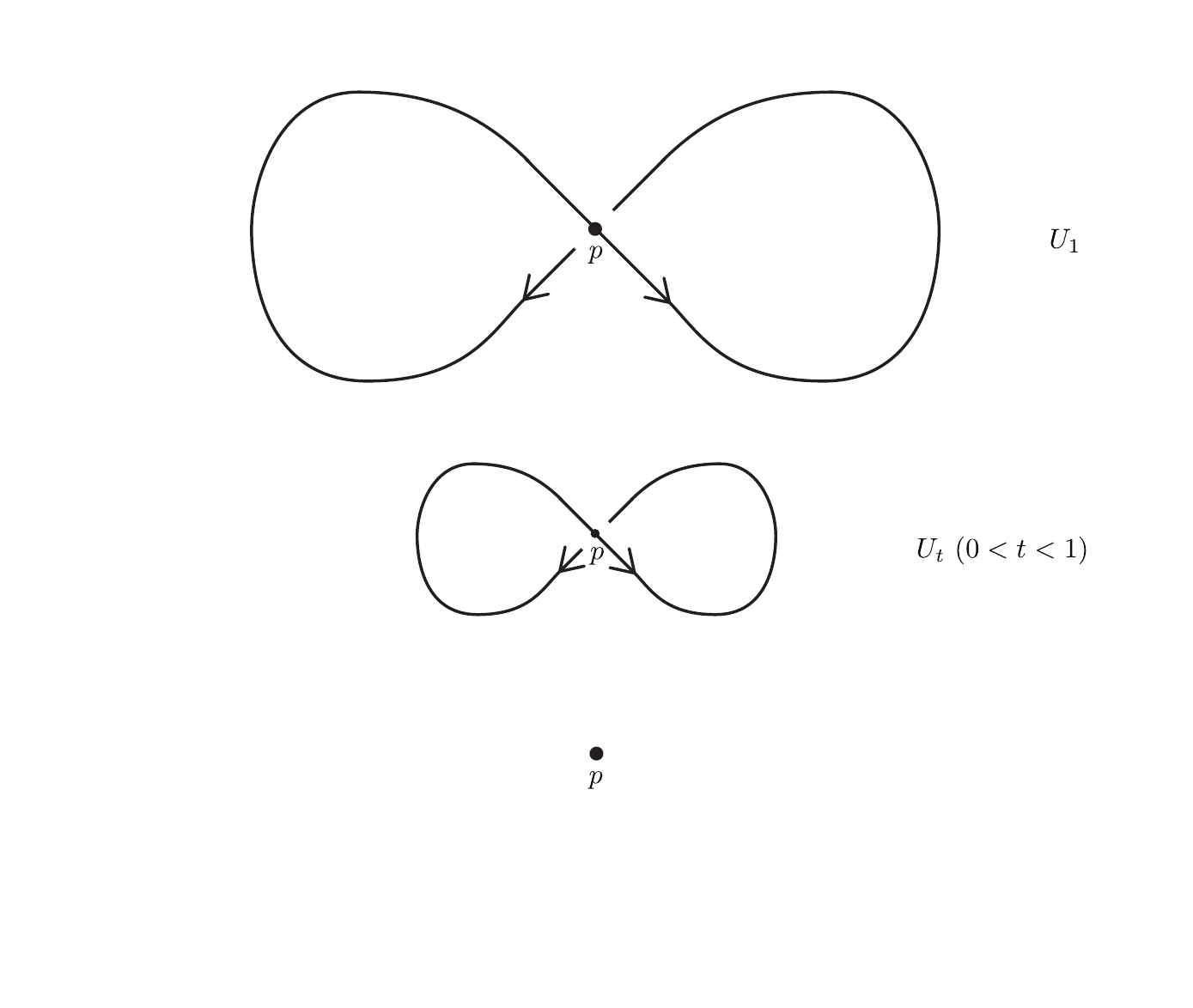}}
\caption{\label{fig:7.1}The family $\{U_{t}\}_{t\in(0,1]}$ of transverse
unknots which converge to $p$ as $t\rightarrow0$}
\end{figure}
\begin{lem}
There exists $R>0$ such that $B(L)$ in Definition \ref{birth-symp-def}
becomes symplectic, and the weak symplectic based isotopy type of
$B(L)$ depends only on $L$.
\end{lem}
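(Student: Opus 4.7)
The plan is to handle the two assertions separately: first prove that $B(L)$ is symplectic for $R$ sufficiently large by imitating the isotopy cylinder calculation from the preceding example, then verify invariance under the choices involved (the auxiliary family $\{U_t\}$, the point $p$, and the constant $R$) by constructing explicit weak symplectic isotopies.

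For symplecticity, I will compute the restriction of $\omega_R = d(e^{Rt}\alpha_{std})$ to $T_{(q,Rt)}B(L)$ at each interior point. On the cylindrical part $L \times [0,R]$ the calculation is exactly the one carried out in the isotopy cylinder example: $\omega_R(\partial_t + w, v_q) = R e^{Rt}(\alpha_{std}(v_q) + R^{-1} d\alpha_{std}(\partial_t+w, v_q))$ and transversality of $L$ together with compactness gives $\alpha_{std}(v_q)$ bounded away from zero, so the factor is nonzero for all $R$ large. On the cap $\bigcup_t U_{1-t} \times \{Rt\}$ the tangent plane is spanned by a unit tangent $v_q \in T U_{1-t}$ and a vertical-ish vector $R\partial_t + w$ with $w \in TS^3$; the same formula yields $e^{Rt}(R\alpha_{std}(v_q) + d\alpha_{std}(w, v_q))$, and each $U_{1-t}$ is transverse, so pointwise $\alpha_{std}(v_q) \ne 0$.

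The step requiring care is that this lower bound must hold \emph{uniformly} as $t \to 1$ while the unknots collapse to $p$. Here I would pass to a Darboux neighborhood $(x,y,z)$ of $p$ with $\alpha_{std} = dz + xdy$ and arrange the family $\{U_{1-t}\}$ near $t=1$ to be rescalings of a fixed small transverse unknot centered at $p$. In such rescaled coordinates, the unit tangent $v_q$ has $\alpha_{std}(v_q)$ equal to a function on a compact fixed model, hence uniformly bounded below; meanwhile $d\alpha_{std}(w, v_q)$ stays bounded since $w$ varies over a compact set and $v_q$ is a unit vector in the rescaled picture. On the remaining compact region $t \in [0, 1-\varepsilon]$ the bound is immediate by continuity and compactness. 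Choosing $R$ larger than the resulting uniform bounds makes $\omega_R|_{B(L)}$ nowhere degenerate, establishing symplecticity. This uniform-bound step near the cone point is the principal obstacle.

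For independence of choices up to weak symplectic isotopy, I handle each ambiguity in turn. Independence of $R$ is built into the definition, since enlarging $R$ amounts to pre- and post-concatenation with cylindrical cobordisms, which the equivalence relation quotients out. Given two admissible families $\{U_t\}$, $\{U'_t\}$ both converging to the same point $p$ and supported in a common Darboux ball around $p$ (which can be arranged after shrinking), any such families are connected by a smooth one-parameter homotopy $\{U_t^s\}_{s\in[0,1]}$ through families of transverse unknots shrinking to $p$. The resulting one-parameter family of caps, each symplectic for $R$ larger than a common bound supplied by the uniform estimate above, gives a symplectic isotopy between the two $B(L)$'s rel boundary, hence a weak symplectic based isotopy (the basepoint arc $s$ can be chosen to lie on $L$ and is unchanged). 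Finally, given two points $p, p' \in S^3 \setminus L$, connect them by a smooth path in the complement; by a standard contact isotopy extension (applied to the standard contact $S^3$), one obtains a contact ambient isotopy of $(S^3,\xi_{std})$ carrying $p$ to $p'$ and fixing $L$ setwise. Its symplectization is a symplectomorphism of $(S^3 \times [0,R], d(e^t\alpha_{std}))$ taking the birth cobordism built at $p$ to the one built at $p'$, providing the required weak symplectic based isotopy and completing the proof.
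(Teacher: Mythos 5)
Your overall strategy parallels the paper's: the paper proves the lemma by deferring entirely to the proof of Theorem \ref{saddle-symp}, which estimates $\omega_R = e^{Rt}(R\,dt\wedge\alpha_{std} + d\alpha_{std})$ on a tangent bivector $\sigma$, uses compactness to bound the two terms, and takes $R$ large. Your treatment of independence of choices (separating the ambiguities in $R$, $\{U_t\}$, and $p$, and connecting choices by one-parameter families) is a reasonable elaboration of the paper's one-line remark and is fine.

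The gap is in your treatment of the cone point, which you correctly identify as the principal obstacle but then resolve incorrectly. You claim that after rescaling, the unit tangent $v_q$ to $U_{1-t}$ has $\alpha_{std}(v_q)$ uniformly bounded below. This is false: at the cone point the tangent plane of $B(L)$ degenerates to a horizontal $2$-plane $P \subset T_p S^3$, and $\alpha_{std}|_P$ is a linear functional whose kernel is $1$-dimensional, so $\alpha_{std}(v_q)$ necessarily tends to $0$ as $q \to p$. Concretely, in Darboux coordinates $\alpha = dz + x\,dy$ at $p$, taking $\gamma_\epsilon(s) = \left(\epsilon\cos s, \epsilon\sin s, -\tfrac{\epsilon^2}{4}\sin 2s\right)$ as the shrinking family of transverse unknots, one finds $\alpha(\dot\gamma_\epsilon) = \tfrac{\epsilon^2}{2}$ while $\|\dot\gamma_\epsilon\| \sim \epsilon$, so $\alpha$ evaluated on the unit tangent is $\sim \tfrac{\epsilon}{2} \to 0$; moreover neither the isotropic nor the parabolic rescaling of $\mathbb{R}^3$ preserves $\alpha_{std}$, so passing to a ``compact fixed model'' does not rescue the bound. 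The mechanism the paper actually uses (in the proof of Theorem \ref{saddle-symp}, which the lemma defers to) is different: $dt \wedge \alpha_{std}(\sigma)$ is allowed to vanish at the singular point, and the term $d\alpha_{std}(\sigma)$ is positive there because the tangent plane $P$ at the cone point is chosen (via the family of Figure \ref{fig:7.1}) to satisfy $d\alpha_{std}|_P > 0$ with the correct orientation. Then $d\alpha_{std}(\sigma) > 0$ on a small neighborhood of the cone point by continuity, while outside that neighborhood $dt\wedge\alpha_{std}(\sigma) \geq m > 0$ and $|d\alpha_{std}(\sigma)| \leq M$ by compactness, so $R > M/m$ suffices. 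Your argument never invokes the positivity of $d\alpha_{std}$ near the cone point, which is the step that actually makes the cap symplectic.
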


\begin{proof}
The proof is very similar to the proof of Theorem \ref{saddle-symp},
so we do not write it down here.
\end{proof}
We will now define symplectic saddle cobordisms. In the case of ordinary
links and cobordisms, a saddle is defined by the following topological
constructions. 
\begin{itemize}
\item (C1) Given a link $L$, choose an arc $a$ which intersects transversely
with $L$ at $\partial a$, such that $a\cap L=\partial a$.
\item (C2) Remove a small neighborhood of $\partial a$ in $L$ and replace
it by two arcs parallel to $a$. The saddle between those two links
is the desired saddle cobordism of $L$ along $a$.
\end{itemize}
In our case, the link $L$ is transverse, and the arcs $a_{i}$ should
be Legendrian. The reason is explained in Example \ref{exa:legendriansaddlearc}.
\begin{example}
\label{exa:legendriansaddlearc}Let $L$ be a transverse link in $(S^{3},\xi_{std})$.
As we have seen previously, $\xi_{std}$ lifts to a contact structure
$\xi_{L}$ on the branched double cover $\Sigma(L)$, which is determined
uniquely up to isotopy(actually, $\mathbb{Z}_{2}$-eqivariant isotopy).
As a smooth manifold, $\Sigma(L)$ is defined by removing a standard
neighborhood 
\[
(N(L),\xi_{std}|_{N(L)})\simeq(S^{1}\times D^{2},\ker(dz+r^{2}d\theta)),
\]
 taking double cover with respect to the meridian $\partial D^{2}$,
and then regluing a copy of $S^{1}\times D^{2}$ where the covering
transformation acts by $(r,\theta)\mapsto(r^{2},2\theta)$.

Suppose that a small smooth curve $\gamma\subset\Sigma(L)$ defined
near a point $p\in L$ is invariant under $\mathbb{Z}_{2}$. Then,
in the parametrization $N(L)\simeq S^{1}\times D^{2}$, the tangent
line $T_{p}\gamma$ must be spanned by the vector $\partial_{r}$.
Since the contact structure $\xi_{L}$ on $\Sigma(L)$ is defined
near $L$ by $\ker(\alpha_{L})$ where $\alpha_{L}=dz+f(r)d\theta$
for a good increasing $f$, we deduce that, for any $v\in T_{p}\gamma$,
\[
(\alpha_{L})_{p}(v)=dz(v)+f(r)d\theta(v)=0.
\]
 Therefore $\gamma$ should be Legendrian at $\gamma\cap L$, whch
implies that it is very natural to work with Legendrian arcs while
dealing with saddle cobordisms of transverse links.
\end{example}

Now let $L$ be a based transverse link with basepoint $z\in L$,
and suppose that we are given a Legendrian arc $a$ satisfying the
conditions (C1) and (C2), which does not contain the basepoint $z$.
Then the set $L\cup a$ is an embedded graph in $S^{3}$, such that
it consists of transverse edge-cycles together with one Legendrian
edge connecting points on the transverse cycles, and has a basepoint
in some transverse edge. We shall impose a further condition on $a$
to ensure that the lift $\gamma$ of $a$ becomes a smooth Legendrian
knot in $(\Sigma(L),\xi_{L})$:
\begin{itemize}
\item There exists a standard neighborhood $(N(L),\xi_{std}|_{L})\simeq\coprod(S^{1}\times D^{2},\ker(dz+r^{2}d\theta))$
such that the curve segment $a\cap N(L)$ is a straight radial line
(i.e. point towards the $r$-axis).
\end{itemize}
When this condition is also satisfied, we shall call such a graph
$(L,a,z)$ a nice graph.

Suppose that a nice graph $G=(L,a,z)$ is given. By the definition
of nice graphs, there exists a standard neighborhood $(N(L),\xi_{std}|_{L})\simeq\coprod(S^{1}\times D^{2},\ker(dz+r^{2}d\theta))$
such that the curve segment $a\cap N(L)$ is a straight radial line
(i.e. towards the $r$-axis). So we have a contact 1-form representative
$\alpha$ of $\xi_{std}$ such that 
\[
\alpha|_{N(L)}=dz+r^{2}d\theta,
\]
 in the given neighborhood parametrization. Now, by the standard neighborhood
theorem for Legendrian arcs, we have a neighborhood parametrization,
which is uniquely determined up to isotopy:
\[
(N(a),\alpha|_{N(a)})\simeq([0,\pi/2]\times D^{2},\ker(\cos(z)dx+\sin(z)dy)).
\]
 Here, by a (closed) neighborhood of the arc $a$, we mean an embedded
closed disk-bundle over $a$. Sometimes we will choose a slight extension
of the above parametrization, so that we have a contact embedding
of the cylinder 
\[
\left(\left[-\epsilon,\frac{\pi}{2}+\epsilon\right]\times D^{2},\ker(\cos(z)dx+\sin(z)dy)\right).
\]
\begin{example}
Consider the contact manifold 
\[
U=([0,\pi/2]\times D^{2},\alpha=\cos(z)dx+\sin(z)dy),
\]
 which is the standard neighborhood of the Legendrian arc $[0,\pi]\times\{0\}$.
If a curve $\gamma=(\gamma_{x},\gamma_{y},\gamma_{z})$ satisfies
\[
\frac{\gamma_{y}^{\prime}}{\gamma_{x}^{\prime}}=2\tan(z),
\]
 Then $\alpha(\gamma^{\prime})=(1+\sin^{2}z)\cdot v\ne0$ where $v=\frac{\gamma_{x}^{\prime}}{\cos(z)}=\frac{\gamma_{y}^{\prime}}{2\sin(z)}$.
So, given a regular curve 
\[
a(t)=(a_{x}(t),a_{y}(t))\in D^{2},
\]
 the induced curve 
\[
V_{a}=\left(a_{x},a_{y},\arctan\left(\frac{a_{y}^{\prime}}{2a_{x}^{\prime}}\right)\right)\in D\times[0,\pi/2],
\]
 if defined, is always transverse. Also note that we have
\begin{align*}
d\alpha_{U} & =dz\wedge(-\sin(z)dx+\cos(z)dy).
\end{align*}
\end{example}

\begin{rem*}
In this section, when we draw a regular curve $a$ on $D^{2}$, we
will mean the induced transverse curve $V_{a}$ in a slightly extended
cylinder $\left[-\epsilon,\frac{\pi}{2}+\epsilon\right]\times D^{2}$
for a very small $\epsilon>0$.
\end{rem*}
Since $L$ is a Reeb orbit of $\alpha$ and the Reeb orbits of $\cos\theta dx+\sin\theta dy$
passing through a point in $I\times\{\mathbf{0}\}$ are radial lines,
we deduce that the curve segments $L\cap N(a_{i})$ are given by the
$y$-axes on $D^{2}\times\{\pm\pi/2\}$. Then we consider a family
of curves $\{L_{t}\}_{t\in[0,1]}$defined as in Figure \ref{fig:7.2},
embedded in the cylinder $S^{3}\times[0,R]$.
\begin{figure}[tbph]
\resizebox{.7\textwidth}{!}{\includegraphics{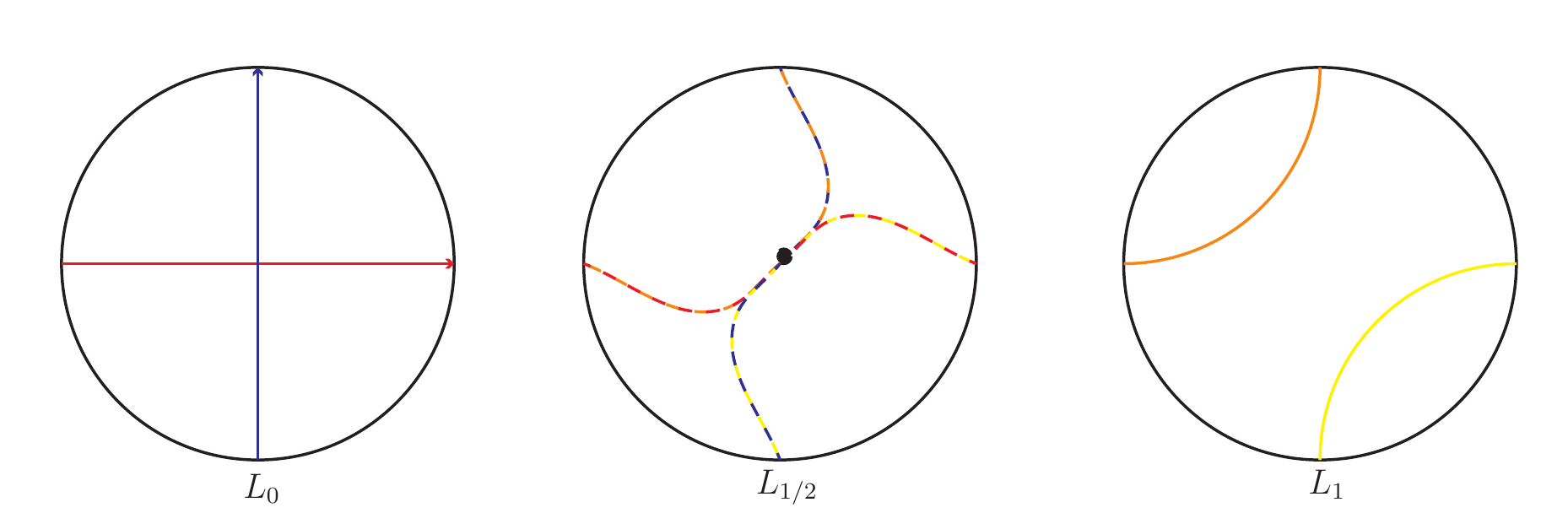}} \caption{\label{fig:7.2}The family of curves $L_{t}$ at $t=0$, $t=\frac{1}{2}$,
and $t=1$.}
\end{figure}

However, since the diagram in the middle gives two curves which are
tangent at a point with tangent line $\mathbb{R}\partial_{z}$, we
see that taking the union $\cup_{t\in[0,1]}L_{t}\times\{Rt\}$ of
such curves does not give us a surface in $S^{3}\times[0,R]$. To
resolve this problem, we perform a slight pertubation to give a new
family $\{\tilde{L}_{t}\}$, where the projection of $\tilde{L}_{1/2}\subset D^{2}\times I$
to $D^{2}$ is given as in Figure \ref{fig:7.4}.
\begin{figure}[tbph]
\resizebox{.4\textwidth}{!}{\includegraphics{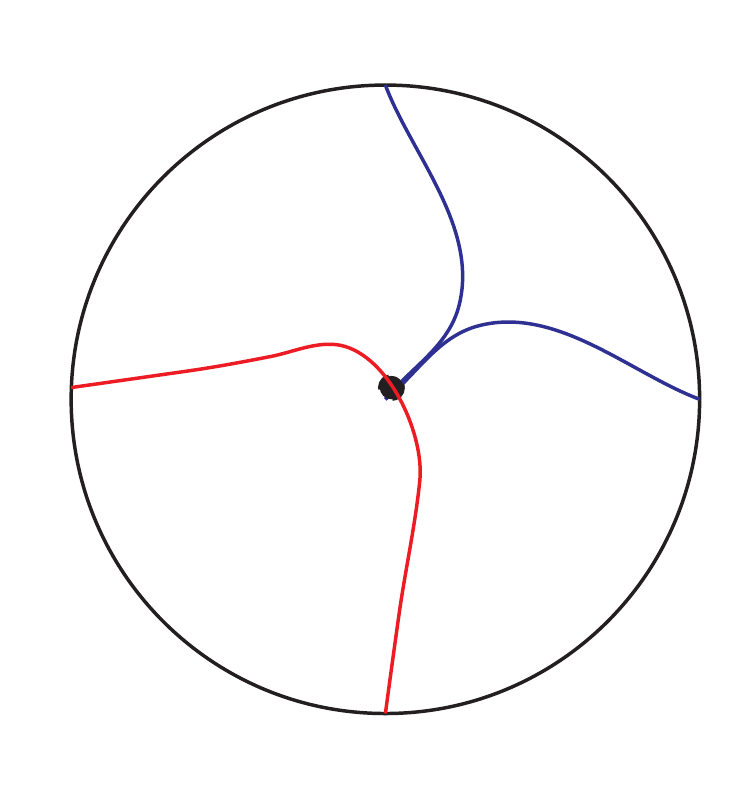}} \caption{\label{fig:7.4}The projection of $\tilde{L}_{1/2}$ along the $z$-axis.}
\end{figure}

If the perturbation is small enough, $\tilde{L}_{t}$ is transverse
for $t\ne\frac{1}{2}$ and $\tilde{L}_{t}-\{p\}$ is transverse for
$t=\frac{1}{2}$. Also, the two tangent vectors to $\tilde{L}_{1/2}$
at $p$ define a positive orientation of $D^{2}$. We can now form
a symplectic cobordism from such a family.
\begin{thm}
\label{saddle-symp}When $R$ is sufficiently large, the cobordism
$S_{R}=\cup_{t\in[0,1]}\tilde{L}_{t}\times\{Rt\}$ is symplectic,
and its weak symplectic isotopy class depends only on the given transverse
link $L$ and a Legendrian arc $a$.
\end{thm}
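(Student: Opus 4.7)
The plan is to split the argument into three steps: smoothness of $S_R$, non-degeneracy of $\omega_R = d(e^{Rt}\alpha_{std})$ restricted to $TS_R$ for all sufficiently large $R$, and well-definedness of the weak symplectic isotopy class. Smoothness follows directly from the choice of the perturbation $\{\tilde L_t\}$: the trace $\cup_t \tilde L_t \times \{Rt\}$ is a smoothly embedded surface, with the projection $t|_{S_R}$ having a single Morse critical point at $(p, R/2)$.

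For the non-degeneracy, at a generic point $(q, Rt)$ where $\tilde L_t$ is positively transverse the isotopy cylinder calculation applies verbatim: with the tangent plane spanned by a vector $v \in T_q \tilde L_t$ and a vector $\partial_t + w$,
\[
\omega_R(\partial_t + w, v) = R e^{Rt}\bigl(\alpha_{std}(v) + R^{-1} d\alpha_{std}(w, v)\bigr),
\]
which is non-zero for $R$ large by compactness, provided one stays outside an arbitrarily small neighborhood of the saddle point. At the saddle itself $(p, R/2)$, however, $\partial_t$ drops out of the tangent plane, which is now spanned by the two limiting tangent vectors $v_1, v_2$ of the two branches of $\tilde L_{1/2}$ at $p$, so the pairing reduces to $e^{R/2}\, d\alpha_{std}(v_1, v_2)$, where scaling in $R$ no longer helps. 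This is the main obstacle. I would resolve it by passing to the standard Legendrian neighborhood $([0, \pi/2] \times D^2, \ker(\cos(z)\,dx + \sin(z)\,dy))$ of $a$ and exploiting the positive-orientation hypothesis on $(v_1, v_2)$ in $D^2$, together with the nonzero $dz$-component introduced by the perturbation at the crossing, to conclude $d\alpha_{std}(v_1, v_2) > 0$; equivalently, that the $2$-plane $\mathrm{span}(v_1, v_2)$ avoids the Reeb direction at $p$. Continuity then propagates non-degeneracy to an open neighborhood of $(p, R/2)$, and combining with the generic-point estimate proves that $S_R$ is symplectic for all $R$ sufficiently large.

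For the second assertion, I would establish well-definedness of the weak symplectic isotopy class by a standard connectedness-of-choices argument. The auxiliary data consist of the extended contact neighborhoods of $L$ and $a$, the admissible perturbation $\{\tilde L_t\}$, and the scale $R$; the first two are unique up to contact isotopy by the standard Legendrian and transverse neighborhood theorems, and the space of admissible perturbations (those giving a single positively-oriented crossing at $t = 1/2$) is path-connected. Any interpolating path of choices yields a continuous family of symplectic cobordisms for a uniformly large $R$, and after concatenating trivial cylindrical cobordisms on both ends to fix the boundary data, this gives a symplectic isotopy, which is exactly the notion of weak symplectic isotopy. The marked curve $s$ requires no further adjustment, since the definition of weak symplectic based isotopy already allows $[s]$ to vary by the image of $\pi_1(\partial S) \to \pi_1(S)$.
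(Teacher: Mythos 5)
Your proof is correct and takes essentially the same approach as the paper: split the surface into a small neighborhood of the saddle where $d\alpha_{std}$ restricted to $TS$ is positive (so the $\frac{1}{R}d\alpha_{std}$ term dominates there) and the complement where $dt\wedge\alpha_{std}$ is bounded below, then take $R$ large, followed by a connectedness-of-choices argument for weak symplectic isotopy invariance. You spell out the saddle-point positivity in more detail than the paper (which simply asserts $d\alpha_{std}(\sigma) > 0$ at $p$), though note that "span$(v_1,v_2)$ avoids the Reeb direction" gives only $d\alpha_{std}(v_1,v_2)\ne 0$, and the positive-orientation hypothesis is what pins down the sign.
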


\begin{proof}
For simplicity, fix $R=1$ and let the symplectic structures on $S^{3}\times I$
vary:
\[
\omega_{R}=d(e^{Rt}\alpha_{std})=e^{Rt}(Rdt\wedge\alpha_{std}+d\alpha_{std}).
\]
We are asking whether the given cobordism, which we will denote by
$S$, is symplectic with respect to the symplectic form 
\[
\frac{1}{Re^{Rt}}\omega_{R}=dt\wedge\alpha_{std}+\frac{1}{R}d\alpha_{std}.
\]
Choose a smooth nonvanishing tangent bivector field $\sigma$ on $S$.
By the transversality assumption, the term $dt\wedge\alpha_{std}(\sigma)$
is everywhere nonnegative, and it vanishes only at the saddle point,
which we will call $p$. On the other hand, the term $d\alpha_{std}(\sigma)$
may not be everywhere nonnegative, but it takes a positive value at
$p$. So, suppose that $d\alpha_{std}(\sigma)>0$ in the $r$-ball
centered at the saddle point, and let 
\[
\max_{S}|d\alpha_{std}(\sigma)|<M,\,\min_{S-B_{r}(p)}dt\wedge\alpha_{std}(\sigma)>m.
\]
 Then, if $R>\frac{M}{m}$, we get 
\[
\left|\left(dt\wedge\alpha_{std}+\frac{1}{R}d\alpha_{std}\right)(\sigma)\right|\ge m-\frac{M}{R}>0
\]
 on $S-B_{r}(p)$. But this quantity is also positive inside $B_{r}(p)$
by our assumptions. Therefore, for all such $R$, the cobordism becomes
symplectic. The statement about the weak symplectic isotopy class
follows from the fact that, given a smooth 1-parameter family of auxiliary
choices that we have made, we can choose a very large stretching factor
$R$, which gives us an 1-parameter family of symplectic saddle cobordisms.
\end{proof}
\begin{rem*}
The same argument can also be used for any surfaces constructed by
a similar procedure. In particular, it works for symplectic births.
\end{rem*}
\begin{defn}
Denote such a cobordism, with the curve $\{z\}\times[0,R]$ on it,
by $S=S(L,a,z)$. Then $S$ has $(L,z)$ as a concave end and a based
transverse link $L^{\prime}=(C(L,a),z)$ as a convex end. The (transverse
isotopy class of) based transverse link $(C(L,a),z)$ is called the
transverse surgery of $(L,z)$ along $a$. The (weak symplectic isotopy
class of) based symplectic cobordism $S(L,a,z)$ is called the symplectic
saddle of $(L,z)$ along $a$.
\end{defn}

With those constructions in mind, we will say that the based symplectic
cobordisms which can be constructed from the symplectic births and
saddles are symplectically constructible. More precisely, we have
the following definition.
\begin{defn}
The weak symplectic isotopy class of based symplectic cobordisms,
obtained from gluing the classes of symplectic births and saddles,
are called constructible classes. The cobordisms contained in constructible
classes are called symplectically constructible based cobordisms.
\end{defn}

Now we prove that the maps between $\widehat{HF}_{\mathbb{Z}_{2}}$,
associated to symplectically constructible based cobordisms, maps
the equivariant contact class of the concave end to the equivariant
contact class of the convex end. We start from the simplest case,
when the based link $L$ is already braided along the $z$-axis and
the Legendrian arc $a$ is in a very nice position.
\begin{defn}
Let $L$ be a transverse link in $(S^{3},\xi_{std})$, which is braided
along the $z$-axis. A simple Legendrian arc $a$ is basic if there
exists a genus-zero open book supporting $(S^{3},\xi_{std})$, whose
binding is the $z$-axis, such that $a$ is contained in a single
page.
\end{defn}

\begin{lem}
Let $(L,a,z)$ be a nice graph, where $L$ is braided along the $z$-axis,
and suppose that $a$ is basic. Let
\[
\hat{f}_{S(L,a,z)}\,:\,\widehat{HF}_{\mathbb{Z}_{2}}(\Sigma(C(L,a)),z)\rightarrow\widehat{HF}_{\mathbb{Z}_{2}}(\Sigma(L),z)
\]
 be the map associated to the based cobordism class $S(L,a,z)$. Then
we have 
\[
\hat{f}_{S(L,a,z)}(c_{\mathbb{Z}_{2}}(\xi_{C(L,a)},z))=c_{\mathbb{Z}_{2}}(\xi_{L},z).
\]
\end{lem}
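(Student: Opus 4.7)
The plan is to adapt the argument of Theorem~\ref{thm:stab} (positive braid stabilization) to the setting of a general basic Legendrian arc, reducing the computation of $\hat{f}_{S(L,a,z)}$ to a triangle-map computation on an involutive Heegaard triple-diagram with an explicit local structure. The fundamental geometric input is that since $a$ is basic, it sits in a single page $D$ of the genus-zero open book for $(S^3,\xi_{std})$ with binding the $z$-axis, and hence the symplectic saddle $S(L,a,z)$ lifts to a Weinstein $2$-handle attachment along the $\mathbb{Z}_2$-invariant Legendrian lift $\gamma\subset\Sigma(L)$, which lies in a single page of the open book $(S,\tilde h)$ obtained from the Plamenevskaya construction. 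Equivalently, $C(L,a)$ is represented by the downstairs monodromy $h\circ T_a$, where $T_a$ is the Dehn half-twist along $a$, and the induced upstairs monodromy is $\tilde h\circ\tau_\gamma$ with $\tau_\gamma$ a positive Dehn twist.

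The first step is to choose a $\mathbb{Z}_2$-equivariantly convenient half-arc basis $\mathcal{A}=\{a_i^0\}_{i\ne 1}$ on $D$ with basepoint $z=p_1$ placed outside a small neighborhood of $a$. Using Corollary~\ref{cor:isotopy-inv} and Theorem~\ref{thm:half-arcslide}, I may assume the local picture of $\mathcal{A}$ near $a$ is in standard position, so that the HKM bridge diagrams for $(L,z)$ and $(C(L,a),z)$ coincide outside a small disk around $a$. Combining this with the perturbed saddle-diagram format from \cite{eqv-Floer}, I build an involutive Heegaard triple-diagram $(\tilde\Sigma,\boldsymbol\alpha,\boldsymbol\beta,\boldsymbol\gamma,z)$ whose weak admissibility is guaranteed by Section~4, and whose associated equivariant triangle map $F_\Theta$ with respect to a $\mathbb{Z}_2$-invariant top-class cocycle $\Theta\in\widehat{CF}(\tilde\Sigma,\boldsymbol\beta,\boldsymbol\gamma,z)$ computes the cobordism map $\hat{f}_{S(L,a,z)}$.

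The second step is the local triangle count. By the convenient placement of $z$ and the standard local picture near $a$, the pair $(\boldsymbol\beta,\boldsymbol\gamma)$ admits a unique $\mathbb{Z}_2$-invariant cocycle $\Theta$ representing the top generator. Every nonconstant holomorphic triangle from the tuple $\{p_1,\dots,p_n\}$ (representing $EH(\xi_{C(L,a)})$) to the tuple $\{p_1,\dots,p_n\}$ (representing $EH(\xi_L)$) through $\Theta$ is supported in the small saddle region, has Maslov index $0$, and is the unique holomorphic representative of its homotopy class by the Riemann mapping theorem. Invoking equivariant transversality (Theorem~\ref{thm:eqvtrans}) to restrict to symmetric-product almost complex structures of the form $\text{Sym}^g(\tilde{\mathfrak{j}})$, the equivariant triangle map therefore sends $EH_{\mathbb{Z}_2}(\xi_{C(L,a)},z)$ to $EH_{\mathbb{Z}_2}(\xi_L,z)$, yielding the claimed identity on cohomology classes.

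The main obstacle lies in the local triangle count of the second step. One must verify that, after lifting the Dehn half-twist along $a$ to the double cover, the resulting local modification from $\boldsymbol\beta$ to $\boldsymbol\gamma$ admits a unique $\mathbb{Z}_2$-invariant top-class cocycle, that no large triangle contributes (ensured by placing $z$ in a region disjoint from the saddle region and from the support of $\boldsymbol\beta\cup\boldsymbol\gamma$ differences), and that the unique triangle contributing to each component of the EH-tuple has Maslov index zero. Once this local picture is drawn explicitly, in the spirit of Figure~\ref{fig:5.4} used in the proof of Theorem~\ref{thm:stab}, the argument follows the same template as the proofs of Theorems~\ref{thm:half-arcslide} and~\ref{thm:stab}.
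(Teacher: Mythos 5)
Your proposal is correct and follows essentially the same route as the paper's own proof: since $a$ is basic, $\Sigma(C(L,a))$ is a contact $(-1)$-surgery on $(\Sigma(L),\xi_L)$ along the Legendrian lift $\gamma$, the cobordism map is computed by an equivariant triangle map on a $\mathbb{Z}_2$-invariant Heegaard triple-diagram, and the convenient placement of the basepoint forces every triangle connecting the $EH$-tuple to $\Theta$ to be a small, embedded, Maslov-index-zero triangle, so that the higher $\theta$-order contributions vanish. The only added content in your write-up is the explicit identification of the downstairs/upstairs monodromy change as composition with a half-twist/Dehn twist and the explicit Riemann-mapping uniqueness claim, both of which match the paper's reliance on the HKM ``small triangle'' argument and its Figures 5.4 and 7.5.
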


\begin{proof}
Since $a$ is basic, we know that the contact branched double cover
of $(S^{3},\xi_{std})$ along $C(L,a)$ is the contact (-1)-surgery
of $(\Sigma(L),\xi_{L})$ along the lift $\gamma$ of $a$. The map
$\hat{f}_{S(L,\{a_{i}\},z)}$ is given by the equivariant triangle
map. The Heegaard triple-diagram we get is described in Figure \ref{fig:7.5}.

As in the proof of invariance under positive stabilizaton, by the
convenient placement of a basepoint, all triangles connecting $\mathbf{x}$
and $\Theta$ in the diagram are small. So we deduce that the higher
order terms in the equivariant triangle map vanish, since they count
triangles of Maslov index at most 0. Thus we get the desired equality:
\begin{align*}
\hat{f}_{S(L,a,z)}(\xi_{C(L,a)},z) & =c_{\mathbb{Z}_{2}}(\xi_{L},z)+\text{higher order terms}\\
 & =c_{\mathbb{Z}_{2}}(\xi_{L},z).
\end{align*}
\end{proof}
\begin{figure}[tbph]
\resizebox{.7\textwidth}{!}{\includegraphics{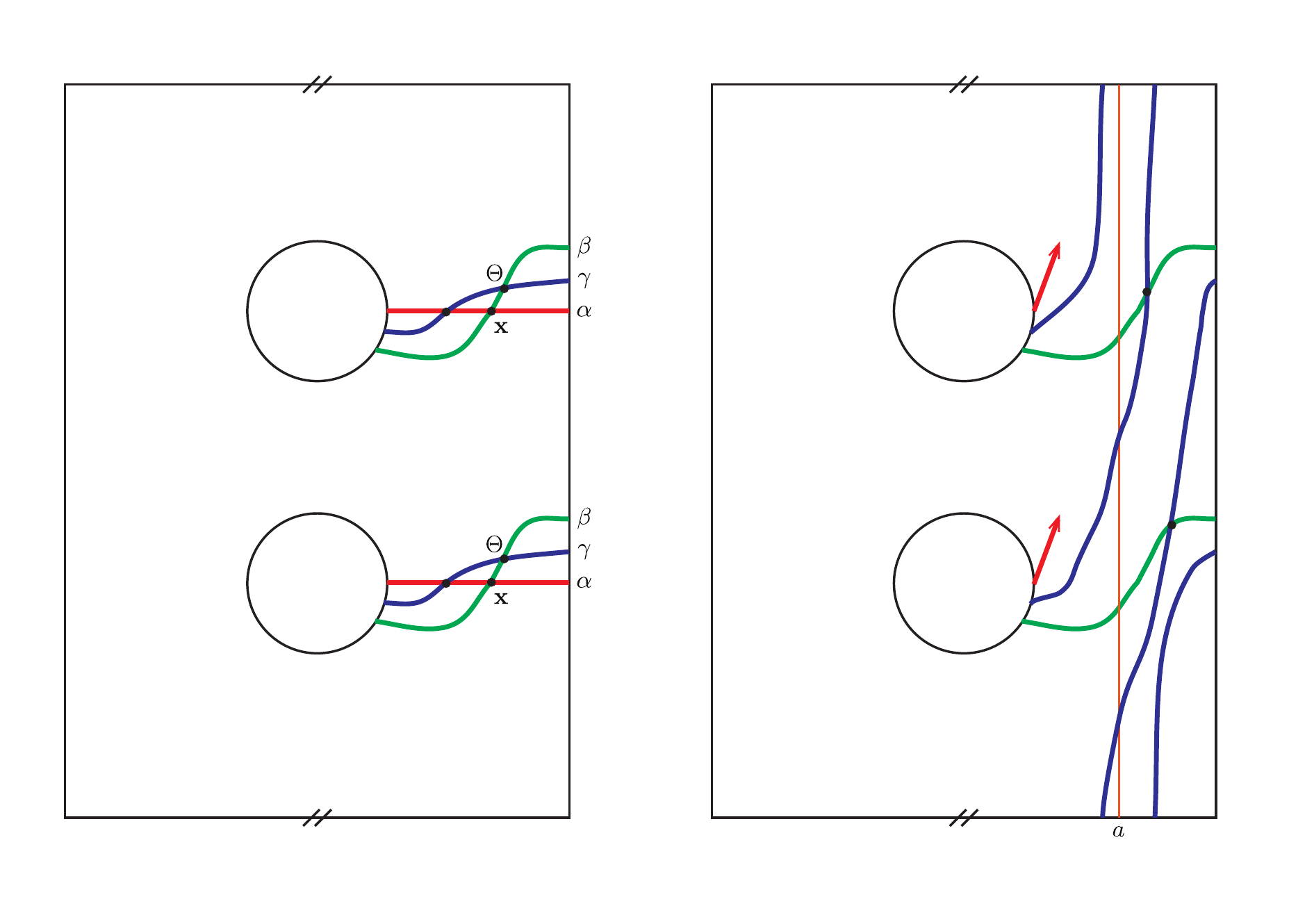}} \caption{\label{fig:7.5}The associated Heegaard triple-diagram.}
\end{figure}

In the general case when $L$ is not in a braid position and $a$
is arbitrary, we argue that we can alway isotope the whole situation
to the above case, where $L$ is braided and $a$ is basic. For that
we will have to deform our nice graph into a 4-valent transverse graph.
\begin{defn}
A based 4-valent transverse graph embedded in $(S^{3},\xi_{std})$
is a 4-valent directed graph $\Gamma$ with exactly one 4-valent vertex
and several 2-valent vertices, together with a basepoint $z$ on an
edge of $\Gamma$ and an embedding
\[
\Gamma\hookrightarrow S^{3},
\]
 such that each edge is transverse, the two adjacent edges of 2-valent
vertices glue smoothly, and the 4-valent vertex has an labelling $l_{1}^{\pm},l_{2}^{\pm}$
of its adjacent edges so that, as directed smooth curves, $l_{1}^{\pm}$
glue smoothly with $l_{2}^{\pm}$.
\end{defn}

Recall that, given a nice graph $(L,a,z)$, we have defined the surgered
link $C(L,a)$ and the symplectic saddle $S(L,a,z)$ by a sequence
of diagrams drawn on $D^{2}$. By considering the intermediate slice
(before applying a perturbation to make it a well-defined symplectic
surface), which is defined as in Figure \ref{fig:7.6}, we can see
that $L$ transforms to $C(L,a)$ through a based 4-valent transverse
graph $G(L,a,z)$ in $(S^{3},\xi_{std})$.
\begin{figure}[tbph]
\resizebox{.4\textwidth}{!}{\includegraphics{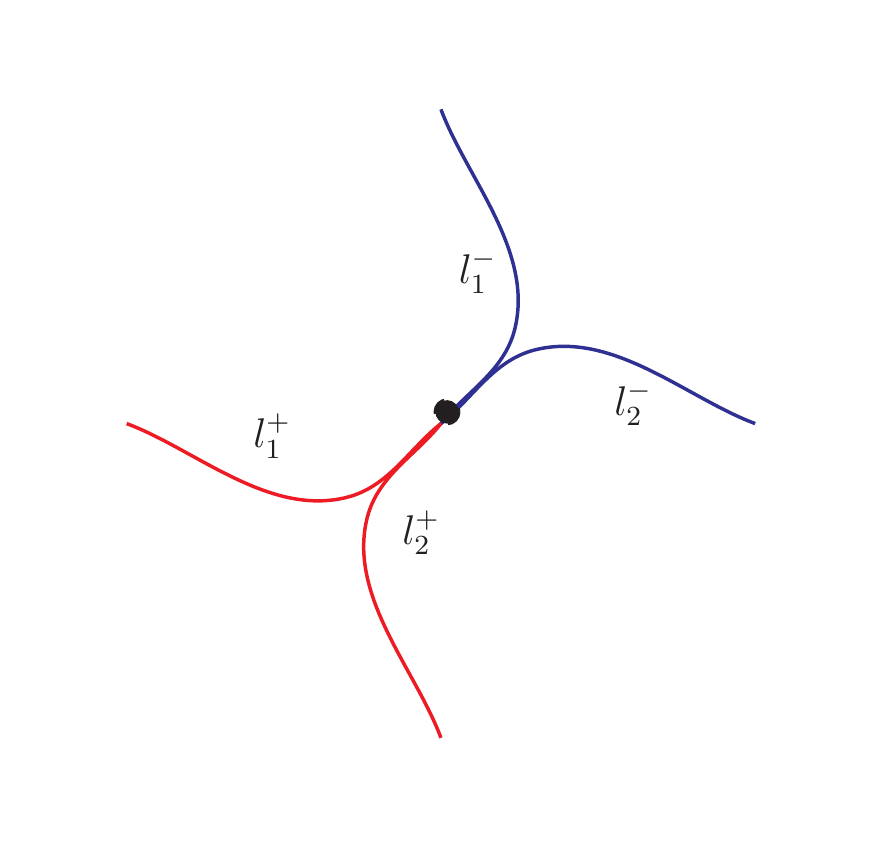}} \caption{\label{fig:7.6}The intermediate slice.}
\end{figure}

Now, given a nice graph $(L,a,z)$, we want to isotope it, through
a 1-parameter family of nice graphs, to another nice graph $(L^{\prime},a^{\prime},z^{\prime})$,
where $L^{\prime}$ is braided along the $z$-axis and $a^{\prime}$
is basic. To do that, we first isotope the based 4-valent transverse
graph $G(L,a,z)$, so that its edges are in braid position. This is
always possible due to the following (variant of) theorem of Bennequin.
\begin{thm}
\label{bennequin}\cite{Bennequin-graphs} Any smooth transverse graph
can be transversely isotoped into a braided position along the $z$-axis.
\end{thm}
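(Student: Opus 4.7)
The plan is to adapt the classical Bennequin braiding procedure for transverse links to the present graph setting by treating the $4$-valent vertex as a localized defect: first isolate it in a small Darboux ball, braid the complement using Bennequin's original argument, and then reconstruct the vertex in a braid-compatible way. Throughout, the open book fibration $\pi\colon S^{3}\setminus\{z\text{-axis}\}\to S^{1}$ provides the angular coordinate $\theta$, and ``braided'' means that $\theta$ restricts to a regular (orientation-preserving, monotonic) map on each edge.

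First I would choose a small Darboux ball $U$ centered at the $4$-valent vertex $v$, small enough that $\Gamma\cap U$ consists of four short transverse arcs emanating radially from $v$, paired into smooth strands $(l_{1}^{-},l_{1}^{+})$ and $(l_{2}^{-},l_{2}^{+})$ by the labelling. After a preliminary small transverse perturbation that preserves the smooth gluing data at $v$, the $8$ endpoints on $\partial U$ can be assumed to lie at generic positions. The graph $\Gamma\setminus\mathrm{int}(U)$ is then a finite disjoint union of transverse arcs and circles, with fixed endpoints on $\partial U$ away from the $z$-axis. To this subgraph I would apply the Alexander-style braiding argument used in Bennequin's proof: cover each edge by finitely many subarcs on which $d\theta$ has definite sign, and for each subarc where $d\theta<0$ perform a transverse-preserving ``Markov swing'' around the $z$-axis to reverse the sign of $d\theta$ without disturbing neighborhoods of the endpoints. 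This produces a transverse isotopy, supported away from $U$, carrying $\Gamma\setminus\mathrm{int}(U)$ to a braided configuration rel $\partial U$.

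The main obstacle is the third step, namely reassembling $\Gamma$ inside $U$ so that the full graph is braided while retaining the smooth gluing condition at the vertex. After the braiding, the four endpoints on $\partial U$ have prescribed $\theta$-coordinates $\theta_{1}^{-},\theta_{1}^{+},\theta_{2}^{-},\theta_{2}^{+}$, and we must supply four transverse arcs inside $U$ meeting at a single $4$-valent point, monotonic in $\theta$, with $l_{i}^{-}$ smoothly gluing to $l_{i}^{+}$. I would first contact-isotope $v$ within $U$ to a point $v'$ whose $\theta$-coordinate lies strictly between $\min_{i,\pm}\theta_{i}^{\pm}$ and $\max_{i,\pm}\theta_{i}^{\pm}$, a move available because in a Darboux ball the Reeb flow allows free movement along the $z$-axis direction and transverse radial adjustments. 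Then I would choose the tangent line $T_{v'}l_{i}$ for each smooth strand so that it is transverse to $\xi_{std}$ and consistent with the $\theta$-direction, and interpolate each of the four arcs from $v'$ to its prescribed endpoint on $\partial U$ by a standard transverse arc monotonic in $\theta$; such interpolations exist in a Darboux ball because the space of transverse arcs with specified boundary jet and monotonic $\theta$-coordinate is nonempty and connected once the boundary $\theta$-coordinates straddle $\theta(v')$.

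The subtlety I would need to control carefully is matching tangent vectors at $v'$: the smooth gluing condition requires $T_{v'}l_{i}^{-}=T_{v'}l_{i}^{+}$, and this must be arranged simultaneously with transversality of each arc and with the $\theta$-monotonicity. I expect to resolve this by reducing to the local model $(D^{2}\times[-\epsilon,\epsilon],\ker(\cos z\,dx+\sin z\,dy))$, where transverse directions form an open cone and, after a genericity perturbation of the original data, the four prescribed endpoint jets on $\partial U$ fall into a configuration realizable by a unique pair of smooth transverse curves through a chosen point. Concatenating this local construction with the braiding isotopy on the complement yields the desired global transverse isotopy from $\Gamma$ to a braided graph, completing the proof.
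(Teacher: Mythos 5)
The paper does not actually prove this statement; it is cited as a variant of Bennequin's theorem and attributed to Birman--Menasco, so there is no in-paper proof to compare against. I will therefore evaluate your proposal on its own terms.

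Your overall architecture (isolate the $4$-valent vertex in a Darboux ball $U$, braid the complement rel $\partial U$, then reassemble the vertex inside $U$) is a sensible adaptation of the Bennequin--Alexander braiding argument, but the reconstruction step has a real gap that the ``preliminary small perturbation'' does not fill. The condition you state for placing $v'$ --- that $\theta(v')$ lie between $\min_{i,\pm}\theta_i^{\pm}$ and $\max_{i,\pm}\theta_i^{\pm}$ --- is not the correct one. For the four arcs inside $U$ to be $\theta$-monotone and smoothly glue into two transverse strands through $v'$, you need the stronger interleaving condition $\max(\theta_1^-,\theta_2^-)<\theta(v')<\min(\theta_1^+,\theta_2^+)$. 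That inequality is equivalent to requiring both tangent lines $T_v l_1$ and $T_v l_2$ to have positive $d\theta$-component, and this is \emph{not} implied by transversality (transversality says $\alpha(T_v l_i)\neq 0$, which constrains the $dz$-component, not the $d\theta$-component). A ``small'' perturbation preserving the gluing data cannot reorient the tangent lines to satisfy this; you need a genuine transverse isotopy that normalizes the vertex. Your order of operations compounds the problem: since your braiding of the complement is performed rel $\partial U$, it cannot rescue a bad endpoint ordering either. The cleaner route is to reverse steps 2 and 3: first isotope a neighborhood of $v$ so that both strands through $v$ are $\theta$-monotone there (which is itself a local form of the braiding procedure applied to the two smooth curves through $v$), and only then apply the Markov-swing braiding to the complement, supported away from the already-normalized vertex neighborhood. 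You should also verify that the Markov swings can indeed be chosen supported in the interior of each edge so that the endpoints on $\partial U$ stay put --- this is plausible but is stated rather than argued. Finally, two small errors: the theorem is stated for arbitrary smooth transverse graphs, while you only treat the single-$4$-valent-vertex case needed for the paper's application; and there are $4$, not $8$, endpoints of $\Gamma$ on $\partial U$.
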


\begin{lem}
Every nice graph $(L,a,z)$ can be isotoped to another nice graph
$(L^{\prime},a^{\prime},z)$ through nice graphs, so that $L^{\prime}$
is braided and $a^{\prime}$ is basic.
\end{lem}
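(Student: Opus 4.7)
The plan is to reduce to Bennequin's theorem for transverse graphs (Theorem~\ref{bennequin}) applied to the associated 4-valent transverse graph $G(L,a,z)$, and then recover a nice graph by reversing the intermediate-slice construction.

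First I would form the 4-valent transverse graph $G(L,a,z)$ via the intermediate slice of Figure~\ref{fig:7.6}, so that $G$ has a single 4-valent vertex $v$ (where $a$ is compressed to a point) together with the smoothness data consisting of an unordered pairing of the four adjacent edges into two smooth branches. Invoking Theorem~\ref{bennequin}, I obtain a 1-parameter family $\{G_t\}_{t\in[0,1]}$ of transverse graphs with $G_0=G(L,a,z)$ and $G_1$ braided about the $z$-axis; by following the 4-valent vertex along this isotopy, the smoothness data at the vertex is transported continuously.

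Next, I would reverse the intermediate-slice construction continuously in $t$: near the 4-valent vertex $v_t$ of each $G_t$, I replace $v_t$ by a pair of nearby points joined by a short Legendrian arc $a_t$, producing a transverse link $L_t$. The contact standard-neighborhood theorem for Legendrian arcs, applied inside a family of tubular neighborhoods of $v_t$ depending continuously on $t$, ensures that $a_t$ exists and can be chosen smoothly in $t$ with the radial condition at its endpoints satisfied. This gives a 1-parameter family of nice graphs $(L_t,a_t,z)$ with $L_1$ braided.

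Finally, I would isotope $(L_1,a_1,z)$ further, through nice graphs with $L$ remaining braided, until the Legendrian arc becomes basic. Since $L_1$ is braided, the pages of the genus-zero open book supporting $(S^3,\xi_{std})$ foliate the complement of the $z$-axis and are transverse to $L_1$; one can rotate the two endpoints of $a_1$ along $L_1$ by a transverse braid isotopy so that they come to lie on a common page $P_{\theta_0}$, after which a local Legendrian isotopy, supported in a tubular neighborhood of $a_1$ and compatible with the fixed standard contact neighborhood of $L_1$, pushes $a_1$ into $P_{\theta_0}$, yielding a basic arc $a'$.

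I expect the main obstacle to be this last step. Theorem~\ref{bennequin} guarantees only that $G_1$ is braided as a transverse graph; it carries no information about where the endpoints of $a_1$ sit relative to the open book pages, nor about the Legendrian isotopy class of $a_1$. Maintaining simultaneously the braided position of $L$, the radial standard-neighborhood condition at the endpoints of $a$, and the planarity of $a$ requires a careful local analysis in a standard contact neighborhood of the 4-valent vertex, and this is where most of the technical work of the proof will lie.
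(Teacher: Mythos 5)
Your proposal and the paper's proof share the same key ingredient---applying Bennequin's theorem for transverse graphs to $G(L,a,z)$ and then un-doing the intermediate-slice construction---but they diverge in how the Legendrian arc is handled, and the paper's route avoids exactly the difficulty you flag at the end. The paper observes that the Bennequin isotopy can be taken to be \emph{supported away from} the unique 4-valent vertex $v$ of $G(L,a,z)$. Since $(L,a,z)$ agrees with $G(L,a,z)$ outside a small neighborhood of $v$ (that is where $a$ gets collapsed to the vertex), the same isotopy braids $L$ while leaving the short arc $a$ essentially untouched near $v$. The only remaining work is then strictly local: isotope the pages of the open book inside a small ball around $v$ so that the fixed short arc $a$ comes to lie in a page. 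By contrast, your plan tracks the arc $a_t$ globally through the family $\{G_t\}$ by re-detaching the vertex at each time, and then must solve a nontrivial global alignment problem at $t=1$: sliding both endpoints of $a_1$ onto a common page while maintaining the radial standard-neighborhood condition and Legendrianness of $a_1$. You correctly identify this as the main obstacle, and it is genuinely delicate---Bennequin's theorem gives no control over where the adjacent edges of the vertex pierce the pages. That obstacle is an artefact of your route rather than an intrinsic difficulty; the paper's normalization ``isotopy supported away from $v$'' is precisely what makes the final step a purely local, and essentially free, adjustment. Your argument as written therefore has a gap at the last step, which would need to be filled by either proving the alignment can be achieved by a braid isotopy of $L_1$ followed by a local Legendrian isotopy of $a_1$, or---more efficiently---by adopting the paper's support-away-from-$v$ observation.
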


\begin{proof}
By Theorem \ref{bennequin}, we can transversely isotope the based
4-valent transverse graph $G(L,a,z)$, so that the isotopy is supported
away from its unique 4-valent vertex, say $v$. Since $(L,a,z)$ can
be isotoped so that it agrees with $G(L,a,z)$ away from the vertex
$v$ by the definition of $G(L,a,z)$, we see that the transverse
isotopy of $G(L,a,z)$ can also be applied to $(L,a,z)$ so that $a$
is very short and $L$ is braided. Therefore, by isotoping the pages
of the open book of $S^{3}$ in a small neighborhood of $v$, we see
that $a$ can be made basic, while remaining $L$ braided.
\end{proof}
Now we can prove the functoriality in the most general setting.
\begin{lem}
Given a nice graph $(L,a,z)$, let 
\[
\hat{f}_{S(L,\{a_{i}\},z)}\,:\,\widehat{HF}_{\mathbb{Z}_{2}}(\Sigma(C(L,a),z)\rightarrow\widehat{HF}_{\mathbb{Z}_{2}}(\Sigma(L),z)
\]
 be the map associated to the based cobordism class $S(L,a,z)$. Then
we have 
\[
\hat{f}_{S(L,a,z)}(c_{\mathbb{Z}_{2}}(\xi_{C(L,a)},z))=c_{\mathbb{Z}_{2}}(\xi_{L},z).
\]
\end{lem}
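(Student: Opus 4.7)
The plan is to reduce the general case to the special case proved in the preceding lemma, by using the isotopy result that any nice graph $(L,a,z)$ can be isotoped through nice graphs to one where $L$ is braided and $a$ is basic. The functoriality of $\widehat{HF}_{\mathbb{Z}_2}$ established in Theorem \ref{thm:functoriality} will then allow us to transfer the contact-class preservation across the isotopy.

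First, apply the preceding lemma to obtain a 1-parameter family of nice graphs $\{(L_t, a_t, z)\}_{t\in[0,1]}$ interpolating between $(L_0, a_0, z) = (L, a, z)$ and $(L_1, a_1, z) = (L', a', z)$, where $L'$ is braided around the $z$-axis and $a'$ is basic. Since each $L_t$ is transverse, this is in particular a transverse isotopy from $L$ to $L'$ and, by the analogous construction on the surgered link, from $C(L,a)$ to $C(L',a')$. By the invariance of the equivariant contact class under transverse isotopy proved in the previous section, we have $c_{\mathbb{Z}_2}(\xi_L, z) = c_{\mathbb{Z}_2}(\xi_{L'}, z)$ under the natural identification of the branched double covers, and likewise for $C(L,a)$ and $C(L',a')$.

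Next, I would promote this isotopy of nice graphs to a weak symplectic isotopy from $S(L,a,z)$ to $S(L',a',z)$. The parametric version of the argument in the proof of Theorem \ref{saddle-symp} yields, for sufficiently large stretching parameter $R$, a smooth 1-parameter family of symplectic saddle cobordisms $\{S(L_t, a_t, z)\}_{t\in[0,1]}$ in $(S^3 \times [0,R], d(e^s \alpha_{std}))$; the pointwise estimate that controls symplecticness depends continuously on the data, so a single $R$ works uniformly in $t$ by compactness. Concatenating both ends with cylindrical pieces (which is exactly what weak symplectic isotopy permits), the family $\{S(L_t, a_t, z)\}$ exhibits a weak symplectic isotopy between $S(L,a,z)$ and $S(L',a',z)$. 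Consequently, these two based symplectic cobordisms are weakly symplectically isotopic as based cobordisms, so by Theorem \ref{thm:functoriality} the induced maps $\hat{f}_{S(L,a,z)}$ and $\hat{f}_{S(L',a',z)}$ agree under the identifications above.

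Putting these ingredients together, we have
\[
\hat{f}_{S(L,a,z)}(c_{\mathbb{Z}_2}(\xi_{C(L,a)}, z)) = \hat{f}_{S(L',a',z)}(c_{\mathbb{Z}_2}(\xi_{C(L',a')}, z)) = c_{\mathbb{Z}_2}(\xi_{L'}, z) = c_{\mathbb{Z}_2}(\xi_L, z),
\]
where the middle equality is the special case proved in the previous lemma. The main obstacle I anticipate is the verification that the family of saddles $\{S(L_t, a_t, z)\}$ is genuinely smooth through the perturbation required at $t = 1/2$ in each slice (the point where the two sheets become tangent in Figure \ref{fig:7.4}); one must choose the perturbations coherently in $t$ so that a single $R$ makes every member of the family symplectic and so that the resulting 4-manifold family is an honest isotopy, rather than merely a pointwise family of symplectic surfaces. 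This is handled exactly by the parametric adaptation of the estimate in Theorem \ref{saddle-symp}, applied after fixing a smooth coherent family of auxiliary perturbations using compactness of $[0,1]$.
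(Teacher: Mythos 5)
Your proposal is correct and follows essentially the same route as the paper: reduce to the braided, basic case via the isotopy-through-nice-graphs lemma, apply the special case, and transfer the conclusion back using invariance of $c_{\mathbb{Z}_2}$ under transverse isotopy. The one thing you spell out more explicitly than the paper is the step verifying that the cobordism maps $\hat{f}_{S(L,a,z)}$ and $\hat{f}_{S(L',a',z)}$ agree, via a weak symplectic isotopy of the saddle cobordisms and the functoriality theorem; the paper's proof leaves this implicit, so your added care there is reasonable and not a departure from its approach.
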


\begin{proof}
Using Bennequin's theorem, we can isotope the decorated transverse
4-valent graph 
\[
(\Gamma,V,z)=G(L,a,z)
\]
 into a braided position along the $z$-axis, with respect to some
genus-0 open book having the $z$-axis as its binding. Furthermore,
we can isotope the vector field $V$ so that it is tangent to the
pages of the open book. Then its detachment is a nice graph $(L^{\prime},a^{\prime},z^{\prime})$,
whch is isotopic to the original nice graph $(L,a,z)$, such that
$L^{\prime}$ is braided along the $z$-axis and $a$ is basic. Therefore,
by the above lemma and the invariance of equivariant contact classes
under isotopies, we must have 
\[
\hat{f}_{S(L,a,z)}(c_{\mathbb{Z}_{2}}(\xi_{C(L,a)},z))=c_{\mathbb{Z}_{2}}(\xi_{L},z).
\]
\end{proof}
\begin{thm}
Given any symplectically constructible (weak symplectic isotopy) class
$S$ of based symplectic cobordisms, with its concave and convex ends
given by transverse isotopy classes $(L_{1},z_{1})$ and $(L_{2},z_{2})$
of based transverse links, we have 
\[
\hat{f}_{S}(c_{\mathbb{Z}_{2}}(\xi_{L_{2}},z_{2}))=c_{\mathbb{Z}_{2}}(\xi_{L_{1}},z_{1}),
\]
 where $\hat{f}_{S}\,:\widehat{HF}_{\mathbb{Z}_{2}}(\Sigma(L_{2}),z_{2})\rightarrow\widehat{HF}_{\mathbb{Z}_{2}}(\Sigma(L_{1}),z_{1})$
is the cobordism map, induced by $S$.
\end{thm}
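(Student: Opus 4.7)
The plan is to reduce the statement to the saddle case established in the preceding lemma, together with an analogous statement for symplectic births, and then invoke functoriality of $\widehat{HF}_{\mathbb{Z}_{2}}$ for based cobordisms.

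First, by the definition of a symplectically constructible class, $S$ is weakly symplectically isotopic to a composition $S_{n}\circ\cdots\circ S_{1}$ whose factors are each either a symplectic birth or a symplectic saddle (no deaths appear, since they are excluded by definition), and all intermediate ends are based transverse links. The forgetful functor $\mathbf{sCob}_{w}\to\mathbf{bCob}$ sends a weak symplectic isotopy class to a well-defined based isotopy class, so by Theorem \ref{thm:functoriality} the induced map factors as $\hat{f}_{S}=\hat{f}_{S_{1}}\circ\cdots\circ\hat{f}_{S_{n}}$, and it suffices to verify the preservation of $c_{\mathbb{Z}_{2}}$ on each basic piece.

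Next, I would apply the preceding lemma to each saddle factor $S_{i}$ to conclude that $\hat{f}_{S_{i}}$ carries the equivariant contact class of its convex end to that of its concave end. For each birth factor, I would establish the analogous statement by essentially the argument used in the proof of Theorem \ref{thm:stab}. A symplectic birth attaches a small split transverse unknot $U$ in a Darboux ball disjoint from $L$; on the branched double cover side this corresponds to the standard stabilization associated with the identification $\Sigma(L\sqcup U)\simeq\Sigma(L)\#(S^{1}\times S^{2})$. This is exactly the first arrow appearing in the quasi-isomorphism in the proof of Theorem \ref{thm:stab}, and by construction the $EH$ cocycle is extended by the canonical top generator on the stabilizing piece; the extension is manifestly $\mathbb{Z}_{2}$-invariant, so $EH_{\mathbb{Z}_{2}}$, and hence $c_{\mathbb{Z}_{2}}$, is preserved.

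The main subtlety I expect is to check carefully that the birth step is genuinely equivariant and does not introduce higher-order corrections in the equivariant differential. This reduces, as in the proof of Theorem \ref{thm:stab}, to placing the basepoint conveniently near the connect-sum neck and verifying that the only equivariant triangles contributing to the triangle map are small and of Maslov index zero; equivariant transversality (Theorem \ref{thm:eqvtrans}) then eliminates all higher-order terms. Once this birth lemma is in place, iterating over $i$ composes to give $\hat{f}_{S}(c_{\mathbb{Z}_{2}}(\xi_{L_{2}},z_{2}))=c_{\mathbb{Z}_{2}}(\xi_{L_{1}},z_{1})$, as required.
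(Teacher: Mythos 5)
Your proposal is correct and follows essentially the same line of argument as the paper: decompose $S$ (up to weak symplectic isotopy) into symplectic births and saddles, invoke the preceding saddle lemma for the saddle factors, handle births by the observation already used in the proof of Theorem \ref{thm:stab} that the map induced by the connected sum with the unknot factor preserves the $EH$ cocycle, and then compose using the functoriality of $\widehat{HF}_{\mathbb{Z}_{2}}$. The only difference is that you spell out the birth step explicitly, whereas the paper merely asserts that functoriality for births has already been established; your elaboration is a welcome and faithful filling-in of that point rather than a deviation.
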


\begin{proof}
We already have the functoriality for both symplectic birth cobordisms
and symplectic saddle cobordisms, and the functoriality for cylindrical
cobordisms is obvious. Therefore, by composition, we get the functoriality
for all symplectically constructible cobordisms.
\end{proof}
\begin{cor}
We have a functor 
\[
(\widehat{HF}_{\mathbb{Z}_{2}},c_{\mathbb{Z}_{2}}):\mathbf{sCob}_{w}^{c}\rightarrow(\mathbb{F}_{2}[\theta]\downarrow\mathbf{Mod}_{\mathbb{F}_{2}[\theta]}),
\]
 where $\mathbf{sCob}_{w}^{c}$ is the wide subcategory of $\mathbf{sCob}_{w}$
spanned by symplectically constructible cobordism classes and $\mathbb{F}_{2}[\theta]\downarrow\mathbf{Mod}_{\mathbb{F}_{2}[\theta]}$
is the category of modules over $\mathbb{F}_{2}[\theta]$ with a $\theta$-tower
generated by a distinguished element.
\end{cor}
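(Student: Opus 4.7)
The plan is to verify the corollary by assembling three pieces that have already been put in place: the functoriality of $\widehat{HF}_{\mathbb{Z}_{2}}$ on the underlying category $\mathbf{bCob}$ of based topological knot cobordisms (Theorem \ref{thm:functoriality}), the forgetful functor $\mathbf{sCob}_{w}\rightarrow\mathbf{bCob}$ described in the preceding section, and the statement of the previous theorem that symplectically constructible cobordism maps preserve the equivariant contact class.

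First, I would define the candidate assignment on objects: to a based transverse knot $(K,z)$ in $(S^{3},\xi_{std})$, associate the pair $(\widehat{HF}_{\mathbb{Z}_{2}}(\Sigma(K),z),\,c_{\mathbb{Z}_{2}}(\xi_{K}))$. This is a well-defined object of $\mathbb{F}_{2}[\theta]\downarrow\mathbf{Mod}_{\mathbb{F}_{2}[\theta]}$ because $\widehat{HF}_{\mathbb{Z}_{2}}(\Sigma(K),z)$ is an $\mathbb{F}_{2}[\theta]$-module and $c_{\mathbb{Z}_{2}}(\xi_{K})=[\theta^{0}\otimes EH(\xi_{K})]$ is a distinguished element in it, generating a $\theta$-tower via the natural $\mathbb{F}_{2}[\theta]$-action described in the equivariant differential formula $(\ref{eq:eqvdiffeqn})$.

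On morphisms, I would define the functor by passing through the forgetful functor. Given a symplectically constructible class $[S]$ of based symplectic cobordisms from $(K_{1},z_{1})$ to $(K_{2},z_{2})$, its image in $\mathbf{bCob}$ is a well-defined based isotopy class of based topological cobordisms (with reversed direction), and Theorem \ref{thm:functoriality} assigns to it a well-defined $\mathbb{F}_{2}[\theta]$-module map $\hat{f}_{S}\,:\,\widehat{HF}_{\mathbb{Z}_{2}}(\Sigma(K_{2}),z_{2})\rightarrow\widehat{HF}_{\mathbb{Z}_{2}}(\Sigma(K_{1}),z_{1})$. To promote $\hat{f}_{S}$ to a morphism in the comma category $\mathbb{F}_{2}[\theta]\downarrow\mathbf{Mod}_{\mathbb{F}_{2}[\theta]}$, I must check it sends the distinguished element to the distinguished element, and this is exactly the statement $\hat{f}_{S}(c_{\mathbb{Z}_{2}}(\xi_{K_{2}},z_{2}))=c_{\mathbb{Z}_{2}}(\xi_{K_{1}},z_{1})$ proved in the previous theorem.

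Finally, functoriality of the composite (respect for identities and composition) is inherited directly from the functoriality of $\widehat{HF}_{\mathbb{Z}_{2}}$ on $\mathbf{bCob}$; the preservation of distinguished elements is closed under composition of morphisms in $\mathbb{F}_{2}[\theta]\downarrow\mathbf{Mod}_{\mathbb{F}_{2}[\theta]}$. Strictly speaking, there is no genuine obstacle here since the corollary is a direct packaging of the previous theorem, the only subtle point being that the forgetful functor $\mathbf{sCob}_{w}\rightarrow\mathbf{bCob}$ is indeed a functor, which was established when weak symplectic based isotopy was defined in terms of ordinary based isotopy.
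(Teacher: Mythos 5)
Your proposal is correct and follows essentially the same approach as the paper, which disposes of this corollary in one line by calling it a category-theoretic repackaging of the preceding theorem; you simply spell out the assignments on objects and morphisms, the role of the forgetful functor $\mathbf{sCob}_{w}\rightarrow\mathbf{bCob}$, and the inheritance of functoriality from Theorem \ref{thm:functoriality}. One small point worth being explicit about: the claim that $c_{\mathbb{Z}_{2}}(\xi_{K})$ actually generates a genuine $\theta$-tower (i.e.\ is not annihilated by any power of $\theta$) is not a formal consequence of the differential formula but follows from Theorem \ref{thm:localization}, which shows its image under the localization isomorphism is a power of $\theta$; citing that theorem would tighten the object-level part of your argument.
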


\begin{proof}
This is just a category-theoretic statement for the theorem above.
\end{proof}

\section{Properties}

The first property of $c_{\mathbb{Z}_{2}}(\xi_{L},z)$ is that it
contains the information about the ordinary contact class $c(\xi_{L})$.
\begin{thm}
\label{thm:forget}The natural map 
\[
\widehat{HF}_{\mathbb{Z}_{2}}(\Sigma(L),z)\rightarrow\widehat{HF}^{\ast}(\Sigma(L))
\]
sends $c_{\mathbb{Z}_{2}}(\xi_{L},z)$ to $c(\xi_{L})$.
\end{thm}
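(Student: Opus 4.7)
The plan is to identify the natural map $\widehat{HF}_{\mathbb{Z}_{2}}(\Sigma(L),z)\to\widehat{HF}^{\ast}(\Sigma(L))$ explicitly at the cochain level and then observe that it sends the distinguished cocycle $EH_{\mathbb{Z}_{2}}(\xi_{L})$ directly to $EH(\xi_{L})$. The candidate natural map is the reduction-mod-$\theta$ map, which at the chain level is the $\mathbb{F}_{2}$-linear map
\[
q\,:\,\widehat{CF}_{\mathbb{Z}_{2}}(\Sigma,\boldsymbol{\alpha},\boldsymbol{\beta},z)\;\longrightarrow\;\widehat{CF}^{\ast}(\Sigma,\boldsymbol{\alpha},\boldsymbol{\beta},z),\qquad q(\theta^{n}\otimes x^{\ast})=\begin{cases} x^{\ast} & n=0,\\ 0 & n\ge 1.\end{cases}
\]
On the $E_{1}$-page of the spectral sequence $E_{1}=\widehat{HF}^{\ast}(\Sigma(L))\otimes\mathbb{F}_{2}[\theta]\Rightarrow\widehat{HF}_{\mathbb{Z}_{2}}(\Sigma(L))$ cited earlier in the paper, $q$ is exactly the edge homomorphism obtained by killing $\theta$.

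The first step is to verify that $q$ is a chain map. This is immediate from the form of the equivariant differential in equation \eqref{eq:eqvdiffeqn}: every term on the right-hand side of $d(\theta^{n}\otimes x^{\ast})$ except $\theta^{n}\otimes dx^{\ast}$ carries a strictly positive power of $\theta$. Hence $q\circ d_{\mathbb{Z}_{2}}$ kills everything except $dx^{\ast}$ when $n=0$ and is identically zero when $n\ge 1$, which matches $d\circ q$. Thus $q$ descends to a well-defined $\mathbb{F}_{2}$-linear map of cohomology groups.

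The second step is to compute the image of the equivariant contact cocycle. By the definition given in Section 5, once we fix an invariant Heegaard diagram coming from a half-arc basis and an almost complex structure of the form $\mathrm{Sym}^{g}(\tilde{\mathfrak{j}})$ (allowed by Theorem~\ref{thm:eqvtrans}), the equivariant contact class is represented by $EH_{\mathbb{Z}_{2}}(\xi_{L},\mathcal{A})=\theta^{0}\otimes EH(\xi_{L})$, where $EH(\xi_{L})=\{p_{1},\dots,p_{n}\}$ is the Honda--Kazez--Mati\'c cocycle representing $c(\xi_{L})$ in $\widehat{HF}^{\ast}(\Sigma(L))$. Applying $q$ sends this cocycle to $EH(\xi_{L})$, whose cohomology class is $c(\xi_{L})$ by \cite{HKM-HF}.

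I do not expect any substantive obstacle here: the statement is essentially true by construction, since the equivariant contact cocycle was defined precisely as the $\theta^{0}$-part lift of the HKM contact cocycle. The only point requiring mild care is to make sure that the ``natural map'' in the statement is the mod-$\theta$ edge map rather than some other candidate (such as the tensor-with-$\mathbb{F}_{2}$ map out of $\theta$-torsion); this is fixed by matching with the spectral sequence $E_{1}$-page already introduced in the paper. One should also briefly note that the choice of auxiliary data used to compute $c(\xi_{L})$ and $c_{\mathbb{Z}_{2}}(\xi_{L},z)$ can be taken to be the same (same bridge/half-arc basis, same almost complex structure), so the chain-level identity makes sense before passing to cohomology.
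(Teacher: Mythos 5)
Your proof is correct and takes essentially the same approach as the paper's: both identify the natural map as the truncation/mod-$\theta$ projection at the cochain level and observe directly that it sends the representing cocycle $EH_{\mathbb{Z}_{2}}(\xi_{L})=\theta^{0}\otimes EH(\xi_{L})$ to the Honda--Kazez--Mati\'c cocycle $EH(\xi_{L})$. You simply spell out the chain-map verification and the compatibility of auxiliary data, which the paper leaves implicit.
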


\begin{proof}
The chain level map is given by truncating all terms with nontrivial
$\theta$-degree, so it sends $EH_{\mathbb{Z}_{2}}^{\ast}(\xi_{L})=EH^{\ast}(\xi_{L})\otimes\theta^{0}$
to $EH^{\ast}(\xi_{L})$. Hence, on the cohomology level, $c_{\mathbb{Z}_{2}}(\xi_{L})$
is sent to $c(\xi_{L})$.
\end{proof}
Also, as in Section 6.1 of \cite{eqv-Floer}, we have a localization
isomorphism 
\[
\theta^{-1}\widehat{HF}_{\mathbb{Z}_{2}}(\Sigma(K))\simeq\widehat{HF}^{\ast}(S^{3})\otimes\mathbb{F}_{2}[\theta,\theta^{-1}],
\]
 where $K$ is a knot and $\theta^{-1}$ means that we are formally
inverting $\theta$, i.e. we define 
\[
\theta^{-1}\widehat{HF}_{\mathbb{Z}_{2}}(\Sigma(K))=\widehat{HF}_{\mathbb{Z}_{2}}(\Sigma(K),z)\otimes_{\mathbb{F}_{2}[\theta]}\mathbb{F}_{2}[\theta,\theta^{-1}].
\]
 In turns out that the image of the equivariant contact class under
the localization isomorphism takes a very simple form.
\begin{thm}
\label{thm:localization}Let $K$ be a transverse knot in $(S^{3},\xi_{std})$.
Then the localization map 
\[
\theta^{-1}\widehat{HF}_{\mathbb{Z}_{2}}(\Sigma(K))\xrightarrow{\sim}\widehat{HF}(S^{3})\otimes\mathbb{F}_{2}[\theta,\theta^{-1}]\simeq\mathbb{F}_{2}[\theta,\theta^{-1}],
\]
 which is defined up to multiplication by powers of $\theta$, sends
$c_{\mathbb{Z}_{2}}(\xi_{K})$ to a power of $\theta$.
\end{thm}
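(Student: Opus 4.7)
The plan is to apply the Hendricks--Lipshitz--Sarkar localization isomorphism (Section 6.1 of \cite{eqv-Floer}), which on the cochain level is induced by restricting $\mathbb{Z}_{2}$-invariant cocycles to the $\mathbb{Z}_{2}$-fixed locus $(\mathbb{T}_{\alpha} \cap \mathbb{T}_{\beta})^{\mathrm{inv}} \subset \mathrm{Sym}^{g}(\tilde{\Sigma})$. For the branched double cover of $S^{3}$ along $K$, this fixed locus is identified with the downstairs bridge diagram data, and its Floer cohomology recovers $\widehat{HF}(S^{3}) \simeq \mathbb{F}_{2}$; correspondingly, the localized equivariant cohomology is the rank-one free $\mathbb{F}_{2}[\theta,\theta^{-1}]$-module.

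First, I would observe that the cocycle representative $EH_{\mathbb{Z}_{2}}(\xi_{K}) = \theta^{0} \otimes \{p_{1},\ldots,p_{n}\}$ is entirely supported on $\mathbb{Z}_{2}$-fixed Floer generators: each branch point $p_{i}$ is a fixed point of the covering involution, so $\{p_{1},\ldots,p_{n}\} \in (\mathbb{T}_{\alpha} \cap \mathbb{T}_{\beta})^{\mathrm{inv}}$. Under the restriction-to-fixed-locus map, $EH_{\mathbb{Z}_{2}}(\xi_{K})$ is therefore sent to (the $\mathbb{F}_{2}[\theta,\theta^{-1}]$-extension of) the same cocycle $\{p_{1},\ldots,p_{n}\}$, now regarded as a generator of the downstairs bridge-diagram Floer cochain complex for $S^{3}$.

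Second, I would verify that this generator represents the nontrivial element of $\widehat{HF}^{*}(S^{3}) \cong \mathbb{F}_{2}$. The Honda--Kazez--Mati\'c argument (Section 3.1 of \cite{HKM-HF}), applied in the downstairs bridge-diagram setting, shows that no holomorphic disks emanate from the tuple of bridge points (any such disk must cross the basepoint), so $\{p_{1},\ldots,p_{n}\}$ is a cocycle that cannot be a coboundary. Combined with the fact that $c_{\mathbb{Z}_{2}}(\xi_{K})$ has a well-defined bidegree (Maslov degree together with $\theta$-degree) and that $\mathbb{F}_{2}[\theta,\theta^{-1}]$ has a unique monomial in each bidegree, the nonzero image must equal $\theta^{k}$ for some $k \in \mathbb{Z}$, as desired (with $k$ depending on the chosen normalization of the localization isomorphism).

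The main obstacle is making step two rigorous: tracing through the precise form of the localization isomorphism in \cite{eqv-Floer} and carefully matching the upstairs cocycle $EH(\xi_{K})$ with the canonical bridge-point generator of $\widehat{HF}(S^{3})$. The key point is to ensure that the restriction map genuinely sends the upstairs cocycle to a nontrivial class, which ultimately reduces to the Honda--Kazez--Mati\'c no-disk argument in the bridge diagram; once this identification is established, the bigrading argument immediately forces the image to be a single monomial.
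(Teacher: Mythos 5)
Your proposal has the right ingredients but misplaces the key step. The localization map of Seidel--Smith (and, as applied in Section~6.1 of \cite{eqv-Floer}) is \emph{not} simply restriction of invariant cocycles to the fixed locus: it is the fixed-locus restriction plus higher-order correction terms arising from counting holomorphic disks. Your first step silently assumes those corrections vanish when you write ``Under the restriction-to-fixed-locus map, $EH_{\mathbb{Z}_{2}}(\xi_{K})$ is therefore sent to the same cocycle $\{p_{1},\ldots,p_{n}\}$.'' That is precisely the nontrivial content: the paper's proof writes the image as $(c(\xi_{std})+\text{higher order terms})\otimes\theta^{d}$ and then invokes the Honda--Kazez--Mati\'c no-disk property \emph{in the upstairs diagram on} $\Sigma(K)$ -- there are no holomorphic disks going towards $EH^{\ast}(\xi_{K})$ -- to kill the correction terms.

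By contrast, you deploy the Honda--Kazez--Mati\'c no-disk argument \emph{downstairs} to show the bridge-point cocycle is not a coboundary in $\widehat{CF}(S^{3})$. This is essentially vacuous here: $\widehat{HF}(S^{3})\cong\mathbb{F}_{2}$, and $\{p_{1},\ldots,p_{n}\}$ represents $c(\xi_{std})$, the nonzero generator; no disk argument is needed once the leading term has been correctly identified. So the genuine gap is not where you flag it (``step two,'' nontriviality downstairs) but in step one: you must show that the higher-order terms in the localization chain map vanish on $EH_{\mathbb{Z}_{2}}(\xi_{K})$, and that is exactly what the upstairs no-disk argument provides. Once that is in place, your closing bigrading observation does finish the argument, as in the paper.
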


\begin{proof}
By the construction of the bare localization map in \cite{localization-map},
the localization map can be written as follows:
\[
c_{\mathbb{Z}_{2}}(\xi_{K})\mapsto(c(\xi_{std})+\mbox{higher order terms})\otimes\theta^{d},
\]
 for some $d$. But since there are no holomorphic disks going towards
$EH^{\ast}(\xi_{K})$, the higher order terms vanish. Therefore the
localization map sends $c_{\mathbb{Z}_{2}}(\xi_{K})$ to $c(\xi_{std})\otimes\theta^{d}=\theta^{d}$.
\end{proof}
This theorem gives us a lower bound for the $d_{3}$-invariants of
the branched double covers along transverse knots in $(S^{3},\xi_{std})$.
Recall that $q_{\tau}(K)$ is defined as:
\[
q_{\tau}(K)=2\cdot\min\{\text{gr}(x)\,\vert\,x\in\widehat{HF}_{\mathbb{Z}_{2}}(\Sigma(K)),\,\theta^{k}x\ne0\text{ for all }k\ge0\}.
\]
\begin{cor}
\label{cor:ineq}For a knot $K$ in $S^{3}$, denote the set of all
transverse representatives of $K$ as $\mathcal{T}_{K}$. Then we
have 
\[
\frac{q_{\tau}(K)-1}{2}\le\min_{T\in\mathcal{T}_{K}}d_{3}(\xi_{T}).
\]
\end{cor}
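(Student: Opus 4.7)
The plan is to combine Theorem \ref{thm:localization}, the definition of $q_{\tau}(K)$, and the standard grading formula that identifies the degree of the contact class with the $d_{3}$-invariant.

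Fix a transverse representative $T \in \mathcal{T}_{K}$. By Theorem \ref{thm:localization}, the image of $c_{\mathbb{Z}_{2}}(\xi_{T})$ under the localization isomorphism
\[
\theta^{-1}\widehat{HF}_{\mathbb{Z}_{2}}(\Sigma(K)) \xrightarrow{\sim} \mathbb{F}_{2}[\theta,\theta^{-1}]
\]
is a nonzero power of $\theta$. In particular $c_{\mathbb{Z}_{2}}(\xi_{T})$ is $\theta$-nontorsion in $\widehat{HF}_{\mathbb{Z}_{2}}(\Sigma(K))$, i.e.\ $\theta^{k}\cdot c_{\mathbb{Z}_{2}}(\xi_{T}) \neq 0$ for every $k \ge 0$. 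Hence $c_{\mathbb{Z}_{2}}(\xi_{T})$ belongs to the set whose minimum grading (doubled) defines $q_{\tau}(K)$, so
\[
q_{\tau}(K) \;\le\; 2\,\mathrm{gr}\bigl(c_{\mathbb{Z}_{2}}(\xi_{T})\bigr).
\]

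Next I would identify this grading. The equivariant cocycle representing $c_{\mathbb{Z}_{2}}(\xi_{T})$ is $EH_{\mathbb{Z}_{2}}(\xi_{T}) = \theta^{0}\otimes EH(\xi_{T})$, so its grading agrees with that of the non-equivariant contact cocycle $EH(\xi_{T})$ inside $\widehat{HF}^{\ast}(\Sigma(T))$ (this uses Theorem \ref{thm:forget} together with the fact that the formal variable $\theta$ appears to the zeroth power, so no grading shift is introduced). Combined with the standard identification, due to Ozsv\'ath--Szab\'o, of the grading of the contact class with the $d_{3}$-invariant, namely
\[
\mathrm{gr}\bigl(c(\xi_{T})\bigr) = d_{3}(\xi_{T}) + \tfrac{1}{2},
\]
the displayed inequality becomes
\[
q_{\tau}(K) \;\le\; 2 d_{3}(\xi_{T}) + 1,
\]
equivalently $\tfrac{q_{\tau}(K)-1}{2} \le d_{3}(\xi_{T})$. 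Since $T$ was an arbitrary transverse representative of $K$, taking the minimum over $T \in \mathcal{T}_{K}$ yields the claim.

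The only subtlety is Step two, the careful bookkeeping of the absolute $\mathbb{Q}$-grading on $\widehat{HF}_{\mathbb{Z}_{2}}(\Sigma(K))$ and its compatibility with both the grading on $\widehat{HF}^{\ast}(\Sigma(T))$ (via the truncation map of Theorem \ref{thm:forget}) and with the localization of Theorem \ref{thm:localization}. This requires verifying that the grading convention used in the definition of $q_{\tau}(K)$ is normalized so that $\mathrm{gr}(\theta^{0}\otimes x)$ equals $\mathrm{gr}(x)$ in the ordinary Heegaard Floer cohomology, which is the content of Section~6.1 of \cite{eqv-Floer}. Once this normalization is in place, the remaining steps are formal.
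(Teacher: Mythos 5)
Your proof is correct and takes essentially the same route as the paper: use Theorem \ref{thm:localization} to conclude that $c_{\mathbb{Z}_{2}}(\xi_{T})$ is not annihilated by any power of $\theta$, identify its grading with $d_{3}(\xi_{T})+\tfrac{1}{2}$ via the Ozsv\'ath--Szab\'o formula, and feed this into the definition of $q_{\tau}(K)$. The paper's proof is more terse (it does not spell out the grading-compatibility check you flag as "Step two," citing Proposition 4.6 of \cite{OSz-contact} directly), but the argument is the same.
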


\begin{proof}
For any transverse representative $T\in\mathcal{T}_{K}$, the absolute
$\mathbb{Q}$-grading $\text{gr}(EH(\xi_{T}))$ of the contact element
of $(\Sigma(K),\xi_{T})$, which is the same as $\text{gr}(c_{\mathbb{Z}_{2}}(\xi_{T}))$,
is given by $\frac{1}{2}+d_{3}(\xi_{T})$; see Proposition 4.6 of
\cite{OSz-contact}. By the above theorem, $c_{\mathbb{Z}_{2}}(\xi_{T})$
cannot be annihilated by a power of $\theta$, since the localization
map is an isomorphism of $\mathbb{F}_{2}[\theta]$-modules. Therefore
we have $\frac{q_{\tau}(K)-1}{2}\le\min_{T\in\mathcal{T}_{K}}d_{3}(\xi_{T})$
for all such $T$.
\end{proof}
The functoriality of equivariant contact classes for symplectically
constructible cobordisms also gives us some results about symplectic
representatives of link cobordisms.
\begin{thm}
Let $(L_{1},z_{1})$ and $(L_{2},z_{2})$ be two based transverse
links in $(S^{3},\xi_{std})$. If the based isotopy class of a based
cobordism $S$ from $(L_{1},z_{1})$ to $(L_{2},z_{2})$ has a symplectically
constructible representative, then we must have 
\[
\hat{f}_{S}(c_{\mathbb{Z}_{2}}(L_{2},z_{2}))=c_{\mathbb{Z}_{2}}(L_{1},z_{1}),
\]
 where $\hat{f}_{S}$ is the cobordism map induced by $S$.
\end{thm}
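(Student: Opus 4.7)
The plan is to reduce this theorem directly to the preceding functoriality statement for symplectically constructible classes. The given hypothesis is that the \emph{based isotopy class} of $S$ (a purely topological notion) contains at least one representative $S'$ which is symplectically constructible, whereas the preceding theorem was phrased in terms of the weak symplectic isotopy class of a based symplectic cobordism. So the only real content that needs to be bridged is that the equivariant cobordism map $\hat{f}_S$ depends only on the based isotopy class of $S$, not on its symplectic structure or on the particular choice of representative.

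Concretely, I would first invoke Theorem \ref{thm:functoriality}, which says that $\widehat{HF}_{\mathbb{Z}_{2}}$ is a functor on $\mathbf{bCob}$, i.e.\ the map $\hat{f}_{S}\,:\,\widehat{HF}_{\mathbb{Z}_{2}}(\Sigma(L_{2}),z_{2})\to\widehat{HF}_{\mathbb{Z}_{2}}(\Sigma(L_{1}),z_{1})$ depends only on the based isotopy class of $S$ rel $\partial S$. (Strictly speaking, Theorem \ref{thm:functoriality} is stated for based knots, but $L_1,L_2$ are knots here, since the equivariant contact class of a link in the strict sense of this paper is defined relative to the component containing the basepoint; the cobordisms we are considering are between the based components.) Then, by hypothesis, there exists a symplectically constructible based symplectic cobordism $S'$ which is based-isotopic (as an unoriented smooth cobordism rel $\partial S$) to $S$, so $\hat{f}_S = \hat{f}_{S'}$.

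Next, applying the preceding theorem to the symplectically constructible weak symplectic isotopy class of $S'$ gives
\[
\hat{f}_{S'}(c_{\mathbb{Z}_{2}}(\xi_{L_{2}},z_{2})) = c_{\mathbb{Z}_{2}}(\xi_{L_{1}},z_{1}).
\]
Combining this with $\hat{f}_S = \hat{f}_{S'}$ yields the desired identity. This is essentially a bookkeeping argument converting from the symplectic category $\mathbf{sCob}_w^c$ to the topological category $\mathbf{bCob}$ along the forgetful functor $\mathbf{sCob}_w\to\mathbf{bCob}$.

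The only place where anything nontrivial could go wrong is in the first step: one must be confident that every auxiliary choice (the slicing of $S'$ into basic pieces, the almost complex structures, the bridge diagrams, the placement of saddle arcs) yields the same equivariant cobordism map when applied to $S$ as when applied to $S'$, using only the fact that $S \simeq S'$ rel $\partial$ as a based topological cobordism. This is exactly the content of Theorem \ref{thm:functoriality} together with Lemma \ref{lemma-6.15}; once that machinery is in hand, the present theorem is an immediate corollary of its symplectic counterpart and requires no further Floer-theoretic computation.
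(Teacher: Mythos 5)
Your proposal is correct and follows essentially the same approach as the paper's one-line proof, which simply cites the functoriality result (that $\hat{f}_{S}$ depends only on the based isotopy class of $S$) and the preceding theorem on symplectically constructible cobordisms. You have merely spelled out that short argument in more detail, with the same two ingredients in the same roles.
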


\begin{proof}
This follows directly from the functoriality and the fact that $\hat{f}_{S}$
depends only on the based isotopy class of $S$.
\end{proof}
We can also explicitly calculate the equivariant contact class for
some very simple transverse knots.
\begin{example}
Consider the trivial transverse braid $U$ and its positive/negative
stabilizations $P$, $N$, respectively. We will see from the proof
of the next theorem that $c_{\mathbb{Z}_{2}}^{\ast}(\xi_{U})=c_{\mathbb{Z}_{2}}^{\ast}(\xi_{P})=1$
but $c_{\mathbb{Z}_{2}}^{\ast}(\xi_{N})=\theta$. This reflects the
fact that, while the contact elements of $\xi_{U}$ and $\xi_{P}$
have Maslov degree zero, the contact element of $\xi_{N}$ has Maslov
degree one, in the Floer chain complex of $S^{3}$. 
\end{example}

Note that, while performing a positive stabilization to a transverse
link (on any of its components) does not change its transverse isotopy
class, performing a negative stabilization does change its transverse
isotopy class. However, the topological isotopy class does not change
under negative stabilizations, so the equivariant contact class of
a transverse link and its positive stabilization lie in the same group.
It turns out that the behavior of the equivariant contact class under
a negative stabilization is very simple.
\begin{thm}
Let $L$ be a transverse link in $(S^{3},\xi_{std})$ and denote its
negative stabilization(i.e. transverse stabilization), applied to
any of its components, by $L^{-}$. Then we have 
\[
c_{\mathbb{Z}_{2}}(\xi_{L^{-}})=\theta\cdot c_{\mathbb{Z}_{2}}(\xi_{L}).
\]
\end{thm}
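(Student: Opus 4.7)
The plan is to compare the HKM-type Heegaard diagrams of $(\Sigma(L),\xi_L)$ and $(\Sigma(L^-),\xi_{L^-})$ and to compute the equivariant differential explicitly on a specific cochain in the diagram for $\Sigma(L^-)$.

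First, I would set up compatible diagrams. Represent $L$ by an $n$-strand braid with word $w$ and $L^-$ by the $(n+1)$-strand braid with word $w\sigma_n^{-1}$. Choose a half-arc basis $\mathcal{A} = \{a_1^0,\ldots,a_n^0\}$ for $L$ with basepoint $z = p_1$ on a strand far from the stabilization region, and extend to $\mathcal{A}^- = \mathcal{A}\cup\{a_{n+1}^0\}$ by adding a half-arc near the new strand. The construction of Section~5 produces $\mathbb{Z}_2$-invariant Heegaard diagrams $(\Sigma,\boldsymbol{\alpha},\boldsymbol{\beta},z)$ for $\Sigma(L)$ and $(\Sigma^-,\boldsymbol{\alpha}^-,\boldsymbol{\beta}^-,z)$ for $\Sigma(L^-)$, which agree outside a small neighborhood of the new handle in $\Sigma^-$; their sole difference is the negative Dehn twist $\tau_{C_{n+1}}^{-1}$ appearing in the monodromy.

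Next, I would analyze the local model produced by $\tau_{C_{n+1}}^{-1}$. In contrast to the positive stabilization case of Theorem~\ref{thm:stab}, where the local triple-diagram admits only small triangles of Maslov index zero, the negative Dehn twist creates a local configuration in which $\alpha_{n+1}^-$ meets the adjacent $\beta$-curves at two distinguished $\mathbb{Z}_2$-invariant intersection points $x^+$ and $x^-$, connected by a single $\mathbb{Z}_2$-invariant Maslov-index-one bigon supported in the local region, from $\mathbf{y} := \{x_1,\ldots,x_n,x^+\}$ to $EH(\xi_{L^-}) = \{x_1,\ldots,x_n,x^-\}$. The natural isomorphism $\widehat{HF}_{\mathbb{Z}_2}(\Sigma(L))\simeq\widehat{HF}_{\mathbb{Z}_2}(\Sigma(L^-))$ induced by the topological isotopy $L\sim L^-$ through Theorem~\ref{thm:functoriality}, realized by the standard bridge-diagram stabilization sequence (which on the chain level records the unique new invariant generator in the local handle), identifies $EH_{\mathbb{Z}_2}(\xi_L)$ with $[\theta^0\otimes\mathbf{y}^\ast]$. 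Formula (\ref{eq:eqvdiffeqn}) then gives, on the chain level,
\[
d_{\mathbb{Z}_2}\!\left(\theta^0\otimes\mathbf{y}^\ast\right) \;=\; \theta^0\otimes EH(\xi_{L^-})^\ast \;+\; \theta^1\otimes\mathbf{y}^\ast \;+\; (\text{higher-order terms}),
\]
where the first summand counts the invariant bigon and the second uses $\tau^\ast\mathbf{y}^\ast = \mathbf{y}^\ast$. Passing to cohomology yields $EH_{\mathbb{Z}_2}(\xi_{L^-}) = \theta\cdot EH_{\mathbb{Z}_2}(\xi_L)$, which is the claim.

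The hard part will be verifying the local Heegaard picture and controlling the remaining disk contributions. Concretely, one must describe explicitly the configuration of $\alpha_{n+1}^-$ and $\beta_{n+1}^-$ produced by $\tau_{C_{n+1}}^{-1}$, identify $x^\pm$ together with the unique invariant Maslov-index-one bigon from $\mathbf{y}$ to $EH(\xi_{L^-})$, verify that no other holomorphic disks emanate from $\mathbf{y}$ at the Maslov indices relevant to (\ref{eq:eqvdiffeqn}), and show that the higher-order terms vanish. Equivariant transversality (Theorem~\ref{thm:eqvtrans}), together with the HKM argument from Section 3.1 of \cite{HKM-HF} ruling out nonconstant disks emanating from EH-generators, should reduce the count to exactly this bigon, completing the proof.
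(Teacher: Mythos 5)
Your local model is where the argument breaks down. You take the intermediate generator $\mathbf{y}$ to be $\mathbb{Z}_2$-invariant, so that $\tau^{\ast}\mathbf{y}^{\ast}=\mathbf{y}^{\ast}$, but in formula (\ref{eq:eqvdiffeqn}) the $\theta^{n+1}\otimes\tau^{\ast}x^{\ast}$ summand and the constant-disk contribution to the $i=1$ term of the $G$-sum both produce $\theta^{1}\otimes\mathbf{y}^{\ast}$, and they cancel over $\mathbb{F}_{2}$. This is precisely the mechanism used in Section~5 to show that $EH_{\mathbb{Z}_{2}}(\xi_{K})$ is a $d_{\mathbb{Z}_{2}}$-cocycle: for any $\mathbb{Z}_{2}$-invariant generator the $\theta^{1}$-level piece is $x^{\ast}+\tau^{\ast}x^{\ast}=0$. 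So an invariant $\mathbf{y}$ can never yield the surviving $\theta^{1}\otimes\mathbf{y}^{\ast}$ term your argument requires; your displayed formula would collapse to $d_{\mathbb{Z}_{2}}(\theta^{0}\otimes\mathbf{y}^{\ast})=\theta^{0}\otimes EH(\xi_{L^{-}})^{\ast}+(\text{higher order})$, which, if the higher terms vanish as you argue, would force $c_{\mathbb{Z}_{2}}(\xi_{L^{-}})=0$, contradicting Theorem~\ref{thm:localization}.

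The paper's proof works instead with a generator that is \emph{not} fixed by $\tau$: in the branched double cover a non-branch intersection point of the bridge diagram lifts to a conjugate pair $\{\mathbf{q},\tau\mathbf{q}\}$, so the $\theta^{1}$-level contribution becomes $EH(\xi_{L})\otimes(\mathbf{q}+\tau\mathbf{q})\ne 0$, and this non-cancellation is essential to getting a $\theta$-relation at all. You are also missing a second ingredient. Identifying the resulting class $[EH(\xi_{L})\otimes(\mathbf{q}+\tau\mathbf{q})\otimes\theta^{0}]$ with $c_{\mathbb{Z}_{2}}(\xi_{L^{+}})=c_{\mathbb{Z}_{2}}(\xi_{L})$ is a separate step, which the paper carries out by counting small triangles in a Heegaard triple-diagram involving both stabilizations (Figure~\ref{fig:8.3}). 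Your appeal to the stabilization isomorphism of Theorem~\ref{thm:functoriality} cannot substitute for it: under the very differential formula you write, $\theta^{0}\otimes\mathbf{y}^{\ast}$ is not a cocycle, so the class $[\theta^{0}\otimes\mathbf{y}^{\ast}]$ that you set equal to $EH_{\mathbb{Z}_{2}}(\xi_{L})$ is not even defined.
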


\begin{proof}
Put $L$ in a braided position. Then the negative stabilization $L^{-}$
is given by adding a negative twist to the last two strands in $L\coprod U$
where $U$ is the trivial braid. So the equivariant Heegaard diagram
for $\Sigma(L^{-})$ near the last strand is given by Figure \ref{fig:8.2}.
\begin{figure}[tbph]
\resizebox{.4\textwidth}{!}{\includegraphics{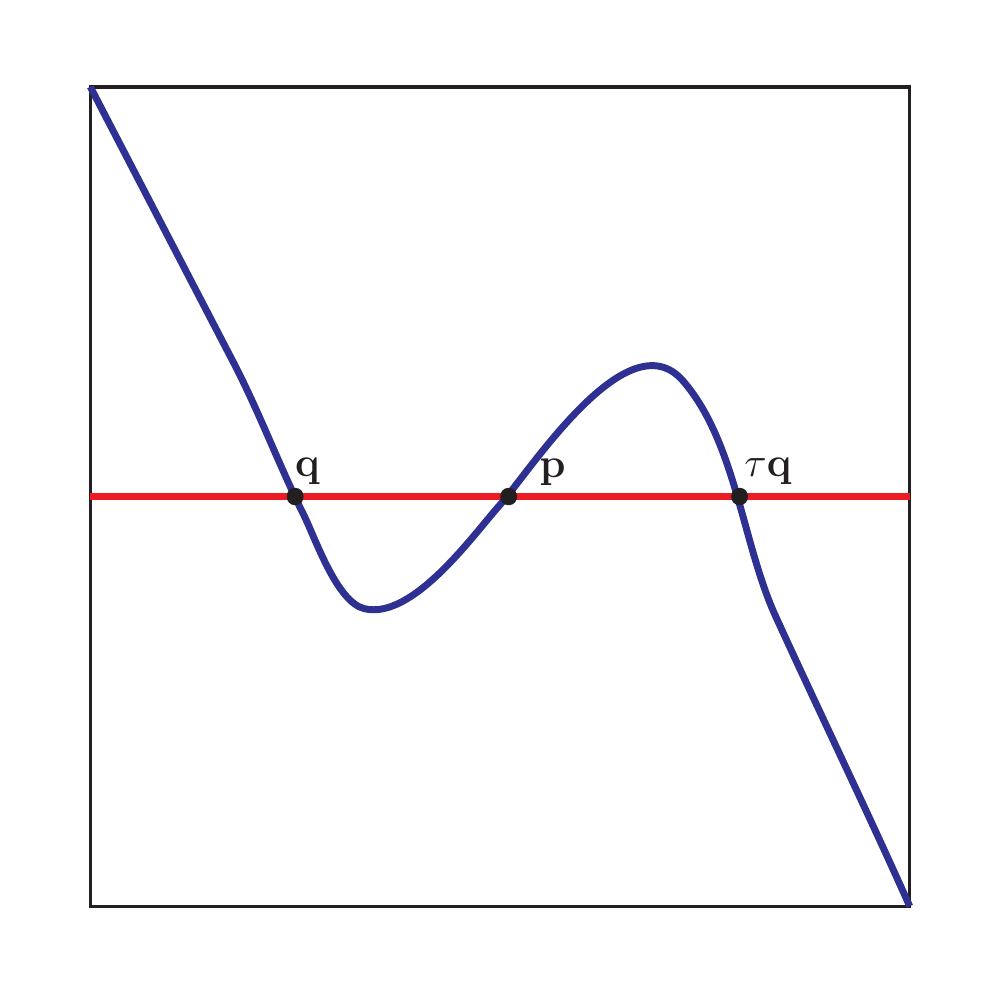}} \caption{\label{fig:8.2}The associated equivariant Heegaard diagram, near
the last strand.}
\end{figure}
 Put $\mathbf{x}=EH(\xi_{L})\otimes\mathbf{q}$. Then, in the dual
of the freed Floer complex, we have 
\begin{align*}
d_{\mathbb{Z}_{2}}\mathbf{x} & =EH(\xi_{L})\otimes\mathbf{p}\otimes\theta^{0}+EH(\xi_{L})\otimes(\mathbf{q}+\tau\mathbf{q})\otimes\theta^{1}\\
 & =EH_{\mathbb{Z}_{2}}(\xi_{L})+EH(\xi_{L})\otimes(\mathbf{q}+\tau\mathbf{q})\otimes\theta^{1}.
\end{align*}
 Now consider the equivariant triple Heegaard diagram in Figure \ref{fig:8.3},
which describes the negative stabilization $L^{-}$ and the positive
stabilization $L^{+}$ of $L$:
\begin{figure}[tbph]
\resizebox{.4\textwidth}{!}{\includegraphics{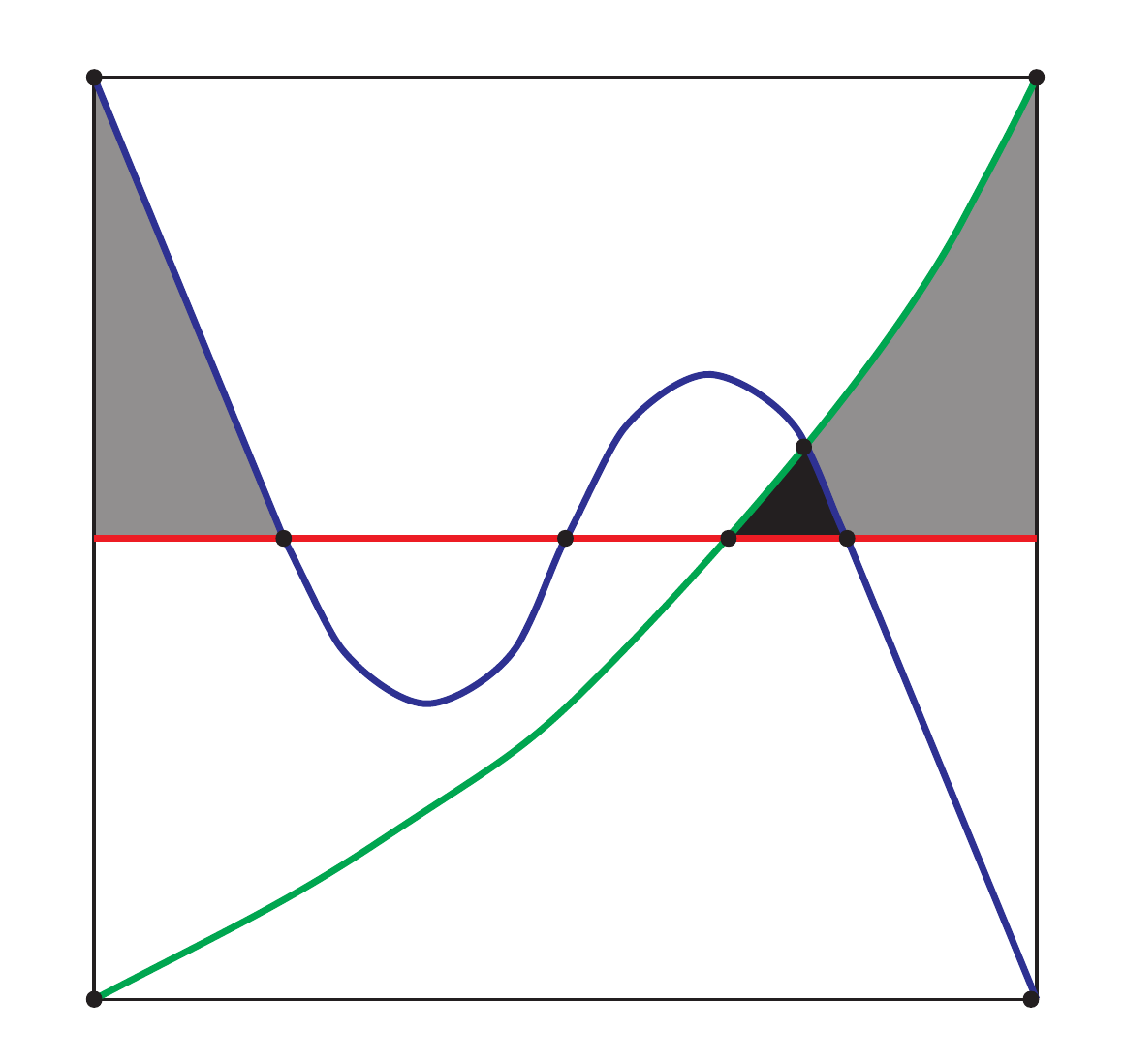}} \caption{\label{fig:8.3}Relevant triangles in the associated equivariant Heegaard
triple-diagram, each of which has zero Maslov index.}
\end{figure}
 The two shaded triangles are the only holomorphic triangles in the
above diagram, so by the associativity of equivariant triangle maps,
working with equivariant contact classes in equivariant HFs should
be the same as working with contact classes in ordinary HFs. In other
words, we have
\[
c_{\mathbb{Z}_{2}}(\xi_{L^{+}})=[EH_{\mathbb{Z}_{2}}(\xi_{L^{+}})]=[EH(\xi_{L})\otimes(\mathbf{q}+\tau\mathbf{q})\otimes\theta^{0}]\in\widehat{HF}_{\mathbb{Z}_{2}}(\Sigma(L)).
\]
 Therefore we get 
\[
c_{\mathbb{Z}_{2}}(\xi_{L^{-}})=\theta\cdot c_{\mathbb{Z}_{2}}(\xi_{L^{+}})=\theta\cdot c_{\mathbb{Z}_{2}}(\xi_{L}).
\]
\end{proof}
Finally, using the naturality and functoriality of equivariant Heegaard
Floer cohomology, we can construct an isotopy invariant of slice disks
of a given slice knot in $S^{3}$, as follows. Note that a similar
invariant can be constructed using functoriality of knot Floer homology
under decorated cobordisms, as the $t_{S,P}$ invariant in \cite{Juhasz-Marengon}.
\begin{thm}
Let $K\subset S^{3}$ be a (smoothly) slice knot, and $D\subset B^{4}$
be a slice disk which bounds $K$. Choose any point $p\in D$, its
neighborhood $B(p)\subset B^{4}$, and draw a smooth simple arc $s$
on the annulus $D^{2}-B(p)$, so that $s\cap K$ is a point and $(D^{2}-B(p),s)$
is a based cobordism between $(K,s\cap K)$ and the based unknot.
Consider the induced cobordism map:
\[
\hat{f}_{(D^{2}-B(p),s)}\,:\,\widehat{HF}_{\mathbb{Z}_{2}}(\Sigma(\text{unknot}),\text{pt})\rightarrow\widehat{HF}_{\mathbb{Z}_{2}}(\Sigma(K),s\cap K).
\]
 Then the element $\hat{f}_{(D^{2}-B(p),s)}(\mathbf{x})$, where $\mathbf{x}\in\widehat{HF}_{\mathbb{Z}_{2}}(\Sigma(\text{unknot}),\text{pt})$,
is nonvanishing and depends only on the isotopy class of $D$ rel
$K$.
\end{thm}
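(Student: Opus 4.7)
My plan is to address the two assertions --- isotopy invariance and nonvanishing --- separately.

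The isotopy invariance is a direct consequence of Theorem~\ref{thm:functoriality}. If $D_0$ and $D_1$ are slice disks of $K$ isotopic rel $K$ through a smooth family $\{D_t\}_{t \in [0,1]}$, I would choose a smooth path of interior points $p_t \in D_t$ together with a correspondingly varying family of arcs $s_t \subset D_t - B(p_t)$; this yields an isotopy of based cobordisms $(D_t - B(p_t), s_t)$ rel boundary, and Theorem~\ref{thm:functoriality} then implies that the induced maps on $\widehat{HF}_{\mathbb{Z}_2}$ agree. The residual ambiguity in the choice of $p$ (for fixed $D$) is handled by connecting two points by a path in $D$; the ambiguity in $s$ up to Dehn twists around the core of the annulus $D - B(p)$ is absorbed into the mapping class group action of $\mathrm{MCG}(S^3, K, s \cap K)$ on $\widehat{HF}_{\mathbb{Z}_2}(\Sigma(K), s \cap K)$ established in Section 6, so the element is well-defined modulo this action.

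For the nonvanishing, my strategy is to pass to ordinary Heegaard Floer cohomology via the forgetful map of Theorem~\ref{thm:forget}. Let $\mathbf{x}$ be the generator of $\widehat{HF}_{\mathbb{Z}_2}(\Sigma(\text{unknot}), \text{pt}) \cong \mathbb{F}_2[\theta]$ in $\theta$-degree zero; under the forgetful map to $\widehat{HF}(S^3) \cong \mathbb{F}_2$ it is sent to the unique nonzero class. Since both the equivariant and the ordinary cobordism maps are assembled from the same basic pieces (births and saddles) and the equivariant construction restricts to the ordinary one on the $\theta^0$ summand, the forgetful map commutes with cobordism maps. Hence, to show $\hat{f}_{(D^2-B(p), s)}(\mathbf{x}) \ne 0$ it suffices to verify that the ordinary Heegaard Floer cobordism map
\[
\hat{f}^{\mathrm{ord}}_{D^2 - B(p)}\,:\,\widehat{HF}(S^3) \longrightarrow \widehat{HF}(\Sigma(K))
\]
sends $1$ to a nonzero element.

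The main obstacle is precisely this last claim. The branched double cover of $B^4$ along the slice disk $D$ is a smooth $4$-manifold $X$ with $\partial X = \Sigma(K)$ and $H_{\ast}(X; \mathbb{Q}) \cong H_{\ast}(\text{pt}; \mathbb{Q})$; removing a small ball produces the cobordism $W = X \setminus \mathrm{int}(B^4)$ from $S^3$ to $\Sigma(K)$ which underlies the ordinary cobordism map. Since $W$ is a rational homology cobordism, standard results on Heegaard Floer cobordism maps guarantee that the induced map on $\widehat{HF}$ carries the generator of $\widehat{HF}(S^3)$ to a nonzero class. An alternative route is to compose with the reversed cobordism $\bar D - B(p')$: the composite $\hat{f}_{\bar D - B(p')} \circ \hat{f}_{(D^2 - B(p), s)}$ corresponds to the cobordism whose branched double cover is the double of $X$ minus two balls, and one can verify that its induced endomorphism of $\widehat{HF}_{\mathbb{Z}_2}(\Sigma(\text{unknot}), \text{pt})$ is nonzero on $\mathbf{x}$, forcing $\hat{f}_{(D^2-B(p), s)}(\mathbf{x}) \ne 0$.
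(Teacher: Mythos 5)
Your proof diverges from the paper's on both halves, and in both places the divergence matters.

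For the isotopy-invariance half, you follow the paper in invoking Theorem~\ref{thm:functoriality}, but you conclude only that the element is ``well-defined modulo the action of $\mathrm{MCG}(S^3,K,s\cap K)$'' coming from the Dehn-twist ambiguity of $s$ on the annulus. That is strictly weaker than the theorem's assertion, which is that the element itself depends only on the isotopy class of $D$ rel~$K$. The paper's (admittedly terse) argument is that because $D-B(p)$ is an annulus, the arc $s$ joining its two boundary circles is essentially unique, so no residual MCG ambiguity survives; your version does not close this gap and so does not actually prove the stated conclusion.

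For the nonvanishing half you take a genuinely different route. The paper deduces nonvanishing from Theorem~\ref{thm:localization} together with Lemma~6.11 of \cite{eqv-Floer}: the cobordism map is compatible with the localization isomorphism $\theta^{-1}\widehat{HF}_{\mathbb{Z}_2}(\Sigma(L))\simeq \mathbb{F}_2[\theta,\theta^{-1}]$, and under localization the cobordism map is an isomorphism, so $\hat{f}(\mathbf{x})$ survives inverting $\theta$ and is therefore nonzero. You instead pass through the forgetful map to ordinary $\widehat{HF}$ and then appeal to ``standard results'' that a rational homology cobordism induces a nonzero map on $\widehat{HF}$. That step is the weak link: there is no off-the-shelf theorem to that effect for $\widehat{HF}$ of rational homology spheres. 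To make it work one has to argue via $HF^{\infty}$/$HF^{+}$: the cobordism has $b_2=\chi=\sigma=0$ so the degree shift vanishes, $F^{\infty}_W$ is an isomorphism for a rational homology cobordism, and the branched double cover of a slice disk has $d(\Sigma(K),\mathfrak{s}_0^K)=0$, so the bottom of the tower is carried to the bottom of the tower, which is the statement you need; one must also keep track of the relevant $\mathrm{Spin}^c$ structure on $W$ rather than a sum over them. None of this is spelled out, so as written the crucial step is unsupported. The paper's localization argument avoids all of these $HF^{+}$/$d$-invariant considerations and is also what makes the spectral-sequence bookkeeping of Section~9 (specifically $q_\tau$ and $v_\tau$) go through uniformly, so it is the cleaner choice in context.
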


\begin{proof}
The fact that $\hat{f}_{(D^{2}-B(p),s)}(\mathbf{x})$ depends only
on the isotopy class of $D$ rel $K$ following from Theorem \ref{thm:functoriality},
since $D^{2}-B(p)$ is an annulus and thus the choice of $s$ is unique
up to homotopy. The fact that $\hat{f}_{(D^{2}-B(p),s)}(\mathbf{x})$
is nonvanishing follows from Theorem \ref{thm:localization} and Lemma
6.11 of \cite{eqv-Floer}.
\end{proof}

\section{Vanishing and nonvanishing of $c(\xi_{K})$}

Recall that, for any knot $K\subset S^{3}$, we have a spectral sequence
\[
E_{1}=\widehat{HF}^{\ast}(\Sigma(K))\otimes_{\mathbb{F}_{2}}\mathbb{F}_{2}[\theta]\Rightarrow\widehat{HF}_{\mathbb{Z}_{2}}(\Sigma(K)),
\]
 constructed in Section 6.1 of \cite{eqv-Floer}, whose pages depend
only on the isotopy class of $K$. This spectral sequence is induced
by the $\theta$-filtration on $\widehat{CF}_{\mathbb{Z}_{2}}(\Sigma(K))$,
which can be written up to quasi-isomorphism as 
\begin{align*}
\widehat{CF}_{\mathbb{Z}_{2}}(\Sigma(K)) & =(\widehat{CF}(-\Sigma(K))\otimes_{\mathbb{F}_{2}}\mathbb{F}_{2}[\theta],d_{\mathbb{Z}_{2}}),\\
d_{\mathbb{Z}_{2}}(\mathbf{x}\otimes\theta^{i}) & =d\mathbf{x}\otimes\theta^{i}+(\mathbf{x}+\tau\mathbf{x})\otimes\theta^{i},
\end{align*}
 where $d$ denotes the differential on the cochain complex $\widehat{CF}^{\ast}(\Sigma(K))$
and $\tau$ denotes the generator of the $\mathbb{Z}_{2}$-action.

From the construction on the transverse knot invariant $c_{\mathbb{Z}_{2}}(\xi_{K})$,
we know that, given a transverse braid representation of $K$ along
the $z$-axis, we have an element 
\[
EH_{\mathbb{Z}_{2}}(\xi_{K})=EH(\xi_{K})\otimes\theta^{0}\in\widehat{CF}_{\mathbb{Z}_{2}}(\Sigma(K)),
\]
 which is a $d_{\mathbb{Z}_{2}}$-cocycle. The same element represents
$c(\xi_{K})\otimes\theta^{0}$ in the $E_{1}$ page, so we see that
the element $c(\xi_{K})\otimes\theta^{0}$ in the $E_{1}$ page of
our spectral sequence induces an element on each page.

However, we have to be careful here: the limit of $c(\xi_{K})\otimes\theta^{0}$
on the $E^{\infty}$ page is not $c_{\mathbb{Z}_{2}}(\xi_{K})$. This
is because our spectral sequence actually does not converge directly
to $\widehat{HF}_{\mathbb{Z}_{2}}(\Sigma(K))$, but instead converges
to its associated graded module 
\[
\mathbf{gr}_{\theta}\widehat{HF}_{\mathbb{Z}_{2}}(\Sigma(K))=\bigoplus_{i=0}^{\infty}\theta^{i}\widehat{HF}_{\mathbb{Z}_{2}}(\Sigma(K))/\theta^{i+1}\widehat{HF}_{\mathbb{Z}_{2}}(\Sigma(K)).
\]
 Thus, considering the bigrading on each page of the sequence, we
see that the limit of $c(\xi_{K})\otimes\theta^{0}$ is the image
$c_{\mathbb{Z}_{2}}(\xi_{K})$ under the following map:
\[
\widehat{HF}_{\mathbb{Z}_{2}}(\Sigma(K))\twoheadrightarrow\widehat{HF}_{\mathbb{Z}_{2}}(\Sigma(K))/\theta\widehat{HF}_{\mathbb{Z}_{2}}(\Sigma(K))\hookrightarrow\mathbf{gr}_{\theta}\widehat{HF}_{\mathbb{Z}_{2}}(\Sigma(K)).
\]
 Similarly, for every nonnegative integer $n$, the limit of $c(\xi_{K})\otimes\theta^{n}$
is the image of $\theta^{n}\cdot c_{\mathbb{Z}_{2}}(\xi_{K})$ under
the map 
\[
\widehat{HF}_{\mathbb{Z}_{2}}(\Sigma(K))\twoheadrightarrow\theta^{n}\widehat{HF}_{\mathbb{Z}_{2}}(\Sigma(K))/\theta^{n+1}\widehat{HF}_{\mathbb{Z}_{2}}(\Sigma(K))\hookrightarrow\mathbf{gr}_{\theta}\widehat{HF}_{\mathbb{Z}_{2}}(\Sigma(K)).
\]
 For simplicity, we will denote that image by $c_{\mathbb{Z}_{2}}^{n}(\xi_{K})$.

Now suppose that the transverse knot $K$ achieves equality in the
inequality of Corollary \ref{cor:ineq}, i.e. we have 
\[
\mathbf{gr}(c_{\mathbb{Z}_{2}}(\xi_{K}))=d_{3}(\xi_{K})+\frac{1}{2}=\frac{q_{\tau}(K)}{2}.
\]
 Then $c_{\mathbb{Z}_{2}}(\xi_{K})$ lies in the smallest possible
grading among all elements of $\widehat{HF}_{\mathbb{Z}_{2}}(\Sigma(K))$
not annihilated by any powers of $\theta$. Under this setting, $c_{\mathbb{Z}_{2}}(\xi_{K})$
cannot be written as a multiple of $\theta$ by the grading minimality
condition, so we have $c_{\mathbb{Z}_{2}}^{0}(\xi_{K})\neq0$ in $\mathbf{gr}_{\theta}\widehat{HF}_{\mathbb{Z}_{2}}(\Sigma(K))$,
and similarly we have $c_{\mathbb{Z}_{2}}^{n}(\xi_{K})$ for every
nonnegative integer $n$. This simple fact can now be used to prove
the following nonvanishing condition for $c(\xi_{K})$.
\begin{thm}
\label{thm:nonvanishing}Let $K$ be a knot in $S^{3}$, and suppose
that a transverse representative $T$ of $K$ satisfies $d_{3}(\xi_{K})=\frac{q_{\tau}(K)-1}{2}$.
Then the following statements hold.
\begin{enumerate}
\item $c(\xi_{T})\ne a+\tau^{\ast}a$ for every $a\in\widehat{HF}(\Sigma(K))$,
where $\tau$ denotes the deck transformation of the branched covering
map $\Sigma(K)\rightarrow S^{3}$.
\item The cardinality of the set 
\[
\left\{ c(\xi_{T})\left|\,T\text{ is a transverse representative of }K\text{ satisfying }d_{3}(\xi_{K})=\frac{q_{\tau}(K)-1}{2}\right.\right\} 
\]
 is at most half of the cardinality of the $d_{3}(\xi_{T})+\frac{1}{2}$-graded
component of $\widehat{HF}(\Sigma(K),\mathfrak{s}_{0}^{K})$, where
$\mathfrak{s}_{0}^{K}$ is the $\text{Spin}^{c}$-structure on $\Sigma(K)$
induced by the unique spin structure on $\Sigma(K)$.
\end{enumerate}
\end{thm}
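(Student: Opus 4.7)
The overall plan is to exploit the spectral sequence $E_1 = \widehat{HF}^*(\Sigma(K)) \otimes \mathbb{F}_2[\theta] \Rightarrow \mathbf{gr}_\theta \widehat{HF}_{\mathbb{Z}_2}(\Sigma(K))$ described at the start of the section, whose $d_1$-differential sends $a \otimes \theta^i$ to $(a + \tau^* a) \otimes \theta^{i+1}$, together with the chain-level observation that $c(\xi_T)$ is $\tau^*$-invariant (the HKM cocycle $EH(\xi_T)$ is a sum of $\tau$-fixed intersection points). Under the hypothesis $d_3(\xi_T) = (q_\tau(K)-1)/2$, the class $c_{\mathbb{Z}_2}(\xi_T)$ lies in the minimal grading $k := q_\tau(K)/2$ among elements not killed by any power of $\theta$. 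A preliminary step is to verify that the associated graded classes $c_{\mathbb{Z}_2}^n(\xi_T) \in E_\infty^{n,k}$ are nonzero for every $n \geq 0$: otherwise $\theta^n c_{\mathbb{Z}_2}(\xi_T) = \theta^{n+1} b$ for some $b$ of strictly smaller grading, but such a $b$ would remain non-$\theta$-torsion under localization, contradicting the definition of $q_\tau(K)$.

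For part (1), since $c_{\mathbb{Z}_2}^1(\xi_T) \in E_\infty^{1, k}$ is nonzero and no $d_r$ with $r \geq 2$ enters $E_r^{1,k}$ (the potential source $E_r^{1-r, k+r-1}$ has negative filtration), the class $c(\xi_T) \otimes \theta^1$ cannot lie in $\mathrm{image}(d_1) = (\mathrm{id} + \tau^*)\widehat{HF}^k(\Sigma(K)) \otimes \theta^1$. This forces $c(\xi_T) \notin (\mathrm{id} + \tau^*)\widehat{HF}^k(\Sigma(K))$, as desired.

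For part (2), combining (1) with the chain-level $\tau$-invariance yields $c(\xi_T) \in V_k^{\tau^*} \setminus (\mathrm{id}+\tau^*)V_k$, where $V_k := \widehat{HF}(\Sigma(K),\mathfrak{s}_0^K)_k$. The plan is to strengthen this by showing that the entire set $S := \{c(\xi_T)\}$ lies in a single coset of $(\mathrm{id}+\tau^*)V_k$ inside $V_k^{\tau^*}$. The localization theorem (Theorem \ref{thm:localization}) forces the non-torsion $\mathbb{F}_2[\theta]$-tower of $\widehat{HF}_{\mathbb{Z}_2}(\Sigma(K))$ to be one-dimensional in grading $k$, so any two equivariant classes $c_{\mathbb{Z}_2}(\xi_{T_1})$, $c_{\mathbb{Z}_2}(\xi_{T_2})$ differ by a $\theta$-torsion element. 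Then, via the long exact sequence
\[
\widehat{HF}_{\mathbb{Z}_2}(\Sigma(K)) \xrightarrow{\theta} \widehat{HF}_{\mathbb{Z}_2}(\Sigma(K)) \xrightarrow{\pi} \widehat{HF}(\Sigma(K)) \xrightarrow{\delta} \widehat{HF}_{\mathbb{Z}_2}(\Sigma(K))
\]
associated to $0 \to \widehat{CF}_{\mathbb{Z}_2} \xrightarrow{\theta} \widehat{CF}_{\mathbb{Z}_2} \to \widehat{CF} \to 0$, one checks that $\pi$-images of $\theta$-torsion classes in $\widehat{HF}_{\mathbb{Z}_2}^k$ land in $(\mathrm{id}+\tau^*)V_k$, establishing the coset containment. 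The bound
\[
|S| \leq |(\mathrm{id}+\tau^*)V_k| = |V_k|/|V_k^{\tau^*}| \leq |V_k|/2
\]
then follows, using that $V_k^{\tau^*}$ is nonzero whenever $V_k$ is (any nonzero $\mathbb{F}_2[\mathbb{Z}_2]$-module has nonzero fixed part).

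The hardest step is the last: showing that $\pi(\text{$\theta$-torsion}) \subseteq (\mathrm{id}+\tau^*)V_k$. This requires a careful cochain-level analysis of the connecting homomorphism $\delta$, whose leading term is $\mathrm{id}+\tau^*$ but which also has higher-order corrections from the $G$-maps of $d_{\mathbb{Z}_2}$ in equation \eqref{eq:eqvdiffeqn}; those corrections must be shown to also factor through the image of $\mathrm{id}+\tau^*$. I expect this, rather than the spectral sequence bookkeeping used for parts (1) and the initial reduction in (2), to be the main technical obstacle.
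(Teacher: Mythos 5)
Your argument for part (1) is correct and is essentially the argument in the paper: both establish that $c_{\mathbb{Z}_2}^n(\xi_T)\ne 0$ for all $n$ from the minimality of the grading, and then observe that survival of $c(\xi_T)\otimes\theta^1$ past $d_1$ (which suffices, since no higher differential can enter filtration $1$) rules out $c(\xi_T)\in(\mathrm{id}+\tau^*)\widehat{HF}$.

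For part (2) your route is genuinely different from the paper's, and it has a real gap. The paper does not pass through $\theta$-torsion or the long exact sequence at all: it looks at the spectral sequence at a high filtration level $N$. By the localization isomorphism, the $(q_\tau(K)+N)$-graded piece of $\widehat{HF}_{\mathbb{Z}_2}(\Sigma(K),\mathfrak{s}_0^K)$ has rank $1$ for $N\gg 0$, so $E_\infty^{N,k}$ has rank $1$. Every $c(\xi_T)\otimes\theta^N$ is a permanent cycle representing the unique nonzero class there, so the whole set $\{c(\xi_T)\otimes\theta^N\}$ sits inside a single coset of the permanent boundaries $B_\infty^{N,k}$. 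Since $|B_\infty^{N,k}|=|Z_\infty^{N,k}|/|E_\infty^{N,k}|=|Z_\infty^{N,k}|/2\le |V_k|/2$, the bound $|S|\le |V_k|/2$ follows with no torsion analysis whatsoever. Your approach instead tries to place $S$ inside a single coset of $(\mathrm{id}+\tau^*)V_k$ by arguing that $\pi$ maps $\theta$-torsion in grading $k$ into $(\mathrm{id}+\tau^*)V_k$. You correctly flag this last step as the technical obstacle, and it is indeed a gap, not merely a deferred detail. The chain-level argument you sketch only works for torsion of order one: if $\theta[x]=0$ one gets $\pi[x]=(\mathrm{id}+\tau^*)[y_0]$ with $y_0$ a genuine cocycle, but if $[x]$ is only $\theta^m$-torsion for $m\ge 2$, the analogous formula $\pi[x]=[(\mathrm{id}+\tau^*)y_{m-1}]$ involves a chain $y_{m-1}$ with $dy_{m-1}=(\mathrm{id}+\tau^*)y_{m-2}\ne 0$, which is not a cocycle, so one cannot conclude that $[(\mathrm{id}+\tau^*)y_{m-1}]$ lies in $(\mathrm{id}+\tau^*)\widehat{HF}$. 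Whether your claim nevertheless holds in this geometric setting is not clear, and if true it would give the stronger bound $|S|\le |(\mathrm{id}+\tau^*)V_k|$; but as written the proof is incomplete, whereas the paper's high-filtration argument sidesteps the issue entirely.
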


\begin{proof}
The statement $c(\xi_{T})\neq a+\tau^{\ast}a$ for every $a\in\widehat{HF}(\Sigma(K))$
means that the element $c(\xi_{K})\otimes\theta^{1}$ in the $E_{1}$
page also survives in the $E_{2}$ page. By the condition we imposed
on $K$, we have $c_{\mathbb{Z}_{2}}^{n}(\xi_{K})\ne0$ for every
nonnegative integer $n$, which means that the element $c(\xi_{K})\otimes\theta^{n}$
in the $E_{1}$ page must survive on every page. This proves (1).

Also, statement (3) follows from the fact that the rank of the $(q_{\tau}(K)+N)$-graded
component of $\widehat{HF}_{\mathbb{Z}_{2}}(\Sigma(K),\mathfrak{s}_{0}^{K})$
converges to $1$ under the limit $N\rightarrow\infty$ (and that
$c(\xi_{K})\otimes\theta^{N}$ must survive in every page). Note that
this a direct corollary of the existence of the localization isomorphism
\[
\widehat{HF}_{\mathbb{Z}_{2}}(\Sigma(K))\otimes_{\mathbb{F}_{2}[\theta]}\mathbb{F}_{2}[\theta,\theta^{-1}]\xrightarrow{\sim}\mathbb{F}_{2}[\theta,\theta^{-1}].
\]
\end{proof}
On the other hand, by analyzing the structure of $\widehat{HF}_{\mathbb{Z}_{2}}(\Sigma(K))$,
we can find a condition which implies the vanishing of $c(\xi_{T})$
for a transverse representative $T$ of $K$. We start by observing
that taking a quotient of $\widehat{CF}_{\mathbb{Z}_{2}}(\Sigma(K))$
by the $\theta$-action gives the ordinary Floer cochain complex $\widehat{CF}^{\ast}(\Sigma(K))$:
\[
\widehat{CF}_{\mathbb{Z}_{2}}(\Sigma(K))\otimes_{\mathbb{F}_{2}[\theta]}\mathbb{F}_{2}\simeq\widehat{CF}^{\ast}(\Sigma(K)).
\]
 We already know from Theorem \ref{thm:forget} that the induced map
\[
\widehat{HF}_{\mathbb{Z}_{2}}(\Sigma(K))\rightarrow\widehat{HF}^{\ast}(\Sigma(K))
\]
 maps $c_{\mathbb{Z}_{2}}(\xi_{K})$ to $c(\xi_{K})$. Now, since
the above map is an $\mathbb{F}_{2}[\theta]$-module homomorhpism
and the $\theta$-action on its codomain $\widehat{HF}^{\ast}(\Sigma(K))$
is trivial, we get a map 
\[
\widehat{HF}_{\mathbb{Z}_{2}}(\Sigma(K))/\theta\widehat{HF}_{\mathbb{Z}_{2}}(\Sigma(K))\rightarrow\widehat{HF}^{\ast}(\Sigma(K)),
\]
 which then maps $c_{\mathbb{Z}_{2}}^{0}(\xi_{K})$ to $c(\xi_{K})$.
Hence, if $c_{\mathbb{Z}_{2}}(\xi_{K})$ is divisible by $\theta$,
then $c(\xi_{K})=0$. This can be used to prove a vanishing property
of $c(\xi_{K})$.
\begin{defn}
Given a knot $K$ in $S^{3}$, we define $v_{\tau}(K)$ to be the
biggest nonnegative integer $N$ such that there exists an element
in the $(q_{\tau}(K)+N)$-graded piece of $\widehat{HF}_{\mathbb{Z}_{2}}(\Sigma(K),\mathfrak{s}_{0}^{K})$
which is not annihilated by any powers of $\theta$ and not divisible
by $\theta$; such an integer always exists by the existence of the
localization isomorphism. Here, $\mathfrak{s}_{0}^{K}$ is defined
as in the statement (2) of Theorem \ref{thm:nonvanishing}.
\end{defn}

\begin{thm}
\label{thm:vnv}Let $K$ be a knot in $S^{3}$ and $T$ be a transverse
representative of $K$. Then $c(\xi)\ne0$ if $d_{3}(\xi_{K})=\frac{q_{\tau}(K)-1}{2}$
and $c(\xi)=0$ if $d_{3}(\xi_{K})>\frac{q_{\tau}(K)-1}{2}+v_{\tau}(K)$.
\end{thm}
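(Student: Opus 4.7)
The plan is to decompose the statement into its nonvanishing and vanishing halves, translate each hypothesis on $d_3(\xi_K)$ into a grading constraint on $c_{\mathbb{Z}_{2}}(\xi_K)$ via the Ozsvath-Szabo formula $\mathbf{gr}(c_{\mathbb{Z}_{2}}(\xi_K)) = d_3(\xi_K) + \frac{1}{2}$ (as used in the proof of Corollary~\ref{cor:ineq}), and then exploit the $\theta$-action on $\widehat{HF}_{\mathbb{Z}_{2}}(\Sigma(K))$ together with Theorem~\ref{thm:localization}, which supplies the essential structural fact that $c_{\mathbb{Z}_{2}}(\xi_K)$ is never annihilated by any power of $\theta$.

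For the nonvanishing half, the quickest route is to quote Theorem~\ref{thm:nonvanishing}(1) with $a = 0$: since $d_3(\xi_K) = \frac{q_\tau(K)-1}{2}$ by hypothesis, that theorem gives $c(\xi_T) \ne 0 + \tau^{\ast}0 = 0$. I will also include a short spectral-sequence remark identifying this with the fact that $c_{\mathbb{Z}_{2}}(\xi_K)$, lying at the minimum non-$\theta$-torsion grading $\frac{q_\tau(K)}{2}$, cannot be a $\theta$-multiple of anything, so the cycle $c(\xi_T) \otimes \theta^0$ representing $c_{\mathbb{Z}_{2}}^0(\xi_K)$ must already be nonzero on the $E_1$-page of the $\theta$-filtered spectral sequence.

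For the vanishing half, I first translate the hypothesis into $\mathbf{gr}(c_{\mathbb{Z}_{2}}(\xi_K)) > \frac{q_\tau(K)}{2} + v_\tau(K)$. By the definition of $v_\tau(K)$ as the largest $N$ for which a non-torsion, non-$\theta$-divisible element exists in grading $\frac{q_\tau(K)}{2} + N$, every non-torsion element in strictly higher grading is forced to lie in $\theta \cdot \widehat{HF}_{\mathbb{Z}_{2}}(\Sigma(K), \mathfrak{s}_0^K)$. Since Theorem~\ref{thm:localization} supplies the non-torsion hypothesis for $c_{\mathbb{Z}_{2}}(\xi_K)$, I obtain a factorisation $c_{\mathbb{Z}_{2}}(\xi_K) = \theta \cdot \tilde{c}$. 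Applying the forgetful map $\widehat{HF}_{\mathbb{Z}_{2}}(\Sigma(K)) \to \widehat{HF}^{\ast}(\Sigma(K))$ of Theorem~\ref{thm:forget}, which is induced by chain-level truncation of positive $\theta$-powers and therefore kills everything in $\theta \cdot \widehat{HF}_{\mathbb{Z}_{2}}(\Sigma(K))$, then sends $c_{\mathbb{Z}_{2}}(\xi_K) \mapsto c(\xi_K)$ and yields $c(\xi_K) = 0$.

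The step requiring the most care is the contrapositive extraction from the definition of $v_\tau(K)$: I must verify that ``grading strictly greater than $\frac{q_\tau(K)}{2} + v_\tau(K)$ implies divisibility by $\theta$ for non-torsion elements'' follows word-for-word from the maximality in the definition of $v_\tau(K)$, and also that $c_{\mathbb{Z}_{2}}(\xi_K)$ genuinely lies in the spin $\text{Spin}^{c}$-summand $\widehat{HF}_{\mathbb{Z}_{2}}(\Sigma(K), \mathfrak{s}_0^K)$ referenced in that definition (which follows from $c_1(\xi_K) = 0$ on the contact branched double cover). Once both bookkeeping points are confirmed, the remainder of the argument is a one-line application of the quotient $\widehat{HF}_{\mathbb{Z}_{2}}(\Sigma(K))/\theta\widehat{HF}_{\mathbb{Z}_{2}}(\Sigma(K))$, and I do not expect any further analytic or combinatorial difficulty.
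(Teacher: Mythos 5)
Your proposal is correct and follows essentially the same route as the paper: the nonvanishing half is exactly the paper's argument of specializing Theorem~\ref{thm:nonvanishing}(1) to $a=0$, and the vanishing half is the paper's argument that the grading hypothesis together with the definition of $v_{\tau}(K)$ forces $c_{\mathbb{Z}_{2}}(\xi_{K})$ to be $\theta$-divisible, which the forgetful map of Theorem~\ref{thm:forget} (factored through the quotient by $\theta$) then sends to $0$. You are somewhat more careful than the paper in making explicit that the non-torsion hypothesis needed to apply the maximality in the definition of $v_{\tau}(K)$ comes from Theorem~\ref{thm:localization}, and in flagging that $c_{\mathbb{Z}_{2}}(\xi_{K})$ lives in the spin summand $\mathfrak{s}_{0}^{K}$; both observations are correct and tighten up bookkeeping that the paper's one-line proof leaves implicit.
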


\begin{proof}
If $d_{3}(\xi_{K})>v_{\tau}(K)-\frac{1}{2}$, then $c_{\mathbb{Z}_{2}}(\xi_{K})$
is divisible by $\theta$, so $c(\xi_{K})=0$. On the other hand,
if $d_{3}(\xi_{K})=\frac{q_{\tau}(K)-1}{2}$, then $c(\xi_{K})\ne a+\tau^{\ast}a$
for any $a\in\widehat{HF}_{\mathbb{Z}_{2}}(\Sigma(K))$ by Theorem
\ref{thm:nonvanishing}, so $c(\xi_{K})\ne0$ as $0+\tau^{\ast}0=0$.
\end{proof}
Theorem \ref{thm:vnv} can be seen as a Heegaard Floer analogue of
the following theorem of Plamenevskaya.
\begin{thm}[Theorem 1.2, \cite{Plamenevskaya-2008}]
If $K$ is a transverse knot such that $\mathbf{sl}(K)=s(K)-1$,
then $\psi(K)\neq0$, where $s(K)$ stands for the Rasmussen invariant\cite{Rasmussen},
and $\psi(K)$ stands for the Plamenevskaya invariant\cite{plamanevskaya-Kh}.
The converse holds if $K$ is $Kh_{\mathbb{F}_{2}}$-thin.
\end{thm}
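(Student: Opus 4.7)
The plan is to compare the Plamenevskaya cycle $\psi(K) \in Kh(K;\mathbb{F}_{2})$ with the canonical Lee generators via the Lee spectral sequence from Khovanov homology to Lee homology. Represent $K$ by a closed braid $\widehat{\beta}$; then $\psi(K)$ is by definition the Khovanov class of the cycle obtained from the oriented resolution of $\widehat{\beta}$ by labelling each Seifert circle with $x_{-}$. A direct count shows that this cycle lies in homological grading $0$ and quantum grading exactly $\mathbf{sl}(K)$. On the other hand, Lee homology of a knot is two-dimensional over $\mathbb{F}_{2}$, generated by canonical classes $\mathfrak{s}_{o}$ and $\mathfrak{s}_{\bar{o}}$ associated to the two orientations of $K$, and Rasmussen's definition places $\mathfrak{s}_{o} + \mathfrak{s}_{\bar{o}}$ in quantum grading $s(K) - 1$.

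The first step is to identify the Plamenevskaya cycle, viewed in the Lee complex, with $\mathfrak{s}_{o} + \mathfrak{s}_{\bar{o}}$. Lee's canonical generators admit a description in terms of two-colourings of Seifert circles induced by a chosen orientation, and over $\mathbb{F}_{2}$ the sum $\mathfrak{s}_{o} + \mathfrak{s}_{\bar{o}}$ corresponds to labelling all circles of the oriented resolution by $x_{-}$, which is precisely the cycle defining $\psi(K)$. Consequently $\psi(K)$ survives, under the quantum filtration on the Lee complex, to a nonzero Lee class whenever it sits in the correct quantum grading. Since the Plamenevskaya inequality $\mathbf{sl}(K) \leq s(K) - 1$ always holds, the equality $\mathbf{sl}(K) = s(K) - 1$ places $\psi(K)$ in the same quantum grading as $\mathfrak{s}_{o} + \mathfrak{s}_{\bar{o}}$, so its Lee class is nonzero and therefore $\psi(K) \neq 0$ in $Kh(K;\mathbb{F}_{2})$.

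For the converse under $Kh_{\mathbb{F}_{2}}$-thinness, the key input is that if $K$ is thin then $Kh(K;\mathbb{F}_{2})$ is concentrated on two adjacent $\delta$-diagonals and the Lee spectral sequence collapses at $E_{2}$; in each homological degree exactly one class contributes to a given Lee generator. In particular the unique class in homological degree $0$ surviving to Lee homology has quantum grading $s(K) - 1$. Since $\psi(K)$ sits in homological grading $0$ and quantum grading $\mathbf{sl}(K) \leq s(K) - 1$, nonvanishing of $\psi(K)$ forces it to represent a nonzero Lee class, which pins its quantum grading to $s(K) - 1$ and yields $\mathbf{sl}(K) = s(K) - 1$.

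The main obstacle is the chain-level identification of $\psi(K)$ with $\mathfrak{s}_{o} + \mathfrak{s}_{\bar{o}}$ in Lee homology: it requires tracing Lee's canonical generators through the cube of resolutions of $\widehat{\beta}$ and checking that the contributions of the two orientations cancel modulo $2$ away from the oriented resolution, where they coincide and reproduce the all-$x_{-}$ labelling. Once this identification is in place, the grading bookkeeping in the remaining steps is essentially formal, and the stated dichotomy between the general direction and the thin-case converse follows immediately from the collapse of the Lee spectral sequence at $E_{2}$.
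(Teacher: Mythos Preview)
This theorem is not proved in the paper at all; it is quoted verbatim from \cite{Plamenevskaya-2008} as an external result, and the only thing the paper proves in this vicinity is an independent derivation of its Corollary~1.3 (the quasi-alternating case) from the author's own Theorem~\ref{thm:vnv}. So there is no in-paper argument to compare your sketch against. Your outline does follow the strategy of Plamenevskaya's original proof, but there is a real gap in the execution.

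The problem is that you run the Lee-homology argument over $\mathbb{F}_{2}$. Over $\mathbb{F}_{2}$ one has $1+x=1-x$, so Lee's Frobenius algebra $\mathbb{F}_{2}[x]/(x^{2}-1)\cong\mathbb{F}_{2}[y]/(y^{2})$ is \emph{not} semisimple; the two canonical generators coincide, $\mathfrak{s}_{o}=\mathfrak{s}_{\bar o}$, so $\mathfrak{s}_{o}+\mathfrak{s}_{\bar o}=0$, and Lee homology over $\mathbb{F}_{2}$ is isomorphic to ordinary $\mathbb{F}_{2}$-Khovanov homology rather than two-dimensional. Your claimed chain-level identity ``$\mathfrak{s}_{o}+\mathfrak{s}_{\bar o}=$ all circles labelled $x_{-}$'' is therefore the identity $0=\psi$, which is the opposite of what you want. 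Plamenevskaya's actual argument is carried out over $\mathbb{Q}$ (or any field of characteristic $\neq 2$): there the lowest-$q$-degree part of the Lee cycle $\mathfrak{s}_{o}\pm\mathfrak{s}_{\bar o}$ is a nonzero scalar multiple of the Plamenevskaya chain, and the filtration comparison goes through. If you insist on characteristic $2$ you must replace Lee's deformation by Bar-Natan's (whose algebra $\mathbb{F}_{2}[x]/(x^{2}+x)$ \emph{is} semisimple, so Turner's theorem gives a two-dimensional limit), and then redo the identification of $\psi$ with the canonical generators in that theory; the combinatorics there is genuinely different and your asserted equality with the all-$x_{-}$ labelling does not hold as stated.
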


As its corollary, Plamenevskaya gives a vanishing/nonvanishing property
of $c(\xi_{K})$ when $K$ is a transverse representative of a quasi-alternating
knot.
\begin{cor}[Corollary 1.3, \cite{Plamenevskaya-2008}]
\label{cor:pl}Let $K$ be a transverse representative of a quasi-alternating
knot. Then $c(\xi_{K})\ne0$ if and only if $\mathbf{sl}(K)=\sigma(K)-1$.
\end{cor}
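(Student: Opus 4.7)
The plan is to specialize Theorem~\ref{thm:vnv} to a transverse representative $K$ of a quasi-alternating knot by computing each of the three relevant invariants $q_\tau(K)$, $v_\tau(K)$, and $d_3(\xi_K)$ explicitly, and then showing that the conditions in Theorem~\ref{thm:vnv} exactly translate into Plamenevskaya's condition $\mathbf{sl}(K) = \sigma(K)-1$.

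First I would show that $v_\tau(K) = 0$ and identify $q_\tau(K)$ with a multiple of the $d$-invariant. The key input is that the branched double cover of a quasi-alternating knot is an $L$-space (Ozsv\'ath--Szab\'o), so $\widehat{HF}(\Sigma(K), \mathfrak{s}_0^K)$ is one-dimensional and sits in grading $d(\Sigma(K), \mathfrak{s}_0^K) = -\sigma(K)/4$. Combining this with the spectral sequence
\[
E_1 = \widehat{HF}^{\ast}(\Sigma(K)) \otimes \mathbb{F}_2[\theta] \Rightarrow \widehat{HF}_{\mathbb{Z}_2}(\Sigma(K))
\]
of Section~6.1 of \cite{eqv-Floer}, together with the localization isomorphism $\theta^{-1}\widehat{HF}_{\mathbb{Z}_2}(\Sigma(K),\mathfrak{s}_0^K) \simeq \mathbb{F}_2[\theta,\theta^{-1}]$, I would argue that for $\mathfrak{s}_0^K$ there is no room in the one-dimensional $E_1$-page for higher differentials, so $\widehat{HF}_{\mathbb{Z}_2}(\Sigma(K), \mathfrak{s}_0^K)$ is a free $\mathbb{F}_2[\theta]$-module of rank one, generated in its minimum grading. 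This immediately gives $v_\tau(K) = 0$ and $q_\tau(K)/2 = -\sigma(K)/4$.

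Next I would compute $d_3(\xi_K)$ in the same variables. By the grading formula of Proposition~4.6 of \cite{OSz-contact} applied to the open book of $(\Sigma(K),\xi_K)$ supplied by the Plamenevskaya branched-cover construction of Section~2, one obtains an expression
\[
d_3(\xi_K) \;=\; -\tfrac{1}{2}\mathbf{sl}(K) - \tfrac{1}{4}\sigma(K) - \tfrac{1}{2},
\]
i.e.\ the grading of $c_{\mathbb{Z}_2}(\xi_K)$ inside $\widehat{HF}_{\mathbb{Z}_2}(\Sigma(K),\mathfrak{s}_0^K)$ depends only on $\mathbf{sl}(K)$ and $\sigma(K)$. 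With the computation of $q_\tau(K)$ above, the equality $d_3(\xi_K) = (q_\tau(K)-1)/2$ unpacks into $\mathbf{sl}(K) = \sigma(K)-1$, while the strict inequality $d_3(\xi_K) > (q_\tau(K)-1)/2 + v_\tau(K) = (q_\tau(K)-1)/2$ unpacks into $\mathbf{sl}(K) < \sigma(K)-1$. Since the Bennequin-type bound forces $\mathbf{sl}(K) \le \sigma(K)-1$ for quasi-alternating knots (this is the content of Plamenevskaya's Theorem~1.2 in the $\mathrm{Kh}$-thin setting, which covers quasi-alternating knots), these two cases exhaust all transverse representatives, and Theorem~\ref{thm:vnv} yields the desired equivalence.

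The main obstacle will be Step~1, namely rigorously ruling out $\theta$-torsion in $\widehat{HF}_{\mathbb{Z}_2}(\Sigma(K),\mathfrak{s}_0^K)$ and showing that the free part is concentrated in a single grading. The $\mathbb{Z}_2$-action must act trivially on the one-dimensional space $\widehat{HF}(\Sigma(K),\mathfrak{s}_0^K)$ because $\mathfrak{s}_0^K$ is the unique spin $\mathrm{Spin}^c$-structure and is therefore preserved by the covering involution; but translating this triviality into the claim that no higher differentials occur, and that no extra summands of $\theta$-torsion arise from $\mathrm{Spin}^c$-structures permuted by $\tau$ that could creep into the grading relevant for $v_\tau$, is the delicate part and will require a careful analysis of the differentials $d_k$ for $k \ge 2$ combined with the absolute grading information.
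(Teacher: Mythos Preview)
Your approach is essentially the paper's: compute $q_\tau$, $v_\tau$, and $d_3$ for quasi-alternating $K$ and feed them into Theorem~\ref{thm:vnv}. The paper carries this out by citing Proposition~6 of \cite{contact-branch} (for $d_3(\xi_T)$ in terms of $\mathbf{sl}(T)$ and the signature of the $2$-handlebody $X$), Theorem~3.1 of \cite{signature} (for $\sigma(X)=-\sigma(K)$), and Theorem~1 of \cite{Lisca-Owens} (for $d(\Sigma(K),\mathfrak{s}_0^K)=\sigma(K)/4$), all in the convention where the right-handed trefoil has signature~$+2$.

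Two remarks. First, your explicit formula $d_3(\xi_K)=-\tfrac12\mathbf{sl}(K)-\tfrac14\sigma(K)-\tfrac12$ does not match the paper's computation (and fails the sanity check on the transverse unknot), and your $q_\tau(K)/2=-\sigma(K)/4$ uses the opposite sign convention from the one in which the Corollary is stated; once you fix the convention and use the references above, the arithmetic goes through cleanly. Second, your ``main obstacle'' dissolves: since $\widehat{HF}(\Sigma(K),\mathfrak{s}_0^K)\cong\mathbb{F}_2$ for an $L$-space and $\tau$ must act trivially on a one-dimensional $\mathbb{F}_2$-space, the equivariant differential on that summand is identically zero, so $\widehat{HF}_{\mathbb{Z}_2}(\Sigma(K),\mathfrak{s}_0^K)\cong\mathbb{F}_2[\theta]$ on the nose and $v_\tau(K)=0$ follows immediately---no analysis of higher differentials is needed.
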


We will now show that Corollary \ref{cor:pl} is also a direct consequence
of Theorem \ref{thm:vnv}, by giving another proof of it.
\begin{proof}
When $K$ is a quasi-alternating knot and $T$ is a transverse representative
of $K$ which satisfies $\mathbf{sl}(T)=s(K)-1$, then by Proposition
6 of \cite{contact-branch}, we have
\[
d_{3}(\xi_{T})=-\frac{3}{4}\sigma(X)-\frac{1}{2}\mathbf{sl}(T)=-\frac{3}{4}\sigma(X)-\frac{1}{2}\sigma(K)-\frac{1}{2},
\]
 where $X$ is a 4-manifold consisting only of 2-handles, as defined
in section 3.1 of \cite{contact-branch}. Note that the original formula
of Plamenevskaya is wrong by an additive factor of $\frac{1}{2}$;
it is easy to check it by testing it with the transverse unknot, which
has self-linking number $-1$.
\end{proof}
Now, by the construction of $X$, it is a branched double cover of
the 4-ball $B^{4}$ along a smooth surface which bounds $K$, but
with the opposite orientation, so by Theorem 3.1 of \cite{signature},
we have $\sigma(X)=-\sigma(K)$. Hence we get 
\[
d_{3}(\xi_{T})=\frac{3}{4}\sigma(K)-\frac{1}{2}\sigma(K)-\frac{1}{2}=\frac{1}{4}\sigma(K)-\frac{1}{2}.
\]
 On the other hand, since $\Sigma(K)$ is an L-space, we have 
\[
\frac{q_{\tau}(K)}{2}=d(K,\mathfrak{s}_{0}^{K}),
\]
 which is then equal to $\frac{\sigma(K)}{4}$ by Theorem 1 of \cite{Lisca-Owens};
note that we are using the sign convention which makes the right handed
trefoil have signature $2$. Also, $v_{\tau}(K)=0$ by the same reason.
Hence we finally get 
\[
d_{3}(\xi_{T})+\frac{1}{2}=\frac{q_{\tau}(K)}{2}=\frac{q_{\tau}(K)}{2}+v_{\tau}(K).
\]
 Therefore, by Theorem \ref{thm:vnv}, $c(\xi_{T})\ne0$. Similarly,
we can prove that $c(\xi_{T})=0$ if $\mathbf{sl}(T)>s(K)-1$ by replacing
equalities by inequalities.

\section{Conclusion}

Given a based link $(L,p)$ in $S^{3}$, the isomorphism type of $\widehat{HF}_{\mathbb{Z}_{2}}(\Sigma(L),p)$
is well-defined. When $L=K$ is a knot, then $\widehat{HF}_{\mathbb{Z}_{2}}(\Sigma(K),p)$
satisfies naturality, and an isotopy class of a based cobordism between
two based knots induces a uniquely defined map between $\widehat{HF}_{\mathbb{Z}_{2}}$. 

Given a transverse based link $L$ in $(S^{3},\xi_{std})$, we have
constructed a distinguished element 
\[
c_{\mathbb{Z}_{2}}(\xi_{L})\in\widehat{HF}_{\mathbb{Z}_{2}}(\Sigma(L),z),
\]
 which is invariant under transverse isotopies and thus is a transverse
based link invariant. When $L=K$ is a transverse knot, then we can
talk about its image under cobordism maps; it satisfies functoriality
under symplectically constructible based cobordisms. When we work
with a transverse knot $K$, we can forget the choice of a basepoint,
so we get an element 
\[
c_{\mathbb{Z}_{2}}(\xi_{K})\in\widehat{HF}_{\mathbb{Z}_{2}}(\Sigma(K)),
\]
 which may not be functorial under unbased cobordisms. The most natural
question to ask about it would be about the effectivity of $c_{\mathbb{Z}_{2}}$
in distinguishing topologically isotopic transverse knots.

Recall that we have two important classical invariants of transverse
knot invariants:
\begin{itemize}
\item the topological knot type;
\item the self-linking number.
\end{itemize}
Thus a transverse knot invariant is said to be effective if it can
distinguish two transverse knots, which are topologically isotopic
and have the same self-linking number, but are not transversely isotopic,
i.e. isotopic through transverse knots. 

The LOSS invariant, defined in \cite{LOSS-inv}, is an example of
an effective transverse knot invariant, lying in the knot Floer homology
of a given transverse knot:
\[
\hat{c}(K)\in\widehat{HFK}(-Y,K),\,c^{-}(K)\in HFK^{-}(-Y,K).
\]
 The invariant $c^{-}$ has some basic properties which similar to
some properties of equivariant contact classes. First of all, it lies
in the knot Floer homology of $(-Y,K)$, which means that it is a
cohomological invariant in $(Y,K)$. Also, if $K^{-}$ is the transverse(negative)
stabilization of $K$, then we have 
\[
c^{-}(K^{-})=U\cdot c^{-}(K),
\]
 which is very similar to the property $c_{\mathbb{Z}_{2}}(\xi_{K^{-}})=\theta\cdot c_{\mathbb{Z}_{2}}(\xi_{K})$.
So, it is natural to ask the following question.

\subsection*{Question.}

Is there a way to calculate $c^{-}(K)$ or $\hat{c}(K)$using $c_{\mathbb{Z}_{2}}(\xi_{K})$?
If not, then is there a way to calculate $c_{\mathbb{Z}_{2}}(\xi_{K})$
using $c^{-}(K)$?\\

Of course, if we can recover either $c^{-}(K)$ or $\hat{c}(K)$from
$c_{\mathbb{Z}_{2}}(\xi_{K})$, then we immediately see that $c_{\mathbb{Z}_{2}}$
is an effective transverse link invariant. However, even if we cannot,
we can still ask whether the equivariant contact class is an effective
invariant:

\subsection*{Question.}

Is there a knot $K$ in $S^{3}$ such that there exist two transverse
knots $K_{1},K_{2}$ in $(S^{3},\xi_{std})$, topologically isotopic
to $K$, with $\mathbf{sl}(K_{1})=\mathbf{sl}(K_{2})$, but
\[
c_{\mathbb{Z}_{2}}(K_{1})\ne c_{\mathbb{Z}_{2}}(K_{2})
\]
 as elements in $\widehat{HF}_{\mathbb{Z}_{2}}(\Sigma(K))$?\\

If the answer to the above question is yes, then we get a new effective
transverse invariant. However, if the answer is no, we also have an
interesting consequence. Given a knot $K$ in $S^{3}$, let $K_{1},K_{2}$
be two transverse knots, topologically isotopic to $K$. Then, we
apply the following well-known theorem.
\begin{thm*}
Any two topologically isotopic transverse knots are related by a sequence
of (de)stabilizations.
\end{thm*}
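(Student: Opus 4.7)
The plan is to reduce this statement to the transverse braid version Theorem~\ref{thm:O-S} combined with the classical (smooth) Markov theorem. First I would invoke Bennequin's theorem (stated above) to produce, for the two topologically isotopic transverse representatives $K_{1}$ and $K_{2}$ of a common smooth knot $K$, transverse closed braids $B_{1}$ and $B_{2}$ around the $z$-axis with $K_{i}$ transversely isotopic to $B_{i}$. Since transverse isotopy is in particular smooth isotopy, $B_{1}$ and $B_{2}$ are smoothly isotopic closed braids representing the topological knot type of $K$, so by the classical Markov theorem they are related through a finite sequence of braid isotopies, conjugations in the braid group, and positive/negative braid stabilizations.

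Next I would apply Theorem~\ref{thm:O-S} of Orevkov--Shevchishin: braid isotopies, conjugations, and positive braid stabilizations produce transverse braids that are transversely isotopic. Consequently, in the Markov sequence relating $B_{1}$ and $B_{2}$, the only moves that can change the transverse isotopy class are the negative braid stabilizations; all other moves contribute only transverse isotopies between the intermediate transverse braids. It therefore remains to identify a negative braid stabilization of a transverse braid with a transverse (de)stabilization of the underlying transverse knot.

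This last identification is a local contact-geometric computation performed in a standard contact neighborhood of the strand at which the stabilization is applied: inserting a negative crossing with a trivial extra strand yields a transverse arc locally modelled on the standard transverse stabilization normal form, and in particular the self-linking number drops by two. Once this local identification is established, the whole Markov sequence translates into a sequence of transverse isotopies together with transverse (de)stabilizations, proving the theorem.

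I expect the main obstacle to be precisely this last local identification, i.e.\ checking that the braid-theoretic negative stabilization really does coincide, up to transverse isotopy, with the transverse stabilization normal form; everything else is a bookkeeping combination of Bennequin's theorem, the classical Markov theorem, and Theorem~\ref{thm:O-S}.
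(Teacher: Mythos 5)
The paper states this result as a ``well-known theorem'' in the Conclusion and offers no proof, so there is no argument in the source to compare yours against. Your proposal is a correct proof and follows the standard route: Bennequin's theorem puts both transverse representatives into closed-braid position; the smooth Markov theorem relates the two closed braids by braid isotopies, conjugations, and positive/negative Markov (de)stabilizations; the ``if'' implication of Theorem~\ref{thm:O-S} collapses every one of those moves \emph{except} the negative ones to a transverse isotopy; and negative braid (de)stabilizations are precisely transverse (de)stabilizations. The local identification you flag as the main obstacle is genuinely routine: inserting $\sigma_{n}^{-1}$ with a trivial extra strand is the standard local model for the transverse stabilization, and $\mathbf{sl}$ drops by $2$ as you say. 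Two small points worth making explicit when you write this up: (i) the intermediate closed braids in the Markov sequence are automatically transverse to $\xi_{std}$ (any closed braid around the $z$-axis is), so the whole sequence really does stay in the transverse category and the bookkeeping goes through; (ii) you only need the easier implication of Theorem~\ref{thm:O-S}, not the full transverse Markov theorem, so you may wish to cite the precise direction you use.
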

Using the above theorem, suppose that the $n$th and $m$th positive
stabilizations of $K_{1}$ and $K_{2}$ are transversely isotopic,
and assume that $n\ge m$. Then the $n-m$th positive stabilization
$K_{1}^{n-m}$ of $K_{1}$ satisfies $\mathbf{sl}(K_{1}^{n-m})=\mathbf{sl}(K_{2})$.
Hence, by the assumed non-effectiveness of $c_{\mathbb{Z}_{2}}$ and
its behavior under positive stabilizations, we get 
\[
c_{\mathbb{Z}_{2}}(\xi_{K_{2}})=c_{\mathbb{Z}_{2}}(\xi_{K_{1}^{n-m}})=c_{\mathbb{Z}_{2}}(\xi_{K_{1}})\cdot\theta^{n-m}.
\]
 Hence, if $T_{K}$ is the transverse representative of $K$ with
the minimal self-linking number, then for any other transverse representative
$T$ of $K$, we must have 
\[
c_{\mathbb{Z}_{2}}(\xi_{T})=c_{\mathbb{Z}_{2}}(\xi_{T_{K}})\cdot\theta^{n_{T}}
\]
 for some $n_{T}\ge0$. Therefore we deduce that the subset 
\[
\{c_{\mathbb{Z}_{2}}(\xi_{T})\,\vert\,T\text{ is a transverse representative of }K\}\subset\widehat{HF}_{\mathbb{Z}_{2}}(\Sigma(K))
\]
 is a single $\theta$-tower; its minimal-order element $c_{\mathbb{Z}_{2}}(\xi_{T_{K}})\in\widehat{HF}_{\mathbb{Z}_{2}}(\Sigma(K))$
becomes a knot invariant.

\subsection*{Question.}

Is there a non-constructible symplectic cobordism between two transverse
knots in the standard contact $S^{3}$?

\end{document}